\theoremstyle{definition}
\newtheorem{definition}{Definition}[section]
\newtheorem{rmk}[definition]{Remark}
\theoremstyle{plain}
\newtheorem{lemma}[definition]{Lemma}
\newtheorem{theorem}[definition]{Theorem}
\newtheorem{proposition}[definition]{Proposition}
\newtheorem{corollary}[definition]{Corollary}
\newcommand{\res}
{\mathop{\hbox{\vrule height 7pt width .5pt depth 0pt \vrule
height .5pt width 6pt depth 0pt}}\nolimits}
\newcommand{\diam}{\mathrm{diam\,}}
\newcommand{\Lip}{\mathrm{Lip}}
\newcommand{\R}{\mathbb R}
\renewcommand{\subset}{\subseteq}
\newcommand{\propHD}{$\mathcal{D}$}
\newcommand{\preciso}{{\mathfrak p}}
\begin{document}

\title[Fine properties of BV functions in CC spaces]{Fine  properties  of  functions  with  bounded  variation  in  Carnot-Carath\'eodory spaces}

\date{\today}

\author[Don]{Sebastiano Don}
\address[Don and Vittone]{Universit\`a di Padova, Dipartimento di Matematica ``T.~Levi-Civita'', via Trieste 63, 35121 Padova, Italy}
\email{don@math.unipd.it}

\author[Vittone]{Davide Vittone}
\email{vittone@math.unipd.it}

\subjclass[2010]{26B30, 53C17, 49Q15, 28A75.}

\keywords{Functions with bounded variation, Carnot-Carath\'eodory spaces}

\thanks{The authors are supported by the University of Padova Project Networking and STARS Project ``Sub-Riemannian Geometry and Geometric Measure Theory Issues:  Old and New'' (SUGGESTION), and by  GNAMPA  of  INdAM  (Italy)  project  ``Campi  vettoriali,  superfici  e  perimetri  in  geometrie singolari''. The second named author wishes to ackowledge the support and hospitality of FBK-CIRM (Trento), where part of this paper was written.}

\begin{abstract}
We study properties of functions with bounded variation in Carnot-Ca\-ra\-th\'eo\-do\-ry spaces. We prove their almost everywhere approximate differentiability and we examine their approximate discontinuity set and the decomposition of their distributional derivatives. Under an additional assumption on the space, called property $\mathcal R$, we show that almost all approximate discontinuities are of jump type and we study a representation formula for the jump part of the derivative.
\end{abstract}
\maketitle

\section{Introduction}
A lot of effort was devoted in the last decades to the development of Analysis and Geometry in general metric spaces and, in particular, to the study of functions with  bounded variation ($BV$). {\em Carnot-Carath\'eodory (CC) spaces} are among the most fruitful settings where $BV$ functions have been introduced (\cite{CapDanGar94,FSSC3}), see also \cite{BiroliMosco,DanGarNhiAnnSNS98,FraGalWhe94,FSSC1,FSSC2,GaroNhiCPAM} and the more recent \cite{AGM,AmbMag,AmbScienza,BraMirPal,ComiMag,DMV,Magnani,Marchi,SongYang03}. The aim of this paper is to give some contributions to this research lines by establishing ``fine'' properties of $BV$ functions in CC spaces. A non-trivial part of our work consists in fixing  the appropriate language in a consistent and robust manner.

A CC space is the space $\R^n$ endowed with the  Carnot-Carath\'eodory  distance $d$ (see \eqref{def:dcc}) arising from a fixed family $X=(X_1,\dots,X_m)$ of smooth, linearly independent vector fields (called {\em horizontal}) in $\R^n$ satisfying the H\"ormander condition, see \eqref{eq:CH}. As customary in the literature, we  always assume that metric balls are bounded with respect to the Euclidean topology. Moreover, we  work in {\em equiregular} CC spaces, where a  {\em homogeneous dimension} $Q$, usually larger than the topological dimension $n$, can be defined; recall that any CC space can be  lifted to an equiregular  one, see e.g. \cite{RotStein}.  

The space $BV_X$ of function with {\em bounded $X$-variation} consists of those functions $u$ whose derivatives $X_1u,\dots,X_mu$ in the sense of distributions are represented by a vector-valued measure $D_Xu$ with finite total variation $|D_Xu|$.  These functions have been extensively studied in the literature and important properties have been proved, like coarea formulae, approximation theorems, Poincar\'e inequalities. 

We now describe some of the results we prove in this paper. The first one, Theorem \ref{teo:CalderonZ} below, concerns the almost everywhere {\em approximate $X$-differentiability} (see Section \ref{sec:nozioniapprossimate}) of $BV_X$ functions; its classical counterpart is very well-known, see e.g. \cite[Theorem 3.83]{AFP}. As customary, we denote by $D^a_Xu$ and $D^s_Xu$, respectively, the absolutely continuous  and singular part of $D_Xu$ with respect to the Lebesgue measure $\mathscr L^n$.

\begin{theorem}\label{teo:CalderonZ}
	Let $(\R^n,X)$ be an equiregular CC space, let $\Omega\subseteq \R^n$ be an open set and let $u\in BV_X(\Omega;\R^k)$. Then $u$ is approximately $X$-differentiable at $\mathscr L^n$-almost every point of $\Omega$. Moreover, the approximate $X$-gradient  coincides $\mathscr L^n$-almost everywhere with the density  of  $D_X^au$ with respect to $\mathscr L^n$.
\end{theorem}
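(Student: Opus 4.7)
The plan is to follow the classical Euclidean proof (e.g.\ Theorem~3.83 of Ambrosio--Fusco--Pallara) adapted to the sub-Riemannian setting. It suffices to treat the scalar case $k=1$, arguing componentwise. Write $D_Xu = \sigma\,\mathscr L^n + D_X^su$ with $\sigma := dD_X^au/d\mathscr L^n \in L^1_{\mathrm{loc}}(\Omega;\R^m)$.

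First I would fix a point $x \in \Omega$ satisfying the following three conditions, each of which holds at $\mathscr L^n$-a.e.\ point: (a) $x$ is a Lebesgue point of $u$; (b) $x$ is a Lebesgue point of $\sigma$; (c) $|D_X^su|(B_r(x))/\mathscr L^n(B_r(x)) \to 0$ as $r \to 0$. Properties (a) and (b) are the classical Lebesgue differentiation theorem, while (c) is the Radon--Nikodym differentiation of the singular Radon measure $|D_X^su|$ against $\mathscr L^n$; both rely on the doubling property of $\mathscr L^n$ on CC balls, which holds in the equiregular setting.

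Next, I would work in privileged exponential coordinates $(y_1,\dots,y_n)$ centered at $x$, in which the horizontal vector fields admit a nilpotent first-order approximation; in particular the horizontal coordinate functions satisfy $X_iy_j(z) = \delta_{ij} + o(1)$ as $d(x,z) \to 0$. Define the candidate horizontal affine approximation
\begin{equation*}
	L(z) := u(x) + \sum_{i=1}^m \sigma_i(x)\,y_i(z), \qquad v := u - L.
\end{equation*}
By the triangle inequality for Radon measures,
\begin{equation*}
	|D_Xv|(B_r(x)) \le \int_{B_r(x)} |\sigma(y) - \sigma(x)|\,dy + |D_X^su|(B_r(x)) + \omega(r)\,\mathscr L^n(B_r(x)),
\end{equation*}
where $\omega(r)\to 0$ absorbs the error coming from $X_iL \neq \sigma_i(x)$. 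Using (b), (c) and $\mathscr L^n(B_r(x)) \asymp r^Q$ (a consequence of equiregularity), this gives $|D_Xv|(B_r(x)) = o(r^Q)$.

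Now I would apply the $1$-Poincar\'e inequality for $BV_X$ functions in equiregular CC spaces,
\begin{equation*}
	\int_{B_r(x)} |v - v_{B_r(x)}|\,dy \le C\,r\,|D_Xv|(B_{\lambda r}(x)) = o(r)\,\mathscr L^n(B_r(x)),
\end{equation*}
and use a telescoping argument along dyadic radii to upgrade $v_{B_r(x)} \to 0$ (which follows from~(a)) into $v_{B_r(x)} = o(r)$, so that the $L^1$-mean of $|v|$ on $B_r(x)$ is itself $o(r)$. Decomposing $B_r(x)$ into dyadic annuli on which $d(x,\cdot) \gtrsim 2^{-k}r$ and applying Chebyshev's inequality on each $B_{2^{-k}r}(x)$ yields, for every $\varepsilon > 0$,
\begin{equation*}
	\lim_{r\to 0} \frac{\mathscr L^n\bigl(\{z \in B_r(x) : |u(z) - L(z)| > \varepsilon\,d(x,z)\}\bigr)}{\mathscr L^n(B_r(x))} = 0,
\end{equation*}
which is precisely approximate $X$-differentiability of $u$ at $x$ with approximate gradient $\sigma(x)$. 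The main technical obstacle is the construction and analysis of $L$: one has to work in coordinates adapted to the nilpotent approximation of $X$ at $x$, control the error $X_iL - \sigma_i(x)$ as a vanishing perturbation, and make sure that the resulting notion matches the definition of approximate $X$-differentiability fixed in Section~\ref{sec:nozioniapprossimate}. Everything else — Lebesgue differentiation, Radon--Nikodym, the $BV_X$-Poincar\'e inequality, and Chebyshev — is standard in this setting.
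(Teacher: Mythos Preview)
Your proposal is correct and is essentially the paper's own approach: reduce to $|D_Xw|(B(p,r))=o(r^Q)$ for $w=u-u^\star(p)-f$ and convert this via the Poincar\'e inequality plus a dyadic telescoping of the means $w_{p,2^{-k}r}$ into the required $L^1$ decay; the paper simply packages that conversion as the standalone Lemma~\ref{lemma3.81} (the estimate $\int_{B(p,r)}|w|/d(p,\cdot)\,d\mathscr L^n\le C\int_0^2 |D_Xw|(B(p,tr))\,t^{-Q}\,dt$, highlighted in the introduction as new even in Carnot groups). One remark: your step~8 already yields $\fint_{B(p,r)}|w|/r\to 0$, which \emph{is} Definition~\ref{def:Xdiff} of approximate $X$-differentiability, so step~9 is superfluous and the density-$0$ formulation you land on there is not the definition adopted in this paper.
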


The proof of Theorem \ref{teo:CalderonZ}  is based on Lemma \ref{lemma3.81}, that is, on a suitable extension to CC spaces of the inequality
\[
\int_{B(p,r)} \frac{|u(q)-u(p)|}{|q- p|}\:d\mathscr L^n(q)\leq C\int_0^1\frac{|Du|(B(p,tr))}{t^n}\:dt
\]
valid for a classical $BV$ function $u$ on $\R^n$. Lemma \ref{lemma3.81} answers  an open problem stated in \cite{AmbMag} and it is new even in {\em Carnot groups}. We only recall that Carnot groups are connected, simply connected and nilpotent Lie groups whose  Lie algebra is stratified, and we refer to  \cite{FollandStein,Monti,Magnani,NoteLeDonne} for more detailed introduction to the subject. Carnot groups possess a canonical CC structure obtained by fixing a basis $X_1,\dots,X_m$ of the first layer of the Lie algebra of left-invariant vector fields; their importance in the theory stems from the fact that they constitute the infinitesimal models of equiregular CC spaces, a fact that we heavily use  in this paper.

Theorem \ref{teo:CalderonZ} was proved in the setting of  Carnot groups in \cite{AmbMag} together with the following result, which we also extend to our more general setting. We denote by $\mathscr H^{Q-1}$ the Hausdorff measure of dimension $Q-1$ and by $\mathcal S_u$ the set of points where a function $u$ does not possess an approximate limit in the sense of Definition \ref{def:limiteapprossimato}.

\begin{theorem}\label{teo:Suquasirettificabile}
	Let $(\R^n,X)$ be an equiregular CC space, let $\Omega\subseteq \R^n$ be an open set and let $u\in BV_X(\Omega;\R^k)$. Then $\mathcal S_u$ is contained in a countable union of sets with finite $\mathscr H^{Q-1}$ measure.
\end{theorem}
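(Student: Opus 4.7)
The plan is to mirror the Euclidean argument of \cite[Theorem 3.77]{AFP} and its Carnot-group adaptation by Ambrosio-Magnani \cite{AmbMag}, exploiting three structural features available in equiregular CC spaces: the local Nagel-Stein-Wainger volume estimate $\mathscr{L}^n(B(p,r))\simeq r^Q$; the doubling property of $\mathscr{L}^n$ for CC balls; and the $BV_X$ Poincar\'e inequality. Combining the last two with the first, one gets that for structural constants $C,\Lambda>0$, every $u\in BV_X(\Omega;\R^k)$ and every small enough $r$,
$$
\fint_{B(p,r)}|u-u_{p,r}|\,d\mathscr{L}^n\;\leq\;C\,\frac{|D_Xu|(B(p,\Lambda r))}{r^{Q-1}},
\qquad u_{p,r}:=\fint_{B(p,r)}u\,d\mathscr{L}^n.
$$

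Next I would cover $\mathcal{S}_u$ via the upper $(Q-1)$-density of $|D_Xu|$. For $\lambda>0$ set
$$
E_\lambda:=\Big\{p\in\Omega:\limsup_{r\to 0^+}r^{1-Q}|D_Xu|(B(p,r))\geq\lambda\Big\}.
$$
A standard $5r$-covering argument in the doubling metric measure space $(\R^n,d,\mathscr{L}^n)$ yields the density bound
$$
\mathscr{H}^{Q-1}(E_\lambda\cap K)\;\leq\;C_K\,\lambda^{-1}\,|D_Xu|(\Omega')<\infty
$$
for every compact $K\subseteq\Omega'\Subset\Omega$, so each $E_\lambda$ has locally finite $\mathscr{H}^{Q-1}$-measure. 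In particular, $\bigcup_{k\in\mathbb{N}}E_{1/k}$ is a countable union of sets of finite $\mathscr{H}^{Q-1}$-measure.

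It then suffices to prove $\mathcal{S}_u\subseteq\bigcup_k E_{1/k}$ up to an $\mathscr{H}^{Q-1}$-negligible set. For $p\notin\bigcup_k E_{1/k}$, that is for $p$ with $r^{1-Q}|D_Xu|(B(p,r))\to 0$, the Poincar\'e bound directly yields $\fint_{B(p,r)}|u-u_{p,r}|\,d\mathscr{L}^n\to 0$, and doubling of $\mathscr{L}^n$ gives the dyadic estimate $|u_{p,r/2}-u_{p,r}|\leq Cr^{1-Q}|D_Xu|(B(p,\Lambda r))$. To upgrade this into the Cauchyness of $u_{p,r}$ as $r\to 0$, and hence into the existence of an approximate limit at $p$, one applies on dyadic annuli the CC version of the integral inequality of Lemma \ref{lemma3.81} proved in this paper, which supplies precisely the summability $\sum_{k\geq 0}(2^{-k}r_0)^{1-Q}|D_Xu|(B(p,\Lambda 2^{-k}r_0))<\infty$ for $\mathscr{H}^{Q-1}$-a.e.\ such $p$, via a Fubini/Chebyshev argument.

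\textbf{Main obstacle.} The delicate step is precisely the last one: passing from the ``zero upper $(Q-1)$-density'' of $|D_Xu|$ to the convergence of the averages $u_{p,r}$ at $\mathscr{H}^{Q-1}$-a.e.\ $p$, rather than merely $\mathscr{L}^n$-a.e. In the Euclidean setting this rests on $W^{1,1}$-capacity or maximal-function estimates for $BV$ functions; here the natural route is the CC extension of Lemma \ref{lemma3.81} already developed in the paper, applied on dyadic annuli. All the remaining ingredients (local doubling, Nagel-Stein-Wainger volume bounds, $BV_X$ Poincar\'e inequality, and the $5r$-covering/density estimate) are standard in equiregular CC spaces.
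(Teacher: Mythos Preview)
Your strategy diverges from the paper's and the step you yourself flag as the ``Main obstacle'' is a genuine gap that your proposed fix does not close. The paper does \emph{not} cover $\mathcal S_u$ by the density sets $E_\lambda$ of $|D_Xu|$. Instead it uses the coarea formula to select a countable dense $D\subset\R$ with each $\{u>t\}$, $t\in D$, of finite $X$-perimeter, and proves the inclusion
\[
\mathcal S_u\setminus L\ \subseteq\ \bigcup_{t\in D}\partial^\ast\{u>t\},
\qquad L:=\Big\{p:\limsup_{r\to0}\fint_{B(p,r)}|u|^{\frac{Q}{Q-1}}\,d\mathscr L^n=+\infty\Big\}.
\]
By Theorem~\ref{teo:ambrosio} each $\partial^\ast\{u>t\}$ has locally finite $\mathscr H^{Q-1}$-measure, and $\mathscr H^{Q-1}(L)=0$ is the content of Lemma~\ref{negligibleset} (whose proof rests on Lemma~\ref{th:3.74}, coarea, and a compactness/blow-up argument---not on Lemma~\ref{lemma3.81}). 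The inclusion itself is proved directly: for $p\notin L$ lying in no $\partial^\ast\{u>t\}$, the level-set densities are monotone in $t$, the boundedness coming from $p\notin L$ forces a finite threshold $z$, and one checks $u^\star(p)=z$.

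Your resolution fails on two counts. First, Lemma~\ref{lemma3.81} is stated only for $p\in\Omega\setminus\mathcal S_u$; its very formulation involves $u^\star(p)$, so invoking it to \emph{prove} that $p\notin\mathcal S_u$ is circular. Second, no Fubini/Chebyshev argument delivers the dyadic summability $\sum_k(2^{-k}r_0)^{1-Q}|D_Xu|(B(p,\Lambda 2^{-k}r_0))<\infty$ at $\mathscr H^{Q-1}$-a.e.\ point: integrating in $p$ against $\mathscr L^n$ yields only an $\mathscr L^n$-a.e.\ conclusion, while $\mathscr H^{Q-1}$ is not $\sigma$-finite on $\Omega\setminus\bigcup_kE_{1/k}$, so there is no legitimate Fubini against $\mathscr H^{Q-1}$. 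In fact that dyadic sum typically \emph{diverges} on sets of positive $\mathscr H^{Q-1}$-measure (e.g.\ on $\mathcal J_u$ each term is bounded below), so summability cannot hold $\mathscr H^{Q-1}$-a.e.\ in $\Omega$. The bare information ``$r^{1-Q}|D_Xu|(B(p,r))\to0$'' gives $|u_{p,r}-u_{p,r/2}|\to0$ but not Cauchyness of $(u_{p,r})$; bridging this gap at the $\mathscr H^{Q-1}$-a.e.\ level is exactly what the coarea/essential-boundary route together with Lemma~\ref{negligibleset} accomplishes.
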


In the classical theory, an important object associated with a $BV$ function $u$ is its jump set: roughly speaking, this is the set of points $p$ for which there exist $u^+(p)\neq u^-(p)$ and a unit direction $\nu_u(p)$ such that, for small $r>0$,  $u$ is approximately equal to $u^+(p)$ on half of $B(p,r)$ and  to $u^-(p)$ on the complementary half of $B(p,r)$, the two halves being separated by an hyperplane orthogonal to $\nu_u(p)$. In this paper we introduce the notion of  {\em approximate $X$-jumps}, see Definition \ref{approximatejump}: this requires a certain amount of preliminary work, expecially about ``fine'' local properties of hypersurfaces with intrinsic $C^1$ regularity ($C^1_X$).

We denote by $\mathcal J_u\subset\mathcal S_u$ the set of $X$-jump points of $u$ and by $(u^+(p),u^-(p),\nu_u(p))$ the approximate $X$-jump triple (see Definition \ref{approximatejump}) at a point $p\in\mathcal J_u$. The measures
\[
D^j_Xu\coloneqq D^s_Xu\res\mathcal J_u,\qquad D^c_Xu\coloneqq D^s_Xu\res(\Omega\setminus\mathcal J_u),
\]
are called, respectively, \emph{jump part}  and \emph{Cantor part} of $D_Xu$. We want to study some further properties of $D_Xu$ and its decomposition 
\[
D_Xu=D_X^au+D_X^su=D_X^au+D_X^cu+D_X^ju.
\]
We  state some of them in the following result, which is a consequence of Theorem \ref{teo:proprieta,a,c,j} and  Proposition \ref{prop3.76}.

\begin{theorem}\label{teo:decomp-intro-1}
Let $(\R^n,X)$ be an equiregular CC space and consider   an open set $\Omega\subseteq \R^n$, a function $u\in BV_X(\Omega;\R^k)$ and a Borel set $B\subseteq \Omega$. Then the following facts hold:
\begin{itemize}
\item[(i)] there exists $\lambda:\R^n\to(0,+\infty)$ (not depending on $\Omega$ nor $u$) locally bounded away from 0 such that $|D_Xu|\geq \lambda|u^+-u^-| \mathscr S^{Q-1}\res \mathcal J_u$;
\item[(ii)] if $\mathscr H^{Q-1}(B)=0$, then $|D_Xu|(B)=0$;
\item[(iii)] if $\mathscr H^{Q-1}(B)<+\infty \text{ and } B\cap\mathcal{S}_u=\emptyset$, then $|D_Xu|(B)=0$;
\item[(iv)] $D^a_Xu=D_Xu\res(\Omega\setminus S)$ and $D^s_Xu=D_Xu\res S$, where 
\[
S\coloneqq\left\{p\in \Omega: \lim_{r\to 0}\frac{|D_Xu|(B(p,r))}{r^Q}=+\infty\right\};
\]
\item[(v)] $\mathcal J_u\subseteq \Theta_u$, where $\Theta_u\subset S$ is defined by
\[
\Theta_u\coloneqq\left\{p\in \Omega: \liminf_{r\to 0}\frac{|D_Xu|(B(p,r))}{r^{Q-1}}>0\right\}.
\]
\end{itemize}
\end{theorem}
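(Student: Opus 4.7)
\medskip
\noindent\emph{Proof plan.} The plan is to decouple the five assertions by nature: (iv) is a Radon--Nikodym statement and follows from general differentiation of Radon measures in doubling metric spaces; (v) and (i) rest on a blow-up analysis at jump points; (ii) and (iii) follow from dimensional density estimates paired with covering arguments. For (iv), the equiregular structure yields the ball--box estimate $\mathscr{L}^n(B(p,r))\asymp r^Q$ locally uniformly in $p$ and makes $(\R^n,d,\mathscr{L}^n)$ doubling, so the Besicovitch/Lebesgue differentiation theorem for Radon measures applies: the density $\theta(p):=\lim_{r\to 0}|D_Xu|(B(p,r))/\mathscr{L}^n(B(p,r))$ exists and is finite $\mathscr{L}^n$-a.e., $D^a_Xu$ has density $\theta$, and $D^s_Xu$ is concentrated on $\{\theta=+\infty\}$; replacing $\mathscr{L}^n(B(p,r))$ by $r^Q$ (up to the locally bounded ball--box factor) identifies this concentration set with the set $S$ in the statement.

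For (v) and (i) I would perform a blow-up at $p\in\mathcal{J}_u$. By Definition \ref{approximatejump} and the fine-structure results on $C^1_X$ hypersurfaces developed earlier in the paper, the anisotropic rescalings of $u$ around $p$ converge in $L^1_{\mathrm{loc}}$ to the step function $u^+(p)\chi_{H^+}+u^-(p)\chi_{H^-}$, where $H^\pm$ are complementary halfspaces of the tangent Carnot group $\mathbb{G}_p$ separated by the vertical hyperplane with horizontal normal $\nu_u(p)$. Lower semicontinuity of the CC perimeter under this rescaling then yields
\[
\liminf_{r\to 0}\frac{|D_Xu|(B(p,r))}{r^{Q-1}}\;\geq\; \lambda(p)\,|u^+(p)-u^-(p)|,
\]
where $\lambda(p)$ is the horizontal perimeter of $H^+$ in the unit ball of $\mathbb{G}_p$; equiregularity forces $p\mapsto\lambda(p)$ to be locally bounded away from $0$. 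This immediately gives $\mathcal{J}_u\subseteq\Theta_u$, proving (v). Statement (i) then follows from a standard Besicovitch/Federer density lemma that converts a uniform lower bound on the $(Q-1)$-dimensional lower density of a Radon measure on a Borel set into the measure inequality $|D_Xu|\geq \lambda|u^+-u^-|\mathscr{S}^{Q-1}\res\mathcal{J}_u$.

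Finally, (ii) and (iii) are dimensional estimates from the opposite side. For (ii), by the $BV_X$ coarea formula it suffices to handle characteristic functions of sets of finite $X$-perimeter, and for these Ambrosio-type density estimates on doubling PI spaces (both of which properties hold here) give $|D_X\chi_E|\ll\mathscr{H}^{Q-1}$, hence (ii) after integration. For (iii), at every $p\in\Omega\setminus\mathcal{S}_u$ the function $u$ has an approximate limit, so its anisotropic rescalings converge to a constant in $L^1_{\mathrm{loc}}$ and consequently $\lim_{r\to 0}r^{1-Q}|D_Xu|(B(p,r))=0$; a Vitali covering argument then yields $|D_Xu|(B)=0$ whenever $B\cap\mathcal{S}_u=\emptyset$ and $\mathscr{H}^{Q-1}(B)<+\infty$. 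The main obstacle throughout is the blow-up step used for (v) and (i): one has to identify the tangent group $\mathbb{G}_p$ via the nilpotent approximation of the horizontal vector fields, prove $L^1_{\mathrm{loc}}$ convergence of the anisotropic rescalings of $u$, and check local uniformity of $\lambda(p)$. This is the technical heart of the paper and relies on the fine local structure of $C^1_X$ hypersurfaces and the careful definition of approximate $X$-jumps; once these are available, the remaining arguments are routine Radon-measure machinery.
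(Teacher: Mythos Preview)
Your treatment of (iv), (v), (i) and (ii) is correct and coincides with the paper's. For (iv) the paper invokes exactly the Radon--Nikod\'ym differentiation theorem in the doubling space $(\R^n,d,\mathscr L^n)$; for (v) and (i) the paper performs precisely the blow-up you describe (Proposition~\ref{jumpequiv}), uses lower semicontinuity of the total variation along the varying families $\widetilde X^r\to\widehat X$ (Proposition~\ref{prop:semicontstrutturavariabile}) to obtain
\[
\liminf_{r\to0}\frac{|D_Xu|(B(p,r))}{r^{Q-1}}\ \geq\ |\det\nabla F_p(0)|\,|u^+(p)-u^-(p)|\,\mathscr H^{n-1}_e(\overline\nu^\perp\cap\widehat B_p(0,1)),
\]
and then applies the density-to-measure lemma (Corollary~\ref{k-densityf}) to get $|D_Xu|\geq\lambda|u^+-u^-|\mathscr S^{Q-1}\res\mathcal J_u$. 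For (ii) the paper also goes through coarea and Theorem~\ref{teo:ambrosio}. So far, nothing to add.

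Your argument for (iii), however, has a genuine gap. From $p\notin\mathcal S_u$ you correctly get that the anisotropic rescalings $\widetilde u_r=u\circ F_p\circ\delta_r$ converge in $L^1_{loc}$ to the constant $u^\star(p)$. But the step ``consequently $\lim_{r\to0}r^{1-Q}|D_Xu|(B(p,r))=0$'' does \emph{not} follow: total variation is only \emph{lower} semicontinuous under $L^1_{loc}$ convergence, which here gives the vacuous bound $\liminf\geq 0$. Upper bounds on rescaled total variation simply do not come for free from $L^1$ convergence of the blow-ups; think of a sequence $v_j\to 0$ in $L^1$ with $|Dv_j|$ unbounded. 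In fact the conclusion you want is \emph{false} pointwise: the paper only proves $\lim_{r\to0}r^{1-Q}|D_Xu|(B(p,r))=0$ for $\mathscr H^{Q-1}$-a.e.\ $p\in\Omega\setminus\mathcal S_u$ (see \eqref{eq:orQ-1}), and that proof \emph{uses} (iii), so you cannot invoke it here without circularity. Since your Vitali/density conversion requires the upper density to vanish at \emph{every} point of $B$ (Proposition~\ref{k-density}(ii)), the argument collapses.

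The paper's proof of (iii) avoids densities altogether and goes through coarea once more. By Theorems~\ref{coarea} and~\ref{teo:ambrosio},
\[
|D_Xu|(B)=\int_{\R}\int_{B\cap\partial^\ast\{u>s\}}\theta_s\,d\mathscr H^{Q-1}\,ds
=\int_{B}\Big(\int_{\R}\theta_s(p)\,\chi_{\partial^\ast\{u>s\}}(p)\,ds\Big)\,d\mathscr H^{Q-1}(p).
\]
Now Proposition~\ref{levelsets} says that if $p\notin\mathcal S_u$ then $p\notin\partial^\ast\{u>s\}$ for every $s\neq u^\star(p)$; hence for each fixed $p\in B\subset\Omega\setminus\mathcal S_u$ the inner integrand vanishes for $\mathscr L^1$-a.e.\ $s$, so the inner integral is $0$ and $|D_Xu|(B)=0$. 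The vector-valued case follows componentwise via \eqref{vectorbv}. This is the missing idea you should substitute for your blow-up attempt in (iii).
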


However, for classical $BV$ functions much stronger results than Theorems \ref{teo:CalderonZ} and \ref{teo:decomp-intro-1} are indeed known: some of them are proved in the present paper also for $BV_X$ functions under the additional assumption that the space $(\R^n,X)$ satisfies the following natural condition.

\begin{definition}[Property $\mathcal{R}$]\label{rectifiability}
	Let $(\R^n,X)$ be an equiregular CC space with homogeneous dimension $Q$. We say that $(\R^n,X)$ satisfies the {\em property $\mathcal R$} if, for every open set $\Omega\subset\R^n$ and every $E\subseteq \R^n$ with locally finite $X$-perimeter in $\Omega$, the essential boundary $\partial^\ast E\cap\Omega$ of $E$ is countably $X$-rectifiable, i.e., there exists a countable family $(S_i)_{i\in \mathbb N}$ of $C^1_X$ hypersurfaces such that $\mathscr H^{Q-1}(\partial^\ast E\cap\Omega\setminus\cup_{i\in\mathbb N}S_i)=0$. 
\end{definition}

Recall that a measurable set $E\subset\R^n$ has locally finite $X$-perimeter in $\Omega$ if its characteristic function $\chi_E$ has locally bounded $X$-variation in $\Omega$, while we refer to Definition \ref{def:essentialboundary} for the essential boundary $\partial^\ast E$. It was proved in the fundamental paper \cite{Ambrosio01} that the $X$-perimeter measure $|D_X\chi_E|$ of $E$ can be represented as $\theta\mathscr H^{Q-1}\res\partial^\ast E$ for a suitable positive function $\theta$ that is locally bounded away from 0, see Theorem \ref{teo:ambrosio}. 

The validity of property $\mathcal R$ (``rectifiability'') for general equiregular CC spaces is  an interesting open question even  in  Carnot groups (see \cite{AKLD} for a partial result). However,  property $\mathcal R$ is satisfied, besides in Euclidean spaces (\cite{Deg55}), in several interesting situations like Heisenberg groups \cite{FSSC1}, Carnot groups of step 2 \cite{FSSC2} and Carnot groups of {\em  type $\star$} \cite{Marchi}: in particular, Theorems \ref{federervolpert}, \ref{teo:decomp-intro-2propR} and \ref{teo:jumppartHDintro} below hold is such classes. We conjecture that property $\mathcal R$ holds also in all CC spaces of step 2, see \cite{AGM}. Building on the results of \cite{DLDMV}, we prove in Section \ref{sec:proprietaLR} the validity of the weaker {\em property $\mathcal {LR}$} (``Lipschitz rectifiability'', see Definition \ref{def:proprietaLR}) in all Carnot groups satisfying the  algebraic property \eqref{eq:defhyva} below; in particular, a weaker version of Theorem \ref{federervolpert}  holds in such groups, see  Theorem \ref{teo:SuLR}.

The first result we are able to prove assuming property $\mathcal R$ is a refinement of Theorem \ref{teo:Suquasirettificabile} and, roughly speaking, it states that $\mathscr H^{Q-1}$-almost all singularities of a $BV_X$ function  are of jump type. 

\begin{theorem}\label{federervolpert}
	Let $(\R^n,X)$ be an equiregular CC space satisfying property $\mathcal R$, let $\Omega\subset\R^n$ be an open set and let $u\in BV_X(\Omega; \R^k)$. Then $\mathcal S_u$ is countably $X$-rectifiable and $\mathscr H^{Q-1}(\mathcal S_u\setminus\mathcal J_u)=0.$
\end{theorem}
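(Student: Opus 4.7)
The plan is to adapt the classical Federer--Vol'pert argument to the Carnot--Carath\'eodory setting, exploiting property $\mathcal R$ in place of De Giorgi's rectifiability theorem. Working componentwise, I would first reduce to the scalar case $k=1$: one has $\mathcal S_u=\bigcup_{i=1}^k \mathcal S_{u_i}$, and a point is an approximate $X$-jump point of $u$ as soon as it is such for every component $u_i$ with a common normal, so the issue is to detect the common normal at a.e.\ point, which will come out of the slicing argument below.

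For the countable $X$-rectifiability of $\mathcal S_u$, I would use the coarea formula for $BV_X$ functions to infer that the sublevel sets $E_t\coloneqq\{u>t\}$ have locally finite $X$-perimeter in $\Omega$ for $\mathscr L^1$-a.e.\ $t\in\R$. Fix a countable dense subset $D\subseteq\R$ consisting of such good levels. For every $p\in\mathcal S_u$ the approximate upper and lower limits satisfy $u^-(p)<u^+(p)$, so any $t\in D\cap(u^-(p),u^+(p))$ forces both $E_t$ and $\Omega\setminus E_t$ to have strictly positive upper density at $p$, i.e.\ $p\in\partial^\ast E_t$. This gives
\[
\mathcal S_u\subseteq \bigcup_{t\in D}\left(\partial^\ast E_t\cap\Omega\right),
\]
and property $\mathcal R$ yields that each set on the right is countably $X$-rectifiable; hence so is $\mathcal S_u$.

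For the statement $\mathscr H^{Q-1}(\mathcal S_u\setminus\mathcal J_u)=0$, I would perform a blow-up analysis at $\mathscr H^{Q-1}$-a.e.\ point of $\mathcal S_u$. By Theorem \ref{teo:ambrosio} (Ambrosio), for each $t\in D$ the measure $|D_X\chi_{E_t}|$ equals $\theta_t\,\mathscr H^{Q-1}\res\partial^\ast E_t$ with $\theta_t$ bounded away from $0$, and at $\mathscr H^{Q-1}$-a.e.\ point of $\partial^\ast E_t$ the blow-ups of $E_t$ converge to a vertical halfspace in the tangent Carnot group with a well-defined horizontal normal $\nu_{E_t}(p)$. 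Combining this with the $X$-rectifiability of $\mathcal S_u$ obtained above and with the coarea decomposition $|D_Xu|(B)=\int_\R|D_X\chi_{E_t}|(B)\,dt$, a density/differentiation argument on the tangent hyperplane furnished by rectifiability shows that at $\mathscr H^{Q-1}$-a.e.\ $p\in\mathcal S_u$ the normals $\nu_{E_t}(p)$ are independent of $t\in(u^-(p),u^+(p))\cap D$, and that the blow-ups of $u$ itself converge to the two-valued function equal to $u^+(p)$ and $u^-(p)$ on the two halfspaces determined by the common normal. This places $p$ in $\mathcal J_u$ and concludes the proof.

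The main obstacle is the last step: passing from the halfspace blow-ups of the individual level sets $E_t$ to a \emph{coherent} jump profile for $u$, that is, extracting a common normal $\nu_u(p)$ and showing that no further intermediate values survive in the blow-up limit of $u$. This is expected to rest on (i) the coarea representation of $|D_Xu|$, (ii) the Ambrosio structure theorem applied simultaneously to a Fubini-type full-measure set of levels $t$, and (iii) standard differentiation of $X$-rectifiable measures, which at $\mathscr H^{Q-1}$-typical points of $\mathcal S_u$ forces the various halfspaces to align with the unique approximate tangent to the rectifying $C^1_X$ hypersurface through $p$.
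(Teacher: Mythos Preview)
Your argument for the countable $X$-rectifiability of $\mathcal S_u$ follows the same coarea/level-set route as the paper, but the inclusion $\mathcal S_u\subseteq\bigcup_{t\in D}\partial^\ast E_t$ is not correct as stated. The set $\mathcal S_u$ is defined here via failure of the $L^1$-average limit (Definition~\ref{def:limiteapprossimato}), not via disagreement of approximate $\liminf$ and $\limsup$ in measure; a point $p$ can have $u^-(p)=u^+(p)$ in your sense and still lie in $\mathcal S_u$ because the means $\fint_{B(p,r)}|u-z|$ diverge. The paper removes the $\mathscr H^{Q-1}$-negligible set $L=\{p:\limsup_{r\to 0}\fint_{B(p,r)}|u|^{Q/(Q-1)}=+\infty\}$ (Lemma~\ref{negligibleset}) and proves $\mathcal S_u\setminus L\subseteq\bigcup_{t\in D}\partial^\ast\{u>t\}$; the $L^{Q/(Q-1)}$ control is exactly what upgrades convergence in measure to $L^1$-convergence. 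This is a repairable gap, but it is not cosmetic: Lemma~\ref{negligibleset} (and Lemma~\ref{th:3.74} behind it) is a genuine ingredient.

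The more serious problem is in your second step. You assert that at $\mathscr H^{Q-1}$-a.e.\ $p\in\partial^\ast E_t$ the blow-ups of $E_t$ converge to a vertical halfspace in the tangent Carnot group. This is a De Giorgi--type blow-up theorem for finite-$X$-perimeter sets, and it is \emph{not} known in general equiregular CC spaces; it is precisely what is proved (by different means) in Heisenberg groups, step-2 groups and type-$\star$ groups, and remains open otherwise. Theorem~\ref{teo:ambrosio} gives only the representation $P_X^E=\theta\,\mathscr H^{Q-1}\res\partial^\ast E$ and asymptotic doubling, not halfspace blow-ups. Property~$\mathcal R$ tells you that $\partial^\ast E_t$ is covered by $C^1_X$ hypersurfaces, whose blow-ups are hyperplanes (Corollary~\ref{th:hypersurface}), but it does not say that $E_t$ itself lies asymptotically on one side of such a hypersurface. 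Thus the blow-up-of-level-sets program, and the subsequent ``coherence of normals across levels'' argument, cannot be carried out with the tools available.

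The paper avoids this obstacle entirely. Once $\mathcal S_u$ is known to be countably $X$-rectifiable, one fixes its horizontal normal $\nu_{\mathcal S_u}$ (Definition~\ref{def:normalerettif}) and applies a trace theorem on $C^1_X$ hypersurfaces (Theorem~\ref{charactraces}, imported from~\cite{Vittone2}) via Proposition~\ref{rectifiabletraces}: for $\mathscr H^{Q-1}$-a.e.\ $p\in\mathcal S_u$ there exist one-sided $L^1$-limits $u^\pm_R(p)$ of $u$ on the half-balls $B^\pm_{\nu_{\mathcal S_u}(p)}(p,r)$. Since $p\in\mathcal S_u$, these must differ, so $p\in\mathcal J_u$. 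No blow-up of level sets, and hence no halfspace theorem, is needed; the normal comes directly from the rectifying $C^1_X$ hypersurfaces, and the one-sided limits come from the trace operator acting on $u$ itself.
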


Assuming property $\mathcal R$, also Theorem \ref{teo:decomp-intro-1} can be refined as follows.

\begin{theorem}\label{teo:decomp-intro-2propR}
Under the assumption and notation of Theorem \ref{teo:decomp-intro-1}, assume that $(\R^n,X)$ satisfies property $\mathcal R$. Then
\begin{itemize}
\item[(i)]  $\mathscr H^{Q-1}(\Theta_u\setminus \mathcal J_u)=0$ and $D^j_Xu=D_Xu\res\Theta_u$;
\item[(ii)] $D_X^cu=D_Xu\res(S\setminus \Theta_u)$;
\item[(iii)] if  $B\subset\Omega$ is such that  $\mathscr H^{Q-1}\res B$ is $\sigma$-finite, then $D_X^cu(B)=D_X^au(B)=0$.
\end{itemize}	
\end{theorem}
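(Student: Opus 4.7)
The plan is to reduce parts (i)--(iii) to measure-theoretic bookkeeping, once two key facts are in hand: (a) the density estimate
\[
\mathscr{H}^{Q-1}(A)\leq Ck\,|D_Xu|(A)\qquad \text{for every Borel }A\subseteq \Theta_u^k,
\]
where $\Theta_u^k\coloneqq\{p\in\Omega:\liminf_{r\to 0}|D_Xu|(B(p,r))/r^{Q-1}>1/k\}$, and (b) the rectifiability of $\mathcal S_u$ provided by Theorem~\ref{federervolpert}. Estimate (a) is the standard upper-density comparison between a Radon measure and $\mathscr{H}^{Q-1}$ in a doubling metric space; it follows from a Vitali-type covering lemma applied to CC balls and, in particular, implies that $\Theta_u=\bigcup_k\Theta_u^k$ has $\sigma$-finite $\mathscr{H}^{Q-1}$-measure.

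For part (i), since $\mathcal J_u\subseteq \Theta_u$ by Theorem~\ref{teo:decomp-intro-1}(v), the task is to show that $\Theta_u\setminus\mathcal J_u$ is $\mathscr{H}^{Q-1}$-negligible. I would split
\[
\Theta_u\setminus\mathcal J_u=\bigl((\Theta_u\cap\mathcal S_u)\setminus\mathcal J_u\bigr)\cup(\Theta_u\setminus\mathcal S_u).
\]
The first piece is contained in $\mathcal S_u\setminus\mathcal J_u$, which is $\mathscr{H}^{Q-1}$-null by Theorem~\ref{federervolpert}. On the second piece, $\sigma$-finiteness and Theorem~\ref{teo:decomp-intro-1}(iii) yield $|D_Xu|(\Theta_u^k\setminus\mathcal S_u)=0$ for every $k$, and (a) then converts this into $\mathscr{H}^{Q-1}(\Theta_u^k\setminus\mathcal S_u)=0$; unioning in $k$ gives $\mathscr{H}^{Q-1}(\Theta_u\setminus\mathcal S_u)=0$. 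Together, $\mathscr{H}^{Q-1}(\Theta_u\setminus\mathcal J_u)=0$, which via Theorem~\ref{teo:decomp-intro-1}(ii) upgrades to $|D_Xu|(\Theta_u\setminus\mathcal J_u)=0$. Using $\Theta_u\subseteq S$ and Theorem~\ref{teo:decomp-intro-1}(iv),
\[
D^j_Xu=D^s_Xu\res\mathcal J_u=D^s_Xu\res\Theta_u=D_Xu\res\Theta_u,
\]
which proves (i).

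Part (ii) is then immediate: since $\mathcal J_u\subseteq\Theta_u\subseteq S$, we have the disjoint decomposition $S\setminus\mathcal J_u=(S\setminus\Theta_u)\sqcup(\Theta_u\setminus\mathcal J_u)$, whence
\[
D^c_Xu=D_Xu\res(S\setminus\mathcal J_u)=D_Xu\res(S\setminus\Theta_u)+D_Xu\res(\Theta_u\setminus\mathcal J_u)=D_Xu\res(S\setminus\Theta_u),
\]
the last term vanishing by $|D_Xu|(\Theta_u\setminus\mathcal J_u)=0$.

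For part (iii), assume $\mathscr{H}^{Q-1}\res B$ is $\sigma$-finite. Exhausting $B$ by sets of finite $\mathscr{H}^{Q-1}$-measure and combining Theorem~\ref{teo:decomp-intro-1}(iii) on $B\setminus\mathcal S_u$ with Theorem~\ref{federervolpert} (plus Theorem~\ref{teo:decomp-intro-1}(ii)) on $B\cap(\mathcal S_u\setminus\mathcal J_u)$, one obtains $|D_Xu|(B\setminus\mathcal J_u)=0$. Since $\mathcal J_u\subseteq S$, this simultaneously forces $D^a_Xu(B)=D_Xu(B\setminus S)=0$ and $D^c_Xu(B)=D_Xu(B\cap(S\setminus\mathcal J_u))=0$. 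I expect the main technical obstacle to lie in estimate (a): pinning down covering/doubling constants in the CC setting so that the same $\mathscr{H}^{Q-1}$ used throughout Theorems~\ref{teo:Suquasirettificabile}--\ref{teo:decomp-intro-2propR} appears on both sides with consistent normalization. Everything else is disjoint-set manipulation built on the chain $\mathcal J_u\subseteq\Theta_u\subseteq S$.
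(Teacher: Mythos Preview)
Your proposal is correct and follows essentially the same route as the paper's proof (Theorem~\ref{teo:proprieta,a,c,j}(c)--(e)): the density comparison you call (a) is exactly Proposition~\ref{k-density}(i) in the appendix, which the paper invokes to obtain $\mathscr{H}^{Q-1}(\{L\geq 1/h\})<\infty$ and then $\mathscr{H}^{Q-1}(\{L\geq 1/h\}\setminus\mathcal S_u)=0$ from $|D_Xu|(\{L\geq 1/h\}\setminus\mathcal S_u)=0$; the rest of the bookkeeping (splitting off $\mathcal S_u\setminus\mathcal J_u$ via Theorem~\ref{federervolpert}, deducing (ii) from (i) and the chain $\mathcal J_u\subseteq\Theta_u\subseteq S$, and handling (iii) by exhaustion plus \eqref{2implication}) matches the paper almost line for line. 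Your worry about normalization constants in (a) is unfounded: the estimate is exactly the standard upper-density bound $\mu\geq t\,\mathscr S^{Q-1}\res E$ of Proposition~\ref{k-density}, valid in any separable metric space with no doubling hypothesis needed.
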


Theorem \ref{teo:decomp-intro-2propR} is part of  Theorem \ref{teo:proprieta,a,c,j}. We also mention that, assuming property $\mathcal R$, one can define a {\em precise representative} $u^\preciso$ of $u$ (see \eqref{eq:defrapprespreciso}) and prove that the convergence of the mean values $\fint_{B(p,r)}u\:d\mathscr L^n$ to $u^\preciso(p)$ holds, as $r\to 0$, for $\mathscr H^{Q-1}$-almost every $p$. See Theorem \ref{teo:convergenzaalrapprpreciso}.

Eventually, a further natural assumption,  {\em property} \propHD\ (``density'', see Definition \ref{def:propHD}), concerning the local behavior of the spherical Hausdorff measure $\mathscr S^{Q-1}$ of $C^1_X$ hypersurfaces, allows  to obtain  a stronger result about the jump part $D^ju$, see Theorem \ref{teo:jumppartHDintro}. Property \propHD\ is satisfied in Riemannian manifolds (trivially),   Heisenberg groups, Carnot groups of step 2 and Carnot groups of   type $\star$, see section \ref{sec:proprietaLR}; its validity in more general settings is an interesting open problem that will be object of future investigations. Theorem \ref{teo:jumppartHDintro} follows from the more general Theorem \ref{teo:jumppartHDcompleto}, which deals with a representation of the restriction of $D_Xu$ to any  countably $X$-rectifiable set $R$.

\begin{theorem}\label{teo:jumppartHDintro}
	Let $(\R^n,X)$ be an equiregular CC space satisfying properties $\mathcal R$ and \propHD; then, there exists a function $\sigma:\R^n\times\mathbb S^{m-1}\to(0,+\infty)$ such that, for every open set $\Omega\subset\R^n$ and every  $u\in BV_X(\Omega; \R^k)$,  one has
\[
D_X^ju=\sigma(\cdot,\nu_u)(u^+-u^-)\otimes \nu_u\: \mathscr S^{Q-1}\res \mathcal J_u.
\]
\end{theorem}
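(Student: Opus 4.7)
My plan is to identify the Radon–Nikodym density of $D_X^j u$ with respect to $\mathscr S^{Q-1}\res \mathcal J_u$ through a blow-up argument at $\mathscr S^{Q-1}$-almost every jump point, where property $\mathcal R$ (plus Theorem \ref{federervolpert}) yields a $C^1_X$ tangent hyperplane and property \propHD{} yields the passage from $\mathscr H^{Q-1}$ to $\mathscr S^{Q-1}$.

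\emph{Step 1: Reduction to pieces of $C^1_X$ hypersurfaces.} By Theorem \ref{teo:decomp-intro-2propR}(i) we have $D_X^j u = D_X u\res\Theta_u$ with $\mathscr H^{Q-1}(\Theta_u\setminus \mathcal J_u)=0$, and by Theorem \ref{federervolpert} the set $\mathcal J_u$ is countably $X$-rectifiable. Thus, up to an $\mathscr H^{Q-1}$-null set, write $\mathcal J_u=\bigsqcup_{i\in \mathbb N} K_i$ with $K_i\subset S_i$ Borel subsets of pairwise distinct $C^1_X$ hypersurfaces $S_i$. Linearity and countable additivity reduce the claim to proving the representation of $D_X u \res K_i$ for each $i$.

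\emph{Step 2: Blow-up at a good jump point.} Fix $p\in K_i$ which is simultaneously (a) an approximate $X$-jump point of $u$ with triple $(u^+(p), u^-(p), \nu_u(p))$; (b) a $\mathscr H^{Q-1}$-density-one point of $K_i$ in $S_i$ at which $S_i$ admits its $C^1_X$ vertical tangent hyperplane; (c) a point where property \propHD{} supplies the limit density $c(p,\nu_u(p))=\lim_{r\to 0}\mathscr S^{Q-1}(S_i\cap B(p,r))/r^{Q-1}\in (0,+\infty)$. Such points form a set of full $\mathscr S^{Q-1}$-measure in $K_i$. Passing to the tangent Carnot group $G_p$ of $(\R^n,X)$ at $p$ via its intrinsic dilations $\delta_r$, the rescaled functions $u_r(q):=u(\delta_r q)$ converge in $L^1_{\mathrm{loc}}(G_p)$ to the model jump $v(q):=u^+(p)\chi_{H^+}(q)+u^-(p)\chi_{H^-}(q)$, where $H^\pm$ are the vertical half-spaces in $G_p$ separated by $\nu_u(p)$; correspondingly, the rescaled derivatives $r^{1-Q}(\delta_{1/r})_\# D_X u$ converge weakly$^*$ to $D_X v = (u^+(p)-u^-(p))\otimes \nu_u(p)\,|D_X\chi_{H^+}|_{G_p}$, which is supported on the vertical tangent hyperplane.

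\emph{Step 3: Density computation.} Applying Theorem \ref{teo:ambrosio} inside $G_p$ to the half-space $H^+$, there exists $\theta(p,\nu_u(p))>0$ with $|D_X\chi_{H^+}|_{G_p}=\theta(p,\nu_u(p))\,\mathscr H^{Q-1}\res\partial H^+$. Combining with (c), and setting $\sigma(p,\nu):=\theta(p,\nu)/c(p,\nu)$, the blow-up of Step 2 translates into
\[
\lim_{r\to 0}\frac{D_X u(B(p,r))}{\mathscr S^{Q-1}(K_i\cap B(p,r))}=\sigma(p,\nu_u(p))(u^+(p)-u^-(p))\otimes\nu_u(p).
\]
Besicovitch differentiation of the measure $D_X u\res K_i$ against $\mathscr S^{Q-1}\res K_i$ then identifies the restriction of $D_X u$ to $K_i$ with $\sigma(\cdot,\nu_u)(u^+-u^-)\otimes\nu_u\,\mathscr S^{Q-1}\res K_i$; summing over $i$ yields the theorem. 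Since neither $\theta$ nor $c$ depends on $u$, the function $\sigma$ is universal.

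\emph{Main obstacle.} The hard step is Step 2: justifying that the full measure $D_X u$ rescales to the model perimeter measure $D_X v$, and not merely its jump part. One must rule out contributions from $D_X^a u$ (which after rescaling is of lower order than $r^{Q-1}$ at $\mathscr L^n$-a.e.\ $p$, hence vanishes in the limit) and from $D_X^c u$ (which by Theorem \ref{teo:decomp-intro-2propR}(ii) is carried by $S\setminus\Theta_u$, hence does not charge $\mathscr H^{Q-1}$-density-one points of $K_i\subset \Theta_u$). One must also verify that the $L^1_{\mathrm{loc}}$ convergence of $u_r$ to the model jump $v$ really follows from the approximate jump definition combined with the $C^1_X$-regularity of $S_i$ at $p$, and that $\sigma$ is bounded away from $0$ (which follows from the lower bound on $\theta$ in Theorem \ref{teo:ambrosio} and the finiteness of $c$).
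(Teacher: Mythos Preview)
Your overall strategy---blow-up at jump points combined with Radon--Nikod\'ym differentiation against $\mathscr S^{Q-1}\res\mathcal J_u$---is exactly the paper's. There is, however, one genuine gap and one technical point you gloss over.

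\textbf{The gap.} You invoke ``Besicovitch differentiation'', but the Besicovitch covering lemma generally fails for CC balls (already in the Heisenberg group), so no such differentiation theorem is available. The paper instead appeals to Federer's differentiation theorem \cite[Theorem~2.9.8]{Federer} for the derivative of $D_Xu\res\mathcal J_u$ with respect to $\mathscr S^{Q-1}\res\mathcal J_u$; this requires closed balls to form a Vitali relation for $\mathscr S^{Q-1}\res\mathcal J_u$, which is granted by \cite[Theorem~2.8.17]{Federer} precisely because property~\propHD\ (transferred to rectifiable sets via Remark~\ref{rem:daC1arettif}) supplies a positive finite density $\lim_{r\to0}\mathscr S^{Q-1}(\mathcal J_u\cap B(p,r))/r^{Q-1}$ at $\mathscr S^{Q-1}$-a.e.\ $p\in\mathcal J_u$. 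In short, property~\propHD\ is not merely the conversion factor you use in (c); it is what makes the differentiation step legitimate.

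\textbf{The technical point.} Your displayed limit
\[
\lim_{r\to0}\frac{D_Xu(B(p,r))}{\mathscr S^{Q-1}(K_i\cap B(p,r))}
\]
does not follow from weak$^\ast$ convergence of the rescaled measures alone, since weak$^\ast$ limits need not respect evaluation on balls whose boundary carries mass. The paper first uses Federer's theorem to assert that the limit exists $\mathscr S^{Q-1}$-a.e., and then identifies its value along a chosen subsequence: in exponential coordinates one extracts $s_i\to0$ so that the total variations $|D_{\widetilde X^{s_i}}\widetilde u_{s_i}|$ weak$^\ast$-converge to some $\lambda$, then picks $\varrho$ with $\lambda(\partial\widehat B_p(0,\varrho))=0$ and evaluates. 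Your ``main obstacle'' analysis (separately disposing of $D_X^au$ and $D_X^cu$) is replaced in the paper by two simpler observations: $\limsup_{r\to0}|D_Xu|(B(p,r))/r^{Q-1}<\infty$ for $\mathscr S^{Q-1}$-a.e.\ $p$ (giving the bounds needed for weak$^\ast$ compactness), and $|D_Xu-D_Xu\res\mathcal J_u|(B(p,r))/r^{Q-1}\to0$ at $\mathscr S^{Q-1}$-a.e.\ $p\in\mathcal J_u$, both from Remark~\ref{densitycorollary}.
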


The paper is structured as follows. In Section \ref{sec:preliminari} we introduce the preliminary material about CC spaces and their nilpotent approximation (Section \ref{sec:spaziCC}), $C^1_X$ hypersurfaces and $X$-rectifiable sets (Section \ref{sec:C1X}), approximate $X$-jumps and $X$-differentiability (Section \ref{sec:nozioniapprossimate}) and $BV_X$ functions (Section \ref{sec:BVX}). Most of the material in Sections \ref{sec:C1X} and \ref{sec:nozioniapprossimate} is original.  Section \ref{sec:dimostrazioni} contains the proof of our results, while in Section \ref{sec:proprietaLR} we discuss some classes of Carnot groups satisfying properties $\mathcal R,\ \mathcal {LR}$ and/or \propHD. Eventually, we collected in Appendix \ref{app:A} some useful result from Geometric Measure Theory in metric spaces and in Appendix \ref{app:risultatitecnici} the proofs of some (new but) technical results (Borel regularity,  etc.) about the approximate $X$-jump and the approximate $X$-differentiability sets.\medskip

{\em Acknowledgements.} It is a pleasure to thank V. Magnani, R. Monti, D. Morbidelli and D. Pallara for their interest in this paper and for several stimulating discussions.

\section{Preliminaries}\label{sec:preliminari}
\subsection{Carnot-Carath\'eodory spaces and nilpotent approximation}\label{sec:spaziCC}
	In what follows $\Omega$ will denote an open set in $\R^n$ and $X=(X_1,\dots,X_m)$ an $m$-tuple ($m\leq n$) of smooth and  linearly independent vector fields on $\R^n$, with $2\leq m\leq n$. We say that an absolutely continuous curve $\gamma:[0,T]\rightarrow\R^n$ is an \emph{$X$-subunit path} joining $p$ and $q$ if $\gamma(0)=p$, $\gamma(T)=q$ and there exist $h_1,\dots,h_m\in L^\infty([0,T];\R)$ such that $\sum_{j=1}^m h_j^2\leq 1$ and for almost every $t \in [0,T]$ one has
	\[
        \dot{\gamma}(t)=\sum_{j=1}^m h(t)X_j(\gamma(t)).
	\]
For every $p,q\in \R^n$, we define the quantity
	\begin{equation}\label{def:dcc}
	d(p,q)\coloneqq\inf\left\{T>0: \exists\text{ a $X$-subunit path } \gamma \text{ joining } p \text{ and } q\right\},
	\end{equation}
where we agree that $\inf\emptyset=+\infty$.

	A sufficient condition that makes $d$ a metric on $\R^n$ is the following
\begin{theorem}[Chow-Rashevsky]\label{Chow}
Suppose that  
\begin{equation}\label{eq:CH}
\forall\ p\in\R^n\qquad\mathcal{L}ie\{X_1,\dots,X_m\}(p)=T_p\R^n\cong\R^n,
\end{equation}
where $\mathcal{L}ie\{X_1,\dots,X_m\}(p)$ denotes the linear span of all  iterated commutators of the vector fields $X_1,\dots,X_m$ computed at $p$. Then $d$ is a distance. 
\end{theorem}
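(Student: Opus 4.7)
The plan is to verify the defining axioms of a distance in turn: non-negativity is immediate from $T>0$, and I would check symmetry, the triangle inequality, separation ($d(p,q)=0\iff p=q$), and the critical finiteness property ($d(p,q)<+\infty$ for all $p,q$). Symmetry follows by reversing an $X$-subunit path $\gamma$ on $[0,T]$: the curve $t\mapsto \gamma(T-t)$ is subunit with controls $-h_j(T-t)$. The triangle inequality follows by concatenating two subunit paths, obtaining one of total length $T_1+T_2$. For separation, fix $p$ and a compact Euclidean neighborhood $K\ni p$ on which $\max_j|X_j|\le M$ in Euclidean norm. Any subunit path starting at $p$ and staying in $K$ satisfies $|\dot\gamma|\le M$, hence $|\gamma(T)-p|\le MT$; this yields the local lower bound $d(p,q)\ge M^{-1}|p-q|$ for $q$ near $p$, and in particular forces $p=q$ whenever $d(p,q)=0$.

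The only substantive point is finiteness, and this is where the H\"ormander condition \eqref{eq:CH} enters. I would prove that for every $p$ and every $T>0$ the reachable set
\[
R(p,T):=\{q\in\R^n: d(p,q)\le T\}
\]
contains a Euclidean neighborhood of $p$. Combined with symmetry and the triangle inequality, this shows that the equivalence class $\{q:d(p,q)<+\infty\}$ is open; the same argument centered at any other point shows it is also closed, so by connectedness of $\R^n$ it must be all of $\R^n$.

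To show that $p$ is interior to $R(p,T)$, I would invoke the classical commutator approximation: writing $\Phi^Y_t$ for the flow of a smooth vector field $Y$, one has
\[
\Phi^{X_j}_{-s}\circ\Phi^{X_i}_{-s}\circ\Phi^{X_j}_{s}\circ\Phi^{X_i}_{s}(p)=p+s^2[X_i,X_j](p)+o(s^2),
\]
and analogous formulas hold for iterated commutators of depth $k$, realized as composite horizontal flows whose total subunit time is $O(s)$ while the Euclidean displacement is of order $s^k$. By \eqref{eq:CH} one can select iterated commutators $Y_1,\dots,Y_n$ of $X_1,\dots,X_m$ spanning $T_p\R^n$. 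Letting $\Psi_i(s)$ denote the composite flow realizing $Y_i$ up to higher-order error, the map
\[
F(s_1,\dots,s_n):=\Psi_1(s_1)\circ\cdots\circ\Psi_n(s_n)(p)
\]
is smooth, sends $0$ to $p$, and has differential at $0$ whose image contains all of $Y_1(p),\dots,Y_n(p)$; hence $dF_0$ is surjective and the inverse (or open) mapping theorem guarantees that $F$ covers a Euclidean neighborhood of $p$. Since the subunit cost of $F(s_1,\dots,s_n)$ tends to $0$ with $\max_i|s_i|$, this neighborhood lies inside $R(p,T)$ for arbitrarily small $T$.

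The main obstacle is the bookkeeping in the commutator asymptotics: iterated commutators used to span $T_p\R^n$ have different weights (``steps''), so a displacement of Euclidean size $s^k$ costs subunit time only of order $s$, and the Jacobian $dF_0$ must be computed in mixed scales. Making this rigorous is the technical heart of the Chow-Rashevsky theorem and is standard in the sub-Riemannian literature (Bella\"iche, Montgomery, or the references \cite{NoteLeDonne,RotStein} already cited); I would quote the approximation formulas from there rather than reproduce the asymptotic expansions.
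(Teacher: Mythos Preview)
The paper does not supply a proof of this theorem: it is stated as the classical Chow--Rashevsky theorem and used as background, so there is no argument in the paper to compare your proposal against.

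Your sketch follows the standard route found in the sub-Riemannian literature and is essentially correct in spirit, but one sentence is not right as written. You assert that the map $F(s_1,\dots,s_n)=\Psi_1(s_1)\circ\cdots\circ\Psi_n(s_n)(p)$ ``has differential at $0$ whose image contains all of $Y_1(p),\dots,Y_n(p)$; hence $dF_0$ is surjective.'' This is false whenever $Y_i$ is a commutator of depth $k_i\geq 2$: since $\Psi_i(s)(p)=p+s^{k_i}Y_i(p)+o(s^{k_i})$, the partial derivative $\partial_{s_i}F|_0$ vanishes, and the ordinary inverse function theorem does not apply directly. You clearly sense this, because in the next paragraph you note that ``the Jacobian $dF_0$ must be computed in mixed scales'' and defer to the literature; but the two paragraphs contradict each other. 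The standard fixes are either to reparametrize (e.g.\ replace $s_i$ by $\mathrm{sgn}(s_i)|s_i|^{1/k_i}$ and use a topological degree or Brouwer-type argument, since smoothness is lost), or to use the anisotropic dilations $\delta_r$ and show that $r^{-1}\cdot$ times the rescaled end-point map has a nondegenerate limit. Either way, the naive inverse function theorem step must be replaced; once you do that, the rest of your outline (symmetry, triangle inequality, the local lower bound for separation, and the open-and-closed argument for global finiteness) is fine.
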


We will refer to \eqref{eq:CH} as  {\em H\"ormander condition}. When \eqref{eq:CH} holds, the couple $(\R^n,X)$ is said to be a \emph{Carnot-Carath\'eodory space} of {\em rank} $m$. We denote by $B(p,r)$ the $d$-ball of center $p\in\R^n$ and radius $r>0$.

For every $p\in \R^n$ and for every $i\in \mathbb N$ we denote by $\mathcal L^i(p)$ the linear span of all the commutators of $X_1,\dots,X_m$ up to order $i$ computed at $p$. Notice that $\mathcal{L}ie\{X_1,\dots,X_m\}(p)=\bigcup_{i\in\mathbb N}\mathcal L^i(p)$. We say that $(\R^n, X)$ is \emph{equiregular} if there exist natural numbers $n_0,n_1,\dots,n_s$ such that
\[
0=n_0< n_1<\cdots< n_s=n\qquad\text{and}\qquad\forall\ p\in\R^n\ \dim \mathcal L^i(p)=n_i.
\]
The natural number $s$ is called \emph{step} of the Carnot-Carath\'eodory space.

In the following theorem we resume some well-known facts about the geometry of  equi\-re\-gu\-lar CC spaces, see e.g.  \cite{NSW,Mitchell}. Recall that a Radon measure $\mu$ on a metric space $(M,d)$ is {\em doubling} if there exists $C>0$ such that 
\[
\mu(B(x,2r))\leq C\mu(x,r)\qquad\text{for every $x\in M$ and  every $r>0$}.
\]

\begin{theorem}\label{ccproperties}
Let $(\R^n,X)$ be an equiregular $CC$ space of step $s$. Then the following facts hold.
	\begin{itemize}
		\item[(i)] For every compact set $K\subseteq\R^n$ there exists $M\geq 1$ such that
\[
\frac{1}{M}\:|p-q|\:\leq d(p,q)\leq M|p-q|^{\frac{1}{s}}\qquad\text{for any }p,q\in K.
\]
\item[(ii)] The Hausdorff dimension of the metric space $(\R^n,d)$ is $Q\coloneqq\sum_{i=1}^si(n_i-n_{i-1})$.
\item[(iii)] The metric measure space $(\R^n,d,\mathscr L^n)$ is locally Ahlfors $Q$-regular, i.e., for every compact set $K\subseteq \R^n$ there exist $R>0$ and $C>1$ such that for every $p\in K$ and for every $r\in (0,R)$
\begin{equation}\label{ahlfors}
\frac{1}{C}r^Q\leq \mathscr L^n(B(p,r))\leq Cr^Q.
\end{equation}
In particular,  $(\R^n, d, \mathscr L^n)$ is locally doubling.
\end{itemize}
\end{theorem}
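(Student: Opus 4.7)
The plan is to follow the classical Nagel--Stein--Wainger strategy and reduce everything to the \emph{ball--box theorem}. The main work is estimate (iii); statements (i) and (ii) then follow from it with soft arguments.

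I would start with the first inequality of (i), which is immediate: on a compact neighborhood of $K$ the vector fields $X_j$ are bounded by some constant $L$, so along any $X$-subunit path $\gamma:[0,T]\to\R^n$ one has $|\dot\gamma(t)|\le L\sqrt m$ by Cauchy--Schwarz (using $\sum h_j^2\le 1$), hence $|p-q|\le L\sqrt m\, T$ and $|p-q|\le L\sqrt m\, d(p,q)$. For the second inequality I would invoke the equiregularity hypothesis to choose, around any $p_0\in K$, an \emph{adapted frame} $Y_1,\dots,Y_n$ consisting of iterated commutators of the $X_j$'s of weights $w_1,\dots,w_n\in\{1,\dots,s\}$ that form a basis of $\R^n$ at $p_0$, with multiplicities forced to be $n_i-n_{i-1}$ at weight $i$ by equiregularity (so $w_1+\cdots+w_n=Q$). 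The key observation, going back to standard commutator-flow lemmas, is that following a commutator of length $w$ for CC-time $\tau$ produces a Euclidean displacement of order $\tau^w$ in the direction $Y$. Composing such flows along all $Y_j$ gives a local diffeomorphism $\Psi_{p_0}(t_1,\dots,t_n)$ whose inverse allows us to connect $p$ to any nearby $q$ in CC-time $\lesssim |p-q|^{1/s}$ (the worst exponent coming from $w_j=s$).

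For (iii) the heart of the matter is the ball--box theorem of Nagel--Stein--Wainger and Mitchell. Using the adapted frame above, I would define
\[
\mathrm{Box}(p,r):=\Big\{\Psi_p(t_1,\dots,t_n):|t_j|\le r^{w_j}\text{ for all }j\Big\}
\]
and prove the uniform double inclusion
\[
B(p,c_1 r)\subset \mathrm{Box}(p,r)\subset B(p,c_2 r)
\]
for all $p\in K$ and all sufficiently small $r>0$, with constants $c_1,c_2>0$ depending only on $K$. The right-hand inclusion is easy from the commutator-flow estimate just used in (i); the left-hand inclusion is the deep part and requires showing that a subunit curve starting at $p$ cannot leave the box in time less than $c_1 r$, which is proved via careful expansions (essentially Baker--Campbell--Hausdorff together with the equiregular rank structure) that bound the coordinates $t_j$ of an arbitrary endpoint in terms of the CC-distance. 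Granted the double inclusion, the Jacobian of $\Psi_p$ is bounded above and below uniformly on $K$ by a continuity/compactness argument, so
\[
\mathscr L^n(\mathrm{Box}(p,r))\asymp \prod_{j=1}^n r^{w_j}=r^{w_1+\cdots+w_n}=r^Q,
\]
which combined with the double inclusion yields (\ref{ahlfors}). Local doubling is then immediate from applying (\ref{ahlfors}) at radii $r$ and $2r$.

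Finally, (ii) is a standard consequence of local Ahlfors $Q$-regularity. The upper bound $\dim_{\mathscr H}(\R^n,d)\le Q$ follows by covering a compact set $K$ with $\lesssim r^{-Q}$ balls of radius $r$ (using the measure lower bound in (\ref{ahlfors}) to control the cardinality of a disjoint packing) and letting $r\to 0$. The lower bound follows from the mass-distribution principle: if $\mathscr L^n(B(p,r))\lesssim r^Q$ on $K$, any covering of $K$ by sets of diameter $\le\delta$ has total $r^Q$-sum $\gtrsim \mathscr L^n(K)>0$, so $\mathscr H^Q(K)>0$ and hence $\dim_{\mathscr H}(\R^n,d)\ge Q$. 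The main obstacle throughout is the left-hand ball--box inclusion, whose full proof (carried out in \cite{NSW}, \cite{Mitchell}) is genuinely technical; the rest of the argument is a rather direct bookkeeping once that estimate is available.
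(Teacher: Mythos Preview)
Your proposal is correct and follows exactly the classical Nagel--Stein--Wainger/Mitchell approach; the paper itself does not give a proof of this theorem but simply cites \cite{NSW,Mitchell} for these well-known facts, so your sketch is precisely what those references do.
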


As customary, we assume from now on that the metric balls $B(p,r)$ are bounded with respect to the Euclidean metric in $\R^n$; this implies that the CC space $(\R^n, X)$ is {\em geodesic}, i.e., that for every $p,q\in \R^n$ there exists a $X$-subunit curve realizing the infimum in \eqref{def:dcc}. The existence of length minimizing curves  implies that, for every $p\in \R^n$ and for every $r>0$, one has $\mathscr L^n (\partial B(p,r))=0$; see Proposition \ref{stella}.

\begin{definition}[Adapted exponential coordinates]\label{def:flagexp}
  Let $(\R^n,X)$ be an equiregular CC spa\-ce and let $p\in\R^n$ be fixed; choose an open neighborhood $V\subset\R^n$ of $p$ and smooth vector fields $Y_1,\dots, Y_n$ such that
\begin{itemize}
\item $Y_i=X_i$ for any $i=1,\dots,m$;
\item for every $k=1,\dots,s$ the vector fields $Y_{n_{k-1}+1},\dots,Y_{n_k}$ are chosen among the $k$-order commutators of $X_1,\dots,X_m$;
\item for every $q\in V$ and every  $k=1,\dots, s$ the set $\{Y_1(q),\dots,Y_{n_k}(q)\}$ is a basis of $\mathcal L^k(q)$.
\end{itemize}
Then there exists a neighborhood $U$ of $0$ in $\R^n$ for which the map 
	\begin{equation}\label{eq:defcoord}
	\begin{split}
	F: U&\rightarrow \R^n\\
	x&\mapsto \exp(x_1Y_1+\dots+x_nY_n)(p)
	\end{split}
	\end{equation}
is well defined. We say that $(x_1,\dots ,x_n)$ are \emph{adapted exponential coordinates} around $p$.
\end{definition}

The definition of $F$ depends on the point $p$; when confusion may arise, we underline this dependence by using the notation $F_p$ to denote (for any $x\in\R^n$ for which it is defined) the map $F_p(x)\coloneqq\exp(x_1Y_1+\dots+x_nY_n)(p)$. When needed, we will also write $F(p,x)$ to denote  $\exp(x_1Y_1+\dots+x_nY_n)(p)$; notice that, for every bounded set $V\subset\R^n$, one can find an open neighborhood $U$ of $0$ in $\R^n$ such that $F$ is well defined in $V\times U$.

For every  $p\in \R^n$ and every $j=1,\dots,m$  we define
	\[
	\widetilde X_j\coloneqq dF_p^{-1}(X_j\circ F_p).
	\] 
	It is readily seen that if $X$ satisfies the H\"ormander condition, then also $\widetilde X$ does and we denote by $\widetilde d$ the CC distance in (a suitable open subset of) $\R^n$ associated with the $m$-tuple of vector fields $\widetilde X=(\widetilde X_1,\dots,\widetilde X_m)$, and by $\widetilde B(x,r)$ the metric balls  associated with $\widetilde d$. Again, when confusion may arise we shall use the notation $\widetilde B_p(x,r)$ to specify that the metric ball is induced by the map $F_p$. Since $dF_p(0)e_j=Y_j(p)$, we have $\widetilde X_j(0)=e_j$ for every $j=1,\dots,m$. Moreover it is easy to verify that for every $p\in \R^n$ and every sufficiently small $r>0$ one has
	\[
d(F_p(x_1),F_p(x_2))=\widetilde d(x_1,x_2)\qquad \forall\ x_1, x_2 \in \widetilde B(0,r);
	\]
in particular, $F_p(\widetilde B(x,r))=B(F_p(x),r)$.
        
        \begin{rmk}
Let us consider $\mu_p\coloneqq(F^{-1}_p)_\#\mathscr{L}^n$, i.e., the measure defined for every Borel set $A$ in $\mathbb R^n$ by
	\[
	\mu_p(A)=\mathscr L^n\left(F_p(A)\right)=\int_A\left|\det\nabla F_p\right|d\mathscr L^n.
	\]
It is easy to see that, whenever $0<\varepsilon<|\det\nabla F_p(0)|$, there exists an open neighborhood $U$ of $0$ such that
	\begin{equation}\label{measureequivalence}
	  \left(\left|\det\nabla F_p(0)\right|-\varepsilon\right)\mathscr L^n \res U\leq \mu_p\res U\leq \left(\left|\det\nabla F_p(0)\right|+\varepsilon\right)\mathscr L^n \res U.
	\end{equation}
        \end{rmk}

\begin{definition}[Degree, dilations and pseudo-norm]
  If $(\R^n,X)$ is an equiregular CC space and $p,Y_1,\dots,Y_n$ are  as in Definition \ref{def:flagexp}, we define the {\em degree} $w_j$ of the coordinate $j$ by $Y_j(p)\in \mathcal L^{w_j}(p)\setminus \mathcal L^{w_{j-1}}(p)$ or, equivalently, by $n_{w_{j-1}}<j\leq n_{w_j}$. For every $r>0$, the anisotropic \emph{dilation} $\delta_r:\R^n\to\R^n$ is defined by 
\begin{equation}\label{dilations}
\delta_r(x)\coloneqq\left(x_1,\dots,r^{w_i}x_i,\dots,r^sx_n\right).
\end{equation}
We  say that a function $f:\R^n\rightarrow \R$ is {\em $\delta$-homogeneous} of degree $w\in \mathbb N$ if for every $p\in \R^n$ and every $\lambda>0$ one has $f(\delta_\lambda p)=\lambda^wf(p)$. We also introduce the pseudo-norm 
\[
\|x\|\coloneqq\sum_{j=1}^n|x_j|^{1/w_j},\qquad x\in\R^n
\]
and the pseudo-balls
\begin{equation}\label{eq:defAr}
A(r)\coloneqq\left\{x\in\R^n:\|x\|\leq r\right\}.
\end{equation}
Clearly, $\delta_r\left(A(1)\right)=A(r)$.
\end{definition}

The following result is proved in \cite{NSW}.
\begin{theorem}\label{normestimate}
	Let $K\subseteq \R^n$ be a compact set in an equiregular CC space $(\R^n,X)$ and let $U$ be a neighborhood of $0$ such that, for every $p\in K$, the map $F_p$ is well-defined in $U$. Then there exists $C>1$ such that for every $x\in U$ and every $p\in K$ we have
	\[
	\frac 1C\|x\|\leq \widetilde d_p(0,x)\leq C\|x\|.
	\] 
\end{theorem}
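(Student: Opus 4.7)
The plan is to translate the statement via the diffeomorphism $F_p$: since $F_p(\widetilde B_p(0,r))=B(p,r)$, one has $\widetilde d_p(0,x)=d(p,F_p(x))$, so the claim becomes the classical ``ball-box'' estimate
\[
\tfrac1C\|x\|\leq d(p,F_p(x))\leq C\|x\|,\qquad x\in U,\ p\in K,
\]
with a constant uniform in $p\in K$. Uniformity will not be an issue: all vector fields and their commutators depend smoothly on $p$, and compactness of $K$ allows us to extract uniform constants at each step.

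For the upper bound I would use an iterated commutator construction of subunit curves: since $Y_j$ is a commutator of order $w_j$ of $X_1,\dots,X_m$, a classical device (going back to Nagel--Stein--Wainger) produces, for every $t\in\R$ with $|t|$ small, an $X$-subunit path $\gamma_j^t$ starting at $p$ of length at most $C_0|t|^{1/w_j}$ whose endpoint is $\exp(tY_j)(p)+O(|t|^{(w_j+1)/w_j})$; concretely one can take nested iterated ``commutator loops'' of the flows $\exp(sX_i)$ with $s=|t|^{1/w_j}$. Concatenating $\gamma_1^{x_1},\dots,\gamma_n^{x_n}$ produces a subunit path from $p$ to $F_p(x)+R(x)$ of total length $\leq C\sum_j|x_j|^{1/w_j}=C\|x\|$, with a remainder $R(x)$ that, after composition with $\delta_{\|x\|}^{-1}$, is of higher order in $\|x\|$. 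An inverse function theorem argument at the identity in the rescaled coordinates $\delta_{\|x\|}^{-1}$ shows that, up to shrinking $U$ and iterating the construction, $R(x)$ can be absorbed and $F_p(x)$ is reached exactly at the cost of enlarging the constant.

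For the lower bound, let $\gamma:[0,T]\to\R^n$ be an $X$-subunit path from $p$ to $F_p(x)$ with $T$ arbitrarily close to $d(p,F_p(x))$. Writing $\gamma$ in the coordinates $F_p$ and setting $\eta\coloneqq F_p^{-1}\circ\gamma$, the curve $\eta$ is $\widetilde X$-subunit with $\eta(0)=0$, $\eta(T)=x$. Decomposing each $\widetilde X_i$ according to the grading induced by $\delta_r$, one has $\widetilde X_i=\partial_i+\sum_{w_j>1}a_{ij}(y)\,\partial_j$ where the coefficients $a_{ij}$ vanish at $0$ to order $w_j-1$ in the dilation-weighted sense. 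Plugging this into $\dot\eta=\sum h_i\widetilde X_i(\eta)$ with $\sum h_i^2\leq 1$ and integrating on $[0,T]$, one obtains coordinate-by-coordinate estimates $|\eta_j(t)|\leq C_j t^{w_j}$ by induction on $w_j$; evaluating at $t=T$ gives $|x_j|^{1/w_j}\leq C\,T$, hence $\|x\|\leq C\,T$. Letting $T\to d(p,F_p(x))$ concludes.

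The main obstacle is the upper bound: while the lower bound is essentially a Gronwall-type calculation exploiting the graded structure of $\widetilde X$ at the origin, the upper bound requires an honest commutator construction in which the remainder terms have to be handled carefully. The cleanest way to do this is probably to invoke the existence of a \emph{nilpotent approximation} $\widehat X$ of $\widetilde X$ at $0$ (which is $\delta_r$-homogeneous and for which the ball-box theorem is almost tautological by homogeneity) and then perturb: the difference $\widetilde X-\widehat X$ is of positive degree in the grading, so by scaling arguments the estimates for the model $\widehat X$ transfer to $\widetilde X$, uniformly in $p\in K$.
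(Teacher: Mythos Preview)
The paper does not prove this statement: it simply attributes it to Nagel--Stein--Wainger \cite{NSW} and quotes the result. So there is no ``paper's own proof'' to compare against.

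Your sketch is essentially the classical ball--box argument, which is indeed the approach of \cite{NSW} (see also Bella\"iche \cite{Bellaiche1} for the nilpotent-approximation viewpoint you mention at the end). The reduction $\widetilde d_p(0,x)=d(p,F_p(x))$ is correct, the upper bound via iterated commutator loops of the $X_i$-flows is the standard device, and the lower bound via a graded ODE estimate in adapted coordinates is also the standard route. One point to be careful about in the lower bound: the coefficients $a_{ij}$ of $\widetilde X_i$ in front of $\partial_j$ have weighted order $\geq w_j-1$ at the origin, but they may depend on \emph{all} coordinates, not only on those of lower weight; so the clean way to run the induction $|\eta_j(t)|\leq C_j t^{w_j}$ is via a bootstrap (define the maximal time up to which $\|\eta(t)\|\leq Ct$ holds and show it equals $T$), rather than a naive coordinate-by-coordinate integration. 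Apart from this, your outline is correct and matches the literature.
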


The following theorem is classical, see e.g. \cite{Bellaiche1} or \cite{MPV}. For an introduction to {\em Carnot groups} (also known as {\em stratified groups}) see for instance \cite{FollandStein,Monti,Magnani,NoteLeDonne}.

\begin{theorem}\label{th:tangentspace}
  Let $(\R^n,X)$ be an equiregular CC space and let $p\in \R^n$ be fixed; then, there exists a family   $\widehat X\coloneqq(\widehat X_1,\dots, \widehat X_m)$ of polynomial vector fields in $\R^n$ such that
  \begin{itemize}
  \item[(i)] for every $j=1,\dots,m$, $\widehat X_j$ is 1-homogeneous, i.e. $(d\delta_r)[\widehat X_j]=r\widehat X_j\circ\delta_r$ for all $r>0$;
  \item[(ii)] for every $j=1,\dots,m$ we have $r(d\delta_{r^{-1}})[\widetilde X_j\circ\delta_r]\to \widehat X_j$ in $C^\infty_{loc}(\R^n)$;
  \item[(iii)] the couple $(\R^n, \widehat X)$ is associated with a Carnot group structure on $\R^n$;
  \item[(iv)] (\cite[Remark 2.6]{MPV}) $\widehat X$ can be completed to a basis $\widehat X_1,\dots, \widehat X_n$ of the Lie algebra of the Carnot group in such a way that $x=\exp(\sum_{j=1}^nx_j\widehat{X}_j)(0)$ for any $x\in\R^n$.
  \end{itemize}
\end{theorem}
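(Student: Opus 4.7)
The plan is to work in the adapted exponential coordinates $(x_1,\dots,x_n)$ around $p$ given by Definition \ref{def:flagexp}, so that we may view the vector fields $\widetilde X_1,\dots,\widetilde X_m$ as smooth vector fields on a neighbourhood of the origin with $\widetilde X_j(0)=e_j$. Writing $\widetilde X_j=\sum_{k=1}^n a_{j,k}(x)\,\partial_{x_k}$ and Taylor-expanding each coefficient $a_{j,k}$ at $0$, I would assign to every monomial vector field $x^\beta\partial_{x_k}$ the \emph{weighted degree} $\sum_{i=1}^n w_i\beta_i - w_k$, using the weights $w_1,\dots,w_n$ from Definition 2.6. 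A direct computation shows that $r\,(d\delta_{r^{-1}})[x^\beta\partial_{x_k}\circ \delta_r]=r^{1+\sum_i w_i\beta_i-w_k}x^\beta\partial_{x_k}$, so the natural candidate for the nilpotent approximation is
\[
\widehat X_j\coloneqq\sum_{\substack{(\beta,k):\\ \sum_i w_i\beta_i-w_k=-1}}\tfrac{\partial^\beta a_{j,k}(0)}{\beta!}\,x^\beta\partial_{x_k},
\]
i.e.\ the piece of $\widetilde X_j$ of weighted degree $-1$.

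Items (i) and (ii) are then essentially bookkeeping. Since every monomial entering $\widehat X_j$ is of weighted degree $-1$, property (i) follows from the definition of $\delta_r$ and of weighted degree. For (ii), I would split the Taylor expansion of $\widetilde X_j$ into the principal part $\widehat X_j$ plus higher-order terms; every such remainder monomial $x^\beta\partial_{x_k}$ has $1+\sum_i w_i\beta_i-w_k\geq 1$, so after the rescaling $r\,(d\delta_{r^{-1}})[\,\cdot\,\circ\delta_r]$ it comes with a strictly positive power of $r$, and the smooth Taylor remainder estimate yields locally uniform convergence together with all derivatives. (A subtle point to check is that the choice of the $Y_{n_{k-1}+1},\dots,Y_{n_k}$ among $k$-th order commutators of $X$ guarantees $\widetilde X_j$ only has coefficients $a_{j,k}$ whose Taylor expansion starts at weighted degree $\geq w_k-1$, so there are no negatively-graded terms to worry about.)

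For (iii), nilpotency is a purely graded statement: the bracket of a vector field of weighted degree $-1$ with another one of weighted degree $-1$ has weighted degree $-2$, and more generally an iterated commutator of order $\ell$ among $\widehat X_1,\dots,\widehat X_m$ is $\ell$-homogeneous. Since in each coordinate $\partial_{x_k}$ the polynomial coefficient must then have weighted degree $w_k-\ell$, and $w_k\leq s$, all commutators of order $>s$ vanish identically, giving a nilpotent Lie algebra of step $\leq s$. The bracket-generating condition at $0$ descends from the fact that $\widetilde X_j(0)=\widehat X_j(0)$ for $j=1,\dots,m$ and, taking brackets and using (ii), that the limit of $r^\ell(d\delta_{r^{-1}})[[\widetilde X_{i_1},\dots,\widetilde X_{i_\ell}]\circ\delta_r]$ equals $[\widehat X_{i_1},\dots,\widehat X_{i_\ell}]$, so the iterated commutators of $\widehat X$ at $0$ contain the images under $dF_p^{-1}$ of the corresponding commutators of $X$ at $p$ modulo lower weights; the equiregular H\"ormander condition then ensures that $\widehat X$ is bracket-generating at $0$, hence everywhere by the homogeneity. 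The standard dictionary between simply connected nilpotent Lie groups and nilpotent Lie algebras equips $\R^n$ with the corresponding Carnot group structure.

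The main obstacle I expect is item (iv): one must choose the completion $\widehat X_{m+1},\dots,\widehat X_n$ coherently with the $Y_j$'s so that the linear coordinates $x$ coincide with the first-kind exponential coordinates associated with the basis $\widehat X_1,\dots,\widehat X_n$. The plan is to define, for $n_{k-1}<j\leq n_k$, the vector field $\widehat X_j$ as the weighted homogeneous approximation (of weighted degree $-w_j$) of $\widetilde Y_j\coloneqq dF_p^{-1}(Y_j\circ F_p)$ obtained by the same procedure as above; by Theorem \ref{th:tangentspace}(ii) applied to each $Y_j$, $\widehat X_j$ is a polynomial, $w_j$-homogeneous vector field satisfying $\widehat X_j(0)=e_j$. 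The identity $x=\exp(\sum_j x_j\widehat X_j)(0)$ then reduces to showing that in adapted exponential coordinates the flow of $\sum_j x_j\widetilde Y_j$ at time $1$ from $0$ is $x$ itself (which is true by the very definition of $F_p$), and taking the rescaled limit: applying $\delta_{r^{-1}}$ to $F_p(\delta_r x)$ and letting $r\to 0$ turns the identity for $\widetilde Y$ into the desired identity for $\widehat X$. This final limit-passage argument, together with the verification that the basis $\{\widehat X_j\}$ is left-invariant for the constructed group law, is the most delicate part and where I would spend most of the care.
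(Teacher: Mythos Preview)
The paper does not actually prove this theorem: it is stated as a classical result with references to \cite{Bellaiche1} and \cite{MPV} (and item (iv) is explicitly attributed to \cite[Remark 2.6]{MPV}), so there is no in-paper proof to compare against. Your sketch follows exactly the standard route taken in those references---working in adapted exponential coordinates, assigning weighted degrees to monomial vector fields, and extracting the homogeneous part of weighted degree $-1$---and is essentially correct at the level of a proof plan.

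One point deserves more care than you give it. The ``subtle point'' you flag, namely that in adapted exponential coordinates the Taylor expansion of $\widetilde X_j$ contains no monomial vector fields of weighted degree strictly less than $-1$, is not automatic from the choice of the $Y_j$'s among commutators: it is precisely the content of the statement that exponential coordinates of the first kind are \emph{privileged} coordinates in the sense of Bella\"iche. This requires a separate argument (see e.g.\ \cite{Bellaiche1}), and without it the limit in (ii) could blow up. Your sketch for (iv) is on the right track---the identity $x=\exp(\sum_j x_j \widetilde Y_j)(0)$ is indeed tautological by construction of $F_p$, and the rescaled limit does yield the corresponding identity for the $\widehat X_j$---but making the limit passage rigorous and checking that the resulting $\widehat X_j$ for $j>m$ are genuinely left-invariant for the group law (rather than merely homogeneous) is exactly the content of \cite[Remark 2.6]{MPV}, which is why the paper simply cites it.
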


The vector fields $\widehat X_1,\dots, \widehat X_m$ introduced in Theorem \ref{th:tangentspace} are known in the literature as the \emph{nilpotent approximation} of $X_1,\dots, X_m$ at the point $p$; we will say that the structure $(\R^n, \widehat X)$ is \emph{tangent} to $(\R^n,X)$ at $p$. We shall denote by $\widehat d$ the Carnot-Carath\'eodory distance associated with $\widehat X$ and by $\widehat B$ the corresponding balls; recall that $\widehat d(\delta_r x,\delta_r y)=r\widehat d(x,y)$ for any $r>0$ and $x,y\in\R^n$. When confusion may arise, we shall use the notation $\widehat B_p,\widehat d_p$ to specify the dependence on the point $p$. 

By the Carnot group structure there exists $\widehat C=\widehat C_p>0$ such that
\begin{equation}\label{eq:pallacarnot}
\mathscr L^n(\widehat B_p(x,r))=\widehat C r^Q\qquad \forall\ x\in \R^n,\ r>0.
\end{equation}
The constant $\widehat C$ depends on $p$; however, given a compact set $K\subset \R^n$, there exists $M>0$ such that $1/M\leq \widehat C_p\leq M$ for any $p\in K$. See Remark \ref{rmk:ccappuccio} below.

We will need later the following simple result.

\begin{proposition}\label{symmetry}
	Let $(\R^n,X)$ be an equiregular CC space, and let $r>0$. Then for every $p\in\R^n$ one has
	\[
	 x\in\widehat B_p(0,r)\Longleftrightarrow -x\in \widehat B_p(0,r).
	\]
\end{proposition}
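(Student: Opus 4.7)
The plan is to exploit the Carnot group structure on $(\R^n, \widehat X)$ provided by Theorem \ref{th:tangentspace}, reducing the statement to a combination of (a) left-invariance and symmetry of $\widehat d$, and (b) the fact that $-x$ is the group inverse of $x$ in the chosen coordinates.

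First, I would recall that in the Carnot group $(\R^n, \widehat X)$ the origin $0$ is the identity element, and the vector fields $\widehat X_1,\dots,\widehat X_m$ are left-invariant. Since $\widehat d$ is constructed from these left-invariant horizontal vector fields in the usual way, it is left-invariant: if $a\in\R^n$ and $\gamma$ is a $\widehat X$-subunit path, then $a\cdot\gamma$ is again $\widehat X$-subunit with the same controls. Together with the obvious symmetry of any CC distance (reverse a subunit curve in time, flipping the sign of the controls, which leaves the constraint $\sum h_j^2\le 1$ unchanged), this gives for any $x\in\R^n$, writing $x^{-1}$ for the group inverse,
\[
\widehat d(0,x)=\widehat d(x^{-1}\cdot 0,\,x^{-1}\cdot x)=\widehat d(x^{-1},0)=\widehat d(0,x^{-1}).
\]

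Second, I would check that in the present coordinates $x^{-1}=-x$. By part (iv) of Theorem \ref{th:tangentspace}, the coordinates of $x$ are exponential coordinates of the first kind: $x=\exp(V)(0)$ with $V=\sum_{j=1}^n x_j\widehat X_j$ a left-invariant vector field on the group. In any Lie group the exponential satisfies $\exp(V)^{-1}=\exp(-V)$, so
\[
x^{-1}=\exp(-V)(0)=(-x_1,\dots,-x_n)=-x.
\]
Combining the two steps yields $\widehat d(0,x)=\widehat d(0,-x)$, whence $x\in\widehat B_p(0,r)\iff -x\in\widehat B_p(0,r)$.

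There is essentially no hard step here: the only subtlety is making sure that the coordinates appearing in the statement really are exponential coordinates of the first kind on the Carnot group, which is exactly the content of Theorem \ref{th:tangentspace}(iv), and that the horizontal vector fields are left-invariant with respect to the corresponding group law. Once those two facts are in place, symmetry of $\widehat B_p(0,r)$ under $x\mapsto -x$ is immediate from left-invariance and time-reversal of subunit curves.
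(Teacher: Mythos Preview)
Your proof is correct and follows essentially the same approach as the paper: both arguments reduce the symmetry of $\widehat B_p(0,r)$ to the identification $x^{-1}=-x$ in exponential coordinates (via Theorem \ref{th:tangentspace}(iv)) combined with left-invariance of $\widehat d$. The only difference is cosmetic ordering---the paper establishes $-x=x^{-1}$ first and then applies left-invariance, whereas you do the reverse---and you spell out a bit more explicitly why the CC distance is symmetric and left-invariant.
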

\begin{proof}
By well-known properties of Carnot groups and Theorem  \ref{th:tangentspace} (iv)  we  have
\[
-x=\exp\left(-\sum_{j=1}^nx_j\widehat{X}_j\right)(0)=\left[\exp\left(\sum_{j=1}^nx_j\widehat X_j\right)(0)\right]^{-1}=x^{-1},
\]
which combined with the left invariance of $\widehat d$ with respect to the group operation implies
\[
\widehat d(0,-x)=\widehat{d}(0,x^{-1})=\widehat d(x\cdot 0,x\cdot x^{-1})=\widehat d(x,0).
\]
This concludes the proof.
\end{proof}

We  recall for future references the following well-known result, for which we refer e.g. to \cite{Bellaiche1,Mitchell}.
 
\begin{theorem} \label{th:localgroup}
	Let $(\R^n,X)$ be an equiregular CC space and let $p\in\R^n$ be fixed; then
	\begin{equation}\label{eq:localgroup}
		\lim_{r\to 0} \left(\sup\left\{\frac{|\widetilde d_p(x,y)-\widehat d_p(x,y)|}{r}: x,y\in \widetilde B_p(0,r)\right\}\right)=0.
	\end{equation}
In particular, for any $\varepsilon >0$, there exists $R>0$ such that 
\[
\widehat B_p(0,(1-\varepsilon)r)\subseteq\widetilde B_p\left(0,r\right)\subseteq \widehat B_p(0,(1+\varepsilon)r)\qquad\text{for any }r\in(0,R).
\]
\end{theorem}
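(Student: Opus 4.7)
The plan is to exploit the dilation structure and the $C^\infty_{loc}$-convergence of the rescaled vector fields to the nilpotent approximation given by Theorem \ref{th:tangentspace}(ii), then invoke the classical stability of Carnot-Carath\'eodory distances under such convergence. The strategy is to ``zoom in'' at $p$ by writing everything in the variables $\xi=\delta_{r^{-1}}x$, $\eta=\delta_{r^{-1}}y$, so that the ratio $(\widetilde d_p-\widehat d_p)/r$ becomes the difference of two intrinsic distances evaluated on a bounded set.

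Concretely, for each $r>0$ set $\widetilde X_j^r\coloneqq r(d\delta_{r^{-1}})[\widetilde X_j\circ\delta_r]$ and let $\widetilde d_r$ denote the CC distance on $\R^n$ associated to $\widetilde X^r=(\widetilde X_1^r,\dots,\widetilde X_m^r)$. A direct time reparametrization of subunit paths (if $\gamma$ is $\widetilde X^r$-subunit then $s\mapsto \delta_r(\gamma(s/r))$ is $\widetilde X$-subunit, and vice-versa) yields the identity
\[
\widetilde d_r(\xi,\eta)=\frac{1}{r}\widetilde d_p(\delta_r\xi,\delta_r\eta),
\]
and the 1-homogeneity of $\widehat X$ from Theorem \ref{th:tangentspace}(i) gives the analogous $\widehat d(\xi,\eta)=\frac{1}{r}\widehat d_p(\delta_r\xi,\delta_r\eta)$. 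Thus the quantity to be estimated rewrites as $|\widetilde d_r(\xi,\eta)-\widehat d(\xi,\eta)|$, and the condition $x,y\in\widetilde B_p(0,r)$ becomes $\widetilde d_r(0,\xi),\widetilde d_r(0,\eta)<1$. Applying Theorem \ref{normestimate} to the original structure $(\R^n,\widetilde X)$ near the origin, there exist $C_1,R_0>0$ such that $\widetilde d_r(0,\xi)<1$ and $r<R_0$ imply $\widetilde d_p(0,\delta_r\xi)<r<R_0$, hence $r\|\xi\|=\|\delta_r\xi\|\le C_1\widetilde d_p(0,\delta_r\xi)<C_1 r$, i.e.\ $\xi\in A(C_1)$, a fixed compact set independent of $r$.

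The heart of the argument is the convergence $\widetilde d_r\to\widehat d$ uniformly on compact sets of $\R^n\times\R^n$, which follows from $\widetilde X^r\to\widehat X$ in $C^\infty_{loc}$ (Theorem \ref{th:tangentspace}(ii)) by a classical stability result for CC distances under smooth convergence of H\"ormander frames (see e.g.\ \cite{Bellaiche1,Mitchell}). Upper semicontinuity is obtained by perturbing $\widehat X$-subunit paths into nearby $\widetilde X^r$-subunit paths with arbitrarily small time increase, exploiting the uniform $C^0$-closeness of the frames on compact sets; lower semicontinuity uses Ascoli-Arzel\`a to extract a $\widehat X$-subunit limit from a minimizing sequence of $\widetilde X^r$-subunit paths, which are equibounded and equi-Lipschitz. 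Applied on the compact set $A(C_1)\times A(C_1)$, this yields
\[
\sup\bigl\{|\widetilde d_r(\xi,\eta)-\widehat d(\xi,\eta)|:\xi,\eta\in A(C_1)\bigr\}\longrightarrow 0\quad\text{as }r\to 0,
\]
which, combined with the reduction above, gives \eqref{eq:localgroup}.

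For the inclusions, fix $\varepsilon>0$ and take $R>0$ small enough that the sup in \eqref{eq:localgroup} is below $\varepsilon$ for $r\in(0,R)$; the right inclusion $\widetilde B_p(0,r)\subseteq\widehat B_p(0,(1+\varepsilon)r)$ follows by taking $y=0$ in \eqref{eq:localgroup}. For the left inclusion, given $x\in\widehat B_p(0,(1-\varepsilon)r)$ one first observes via the Carnot group analog of Theorem \ref{normestimate} that $\xi=\delta_{r^{-1}}x\in\widehat B(0,1-\varepsilon)$ lies in a fixed compact set, and then applies the uniform convergence $\widetilde d_r\to\widehat d$ on that set to obtain $\widetilde d_r(0,\xi)<\widehat d(0,\xi)+\varepsilon<1$, i.e.\ $x\in\widetilde B_p(0,r)$. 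The main obstacle is the stability of CC distances under $C^\infty_{loc}$ convergence of the generating vector fields: this is standard but not trivial, particularly the lower semicontinuity which needs a careful compactness argument on subunit paths; everything else is bookkeeping with the dilations.
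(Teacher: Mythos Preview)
The paper does not actually prove this theorem: it is stated as a well-known result and the reader is referred to \cite{Bellaiche1,Mitchell}. Your argument---rescaling via $\delta_{r^{-1}}$ to reduce \eqref{eq:localgroup} to the locally uniform convergence $\widetilde d_r\to\widehat d$ implied by $\widetilde X^r\to\widehat X$ in $C^\infty_{loc}$---is precisely the standard one carried out in those references, and the paper itself later uses the same rescaled fields $\widetilde X^r$ and the identity $\widetilde d_r(\xi,\eta)=r^{-1}\widetilde d_p(\delta_r\xi,\delta_r\eta)$ (see the discussion around \eqref{eq:defcampiriscalati}).

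Your outline is correct; the only point worth tightening is the confinement of subunit paths in the stability step: since $\xi,\eta\in A(C_1)$ implies $\widetilde d_r(\xi,\eta)<2$, near-minimizing $\widetilde X^r$-subunit paths stay in $\widetilde B_r(0,3)$, which by another application of Theorem~\ref{normestimate} (pulled back through $\delta_{r^{-1}}$) is contained in a fixed $A(C')$ independent of $r$; this is what justifies the Ascoli--Arzel\`a compactness on a common compact set.
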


\begin{rmk}\label{rmk:ccappuccio}
Let $K\subset\R^n$ be a compact set; then there exists $M\geq 1$ such that the constant $\widehat C=\widehat C_p$ appearing in \eqref{eq:pallacarnot} satisfies
  \[
  \frac 1M\leq \widehat C_p\leq M\qquad\forall\ p\in K.
  \]
This follows because, by Theorem \ref{th:localgroup}, for any $p\in K$
  \[
  \begin{aligned}
  \widehat C_p&=\lim_{r\to 0}\frac{\mathscr L^n(\widehat B_p(0,r))}{r^Q}=\lim_{r\to 0}\frac{\mathscr L^n(\widetilde B_p(0,r))}{r^Q}=\lim_{r\to 0}\frac{\mathscr L^n(F_p^{-1}(B(p,r)))}{r^Q}\\&=\frac 1{|\det \nabla F_p(0)|}\lim_{r\to 0}\frac{\mathscr L^n(B(p,r))}{r^Q}
  \end{aligned}
  \]
and one can conclude by using Theorem \ref{ccproperties} (iii) and the smoothness of $F(p,x)$.  
\end{rmk}

\subsection{Hypersurfaces of class \texorpdfstring{$C_X^1$}{C1X}}\label{sec:C1X}
This section is devoted to the study of hypersurfaces with intrinsic $C^1$ regularity; we work in a fixed equiregular CC space $(\R^n,X)$. As customary, given an open set $\Omega\subset\R^n$ we denote by $C^1_X(\Omega)$ the space of continuous functions $f:\Omega\to\R$ such that the derivatives $X_1f,\dots,X_mf$ are represented, in the sense of distributions, by continuous functions.

\begin{definition}[Hypersurface of class $C_X^1$]
	We say that $S\subseteq \R^n$ is a {\em $C_X^1$ hypersurface} if for every $p\in S$ there exist $R>0$ and $f\in C_X^1(B(p,R))$ such that the following facts hold
	\begin{itemize}
		\item[(i)] $S\cap B(p,R)=\{q\in B(p,R): f(q)=0\};$
		\item[(ii)] $Xf\neq 0$ on $ B(p,R)$.
	\end{itemize}
In this case, for every $p$ in $S$ we define the {\em horizontal normal} $\nu_S(p)\in\mathbb S^{m-1}$ to $S$ at $p$ letting 
	\[
	\nu_S(p)\coloneqq\frac{Xf(p)}{|Xf(p)|}.
	\] 
\end{definition}

The horizontal normal is well-defined up to a sign and, in particular, it does not depend on the choice of $f$: this is a consequence, for instance, of Corollary \ref{th:hypersurface}, below.

We will also use the notion of intrinsic Lipschitz regularity for hypersurfaces introduced in \cite{Vittone2}.  In the next definition, the Lipschitz continuity of $f$ is understood with respect to the CC distance; recall that $f:\Omega\to\R$ is locally Lipschitz on an open set $\Omega\subset\R^n$ if and only if it is continuous and its distributional derivatives $X_1f,\dots,X_m f$ belong to $L^\infty_{loc}(\Omega)$; see \cite{FSSCBUMI,GNJAM}. 

\begin{definition}[$X$-Lipschitz hypersurface]
We say that $S\subseteq \R^n$ is an {\em $X$-Lipschitz hypersurface} if for every $p\in S$ there exist $R>0$ and a Lipschitz map $f: B(p,R)\rightarrow \R$ such that
  \begin{itemize}
  \item[(i)] $B(p,R)\cap S=\{q\in B(p,R): f(q)=0\}$;
  \item[(ii)] there exist $C>0$ and $1\leq j\leq m$ such that $X_jf\geq C$  $\mathscr L^n$-a.e. on $B(p,R)$.
  \end{itemize}
\end{definition}

Hypersurfaces with $X$-Lipschitz or $C^1_X$ regularity have locally finite $(Q-1)$-dimensional Hausdorff measure, see \cite{Vittone2}.

Given $\nu\in\R^m$ we define $\widetilde L_\nu: \R^n\rightarrow \R$ letting
\begin{equation}\label{def:elletilde}
\widetilde L_\nu (x)\coloneqq \sum_{i=1}^m \nu_i x_i.
\end{equation}
This notation will be extensively used throughout the paper. The following proposition shows that the maps $\widetilde L_\nu$ provide a sort of first-order ``linear'' approximation for $C^1_X$ functions.

\begin{proposition}\label{prop7.2}
	Let $p\in\R^n$, $R>0$ and $f\in C_X^1(B(p,R))$ be fixed; then
	\[
	\lim_{r\to 0}\left(\sup\left\{\frac{|f(F_p(x))-f(p)-\widetilde L_{Xf(p)}(x)|}{r}: x \in \widetilde B(0,r)\right\}\right)=0.
	\] 
\end{proposition}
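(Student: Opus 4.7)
My plan is to push the problem forward via $F_p$ and to estimate $g\coloneqq f\circ F_p$ by integrating along a short $\widetilde X$-subunit curve joining $0$ to $x$; the whole argument hinges on the identity $\widetilde X_j(0)=e_j$ for $j=1,\dots,m$, which is built into the adapted exponential coordinates.

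By the chain rule and $\widetilde X_j=dF_p^{-1}(X_j\circ F_p)$, one has $\widetilde X_j g=(X_jf)\circ F_p$, which is continuous in a neighborhood of $0$ with $\widetilde X_j g(0)=\nu_j$, where I set $\nu\coloneqq Xf(p)$. Since $Y_j=X_j$ and $dF_p(0)e_j=Y_j(p)$ for $j\leq m$, the smooth components satisfy $(\widetilde X_k)_j(0)=\delta_{kj}$ for every $j,k\in\{1,\dots,m\}$. Fix $\varepsilon>0$ and pick $\rho>0$ such that $|\widetilde X_j g(y)-\nu_j|<\varepsilon$ for every $j\le m$ whenever $|y|\leq\rho$. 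For $r$ small and $x\in\widetilde B(0,r)$, the geodesic assumption on $(\R^n,X)$ combined with the local-diffeomorphism property of $F_p$ provides an $\widetilde X$-subunit curve $\gamma:[0,T]\to\R^n$ from $0$ to $x$ with $T=\widetilde d(0,x)\leq r$, $\dot\gamma=\sum_{j=1}^m h_j\widetilde X_j(\gamma)$ a.e.\ and $\sum_j h_j^2\leq 1$. Boundedness of the $\widetilde X_j$ near $0$ forces $|\gamma(t)|\leq Ct\leq Cr$, so for $r$ small enough $\gamma$ stays inside $\{|y|\leq\rho\}$.

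Integrating along $\gamma$,
\[
g(x)-g(0)=\int_0^T\sum_{j=1}^m h_j(t)\,\widetilde X_j g(\gamma(t))\,dt=\sum_{j=1}^m\nu_j\int_0^T h_j(t)\,dt+E_1,
\]
with $|E_1|\leq m\varepsilon T\leq m\varepsilon r$. Reading the $j$-th component of $x=\int_0^T\dot\gamma\,dt$ for $j\leq m$ gives
\[
x_j=\int_0^T\sum_{k=1}^m h_k(t)(\widetilde X_k)_j(\gamma(t))\,dt=\int_0^T h_j(t)\,dt+E_2^{(j)},
\]
where $|E_2^{(j)}|\leq C'r^2$, using the Lipschitz bound $(\widetilde X_k)_j(y)=\delta_{kj}+O(|y|)$ near $0$ together with $|\gamma(t)|\leq Cr$. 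Combining, one obtains
\[
|g(x)-g(0)-\widetilde L_\nu(x)|=\Big|\sum_{j=1}^m\nu_j\Big(\int_0^T h_j(t)\,dt-x_j\Big)+E_1\Big|\leq m\varepsilon r+mC'|\nu|r^2
\]
uniformly in $x\in\widetilde B(0,r)$; dividing by $r$, letting $r\to 0$ and then $\varepsilon\to 0$ yields the claim.

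I expect the main difficulty to lie in the identification, modulo an $O(r^2)$ error, of $\int_0^T h_j(t)\,dt$ with the coordinate $x_j$ for $j\leq m$: this is precisely what the normalization $\widetilde X_j(0)=e_j$ of adapted exponential coordinates delivers. In arbitrary coordinates, the horizontal gradient $Xf(p)$ would not align in this clean way with the Euclidean-linear map $\widetilde L_\nu$, and a further change of variables would be required to compensate for the mismatch.
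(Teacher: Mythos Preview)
Your proof is correct and follows essentially the same route as the paper's: transfer to adapted coordinates so that $\widetilde X_j(0)=e_j$, integrate $g=f\circ F_p$ along a length-minimizing $\widetilde X$-subunit curve from $0$ to $x$, identify $\int_0^T h_j\,dt$ with $x_j$ up to an $o(r)$ error via $(\widetilde X_k)_j(y)=\delta_{kj}+O(|y|)$, and control the remaining term by continuity of $Xf$ at $p$. The only differences are cosmetic---you phrase the coordinate identification via a Lipschitz bound on the components of $\widetilde X_k$ and the gradient estimate via an explicit $\varepsilon$--$\rho$ choice, whereas the paper bundles both into the single observation $|\widetilde X_j(y)-e_j|\leq Cr$ on $\widetilde B(0,r)$---so the arguments are equivalent.
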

\begin{proof}
	It is not restrictive to assume that $f(p)=0$. Let $r\leq R$ and take $x\in \widetilde B(0,r)$. Set $d\coloneqq\widetilde d(x,0)$ and take a geodesic $\gamma\in \Lip([0,d];\R^n)$ such that $\gamma(0)=0$, $\gamma(d)=x$ and there exists $h:[0,d]\rightarrow \mathbb R^m$ such  that for $\mathscr L^1$-a.e. $t\in [0,d]$ we have 
	\[
	|h(t)|=1 \qquad\text{and}\qquad \dot{\gamma}(t)=\sum_{j=1}^mh_j(t)\widetilde X_j(\gamma(t)).
	\] 
	Notice that $\widetilde X_j(0)=e_j$, hence there exists $C>0$ such that $|\widetilde X_j(y)-e_j|\leq Cr$ for every $y\in \widetilde B(0,r)$ and  every $j=1,\dots, m$. 	Therefore, for every $k=1,\dots,m$
	\[
	\begin{aligned}
	  \left|x_k-\int_0^d h_k(t)dt\right| &= \left|\left(\int_0^d \dot\gamma(t)dt\right)_k-\int_0^d h_k(t)dt\right|\\
          &=\left|\sum_{j=1}^m\int_0^dh_j(t)\left(\widetilde X_j(\gamma(t))\right)_kdt-\sum_{j=1}^m\int_0^d h_j(t)\left(e_j\right)_kdt\right|\\
          &=\left|\sum_{j=1}^m\int_0^dh_j(t)\left(\widetilde X_j(\gamma(t))-e_j\right)_kdt\right|\leq mCrd\leq mCr^2.
	\end{aligned}
	\]
	Hence, if for every $x\in \widetilde B(0,r)$ we set $d\coloneqq\widetilde d(x,0)$ and we denote by $h$ a control associated with the geodesic $\gamma$ joining $0$ and $x$, we have
	\begin{equation}\label{eq:control}
		\lim_{r\to 0} \left(\sup\left\{\frac{1}{r}\left|x_k-\int_0^dh_k(t)dt\right|: x\in \widetilde B(0,r), k=1,\dots,m\right\}\right)=0.
	\end{equation}
	Notice also that for every $x\in \widetilde B(0,r)$
	\[
	\begin{split}
		f(F_p(x))&=f(F_p(x))-f(F_p(0))\\
		&=f(F_p(\gamma(d)))-f(F_p(\gamma(0)))=\int_0^d\sum_{j=1}^mX_jf(F_p(\gamma(t)))h_j(t)dt.
	\end{split} 
	\]
	Let $\varepsilon>0$ be fixed. By \eqref{eq:control} and the continuity of $Xf$  we can choose $r_0\in(0,R)$ such that
\[
\begin{array}{ll}
\sup\left\{\dfrac 1r\left|(x_1,\dots,x_m)-\displaystyle\int_0^dh(t)dt\right|: x\in \widetilde B(0,r)\right\}<\dfrac{\varepsilon}{2|Xf(p)|}\quad & \forall\ r\in(0,r_0)\vspace{.2cm}\\
|Xf(F_p(x))-Xf(p)|<\dfrac{\varepsilon}{2} & \forall\ x\in \widetilde B(0,r_0).
\end{array}
\]
For any $r\in(0,r_0)$ and $x\in \widetilde B(0,r)$ we have
	\[
	\begin{split}
		&|f(F_p(x))-\widetilde L_{Xf(p)}(x)|\\
		=\:&\left|\int_0^d\left\langle h(t),Xf(F_p(\gamma(t)))\right\rangle dt -\sum_{j=1}^m X_jf(p)x_j\right|\\
		\leq\:&\int_0^d|h(t)| |Xf(F_p(\gamma(t)))-Xf(p)|dt +|Xf(p)|\left|(x_1,\dots,x_m)-\int_0^dh(t)dt\right|\\
		<\:&d\frac{\varepsilon}{2} + |Xf(p)|\left|(x_1,\dots,x_m)-\int_0^dh(t)dt\right|.
	\end{split}
	\]
	 The result follows dividing both sides by $r$ and taking into account that $d\leq r$.
\end{proof}

An immediate consequence of Proposition \ref{prop7.2} is Corollary \ref{th:hypersurface}, where we start using the following very convenient notation: given $t\in\R$ and a function $f:I\to\R$ defined on some set $I$, we denote by $\{f>t\},\{f=t\}$, etc. the sets $\{x\in I:f(x)>t\},\{x\in I:f(x)=t\}$, etc. This notation will be extensively used in the paper.

\begin{corollary} \label{th:hypersurface}
  Let  $p\in \R^n$ and $f\in C_X^1(B(p,R))$ for some $R>0$; suppose that $f(p)=0$, $Xf\neq 0$ in $B(p,R)$ and consider the $C_X^1$ hypersurface  $S\coloneqq\left\{q\in B(p,R): f(q)=0\right\}$. Then, for every $\varepsilon>0$, there exists $r_0>0$ such that, for every $r\in(0,r_0)$
        \begin{equation}\label{eq:striscia}
	F^{-1}_p(S)\cap \widetilde B(0,r)\subseteq \{x\in \widetilde B(0,r): -\varepsilon r\leq \widetilde L_{Xf(p)}(x)\leq\varepsilon r\}.
	\end{equation}
        Moreover 
        \begin{equation}\label{eq:discordanza}
	\lim_{r\to 0}\frac{\mathscr L^n(\{x\in \widetilde B(0,r): f(F_p(x))\widetilde L_{Xf(p)}(x) <0\})}{r^Q}=0.
	\end{equation}
\end{corollary}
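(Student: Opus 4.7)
The plan is to derive both \eqref{eq:striscia} and \eqref{eq:discordanza} as consequences of Proposition \ref{prop7.2}, with \eqref{eq:discordanza} additionally requiring a rescaling argument via the anisotropic dilations $\delta_r$.

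For \eqref{eq:striscia}, I would argue as follows. Fix $\varepsilon>0$; by Proposition \ref{prop7.2}, together with the assumption $f(p)=0$, there exists $r_0>0$ such that for all $r\in(0,r_0)$ and $x\in\widetilde B(0,r)$
\[
|f(F_p(x))-\widetilde L_{Xf(p)}(x)|\leq \varepsilon r.
\]
If moreover $x\in F_p^{-1}(S)$, then $f(F_p(x))=0$, so $|\widetilde L_{Xf(p)}(x)|\leq\varepsilon r$, which is precisely the inclusion in \eqref{eq:striscia}.

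For \eqref{eq:discordanza}, set $\nu\coloneqq Xf(p)$ (a nonzero vector in $\R^m$). If $f(F_p(x))$ and $\widetilde L_\nu(x)$ have opposite signs, then
\[
|\widetilde L_\nu(x)|\leq |f(F_p(x))-\widetilde L_\nu(x)|,
\]
so Proposition \ref{prop7.2} yields, for any $\eta>0$, a radius $r_0(\eta)>0$ such that for every $r\in(0,r_0(\eta))$
\[
\{x\in\widetilde B(0,r):f(F_p(x))\widetilde L_\nu(x)<0\}\subset\{x\in\widetilde B(0,r):|\widetilde L_\nu(x)|\leq\eta r\}.
\]
Next I would use Theorem \ref{th:localgroup} to include $\widetilde B(0,r)\subset\widehat B_p(0,2r)$ for $r$ small, and then rescale via the dilation $\delta_{r^{-1}}$. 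Since the coordinates $x_1,\dots,x_m$ have degree $1$, the map $\widetilde L_\nu$ is $\delta$-homogeneous of degree $1$, i.e.\ $\widetilde L_\nu(\delta_r y)=r\widetilde L_\nu(y)$; moreover $\delta_{r^{-1}}(\widehat B_p(0,2r))=\widehat B_p(0,2)$ and $(\delta_{r^{-1}})_\#\mathscr L^n=r^{Q}\mathscr L^n$. Therefore
\[
\mathscr L^n\bigl(\{x\in\widehat B_p(0,2r):|\widetilde L_\nu(x)|\leq\eta r\}\bigr)=r^Q\,\mathscr L^n\bigl(\{y\in\widehat B_p(0,2):|\widetilde L_\nu(y)|\leq\eta\}\bigr).
\]
The quantity $g(\eta)\coloneqq \mathscr L^n(\{y\in\widehat B_p(0,2):|\widetilde L_\nu(y)|\leq\eta\})$ tends to $0$ as $\eta\to 0^+$ by dominated convergence, since $\{\widetilde L_\nu=0\}$ is a linear hyperplane in $\R^n$ (recall $\nu\neq 0$) and hence has $\mathscr L^n$-measure zero. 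Combining these estimates, for any prescribed $\delta>0$ one first chooses $\eta$ so small that $g(\eta)<\delta$, then $r$ so small that the containment holds; this produces a bound $\mathscr L^n(\{f(F_p(\cdot))\widetilde L_\nu<0\}\cap\widetilde B(0,r))\leq \delta r^Q$ for small $r$, which gives \eqref{eq:discordanza}.

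The only mildly delicate point is the rescaling: one must verify that the homogeneity degree of $\widetilde L_\nu$ matches the horizontal indices so that the $\delta_r$-scaling produces the clean factor $r^Q$ against the Lebesgue measure, and that the comparison $\widetilde d\simeq\widehat d$ near $0$ from Theorem \ref{th:localgroup} is sufficient to reduce the analysis from $\widetilde B(0,r)$ to the Carnot-group ball $\widehat B_p(0,2r)$. Neither is a real obstacle, but both must be explicitly invoked to close the argument.
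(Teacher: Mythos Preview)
Your proof is correct and follows essentially the same approach as the paper: both parts use Proposition \ref{prop7.2} to control $|f(F_p(x))-\widetilde L_{Xf(p)}(x)|$ by $\varepsilon r$, deduce the containment of the ``bad'' set in the strip $\{|\widetilde L_{Xf(p)}|\leq\varepsilon r\}\cap\widetilde B(0,r)$, and then bound the measure of this strip by $C\varepsilon r^Q$. The only minor difference is in this last volume estimate---the paper obtains it directly via Theorem \ref{normestimate} (the pseudo-norm comparison $\widetilde B(0,r)\subset A(Cr)$), whereas you use Theorem \ref{th:localgroup} and a dilation argument on the tangent Carnot ball; your argument for \eqref{eq:striscia} is in fact slightly more direct than the paper's.
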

\begin{proof}
	Fix $\varepsilon>0$ and apply Proposition \ref{prop7.2} to get $r_0>0$ such that for every $0<r<r_0$ and for every $x\in \widetilde B(0,r)$ we have $|f(F_p(x))-\widetilde L_{Xf(p)}(x)|\leq \varepsilon r$.
	Then, if we take $x\in \widetilde B(0,r)\cap \{\widetilde L_{Xf(p)}\geq 2\varepsilon r\}$, we also get
	\[
	f(F_p(x))\geq\varepsilon r.
	\]
        Reasoning in the same way with the set $\{\widetilde L_{Xf(p)}\leq -2 \varepsilon r\}$ we readily get \eqref{eq:striscia}. The previous argument shows that for any $\varepsilon >0$ there exists $r_0>0$ such that for any $r\in (0,r_0)$ we have
        \[
        \widetilde B(0,r)\cap\{(f\circ F_p)\widetilde L_{Xf(p)} \leq 0\}\subseteq  \widetilde B(0,r)\cap\{ -\varepsilon r\leq \widetilde L_{Xf(p)}\leq\varepsilon r\}.
        \]
        The proof of \eqref{eq:discordanza} follows by noticing that, by Theorem \ref{normestimate}
        \[
\mathscr L^n( \widetilde B(0,r)\cap\{ -\varepsilon r\leq \widetilde L_{Xf(p)}\leq\varepsilon r\})\leq C\varepsilon r^Q,
\]
for a suitable constant $C$ independent of $r$.
\end{proof}

We point out  for future references the following observation.

\begin{rmk}\label{rmk:germ}
  Let $(\R^n,X)$ be an equiregular CC space, $p\in \R^n$, $R>0$ and suppose that $f_1,f_2 \in C_X^1(B(p,R))$ are such that $f_1(p)=f_2(p)=0$ and $Xf_1(p)=Xf_2(p)$; then one has
  \[
  \lim_{r\to 0} \frac{1}{r^Q}\mathscr L^n( B(p,r)\cap\{ f_1f_2\leq 0\})=0.
  \]
Indeed, taking into account \eqref{eq:striscia} we observe that
  \[
  \lim_{r\to 0} \frac{1}{r^Q}\mathscr L^n(\{\xi \in B(p,r): f_1(\xi)f_2(\xi)= 0\})=0.
  \]
  On the other hand, since $\widetilde L_{Xf_1(p)}=\widetilde L_{Xf_2(p)}$ the set $B(p,r)\cap\{ f_1f_2<0\}$ is contained in 
  \[
  B(p,r)\cap\Big(  \{f_1f_2<0 \text{ and } \widetilde L_{Xf_1(p)}\circ F_p^{-1}>0\}\cup \{f_1f_2<0 \text{ and } \widetilde L_{Xf_1(p)}\circ F_p^{-1}\leq 0\}\Big),
  \]
  that combined with \eqref{eq:discordanza} completes the proof.
\end{rmk}

We can now introduce the notion of intrinsic rectifiability in equiregular CC spaces. We denote by $\mathscr H^k$ and $\mathscr S^k$, respectively, the  $k$-dimensional Hausdorff and spherical Hausdorff measures in $(\R^n,d)$, see e.g. Definition \ref{def:Hausmeas}.

\begin{definition}[$X$-rectifiability]
	Let $(\R^n,X)$ be an equiregular CC space of homogeneous dimension $Q\in \mathbb N$ and let $R\subseteq \R^n$. We say that $R$ is {\em countably $X$-rectifiable} (respectively,  {\em countably $X$-Lipschitz rectifiable}) if there exists a family $\{S_h:h\in \mathbb N\}$ of $C_X^1$ hypersurfaces (resp., $X$-Lipschitz hypersurfaces) such that
	\begin{equation}\label{eq:defrett}
	\mathscr H^{Q-1}\left(R\setminus \bigcup_{h=0}^\infty S_h\right)=0.
	\end{equation}
	Moreover we say that $R$ is  {\em $X$-rectifiable} (resp.,  {\em $X$-Lipschitz rectifiable}) if $R$ is countably $X$-rectifiable (resp., countably $X$-Lipschitz rectifiable) and $\mathscr H^{Q-1}(R)<+\infty$.
\end{definition}

\begin{definition}[Horizontal normal]\label{def:normalerettif}
Let $R\subset \R^n$ be countably $X$-rectifiable and let $(S_h)$ be  $C_X^1$ hypersurfaces such that \eqref{eq:defrett} holds. Then the {\em horizontal normal} $\nu_R:R\to \mathbb S^{m-1}$ to $R$ is defined by
\[
\nu_R(p)\coloneqq\nu_{S_h}(p)\qquad\text{if }p\in R\cap S_h\setminus\bigcup_{k<h}S_k\,.
\]
\end{definition}

The horizontal normal $\nu_R$ is well-defined, up to a sign, $\mathscr H^{Q-1}$-almost everywhere on $R$: this is a standard consequence of the following result. 

\begin{proposition}
	Let $(\R^n,X)$ be an equiregular CC space and let $S_1,S_2\subseteq\R^n$ be two hypersurfaces of class $C_X^1$. Then the set
	\[
	E\coloneqq \left\{p\in S_1\cap S_2: \nu_{S_1}(p)\notin\{ \pm \nu_{S_2}(p)\} \right\}
	\]
is $\mathscr H^{Q-1}$-negligible.
\end{proposition}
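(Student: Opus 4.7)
I plan to show that the upper $(Q-1)$-dimensional density $\Theta^{*,Q-1}(\mu,p)=0$ at every $p\in E$, where $\mu\coloneqq\mathscr H^{Q-1}\res(S_1\cap S_2)$. Since $\mathscr H^{Q-1}\res S_i$ is locally finite (a standard property of $C^1_X$ hypersurfaces, see \cite{Vittone2}), so is $\mu$, and the conclusion $\mathscr H^{Q-1}(E)=0$ then follows from a classical density/differentiation theorem for Radon measures (see Appendix \ref{app:A}).

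\textbf{Reduction to a thin cross-strip.} Fix $p\in E$ and local defining functions $f_1,f_2\in C^1_X$ for $S_1,S_2$ near $p$ with $f_i(p)=0$; set $\eta_i\coloneqq Xf_i(p)$. The assumption $\nu_{S_1}(p)\notin\{\pm\nu_{S_2}(p)\}$ means $\eta_1,\eta_2\in\R^m$ are linearly independent. Applying Proposition \ref{prop7.2} to each $f_i$, for every $\varepsilon>0$ there is $r_0>0$ such that for all $r\in(0,r_0)$
\[
F_p^{-1}(S_1\cap S_2\cap B(p,r)) \subseteq \widetilde B(0,r) \cap \{|\widetilde L_{\eta_1}|\leq\varepsilon r\} \cap \{|\widetilde L_{\eta_2}|\leq\varepsilon r\}.
\]
The functions $\widetilde L_{\eta_i}$ involve only the horizontal coordinates and are hence $\delta$-homogeneous of degree $1$.

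\textbf{Blow-up and conclusion.} By Ambrosio's area formula (Theorem \ref{teo:ambrosio}), $\mathscr H^{Q-1}\res S_1$ is comparable, in a neighborhood of $p$, to the $X$-perimeter measure $|D_X\chi_{\{f_1>0\}}|$; the latter admits, at a $C^1_X$-regular boundary point such as $p$, a blow-up to a constant multiple of $\mathscr H^{Q-1}\res H_1$, where $H_1\coloneqq\{\widetilde L_{\eta_1}=0\}$ is the tangent hyperplane in the Carnot group tangent at $p$ (Theorem \ref{th:tangentspace}). Combining this weak convergence with the strip inclusion above (using $\delta$-homogeneity of the $\widetilde L_{\eta_i}$ and Theorem \ref{th:localgroup} to identify $\delta_{1/r}\widetilde B(0,r)$ with $\widehat B(0,1)$ in the limit), plus upper semicontinuity of weakly convergent Radon measures on closed sets, yields
\[
\limsup_{r\to 0}\frac{\mu(B(p,r))}{r^{Q-1}} \;\leq\; C\,\mathscr H^{Q-1}\big(H_1 \cap \{|\widetilde L_{\eta_2}|\leq\varepsilon\} \cap \widehat B(0,1)\big).
\]
Since $\eta_1,\eta_2$ are linearly independent in $\R^m$, the Euclidean subspace $H_1\cap\{\widetilde L_{\eta_2}=0\}$ is of the form $V\times\R^{n-m}$ with $\dim V=m-2$; its homogeneous dimension in the tangent Carnot group equals $(m-2)+(Q-m)=Q-2$, so it is $\mathscr H^{Q-1}$-negligible. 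Letting $\varepsilon\to 0$ gives $\Theta^{*,Q-1}(\mu,p)=0$, as required.

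\textbf{Main obstacle.} The delicate point is the rigorous blow-up of the $X$-perimeter of $\{f_1>0\}$ at $p$: one must identify its tangent measure with a constant multiple of $\mathscr H^{Q-1}\res H_1$ with sufficient convergence to pass to the limit on the closed strip $\{|\widetilde L_{\eta_2}|\leq\varepsilon\}\cap\widehat B(0,1)$. In Carnot groups this is a standard consequence of the blow-up machinery for sets of finite $X$-perimeter; in a general equiregular CC space it must be reduced to the Carnot setting via Theorem \ref{th:tangentspace}, through the convergence of the rescaled horizontal vector fields $\widetilde X_j$ to the nilpotent approximation $\widehat X_j$. A fully elementary alternative, avoiding the blow-up, would cover $F_p^{-1}(S_1\cap S_2\cap B(p,r))$ in adapted coordinates by anisotropic boxes of pseudo-norm size $\varepsilon r$ (whose CC-diameters are controlled via Theorem \ref{normestimate}) and use Ambrosio's density bound to count those intersecting $S_1$; the codimension-$2$ constraint on the horizontal part produces the factor of $\varepsilon$ needed to conclude.
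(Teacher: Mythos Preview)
Your primary approach differs from the paper's and, as you yourself anticipate, the delicate blow-up step is where it fails to close. You invoke weak-$\ast$ convergence of the rescaled perimeter measures $r^{1-Q}|D_{\widetilde X}\chi_{\{f_1\circ F_p>0\}}|(\widetilde B(0,r\cdot))$ to $|D_{\widehat X}\chi_{\{\widetilde L_{\eta_1}>0\}}|$ in order to use upper semicontinuity on the closed strip. But the paper's toolkit does not give this: Proposition~\ref{prop:semicontstrutturavariabile} yields only \emph{lower} semicontinuity of total variations under varying vector fields, and Remark~\ref{rem:convergenzadebole} yields weak-$\ast$ convergence of the \emph{vector} measures $D_{\widetilde X^r}\chi_{E_r}$, not of their total variations. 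To pass from these to weak-$\ast$ convergence of $|D_{\widetilde X^r}\chi_{E_r}|$ one needs a matching upper bound (no loss of mass), which for $C^1_X$ level sets in a general equiregular CC space is exactly the kind of blow-up statement the paper does \emph{not} assume or prove. So the displayed $\limsup$ inequality is currently unjustified.

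The paper avoids measure blow-up altogether and works directly with Hausdorff content via a covering-and-recovering argument. After localizing and stratifying into sets $E_\delta=\{|\langle\nu_{S_1},\nu_{S_2}\rangle|\leq 1-\delta\}$ and $E_{\delta,R}$ (where the strip inclusion of Corollary~\ref{th:hypersurface} and the ball comparisons of Theorems~\ref{normestimate}, \ref{th:localgroup} hold uniformly), one covers $E_{\delta,R}$ by balls $B(p_h,2r_h)$ and observes that in adapted coordinates the intersection sits in a cross-strip $A_h=\{\|x\|\leq Cr_h,\ |\widetilde L_{\nu_{S_i}(p_h)}(x)|\leq\varepsilon r_h,\ i=1,2\}$ with $\mathscr L^n(A_h)\leq C_\delta\varepsilon^2 r_h^Q$. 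A $5r$-packing in the tangent group then shows $A_h$ is covered by at most $C_\delta\varepsilon^{2-Q}$ balls of radius $2\varepsilon r_h$, whence $\mathscr S^{Q-1}_\eta(E_{\delta,R})\leq C_\delta\varepsilon\big(\mathscr S^{Q-1}_\eta(E_{\delta,R})+1\big)$; letting $\eta\to0$, $R\to0$, $\varepsilon\to0$ finishes. Note that this is a \emph{global} self-improvement on the Hausdorff pre-measure, not a pointwise density computation: the covering of $A_h$ only bounds $\mathscr H^{Q-1}_{4\varepsilon r_h}$, not $\mathscr H^{Q-1}$, so it does not directly give $\Theta^{*,Q-1}(\mu,p)=0$. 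Your suggested ``elementary alternative'' is morally the same box-counting, but to turn it into a proof you would still need this global iteration rather than a single-point density estimate.
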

\begin{proof}
  By a localization argument we can suppose without loss of generality that $S_1$ is bounded in $\R^n$ and that $\mathscr H^{Q-1}(E)\leq \mathscr H^{Q-1}(S_1)<+\infty$. For every $\delta>0$ define 
	\[
	E_{\delta}\coloneqq\{p\in E: |\langle\nu_{S_1}(p),\nu_{S_2}(p)\rangle|\leq 1-\delta\}.
	\]
	Then we have $ E=\bigcup\{E_{\delta}:\delta \in (0,+\infty)\cap\mathbb Q\}$.
	
	Fix $\varepsilon\in (0,1/4)$ and define for every $R>0$ the set $E_{\delta,R}$ of all the points $p$ of $E_\delta$ such that the following three properties hold for every $r\leq 2R$
	\begin{itemize}
		\item[(a)] if $C>0$ is the constant appearing in Theorem \ref{normestimate}, for every $x\in A(Cr)$ we have $\widehat B_p(x,\varepsilon r)\subseteq \widetilde B_p(x,2\varepsilon r)$;
		\item[(b)] for $i=1,2$ we have $F_p^{-1}(S_i\cap B(p,2r))\subseteq\{|\widetilde L_{\nu_{S_i}(p)}|<\varepsilon r\}$;
		\item[(c)] $\diam B(p,r)=\diam \widetilde B_p(0,r)\geq r$.
	\end{itemize}
 By Theorems \ref{th:localgroup} and \ref{th:hypersurface} and the fact\footnote{This is an easy consequence of the fact that the curve $t\mapsto \exp(t\widehat X_1)$ is globally length minimizing. This fact is well-known to experts, even though the only reference we are aware of is \cite[Proposizione 7.4]{LaureaVittone}.} that $\diam \widehat B_p(0,r)=2r$  we deduce that $E_{\delta,R}\nearrow E_\delta$ as $R\to 0$.
 
	Fix now $\eta\in(0,\frac R2)$. Then there exist a sequence $(q_h)$ in $\R^n$ and a sequence $(r_h)$ in $(0,\eta)$ such that
	\[
	E_{\delta,R}\subseteq \bigcup_{h=0}^\infty B(q_h,r_h) \quad\text{ and }
	\]
	\[
	\sum_{h=0}^\infty (r_h)^{Q-1}\leq\sum_{h=0}^\infty\left(\diam B(q_h,r_h)\right)^{Q-1}\leq\mathscr S^{Q-1}_\eta(E_{\delta,R})+1.
	\]
	We can suppose without loss of generality that for every $h\in \mathbb N$ there exists $p_h\in B(q_h,r_h)\cap E_{\delta,R}$. Therefore for every $h\in \mathbb N$ one has $B(q_h,r_h)\subseteq B(p_h,2r_h)$ and consequently
	\[
	E_{\delta,R}\subseteq\bigcup_{h=0}^\infty B(p_h,2r_h).
	\]
	Taking into account Theorem \ref{normestimate}, we can find $C>0$ such that for every $h\in \mathbb N$ one has 
	\[
	F_{p_h}^{-1}(E_{\delta,R}\cap  B(p_h,2r_h))\subseteq A_h
	\]
where
\[
A_h\coloneqq\left\{x\in\R^n:\|x\|\leq C r_h \text{ and } |\widetilde L_{\nu_{S_i}(p_h)}(x) |\leq\varepsilon r_h, \text{ for } i=1,2 \right\}.
	\] 
	We prove now that $\mathscr L^n(A_h)\leq C_\delta \varepsilon^2 r_h^Q$ for some $C_\delta>0$ depending on $\delta$. In fact, since $|\langle\nu_{S_1}(p_h),\nu_{S_2}(p_h)\rangle|\leq 1-\delta$, we have (up to an orthogonal change of  coordinates)\
	\[
	\left\{x\in \R^n: |\widetilde L_{\nu_{S_i}(p_h)}(x)|<\varepsilon r_h \text{ for } i=1,2 \right\}\subseteq Q^2(0,C_{\delta}\varepsilon r_h)\times\R^{n-2},
	\]
where the notation $Q^2(z,s)$ denotes a $2$-dimensional cube of center $z$ and size $s$. Hence 
	\[
	A_h\subseteq \left(Q^2(0,C_\delta\varepsilon r_h)\cap \left\{x\in \R^m:\sum_{j=1}^m|x_j|\leq Cr_h\right\} \right) \times \left\{ x\in \R^{n-m}:\sum_{j=m+1}^n|x_j|^{\frac{1}{d_j}}\leq Cr_h \right\}
	\]
and consequently $\mathscr L^n(A_h)\leq C_\delta \varepsilon^2 r_h^Q$. 
        
        For every $h\in \mathbb N$, combining Theorem \ref{5rcovering} and the fact that $A_h$ is compact, we can find $N_h\in \mathbb N$ and a family $\{x_{h,j}:j=1,\dots,N_h\}$ of points of $A_h$ such that $\{\widehat B_{p_h}(x_{h,j},\varepsilon r_h):j=1,\dots,N_h\}$ covers $A_h$ and $\{\widehat B_{p_h}(x_{h,j},\varepsilon \tfrac{r_h}{5}):j=1,\dots,N_h\}$ is pairwise disjoint.
 Reasoning as above, it is easy to see that 
	\[
	\mathscr L^n\left(\left\{x\in \R^n:\widehat d_{p_h}(x,A_h)<\frac{\varepsilon r_h}{5}\right\}\right)\leq \widetilde C_\delta\varepsilon^2 r_h^Q.
	\]
	Therefore we can estimate
	\[
	N_h\leq \frac{\mathscr L^n\left(\left\{x\in \R^n:\widehat d_{p_h}(x,A_h)<\frac{\varepsilon r_h}{5}\right\}\right)}{\mathscr L^n\left(\widehat B_{p_h}(x_{h,j},\frac{\varepsilon r_h}{5})\right)}\leq \widehat C_\delta \varepsilon^{2-Q}
	\]
for some $\widehat C_\delta>0$ that, by Remark \ref{rmk:ccappuccio}, depends only on $\delta$. By property (a) we have also $\widehat B_{p_h}(x_{h,j},\varepsilon r_h)\subseteq \widetilde B_{p_h}(x_{h,j},2\varepsilon r_h)$,  hence the family $\{\widetilde B_{p_h}(x_{h,j},2\varepsilon r_h): j=1,\dots, N_h\}$ is a covering of $A_h$, that is also a covering of $F_{p_h}^{-1}(E_{\delta,R}\cap B(p_h,r_h))$. Hence the family $\{B(F^{-1}_{p_h}(x_{h,j}),2\varepsilon r_h):j\in \mathbb N\}$ is a covering of $E_{\delta , R}\cap B(p_h,2r_h)$
	In particular, since $\varepsilon \in (0,1/4)$ we have 
	\[
	\begin{aligned}
	  \mathscr S^{Q-1}_{\eta}(E_{\delta, R})&\leq \mathscr S^{Q-1}_{4\varepsilon \eta} (E_{\delta, R})\leq \sum_{h=0}^\infty\mathscr S^{Q-1}_{4\varepsilon \eta}(E_{\delta, R}\cap B(p_h, 2r_h))\\
          & \leq\sum_{h=0}^\infty \sum_{j=1}^{N_h} \left(\diam B(F_{p_h}^{-1}(x_{h,j}), 2\varepsilon r_h)\right)^{Q-1}\leq \sum_{h=0}^\infty N_h(4\varepsilon r_h)^{Q-1}\\
          &\leq \sum_{h=0}^\infty \widehat C_\delta\varepsilon r_h^{Q-1}\leq \widehat C_\delta\varepsilon (\mathscr S_\eta ^{Q-1}(E_{\delta, R})+1).
	\end{aligned}
	\]
	Letting $\eta \to 0$ we get $\mathscr S^{Q-1}(E_{\delta, R})\leq \widehat C_\delta\varepsilon(\mathscr S^{Q-1}(E_{\delta, R})+1)$, which gives, letting $R\to 0$
        \[
        \mathscr S^{Q-1}(E_\delta)\leq \widehat C_\delta\varepsilon(\mathscr S^{Q-1}(E_\delta)+1).
        \]
        Letting now $\varepsilon \to 0$ we get, for any $\delta >0$, that $\mathscr S^{Q-1}(E_\delta) =0$ , i.e., $\mathscr S^{Q-1}(E)=0$. This concludes the proof.
\end{proof}

\subsection{Approximate notions of continuity, \texorpdfstring{$X$}{X}-jumps and  \texorpdfstring{$X$}{X}-dif\-fe\-ren\-tia\-bi\-li\-ty}\label{sec:nozioniapprossimate}
In this section we introduce the notions of approximate continuity, approximate $X$-jumps and approximate $X$-differentiability; we keep on working in a fixed equiregular CC space $(\R^n,X)$. We use the notation 
\[
\fint_A u\,d\mathscr L^n\coloneqq\frac{1}{\mathscr L^n(A)}\int_{A} u\,d\mathscr L^n
\]
and, in what follows, we denote by $\Omega$  an open subset of $\R^n$.

\begin{definition}[Approximate Limit]\label{def:limiteapprossimato}
	Let  $u\in L^1_{loc}(\Omega;\R^k)$, $z\in \R^k$ and $p\in \Omega$. We say that $z\in \R^k$ is the {\em approximate limit} of $u$ at $p$ if
\[
\lim_{r\to 0}\fint_{B(p,r)}|u-z|d\mathscr L^n=0.
\]
We  denote by $u^\star(p)$ the approximate limit of $u$ at $p$ and by $\mathcal S_u$ the set of points in $\Omega$ where $u$ does not admit an approximate limit.
\end{definition}

If the approximate limit of $u$ at a point $p$ exists, it is also unique. By the generalized Lebesgue's differentiation theorem (see e.g. \cite[Section 2.7]{Heinonen}), we have $\mathscr L^n(\mathcal S_u)=0$ and $u^\star=u$ a.e. on $\Omega$. Moreover it can be easily proved (adapting e.g. \cite[Proposition 3.64]{AFP}) that $\mathcal S_u$ is a Borel set and that $ u^\star:\Omega\setminus\mathcal S_{u}\rightarrow \R^k$ is a Borel map.

\begin{rmk}\label{rem:applimIFFconvL1}
Let $\Omega,u,z$ and $p$ be as in Definition \ref{def:limiteapprossimato}. Then $u$ has approximate limit $z$ at $p$ if and only if, working in adapted exponential coordinates $F_p$ around $p$, as $r\to 0$ the functions $u\circ F_p\circ \delta_r$ converge in $L^1_{loc}(\R^n;\R^k)$ to the constant function $z$. This is an easy exercise left to the reader; alternatively, it is enough to follow the proof of Proposition \ref{jumpequiv} below with $a=b=z$. 
\end{rmk}

\begin{definition}[Essential boundary]\label{def:essentialboundary}
  Given a measurable set $E\subseteq \R^n$ and $t\in[0,1]$ we denote by $E^t$ the set of points with density $t$ for $E$, i.e., the set of all $p\in \R^n$ satisfying
  \[
\lim_{r\to 0}\frac{\mathscr L^n(E\cap B(p,r))}{\mathscr L^n(B(p,r))}=t.
\]
The {\em essential boundary} of $E$ is $\partial^*E\coloneqq\R^n\setminus(E^0\cup E^1)$.
  \end{definition}
  
The following proposition is standard; for the reader's convenience we prove it later in Proposition \ref{levelsetsbis}.

\begin{proposition}\label{levelsets}
Let $u\in L^1_{loc}(\Omega)$, $p\in \Omega\setminus \mathcal S_u$ and $t\neq u^\star(p)$; then $p\notin \partial^*\{u>t\}$.  
\end{proposition}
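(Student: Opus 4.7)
The plan is to split into the two cases $t>u^\star(p)$ and $t<u^\star(p)$, and in each case bound the density of $\{u>t\}$ (respectively its complement) at $p$ by a simple Chebyshev-type estimate against the defining integral for the approximate limit.

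Suppose first that $t>u^\star(p)$ and set $\varepsilon\coloneqq t-u^\star(p)>0$. At every point $q\in\{u>t\}$ one has $|u(q)-u^\star(p)|>\varepsilon$, so for every $r>0$ small enough that $B(p,r)\subset\Omega$ we can write
\[
\varepsilon\,\mathscr L^n(\{u>t\}\cap B(p,r))\leq\int_{\{u>t\}\cap B(p,r)}|u-u^\star(p)|\,d\mathscr L^n\leq \int_{B(p,r)}|u-u^\star(p)|\,d\mathscr L^n.
\]
Dividing both sides by $\mathscr L^n(B(p,r))$ and passing to the limit as $r\to 0$, the right-hand side tends to $0$ by definition of the approximate limit, so the density of $\{u>t\}$ at $p$ equals $0$; in particular $p\in\{u>t\}^0\subset\R^n\setminus\partial^\ast\{u>t\}$.

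Symmetrically, if $t<u^\star(p)$, set $\varepsilon\coloneqq u^\star(p)-t>0$. For every $q\in\{u\leq t\}$ we have $|u(q)-u^\star(p)|\geq\varepsilon$, whence
\[
\varepsilon\,\mathscr L^n(\{u\leq t\}\cap B(p,r))\leq\int_{B(p,r)}|u-u^\star(p)|\,d\mathscr L^n,
\]
which by the same argument shows that the density of $\{u\leq t\}$ at $p$ is $0$, i.e. the density of $\{u>t\}$ at $p$ is $1$. Hence $p\in\{u>t\}^1$ and again $p\notin\partial^\ast\{u>t\}$.

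There is no real obstacle here: the only thing to keep in mind is that Theorem \ref{ccproperties}(iii) guarantees $\mathscr L^n(B(p,r))>0$ for small $r$, so the averages are well-defined and one can legitimately replace the absolute average in Definition \ref{def:limiteapprossimato} by the one-sided estimates above without any additional measurability concern (the set $\{u>t\}$ is measurable since $u$ is).
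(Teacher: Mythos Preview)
Your proof is correct and follows essentially the same Chebyshev-type argument as the paper's proof (Proposition~\ref{levelsetsbis} in Appendix~\ref{app:A}). Your treatment of the case $t<u^\star(p)$ is in fact slightly cleaner: by working directly with $\{u\leq t\}$ and the non-strict inequality $|u(q)-u^\star(p)|\geq\varepsilon$, you avoid the auxiliary parameter $\eta$ that the paper introduces when routing through the strictly-defined sets $E_\varepsilon$.
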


We now introduce the notion of $X$-jump points; this requires a certain  amount of work, one of the reasons being that there is no canonical way of separating a CC ball $B(p,r)$ into complementary ``half-balls'' $B^+_\nu(p,r),B^-_\nu(p,r)$. We will use as separating sets an arbitrary hypersurface $S$ of class $C^1_X$ such that $\nu_S(p)=\nu$, and one of the issues (Remark \ref{jumpwellposed} below) is proving well-posedness of our definition independently of the choice of $S$.

For any fixed $p\in \R^n$, $\nu\in \mathbb S^{m-1}$ and $r>0$ we introduce the notation $B^+_\nu(p,r)$ and $B^-_\nu(p,r)$ as follows. Given $R>0$ and $f\in C_X^1(B(p,R))$ such that\footnote{One can consider for instance $f=\widetilde L_\nu\circ F_p$.} $f(p)=0$ and $Xf (p)/|Xf(p)|=\nu$, we set for $r\in(0,R)$
\begin{equation}\label{eq:Bpmnu}
  B^+_\nu(p,r)\coloneqq   B(p,r)\cap\{ f>0\}
\qquad\text{and}\qquad
  B^-_\nu(p,r)\coloneqq   B(p,r)\cap\{ f<0\}.
\end{equation}
These objects are well-defined only if $r$ is small enough. Moreover, there is a clear abuse of notation, since $B_\nu^{\pm}(p,r)$ depend on the choice of $f$. However, this will not effect the validity of our results.

Before introducing the notion of approximate $X$-jumps we state some properties of the ``half-balls'' $B_\nu^{\pm}(p,r)$. Proposition \ref{goodsplit} is used in the proof of Theorem \ref{teo:convergenzaalrapprpreciso}.

\begin{proposition}\label{goodsplit}
	Let $(\R^n,X)$ be an equiregular $CC$ space and let $\Omega\subseteq\R^n$ be an open set. Then, for any $p\in\Omega$ and $\nu \in \mathbb S^{m-1}$. 
	\[
	\lim_{r\to 0}\frac{\mathscr L^n\left(B_\nu^+(p,r)\right)}{\mathscr L^n\left(B(p,r)\right)}=\lim_{r\to 0}\frac{\mathscr L^n\left(B_\nu^-(p,r)\right)}{\mathscr L^n\left(B(p,r)\right)}=\frac{1}{2}
	\]
\end{proposition}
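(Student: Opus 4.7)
The plan is to push the question into the exponential chart $F_p$, where it becomes a statement about the pseudo-ball $\widetilde B_p(0,r)$ intersected with a half-space, and then to use the nilpotent approximation together with the central symmetry of $\widehat B_p(0,r)$ to get the factor $1/2$.

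First, write $B(p,r)=F_p(\widetilde B(0,r))$ and recall that by the change-of-variables formula
\[
\mathscr L^n(B(p,r))=\int_{\widetilde B(0,r)}|\det\nabla F_p|\,d\mathscr L^n,\qquad \mathscr L^n(B_\nu^\pm(p,r))=\int_{\widetilde B(0,r)\cap\{\pm(f\circ F_p)>0\}}|\det\nabla F_p|\,d\mathscr L^n,
\]
where $f\in C^1_X(B(p,R))$ is the function defining the half-balls, so that $Xf(p)/|Xf(p)|=\nu$. Since $|\det\nabla F_p|$ is continuous and $|\det\nabla F_p(0)|>0$, this factor tends to $|\det\nabla F_p(0)|$ uniformly on $\widetilde B(0,r)$ as $r\to 0$, so it cancels in the ratios we have to compute. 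Hence it is enough to prove
\[
\lim_{r\to 0}\frac{\mathscr L^n(\widetilde B_p(0,r)\cap\{f\circ F_p>0\})}{\mathscr L^n(\widetilde B_p(0,r))}=\frac12,
\]
and the analogous statement with $<0$.

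Next, by Corollary \ref{th:hypersurface} (specifically \eqref{eq:discordanza}) and the positivity of $|Xf(p)|$, the symmetric difference of $\{f\circ F_p>0\}$ and $\{\widetilde L_\nu>0\}$ inside $\widetilde B(0,r)$ has $\mathscr L^n$-measure $o(r^Q)$. Since Theorem \ref{ccproperties}(iii) gives $\mathscr L^n(\widetilde B_p(0,r))\asymp r^Q$ (equivalently, via Theorem \ref{th:localgroup} and \eqref{eq:pallacarnot}), we can replace $\{f\circ F_p>0\}$ by the half-space $\{\widetilde L_\nu>0\}$ in the limit. Thus it suffices to show
\[
\lim_{r\to 0}\frac{\mathscr L^n(\widetilde B_p(0,r)\cap\{\widetilde L_\nu>0\})}{\mathscr L^n(\widetilde B_p(0,r))}=\frac12.
\]

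Now compare $\widetilde B_p(0,r)$ with the homogeneous ball $\widehat B_p(0,r)$ of the nilpotent approximation. By Theorem \ref{th:localgroup}, for every $\varepsilon\in(0,1)$ and all sufficiently small $r$,
\[
\widehat B_p(0,(1-\varepsilon)r)\subseteq\widetilde B_p(0,r)\subseteq\widehat B_p(0,(1+\varepsilon)r),
\]
and by \eqref{eq:pallacarnot} each $\widehat B_p(0,\rho)$ has $\mathscr L^n$-measure $\widehat C_p\rho^Q$. Crucially, by Proposition \ref{symmetry} the set $\widehat B_p(0,\rho)$ is symmetric under $x\mapsto -x$, and $\widetilde L_\nu$ is odd; since $\{\widetilde L_\nu=0\}$ is an affine hyperplane and hence $\mathscr L^n$-negligible, we obtain
\[
\mathscr L^n(\widehat B_p(0,\rho)\cap\{\widetilde L_\nu>0\})=\mathscr L^n(\widehat B_p(0,\rho)\cap\{\widetilde L_\nu<0\})=\tfrac12\,\widehat C_p\rho^Q.
\]
Inserting this into the sandwich above gives
\[
\frac{(1-\varepsilon)^Q}{2(1+\varepsilon)^Q}\;\leq\;\frac{\mathscr L^n(\widetilde B_p(0,r)\cap\{\widetilde L_\nu>0\})}{\mathscr L^n(\widetilde B_p(0,r))}\;\leq\;\frac{(1+\varepsilon)^Q}{2(1-\varepsilon)^Q}
\]
for all small $r$. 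Letting $r\to 0$ and then $\varepsilon\to 0$ yields the limit $1/2$, and the identical argument with $\{\widetilde L_\nu<0\}$ gives the other equality. The only mildly delicate point is the transition from $\{f\circ F_p>0\}$ to $\{\widetilde L_\nu>0\}$, but this is exactly what \eqref{eq:discordanza} is designed to handle.
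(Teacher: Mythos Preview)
Your proof is correct and follows essentially the same route as the paper: pass to the exponential chart $F_p$, compare $\widetilde B_p(0,r)$ with the dilation-homogeneous ball $\widehat B_p(0,r)$ via Theorem \ref{th:localgroup}, and use the central symmetry of $\widehat B_p(0,r)$ (Proposition \ref{symmetry}) together with the oddness of $\widetilde L_\nu$ to extract the factor $1/2$. The only cosmetic difference is that the paper invokes Proposition \ref{prop7.2} directly to obtain the set inclusions $F_p^{-1}(B_\nu^+(p,r))\subseteq \widehat B(0,(1+\varepsilon)r)\cap\{\widetilde L_\nu\geq -\varepsilon r\}$ (and the reverse), whereas you first replace $\{f\circ F_p>0\}$ by $\{\widetilde L_\nu>0\}$ via \eqref{eq:discordanza} and then sandwich the balls separately; since Corollary \ref{th:hypersurface} is itself derived from Proposition \ref{prop7.2}, the two arguments are equivalent.
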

\begin{proof}
	Let $U$ be a neighborhood of $p$ and let $f\in C^1_X(U)$ be such that $f(p)=0$ and $Xf(p)=\nu$.
	Choose $\varepsilon\in (0,1)$. By Proposition \ref{prop7.2} and Theorem \ref{th:localgroup} we can suppose without loss of generality that for every small enough $r$ one has $F_p(\widetilde B(0,r))=B(p,r)$ and 
	\begin{equation}\label{limsup}
		F_p^{-1}(B_\nu^+(p,r))=\widetilde B(0,r)\cap\left\{f\circ F_p>0\right\}
	\subseteq \widehat B(0,(1+\varepsilon)r)\cap \{\widetilde L_{\nu}\geq -\varepsilon r\}.
	\end{equation}
	Analogously
	\begin{equation}\label{liminf}
		\widehat B(0,(1-\varepsilon)r)\cap\{\widetilde L_\nu\geq \varepsilon r\}\subseteq\widetilde B(0,r)\cap \{f\circ F_p>0\}
		=F_p^{-1}(B_\nu^+(p,r)).
	\end{equation}
	Applying $\delta_{1/r}$ to both sides of \eqref{limsup} and evaluating the Lebesgue measure we get
	\[
	\begin{aligned}
		\frac{\mathscr L^n\left(F_p^{-1}(B_\nu^+(p,r))\right)}{r^Q}&=\mathscr L^n\left(\delta_{1/r}\left(F_p^{-1}(B_\nu^+(p,r))\right)\right)\\
		&\leq\mathscr L^n\left(\widehat B(0,1+\varepsilon)\cap\{\widetilde L_\nu\geq -\varepsilon\}\right).
	\end{aligned}
	\]
	Taking the $\limsup$ as $r\to 0$ and letting $\varepsilon\to 0$ we infer
	\[
	\limsup_{r\to 0}\frac{\mathscr{L}^n\left(F_p^{-1}(B_\nu^+(p,r))\right)}{r^Q}\leq\mathscr L^n\left(\widehat{B}(0,1)\cap\{\widetilde L_\nu\geq 0\}\right)
	=\frac{1}{2}\mathscr L^n(\widehat B(0,1)),
	\]
	where the last equality follows from Proposition \ref{symmetry}. With the same argument, from \eqref{liminf} we get
	\[
	\liminf_{r\to 0}\frac{\mathscr L^n\left(F_p^{-1}(B_\nu^+(p,r))\right)}{r^Q}\geq \frac{1}{2}\mathscr L^n(\widehat B(0,1)),
	\]
hence
	\begin{equation}\label{lim1}
	\lim_{r\to 0} \frac{\mathscr L^n\left(F_p^{-1}(B_\nu^+(p,r))\right)}{r^Q}=\frac{1}{2}\mathscr L^n(\widehat B(0,1)).
	\end{equation}
By Theorem \ref{th:localgroup}
	\begin{equation}\label{lim2}
	\lim_{r\to 0}\frac{\mathscr L^n(\widetilde B(0,r))}{r^Q}=\lim_{r\to 0}\mathscr L^n(\delta_{1/r}(\widetilde B(0,r)))=\mathscr L^n(\widehat B(0,1)),
	\end{equation}
	and combining \eqref{lim1} and \eqref{lim2} we get
	\[
	\lim_{r\to 0} \frac{\mathscr L^n\left(F_p^{-1}\left(B_\nu^+(p,r)\right)\right)}{\mathscr L^n(\widetilde B(0,r))}=\frac 12.
	\]
	If $c\coloneqq|\det\nabla F(0)|>0,$ using \eqref{measureequivalence} we notice that for every $0<\varepsilon<c$ and every sufficiently small $r>0$ we have 
	\[
	\frac{\left(c-\varepsilon\right)\mathscr L^n\left(F_p^{-1}\left(B^+_\nu(p,r)\right)\right)}{\left(c+\varepsilon\right)\mathscr L^n(\widetilde B(0,r))}\leq\frac{\mathscr L^n\left(B_\nu^+(p,r)\right)}{\mathscr L^n(B(p,r))}\leq\frac{(c+\varepsilon)\mathscr L^n\left(F_p^{-1}\left(B^+_\nu(p,r)\right)\right)}{(c-\varepsilon) \mathscr L^n(\widetilde B(0,r))}.
	\]
	The result follows passing to the limit as $r\to 0$, letting $\varepsilon \to 0$ and, eventually, using a similar argument for $B_\nu^-$.
\end{proof}

We can now introduce the notion of $X$-jump points.

\begin{definition}[Approximate $X$-jumps]\label{approximatejump}
	Let $u\in L^1_{loc}(\Omega;\R^k)$ and $p\in \Omega$. We say that $u$ has an \emph{approximate $X$-jump} at $p$ if there exist $a,b\in \R^k$ with $a\neq b$ and $\nu \in \mathbb{S}^{m-1}$ such that
	\begin{equation}\label{eq:defsalti}
	\lim_{r\to 0}\fint_{B^+_\nu(p,r)}|u-a|d\mathscr L^n=\lim_{r\to 0}\fint_{B^-_\nu(p,r)}|u-b|d\mathscr L^n=0.
	\end{equation}
In this case we say that $(a,b,\nu)$ is an \emph{approximate $X$-jump triple} of $u$ at $p$.
We shall denote by $\mathcal J_u$ the set of approximate $X$-jump points of $u$ and by $(u^+(p),u^-(p),\nu_u(p))$ the  approximate $X$-jump triple for $u$ at $p\in \mathcal J_u$. 
\end{definition}

\begin{rmk}\label{rem:JusubsetSu}
Using e.g. Proposition \ref{goodsplit} one easily proves that $\mathcal J_u\subset\mathcal S_u$.
\end{rmk}

Notice that, if $u$ has an approximate $X$-jump at $p$ associated with $(a,b,\nu)$, then it is also associated with the triple $(b,a,-\nu)$. For this reason, it will be sometimes convenient to consider the space of  triples endowed with the equivalence relation $(a,b,\nu)\equiv (a',b',\nu')$ if and only if $(a,b,\nu)=(a',b',\nu')$ or $(a,b,\nu)=(b',a',-\nu')$. The following Proposition \ref{jumpequiv}  shows that the $X$-jump triple $(u^+(p),u^-(p),\nu_u(p))$ is unique up to equivalence, for the map $\R^k\times\R^k\times\mathbb S^{m-1}\ni(a,b,\nu)\to w_{a,b,\nu}\in L^1_{loc}(\R^n;\R^k)$ defined by \eqref{eq:saltotangente} below satisfies
\[
w_{a,b,\nu}=w_{a',b',\nu'}\quad\Longleftrightarrow\quad (a,b,\nu)\equiv(a',b',\nu').
\]

In the theory of classical $BV$ functions a jump point can be detected, via a blow-up procedure, in terms of $L^1_{loc}$-convergence to a function taking two different values on complementary half-spaces; this is the content of the next statement, which also gives an equivalent definition of approximate $X$-jump points.

\begin{proposition}\label{jumpequiv}
	Let $(\R^n, X)$ be an equiregular $CC$ space, $\Omega$  an open set,  $u\in L^1_{loc}(\Omega;\R^k)$, $p\in \Omega$ and let $a,b\in\R^k$ with $a\neq b$ and $\nu\in\mathbb S^{m-1}$ be fixed. Then the following statements are equivalent:
\begin{itemize}
\item[(i)] $p\in \mathcal J_u$ and $(u^+(p),u^-(p),\nu_u(p))\equiv(a,b,\nu)$;
\item[(ii)]  working in adapted exponential coordinates $F_p$ around $p$, as $r\to 0$ the functions $\widetilde u_r\coloneqq u\circ F_p\circ \delta_r$ converge in $L^1_{loc}(\R^n;\R^k)$ to 
	\begin{equation}\label{eq:saltotangente}
	w_{a,b,\nu}(y)\coloneqq\begin{cases}
	a\quad & \text{if } \widetilde L_{\nu}  (y)>0\\
	b & \text{if } \widetilde L_{\nu} (y)<0.
	\end{cases}
	\end{equation}
	\end{itemize}
\end{proposition}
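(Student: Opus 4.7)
The plan is to set up a change-of-variables dictionary between integrals over the CC half-balls $B_\nu^\pm(p,r)$ and integrals of the blow-up $\widetilde u_r$ over $\widehat B(0,\rho)\cap\{\pm\widetilde L_\nu>0\}$, and then to deduce both directions from it. Under the substitution $q=F_p(\delta_r(y))$ the Jacobian is $r^Q|\det\nabla F_p(\delta_r(y))|$, which converges uniformly on compacts to $r^Qc_0$ with $c_0\coloneqq|\det\nabla F_p(0)|>0$. Exploiting the 1-homogeneity of $\widetilde L_\nu$ with respect to $\delta_r$ (the first $m$ coordinates have degree one), together with Theorem \ref{th:localgroup} and Corollary \ref{th:hypersurface}, one verifies that for every $\rho>0$
\[
\lim_{r\to 0}\mathscr L^n\bigl(\delta_{1/r}(F_p^{-1}(B_\nu^\pm(p,r\rho)))\,\triangle\,(\widehat B(0,\rho)\cap\{\pm\widetilde L_\nu>0\})\bigr)=0,
\]
since the $o((r\rho)^Q)$ errors from replacing $\widetilde B$ by $\widehat B$ and $\{f\circ F_p\gtrless 0\}$ by $\{\widetilde L_\nu\gtrless 0\}$ become $o(1)$ after rescaling by $\delta_{1/r}$.

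For (i)$\Rightarrow$(ii), fix $\rho>0$ and write
\[
\int_{\widehat B(0,\rho)}|\widetilde u_r-w_{a,b,\nu}|d\mathscr L^n=\int_{A^+}|\widetilde u_r-a|d\mathscr L^n+\int_{A^-}|\widetilde u_r-b|d\mathscr L^n,
\]
with $A^\pm\coloneqq\widehat B(0,\rho)\cap\{\pm\widetilde L_\nu>0\}$ and $E_r^\pm\coloneqq\delta_{1/r}(F_p^{-1}(B_\nu^\pm(p,r\rho)))$. I decompose $A^+=(A^+\cap E_r^+)\cup(A^+\setminus E_r^+)$. On the first piece the change of variables together with local Ahlfors regularity (Theorem \ref{ccproperties}(iii)) and Proposition \ref{goodsplit} yield
\[
\int_{A^+\cap E_r^+}|\widetilde u_r-a|d\mathscr L^n\leq C\rho^Q\fint_{B_\nu^+(p,r\rho)}|u-a|d\mathscr L^n\;\xrightarrow[r\to 0]{}\;0
\]
by \eqref{eq:defsalti}. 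On $A^+\setminus E_r^+$ the map $F_p\circ\delta_r$ lands, up to an $\mathscr L^n$-negligible set, inside $B_\nu^-(p,(1+o(1))r\rho)$; then $|\widetilde u_r-a|\leq|\widetilde u_r-b|+|a-b|$ reduces matters to the corresponding limit on $B_\nu^-$ plus the symmetric-difference bound $\mathscr L^n(A^+\setminus E_r^+)\to 0$. The analysis of $A^-$ is symmetric.

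For (ii)$\Rightarrow$(i), Proposition \ref{goodsplit} and Ahlfors regularity reduce the claim to $r^{-Q}\int_{B_\nu^+(p,r)}|u-a|d\mathscr L^n\to 0$; the change of variables rewrites this, up to the convergent factor $c_0$, as $\int_{E_r^+}|\widetilde u_r-a|d\mathscr L^n$ with $\rho=1$. This integral differs from $\int_{A^+}|\widetilde u_r-a|d\mathscr L^n=\int_{A^+}|\widetilde u_r-w_{a,b,\nu}|d\mathscr L^n$ by at most
\[
\int_{\widehat B(0,2)}|\widetilde u_r-w_{a,b,\nu}|d\mathscr L^n+(|a|+|b|)\mathscr L^n(E_r^+\triangle A^+),
\]
so both the difference and the main integral vanish by the hypothesis $\widetilde u_r\to w_{a,b,\nu}$ in $L^1_{loc}$ combined with the symmetric-difference estimate. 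The case of $B_\nu^-$ is identical, and accounts also for the equivalence class $(b,a,-\nu)$ by the symmetry $B^+_{-\nu}(p,r)\leftrightarrow B^-_{\nu}(p,r)$.

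The principal subtlety is that the half-balls $B_\nu^\pm(p,r)$ depend on the auxiliary choice of the defining function $f$, whereas $w_{a,b,\nu}$ depends only on $\nu$; this is handled by Remark \ref{rmk:germ}, which shows that $\{f_1\gtrless 0\}\triangle\{f_2\gtrless 0\}$ has vanishing $\mathscr L^n$-density at $p$ whenever $Xf_1(p)=Xf_2(p)$, so all estimates above are independent of the choice of $f$.
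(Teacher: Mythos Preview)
Your proof is correct and follows essentially the same approach as the paper's: both arguments rest on the change of variables $q=F_p(\delta_r(y))$, the comparison $\widetilde B\leftrightarrow\widehat B$ from Theorem~\ref{th:localgroup}, and the discordance estimate of Corollary~\ref{th:hypersurface} to pass between $\{f\circ F_p\gtrless 0\}$ and $\{\widetilde L_\nu\gtrless 0\}$. The only organizational difference is that you rescale first and work on $\widehat B(0,\rho)$ with the symmetric-difference set $E_r^\pm\triangle A^\pm$, whereas the paper stays at scale $r$ and uses the inclusion $\widehat B^\pm_\nu(0,r)\setminus\widetilde B^\pm_\nu(0,r+\omega(r))\subset\overline{\widetilde B^\mp_\nu(0,r+\omega(r))}$; you also write out the direction (ii)$\Rightarrow$(i), which the paper dismisses as a ``boring adaptation''. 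One cosmetic point: in your final displayed bound the constant $|a|+|b|$ should be $|a-b|$ (coming from $|w_{a,b,\nu}-a|\leq|a-b|$), though of course your larger constant is harmless.
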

\begin{proof}
We can assume without loss of generality that $k=1$. 

We prove  the implication (i)$\Rightarrow$(ii); we can assume that $(u^+(p),u^-(p),\nu_u(p))=(a,b,\nu)$ and, writing $w\coloneqq w_{a,b,\nu}$, we prove that for any fixed $R>0$ one has
\[
\lim_{r\to0}\int_{\widehat B(0,R)}|u\circ F_p\circ\delta_r-w|\:d\mathscr L^n=0.
\]
By a change of variables, this is equivalent to proving that
\begin{equation}\label{eq:lc}
\lim_{r\to0}\frac{1}{r^Q}\int_{\widehat B(0,r)}|u\circ F_p-w|\:d\mathscr L^n=0.
\end{equation}
Let $f$ be the real function of class $C^1_X$ defined on a neighborhood of $p$ used to define, as in \eqref{eq:Bpmnu}, the half-balls $B^\pm_\nu(p,r)$ appearing in \eqref{eq:defsalti};  we set for brevity
\begin{align*}
& \widehat B^+_\nu(0,r)\coloneqq \widehat B(0,r)\cap\{\widetilde L_\nu>0\}, &&\widehat B^-_\nu(0,r)\coloneqq\widehat B(0,r)\cap\{\widetilde L_\nu<0\}\\
& \widetilde B^+_\nu(0,r)\coloneqq \widetilde B(0,r)\cap\{ f\circ F_p>0\}, &&\widetilde B^-_\nu(0,r)\coloneqq\widetilde B(0,r)\cap\{ f\circ F_p<0\}.
\end{align*}
By Theorem \ref{th:localgroup} there exists an increasing function $\omega:(0,+\infty)\to(0,+\infty)$ such that 
\[
\lim_{r\to 0}\frac{\omega(r)}r=0\qquad\text{and}\qquad\widehat B(0,r) \subset  \widetilde B(0,r+\omega(r))
\]
for any sufficiently small $r$. Therefore
\begin{align*}
& \frac{1}{r^Q}\int_{\widehat B(0,r)}|u\circ F_p-w|\:d\mathscr L^n\\
= &  \frac{1}{r^Q}\bigg(\int_{\widehat B^+_\nu(0,r)}|u\circ F_p-a|\:d\mathscr L^n+\int_{\widehat B^-_\nu(0,r)}|u\circ F_p-b|\:d\mathscr L^n\bigg)\\
\leq & \frac{1}{r^Q}\bigg(\int_{\widetilde B^+_\nu(0,r+\omega(r))}|u\circ F_p-a|\:d\mathscr L^n
+\int_{\widehat B^+_\nu(0,r)\setminus\widetilde B^+_\nu(0,r+\omega(r))}\big(|u\circ F_p-b|+|a-b|\big)\:d\mathscr L^n\\
& \hphantom{\frac{1}{r^Q}\bigg(}+\int_{\widetilde B^-_\nu(0,r+\omega(r))}|u\circ F_p-b|\:d\mathscr L^n
+\int_{\widehat B^-_\nu(0,r)\setminus\widetilde B^-_\nu(0,r+\omega(r))}\big(|u\circ F_p-a|+|a-b|\big)\:d\mathscr L^n\bigg)
\intertext{and using $\widehat B^\pm_\nu(0,r)\setminus\widetilde B^\pm_\nu(0,r+\omega(r))\subset \widetilde B(0,r+\omega(r))\setminus\widetilde B^\pm_\nu(0,r+\omega(r))\subset\overline{\widetilde B^\mp_\nu(0,r+\omega(r))}$}
\leq & \frac{1}{r^Q}\bigg(2\int_{\widetilde B^+_\nu(0,r+\omega(r))}|u\circ F_p-a|\:d\mathscr L^n + 2\int_{\widetilde B^-_\nu(0,r+\omega(r))}|u\circ F_p-b|\:d\mathscr L^n\\
& \hphantom{\frac{1}{r^Q}\bigg(}+|a-b| \mathscr L^n\big(\widetilde B(0,r+\omega(r))\cap \{(f\circ F_p)\widetilde L_\nu\leq0\}\big)\bigg)
\end{align*}
and \eqref{eq:lc} follows from \eqref{eq:defsalti} and Corollary 	\ref{th:hypersurface} taking also Theorem \ref{ccproperties} into account.

For the converse implication one has to prove that, if (ii) holds and $f$ is a $C^1_X$ real function  on a neighborhood of $p$ such that $f(p)=0$ and $Xf (p)/|Xf(p)|=\nu$, then \eqref{eq:defsalti} holds with  $B^\pm_\nu(p,r)$  defined (see \eqref{eq:Bpmnu}) in terms of $f$. By Theorem \ref{ccproperties} and a change of variables, proving \eqref{eq:defsalti} amounts to proving that
\[
\lim_{r\to0}\frac{1}{r^Q}\int_{\widetilde B^+_\nu(0,r)}|u\circ F_p-a|\:d\mathscr L^n = \lim_{r\to0}\frac{1}{r^Q}\int_{\widetilde B^-_\nu(0,r)}|u\circ F_p-b|\:d\mathscr L^n=0
\]
and this can be done by a boring adaptation, that we omit, of the previous argument.
\end{proof}

\begin{rmk}\label{jumpwellposed}
The proof of Proposition \ref{jumpequiv} implicitly shows that the validity of \eqref{eq:defsalti} does not depend on the choice of the function $f$ used in \eqref{eq:Bpmnu} to define $B^\pm_\nu(p,r)$.
\end{rmk}

The proof of the following result is standard and we postpone it to the Appendix \ref{app:risultatitecnici}.

\begin{proposition}\label{jumpprop}
Let $(\R^n,X)$ be an equiregular $CC$ space, $\Omega$ be an open set and let $u\in L^1_{loc}(\Omega;\R^k)$. Then the following facts hold: 
	\begin{itemize}
	\item[(i)] $\mathcal{J}_u$ is a Borel set and, up to a choice of a representative for $X$-jump triples, the function
		\[
		\begin{split}
		\mathcal J_u&\rightarrow\R^k\times\R^k\times\mathbb S^{m-1}\\
		p&\mapsto (u^+(p),u^-(p),\nu_u(p))
		\end{split}
		\]
is Borel;
	\item[(ii)] for every $f\in \Lip(\R^k;\R^h)$ and $p\in \mathcal J_u$ we have 
		\[
		p\in\mathcal{J}_{f\circ u} \quad\Longleftrightarrow\quad f(u^+(p))\neq f(u^-(p))
		\] 
and in this case $\displaystyle((f\circ u)^+(p),(f\circ u)^-(p),\nu_{f\circ u}(p))\equiv(f(u^+(p)),f(u^-(p)),\nu_u(p))$. Otherwise, $p\notin \mathcal S_{f\circ u}$ and $(f\circ u)^\star(p)=f(u^+(p))=f(u^-(p))$.
\end{itemize}
\end{proposition}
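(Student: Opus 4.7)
For (i), the strategy is to realize $\mathcal J_u$ and the jump map as the Borel projection of an explicitly constructed Borel set. Choose as canonical defining function for the half-balls in \eqref{eq:Bpmnu} the map $f^p_\nu\coloneqq \widetilde L_\nu\circ F_p^{-1}$, which is $C^1_X$ near $p$ with $f^p_\nu(p)=0$ and $Xf^p_\nu(p)=\nu$; by Remark \ref{jumpwellposed} this choice is immaterial for the jump condition. For each rational $r>0$ the quantity
\[
\Phi_r(p,\nu,a,b) \coloneqq \frac{1}{\mathscr L^n(B(p,r))}\left(\int_{B^+_\nu(p,r)}|u-a|\:d\mathscr L^n + \int_{B^-_\nu(p,r)}|u-b|\:d\mathscr L^n\right)
\]
is jointly Borel in $(p,\nu,a,b)\in\Omega\times\mathbb S^{m-1}\times\R^k\times\R^k$ (by Fubini--Tonelli, using that $(p,q)\mapsto F_p^{-1}(q)$ is smooth wherever defined, so that the integrand is a jointly Borel function of $(p,\nu,a,q)$). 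Hence $\Phi\coloneqq \limsup_{r\to 0^+,\,r\in\mathbb Q}\Phi_r$ is Borel, and by Proposition \ref{goodsplit} the Borel set
\[
\Gamma\coloneqq\{(p,\nu,a,b):\Phi(p,\nu,a,b)=0,\ a\neq b\}
\]
is exactly the ``graph'' of $p\mapsto(\nu_u(p),u^+(p),u^-(p))$ on $\mathcal J_u$, modulo the equivalence $(\nu,a,b)\sim(-\nu,b,a)$.

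To single out a representative, I would consider the Borel set
\[
\mathbb S^{m-1}_+ \coloneqq \bigcup_{j=1}^m\{\nu\in\mathbb S^{m-1}:\nu_1=\cdots=\nu_{j-1}=0,\ \nu_j>0\},
\]
which contains exactly one of $\nu$ and $-\nu$ for each $\nu\in\mathbb S^{m-1}$. Then $\Gamma\cap(\Omega\times \mathbb S^{m-1}_+\times\R^k\times\R^k)$ is a Borel set whose projection to $\Omega$ is injective, so by the Lusin--Souslin theorem (Borel injective images of Borel sets in Polish spaces are Borel, and the inverse is Borel) the set $\mathcal J_u$ is Borel and $p\mapsto(\nu_u(p),u^+(p),u^-(p))$ is Borel.

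For (ii), fix $p\in\mathcal J_u$ and write $(a,b,\nu)\coloneqq(u^+(p),u^-(p),\nu_u(p))$. By Proposition \ref{jumpequiv} the rescalings $u\circ F_p\circ\delta_r$ converge in $L^1_{loc}(\R^n;\R^k)$ to $w_{a,b,\nu}$ as $r\to 0$; composing with $f$ and using the Lipschitz bound
\[
|f\circ u\circ F_p\circ\delta_r - f\circ w_{a,b,\nu}|\leq \Lip(f)\,|u\circ F_p\circ\delta_r - w_{a,b,\nu}|,
\]
one obtains that $(f\circ u)\circ F_p\circ\delta_r\to f\circ w_{a,b,\nu}$ in $L^1_{loc}(\R^n;\R^h)$. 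If $f(a)\neq f(b)$ then $f\circ w_{a,b,\nu}=w_{f(a),f(b),\nu}$, hence the implication (ii)$\Rightarrow$(i) of Proposition \ref{jumpequiv} gives $p\in\mathcal J_{f\circ u}$ with triple $\equiv(f(a),f(b),\nu)$; if instead $f(a)=f(b)$, the limit is the constant function $f(a)$ and Remark \ref{rem:applimIFFconvL1} yields $p\notin\mathcal S_{f\circ u}$ with $(f\circ u)^\star(p)=f(a)$.

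The main obstacle is the measurability statement in (i): since the very definition of the half-balls $B^\pm_\nu(p,r)$ involves the auxiliary map $F_p$ that varies with $p$, one must verify the joint Borel dependence of the relevant integrals on $(p,\nu,a,b)$. The canonical choice $f^p_\nu=\widetilde L_\nu\circ F_p^{-1}$ makes this transparent, after which the Borel structure of $\mathcal J_u$ and of the jump map reduces to an injective projection handled by the Lusin--Souslin theorem.
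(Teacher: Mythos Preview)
Your argument for (ii) coincides with the paper's: one uses the blow-up characterization of Proposition~\ref{jumpequiv}, composes with the Lipschitz map, and reads off the conclusion from Remark~\ref{rem:applimIFFconvL1} or Proposition~\ref{jumpequiv} according to whether $f(a)=f(b)$.

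For (i) your approach is correct but genuinely different. The paper does not pass through a Borel graph and Lusin--Souslin. Instead it proceeds in two steps. First, it shows that $(\Omega\setminus\mathcal S_u)\cup\mathcal J_u$ is Borel by proving, for a countable dense family of candidate triples $(a_h,b_h,\nu_h)$, that the maps
\[
p\longmapsto \fint_{A(r)}|u\circ F_p - w_h|\,d\mathscr L^n
\]
are \emph{continuous} in $p$ (via an $L^1$-approximation by smooth functions), and then writes the desired set as a countable combination of sublevel sets; since $\Omega\setminus\mathcal S_u$ is already known to be Borel, $\mathcal J_u$ is too. Second, for the Borel measurability of the triple, the paper extracts $\nu$ and $u^\pm$ by a distributional-derivative trick: it evaluates $\int w_p\,\partial_i\psi$ for suitable test functions $\psi$, expresses $(u^+(p)-u^-(p))\nu(p)$ as a pointwise limit of Borel functions of $p$, and then recovers $u^+$ as a limit of averages on half-boxes.

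Your route replaces both steps with a single descriptive-set-theory argument: build the Borel set $\Gamma\subset\Omega\times\mathbb S^{m-1}\times\R^k\times\R^k$, cut to $\mathbb S^{m-1}_+$ to make the fibers singletons, and invoke Lusin--Souslin for the injective projection. This is conceptually slick and handles the vector-valued case $k\geq 1$ uniformly, at the price of importing an external theorem and of having to justify the joint Borel dependence of $\Phi_r$ on $(p,\nu,a,b)$ (which you correctly identify as the delicate point, due to the implicit local nature of $F_p$; this is handled by a countable localization in $p$). The paper's approach, by contrast, stays entirely elementary and even obtains continuity (not just Borel measurability) of the basic building blocks in $p$; the trade-off is the somewhat ad hoc second step to recover the triple.
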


We now pass to he introduction of approximate $X$-differentiability.

\begin{definition}[Approximate $X$-differentiability]\label{def:Xdiff}
Let $u\in L^1_{loc}(\Omega;\R^k)$ and $p\in \Omega\setminus \mathcal S_u$. We say that $u$ is \emph{approximately $X$-differentiable} at $p$ if there exist a neighborhood $U$ of $p$ and  $f\in C^1_X(U;\R^k)$ such that $f(p)=0$ and
	\begin{equation}\label{approximatediff}
	\lim_{r\to 0}\fint_{B(p,r)}\frac{|u- u^\star(p)-f|}{r}d\mathscr L^n=0.
	\end{equation}
The subset of points of $\Omega$ in which $u$ is approximately $X$-differentiable is denoted by $\mathcal D_u$. 
\end{definition}

If $f$ is as in Definition \ref{def:Xdiff} we will call $Xf(p)\in\R^{k\times m}$ the \emph{approximate $X$-gradient} of $ u$ at $p$. By the following proposition the approximate $X$-gradient of $u$ at $p$ is uniquely determined, and we denote it by $D^{ap}_X u(p)$.

\begin{proposition}[Uniqueness of approximate $X$-gradient]\label{wellposedness}
  Let  $(\R^n,X)$ be an equi\-re\-gu\-lar CC space, $\Omega\subset\R^n$ an open set, $u\in L^1_{loc}(\Omega;\R^k)$ and $p\in \Omega\setminus \mathcal S_u$. Let $R>0$ and  $f_1,f_2 \in C^1_X(B(p,R);\R^k)$; suppose that formula \eqref{approximatediff} holds for both $f=f_1$ and  $f=f_2$. Then $p\in \mathcal D_u$, $f_1(p)=f_2(p)=0$ and $Xf_1(p)=Xf_2(p)$. 
  
Conversely, if $f_1(p)=f_2(p)=0$ and $Xf_1(p)=Xf_2(p)$, then formula \eqref{approximatediff} holds for $f=f_1$ if and only if it holds for $f=f_2$.
\end{proposition}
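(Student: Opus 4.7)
\textbf{Reduction to an auxiliary statement.} First, I observe that if \eqref{approximatediff} holds for some continuous $f$ (which is the case for any $f\in C^1_X$), then $f(p)=0$ is automatic, hence the first claim $f_1(p)=f_2(p)=0$ follows from the hypothesis and, tautologically, $p\in\mathcal D_u$. Indeed, since $p\notin\mathcal S_u$ we have $\fint_{B(p,r)}|u-u^\star(p)|\,d\mathscr L^n\to 0$, so the triangle inequality together with \eqref{approximatediff} gives $\fint_{B(p,r)}|f|\,d\mathscr L^n\to 0$; by continuity of $f$, the mean values converge to $|f(p)|$, forcing $f(p)=0$. Setting $g\coloneqq f_1-f_2\in C^1_X(B(p,R);\R^k)$, the triangle inequality also gives
\[
\fint_{B(p,r)}\frac{|g|}{r}\,d\mathscr L^n\leq \fint_{B(p,r)}\frac{|u-u^\star(p)-f_1|}{r}\,d\mathscr L^n+\fint_{B(p,r)}\frac{|u-u^\star(p)-f_2|}{r}\,d\mathscr L^n\xrightarrow[r\to 0]{} 0.
\]
Both statements of the Proposition therefore reduce to the following key claim: \emph{if $g\in C^1_X(B(p,R);\R^k)$ satisfies $g(p)=0$, then}
\[
\lim_{r\to 0}\fint_{B(p,r)}\frac{|g|}{r}\,d\mathscr L^n=0\quad\Longleftrightarrow\quad Xg(p)=0.
\]

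\textbf{Proof of the key claim via blow-up.} Working in adapted exponential coordinates $F_p$ around $p$, Proposition \ref{prop7.2} gives
\[
\sup\left\{\frac{|g(F_p(x))-\widetilde L_{Xg(p)}(x)|}{r}: x\in\widetilde B(0,r)\right\}\xrightarrow[r\to 0]{} 0.
\]
Pushing forward $\mathscr L^n\res\widetilde B(0,r)$ by $F_p$ and using that $|\det\nabla F_p|$ is continuous and positive near $0$ (cf.\ \eqref{measureequivalence}), the mean value $\fint_{B(p,r)}|g|/r\,d\mathscr L^n$ has the same limit as $\fint_{\widetilde B(0,r)}|\widetilde L_{Xg(p)}(x)|/r\,d\mathscr L^n(x)$. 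Now I perform the rescaling $x=\delta_r y$, which has Jacobian $r^Q$, and use that $\widetilde L_\nu(\delta_r y)=r\widetilde L_\nu(y)$ (since $\widetilde L_\nu$ depends only on the first $m$ coordinates, which have degree $1$). Combining this with $r^Q/\mathscr L^n(\widetilde B(0,r))\to 1/\mathscr L^n(\widehat B(0,1))$ (from Theorems \ref{th:localgroup}--\ref{normestimate} or \eqref{eq:pallacarnot}) and the fact that $\delta_{1/r}\widetilde B(0,r)\to\widehat B(0,1)$ in the sense of Theorem \ref{th:localgroup}, the dominated convergence theorem yields
\[
\lim_{r\to 0}\fint_{B(p,r)}\frac{|g|}{r}\,d\mathscr L^n=\fint_{\widehat B(0,1)}|\widetilde L_{Xg(p)}(y)|\,d\mathscr L^n(y).
\]

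\textbf{Conclusion.} Since $\widehat B(0,1)$ is open and has non-empty interior in $\R^n$, the linear functional $y\mapsto \widetilde L_{Xg(p)}(y)=\sum_{i=1}^m (Xg(p))_i\,y_i$ vanishes only on an $\mathscr L^n$-negligible subset of $\widehat B(0,1)$ when $Xg(p)\neq 0$; hence the right-hand side is strictly positive in that case. Thus the limit is $0$ if and only if $Xg(p)=0$, which proves the key claim and therefore both directions of the Proposition.

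The main technical obstacle is the passage to the limit in the rescaled integral: one has to simultaneously control the distortion of the domain $\widetilde B(0,r)$ towards $\widehat B(0,1)$ (furnished by Theorem \ref{th:localgroup}), the measure equivalence \eqref{measureequivalence} coming from the change of coordinates, and the uniform approximation of $g$ by $\widetilde L_{Xg(p)}$ provided by Proposition \ref{prop7.2}. Once these three ingredients are properly aligned, the computation is essentially a weighted average against a homogeneous linear functional on the Carnot unit ball.
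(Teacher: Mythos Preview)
Your proof is correct and follows essentially the same strategy as the paper: reduce to the difference $g=f_1-f_2$, linearize via Proposition~\ref{prop7.2}, and show that the linear part $\widetilde L_{Xg(p)}$ must vanish. The only difference is in the last step: you compute the exact blow-up limit $\fint_{\widehat B(0,1)}|\widetilde L_{Xg(p)}|$ using Theorem~\ref{th:localgroup}, whereas the paper obtains a direct lower bound $\fint_{\widetilde B(0,\varrho)}|L_1-L_2|\geq C\varrho$ via the pseudo-ball comparison of Theorem~\ref{normestimate}, which is slightly more elementary since it avoids the tangent-group machinery. You also make explicit the (easy) fact that $f_i(p)=0$ is forced by \eqref{approximatediff} and continuity, which the paper's proof uses but does not spell out.
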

\begin{proof}
It is not restrictive to assume that $k=1$.  Define for $i=1,2$ the functions $L_i\coloneqq \widetilde L_{Xf_i(p)}$. Suppose first that both $f_1, f_2$ satisfy \eqref{approximatediff}. Fix $\varepsilon>0$ and by Proposition \ref{prop7.2} choose $r>0$ such that for every $\varrho \in (0,r)$ 
	\[
	\frac{|f_i\circ F_p-L_i|}{\varrho} <\frac{\varepsilon}{2}\qquad\text{on }\widetilde B(0,\varrho).
	\]
	Then for such values of $\varrho$ we have
	\[
	\begin{aligned}
	\fint_{\widetilde B(0,\varrho)}\frac{|L_1-L_2|}{\varrho}d\mathscr L^n & \leq\fint_{\widetilde B(0,\varrho)}\frac{\left|f_1\circ F_p-f_2\circ F_p\right|}{\varrho}d\mathscr L^n+\varepsilon\\
	&\leq C \fint_{B(p,\varrho)} \frac{\left|f_1-f_2\right|}{\varrho}d\mathscr L^n +\varepsilon\\
	&\leq C\fint_{B(p,\varrho)}\frac{\left|u-u^\star(p)-f_1\right|+\left|u- u^\star(p)-f_2\right|}{\varrho}d\mathscr L^n+\varepsilon .
	\end{aligned}
	\]
	It follows that
	\[
	\lim_{\varrho\to 0}\fint_{\widetilde B(0,\varrho)}\frac{\left|L_1-L_2\right|}{\varrho}d\mathscr L^n=0.
	\]
	If $Xf_1(p)\neq Xf_2(p)$, by Theorem \ref{normestimate} one would get, for some $C_1>0$
	\[
	\begin{aligned}
	\fint_{\widetilde B(0,\varrho)}\left|L_1-L_2\right|d\mathscr L^n&=\frac{1}{\mathscr L^n(\widetilde B(0,\varrho))}\int_{\widetilde B(0,\varrho)}\left|L_1-L_2\right|d\mathscr L^n\\
	&\geq \frac{1}{\mathscr L^n\left(A(C_1\varrho)\right)}\int_{A(\varrho/C_1)}|L_1-L_2|d\mathscr L^n= C \frac{\varrho^{Q+1}}{\varrho^Q}=C \varrho,
	\end{aligned}
	\]
 a contradiction. This proves the first part of the statement.

	Suppose now that $Xf_1(p)=Xf_2(p)$ and that $f_1$ satisfies \eqref{approximatediff}. Then we have $L_1=L_2$ and
	\[
	\begin{aligned}
	&\fint_{B(p,\varrho)}\frac{|u-u^\star(p)-f_2|}{\varrho}d\mathscr L^n\\
	 \leq&\fint_{B(p,\varrho)}\frac{|f_1-L_1\circ F_p^{-1}|+|u(y)- u^\star(p)-f_1|+|f_2-L_2\circ F_p^{-1}|}{\varrho}d\mathscr L^n.
\end{aligned}
	\]
	By Proposition \ref{prop7.2} this completes the proof.
\end{proof}

As for $X$-jump points, also approximate $X$-differentiability points can be detected by a blow-up procedure.

\begin{proposition}\label{diffequiv}
	Let $(\R^n, X)$ be an equiregular CC space, $\Omega$ be an open subset of $\R^n$, $u\in L^1_{loc}(\Omega;\R^k)$ and let $p\in \Omega\setminus \mathcal S_u$. Then $u$ is approximate $X$-differentiable at $p$ if and only if there exists $z=(z_1,\dots,z_k)\in \R^{k\times m}$ such that
	\[
	\frac{u\circ F_p\circ \delta_r-u^\star(p)}{r}\to (\widetilde L_{z_1},\dots,\widetilde L_{z_k})\quad\text{ in $L^1_{loc}(\R^n;\R^k)$ as }r\to0.
	\]
In this case we have $D_X^{ap}u(p)=z$.
\end{proposition}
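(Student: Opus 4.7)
The plan is to prove both implications by reducing each to a common reformulation in adapted exponential coordinates around $p$. Write $\widetilde u\coloneqq u\circ F_p$ and, for $z=(z_1,\dots,z_k)\in\R^{k\times m}$, set $\widetilde L_z\coloneqq(\widetilde L_{z_1},\dots,\widetilde L_{z_k})$.

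First I would reformulate both (i) and (ii) as a single condition on $\widetilde L_z$. Since $|\det\nabla F_p|$ is continuous and bounded away from $0$ near the origin, and since $\mathscr L^n(\widetilde B(0,r))\simeq r^Q$ by Theorem \ref{ccproperties}(iii) combined with \eqref{measureequivalence}, the change of variables $\xi=F_p(y)$ shows that \eqref{approximatediff} for a candidate $f\in C^1_X(U;\R^k)$ with $f(p)=0$ is equivalent to
\[
\lim_{r\to 0}\frac{1}{r^{Q+1}}\int_{\widetilde B(0,r)}|\widetilde u-u^\star(p)-f\circ F_p|\,d\mathscr L^n=0.
\]
Applying Proposition \ref{prop7.2} componentwise, $f\circ F_p$ can be replaced by $\widetilde L_{Xf(p)}$ up to an error of size $o(r)\cdot\mathscr L^n(\widetilde B(0,r))\leq o(r^{Q+1})$, which is negligible. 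Hence, with $z\coloneqq Xf(p)$, (i) is equivalent to
\[
(\ast)\quad\lim_{r\to 0}\frac{1}{r^{Q+1}}\int_{\widetilde B(0,r)}|\widetilde u-u^\star(p)-\widetilde L_z|\,d\mathscr L^n=0.
\]
On the other hand, the $\delta_r$-homogeneity $\widetilde L_z(\delta_r x)=r\widetilde L_z(x)$ (which holds because $\widetilde L_z$ involves only $x_1,\dots,x_m$, each of weight $1$) together with the change of variables $y=\delta_r x$ of Jacobian $r^Q$ transforms the $L^1_{loc}$ convergence in (ii) into
\[
(\ast\ast)\quad\lim_{r\to 0}\frac{1}{r^{Q+1}}\int_{\widehat B(0,rR)}|\widetilde u-u^\star(p)-\widetilde L_z|\,d\mathscr L^n=0\qquad\forall\,R>0.
\]
By Theorem \ref{th:localgroup}, for any $\varepsilon>0$ and small $\rho>0$ one has the sandwich $\widetilde B(0,\rho)\subseteq\widehat B(0,(1+\varepsilon)\rho)\subseteq\widetilde B(0,(1+\varepsilon)\rho/(1-\varepsilon))$, so $(\ast)$ and $(\ast\ast)$ are equivalent: the $(1\pm\varepsilon)$ perturbation of radii only multiplies the normalizing factor $r^{Q+1}$ by a bounded constant, which is absorbed into the limit thanks to the arbitrariness of $R$.

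For the converse direction I still need to exhibit an actual $f\in C^1_X$; given $z$ satisfying (ii), define $f\coloneqq\widetilde L_z\circ F_p^{-1}$ on a neighborhood of $p$. This map is smooth, hence in $C^1_X$, with $f(p)=0$; and the identity $Xf(p)=z$ follows from the normalization $\widetilde X_j(0)=dF_p^{-1}(p)X_j(p)=e_j$ for $j=1,\dots,m$, which gives $X_jf_i(p)=\partial_j\widetilde L_{z_i}(0)=(z_i)_j$. The uniqueness statement in Proposition \ref{wellposedness} then yields $D_X^{ap}u(p)=z$, completing the proof.

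The main technical subtlety is precisely the passage between the CC ball $\widetilde B(0,r)$ and the nilpotent-tangent ball $\widehat B(0,r)$: these coincide only asymptotically as $r\to 0$, and one must verify that the first-order approximation supplied by Theorem \ref{th:localgroup} is strong enough that the $(1\pm\varepsilon)$ perturbations of radii do not interfere with the $r^{-(Q+1)}$ scaling. A secondary, routine nuisance is absorbing the Jacobian of $F_p$ into the average; this is handled once and for all via \eqref{measureequivalence}.
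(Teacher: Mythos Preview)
Your proposal is correct and follows essentially the same route as the paper: both arguments reduce the comparison of \eqref{approximatediff} with the blow-up convergence to the common condition $\frac{1}{r^{Q+1}}\int|\widetilde u-u^\star(p)-\widetilde L_z|\to 0$ over $\widetilde B(0,r)$ (respectively $\widehat B(0,r)$), swap $\widetilde B$ and $\widehat B$ via Theorem \ref{th:localgroup}, absorb the Jacobian of $F_p$, and take $f=\widetilde L_z\circ F_p^{-1}$ for the converse. The paper presents this as two direct chains of inequalities and invokes Proposition \ref{wellposedness} to pass from a generic $f$ to $\widetilde L_z\circ F_p^{-1}$, whereas you invoke Proposition \ref{prop7.2} explicitly, but the content is the same.
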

\begin{proof}
We assume without loss of generality that $k=1$. Assume first that $p\in \mathcal D_u$ and let $f$ be as in \eqref{approximatediff}; set $z\coloneqq D_X^{ap}u(p)\in\R^m$. Given $R>0$, by Theorem \ref{th:localgroup} one has for small enough $r$
	\[
	\begin{aligned}
	&\int_{\widehat B(0, R)}\left|\frac{u\circ F_p\circ \delta_r- u^\star(p)}{r}-\widetilde L_z\right|d\mathscr L^n\\
	=&\frac{1}{r^Q}\int_{\widehat B(0,r R)}\left|\frac{u\circ F_p-u^\star(p)-\widetilde L_z}{r}\right|d\mathscr L^n
	\leq  \frac{1}{r^Q}\int_{\widetilde B(0,2r R)}\left|\frac{u\circ F_p-u^\star(p)-\widetilde L_z}{r}\right|d\mathscr L^n\\
	\leq&  \frac{C}{r^Q}\int_{ B(p,2r R)}\left|\frac{u-u^\star(p)-\widetilde L_z\circ F_p^{-1}}{r}\right|d\mathscr L^n
		\to0\quad\text{as }r\to0,
	\end{aligned}
	\]
(we used Proposition \ref{wellposedness}), which proves the first part of the statement.

	Conversely, for any small enough $r>0$ we have
	\[
	\begin{aligned}
&\fint_{B(p,r)}	\left|\frac{u-u^\star(p)-\widetilde L_z\circ F_p^{-1}}{r}\right|d\mathscr L^n\leq 	
	\frac{C}{r^Q}\int_{\widetilde B(0,r)}\left|\frac{u\circ F_p-u^\star(p)-\widetilde L_z}{r}\right|d\mathscr L^n\\
	 \leq & \frac{C}{r^Q}\int_{\widehat B(0,2r)}\left|\frac{u\circ F_p-u^\star(p)-\widetilde L_z}{r}\right|d\mathscr L^n=C\int_{\widehat B(0,2)}\left|\frac{u\circ F_p\circ \delta_r- u^\star(p)}{r}-\widetilde L_z\right|d\mathscr L^n,
	\end{aligned}
	\]
which allows to conclude.
\end{proof}

The proofs of the following two results are postponed to Appendix \ref{app:risultatitecnici}.

\begin{proposition}[Properties of approximate differentiability points]\label{prop:DuBorel}
	Let $(\R^n,X)$ be an equiregular CC space, $\Omega$ be an open set in $\R^n$ and let $u\in L^1_{loc}(\Omega;\R^k)$.
	Then $\mathcal D_u$ is a Borel set and the map $D_X^{ap}u:~\mathcal D_u\rightarrow\R^{m\times k}$ is a Borel map.
\end{proposition}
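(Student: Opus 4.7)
The plan is to reformulate approximate $X$-differentiability as a Borel condition on a parameter $z \in \R^{k\times m}$ representing the candidate gradient, and then to extract $\mathcal D_u$ and $D^{ap}_X u$ via a measurable-graph selection argument. By Proposition~\ref{wellposedness}, $p\in \mathcal D_u$ is equivalent to $p\in \Omega\setminus \mathcal S_u$ together with the existence of a (necessarily unique) $z=(z_1,\ldots,z_k)\in \R^{k\times m}$ such that
\[
\phi_r(p,z)\coloneqq \frac{1}{r}\fint_{B(p,r)}\left|u(q)-u^\star(p)-L_z(F_p^{-1}(q))\right|d\mathscr L^n(q) \xrightarrow[r\to 0^+]{} 0,
\]
where $L_z(x)\coloneqq(\widetilde L_{z_1}(x),\ldots,\widetilde L_{z_k}(x))$, and in this case $D_X^{ap}u(p)=z$. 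Since $u^\star$ is a Borel map on the Borel set $\Omega\setminus\mathcal S_u$, I would focus on showing that
\[
\mathcal G\coloneqq \big\{(p,z)\in (\Omega\setminus\mathcal S_u)\times\R^{k\times m}: \lim_{r\to 0^+}\phi_r(p,z)=0\big\}
\]
is Borel; then $\mathcal D_u$ will be the $p$-projection of $\mathcal G$ and $D^{ap}_X u$ the associated single-valued selection.

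The first step is to show that $\phi_r(p,z)$ is Borel in $(p,z)$ for each fixed rational $r>0$. After covering $\Omega$ by countably many open sets on each of which $F(p,x)=F_p(x)$ is jointly smooth in $(p,x)$, the integrand is Borel in $(p,q,z)$; moreover, $\chi_{B(p,r)}(q)$ is Borel in $(p,q,r)$ by continuity of $(p,q)\mapsto d(p,q)$, and $p\mapsto \mathscr L^n(B(p,r))$ is Borel and strictly positive. A Fubini-type argument then yields the claimed joint measurability. The next step is to reduce the limit as $r\to 0^+$ to a countable Borel operation: since $\mathscr L^n(\partial B(p,r))=0$ for every $r>0$ (Proposition~\ref{stella}), dominated convergence shows that $r\mapsto \phi_r(p,z)$ is continuous on $(0,+\infty)$ for each fixed $(p,z)$, so
\[
\mathcal G=\bigcap_{n\geq 1}\bigcup_{N\geq 1}\bigcap_{r\in \mathbb Q\cap (0,1/N)}\big\{(p,z)\in (\Omega\setminus\mathcal S_u)\times\R^{k\times m}: \phi_r(p,z)<1/n\big\},
\]
which exhibits $\mathcal G$ as Borel.

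Finally, by the uniqueness part of Proposition~\ref{wellposedness}, for every $p\in \mathcal D_u$ the section $\mathcal G_p=\{D^{ap}_X u(p)\}$ is a singleton, so $\mathcal G$ is a Borel graph in the Polish space $(\Omega\setminus\mathcal S_u)\times\R^{k\times m}$. I would then invoke the Lusin-Suslin theorem, which asserts that an injective Borel image of a Borel set in a Polish space is Borel; applied to the continuous injection $\pi_1|_{\mathcal G}$, it yields at once that $\mathcal D_u=\pi_1(\mathcal G)$ is Borel, and that $D^{ap}_X u: \mathcal D_u\to \R^{k\times m}$ is Borel measurable as the inverse of a Borel bijection. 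I expect the main obstacle to be the careful bookkeeping in the joint-measurability step, as the chart $F_p$ is only defined locally in $p$; the continuity-in-$r$ observation is what bypasses any analytic-set subtleties, and the final selection step is a standard descriptive-set-theoretic black box.
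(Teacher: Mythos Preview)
Your approach is correct but takes a different route from the paper's. For the Borelness of $\mathcal D_u$, the paper avoids the continuous parameter $z$ and any descriptive set theory: it fixes a countable dense set $\{z_i\}\subset\R^{k\times m}$ and writes $\mathcal D_u$ directly as the countable intersection/union
\[
\mathcal D_u=\bigcap_{h\geq 1}\bigcup_{i\geq 0}\Big\{p\in \Omega\setminus\mathcal S_u:\ \limsup_{r\to 0}\frac{1}{r^{Q+1}}\int_{A(r)}|u\circ F_p-u^\star(p)-\widetilde L_{z_i}|\,d\mathscr L^n<\tfrac1h\Big\},
\]
integrating in the $x$-coordinates over the fixed pseudo-balls $A(r)$ rather than over $B(p,r)$. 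For the Borelness of $D_X^{ap}u$, the paper again gives an explicit formula: it averages $u\circ F_p-u^\star(p)$ over shrinking anisotropic boxes $P_h^i$ aligned with the $i$-th axis and recovers the $i$-th component of the gradient as a pointwise limit of these Borel functions of $p$. Your route trades these explicit computations for the Lusin--Suslin black box; it is clean and conceptually uniform, but relies on a nontrivial theorem from descriptive set theory and requires you to track the local domain of $F_p^{-1}$ inside the integrand. The paper's route is more elementary and self-contained, and handles the locality of $F_p$ painlessly by never inverting it.
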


\begin{proposition}[Locality]\label{prop:localita(balneare?)}
	Let $(\R^n,X)$ be an equiregular CC space, $\Omega$ an open set in $\R^n$ and $u,v\in L^1_{loc}(\Omega; \R^k)$. Suppose that $p\in \Omega$ is of density 1 for the set $\left\{q\in\Omega:u(q)=v(q)\right\}$. Then the following facts hold.
	\begin{itemize}
	\item[(i)] If $p\in \Omega\setminus\left(\mathcal S_u\cup\mathcal S_v\right)$, then $ u^\star(p)=v^\star(p)$. 
	\item[(ii)] If $p\in \mathcal J_u\cap\mathcal J_v$, then $(u^+(p),u^-(p),\nu_u(p))\equiv(v^+(p),v^-(p),\nu_v(p))$.
	\item[(iii)] If $p\in \mathcal D_u\cap\mathcal D_v$ then $D_X^{ap}u(p)=D_X^{ap}v(p)$.
	\end{itemize}
\end{proposition}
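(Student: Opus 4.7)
Set $E\coloneqq\{q\in\Omega:u(q)=v(q)\}$, so by hypothesis $\mathscr L^n(B(p,r)\cap E)/\mathscr L^n(B(p,r))\to 1$ as $r\to 0$. For (i), let $a\coloneqq u^\star(p)$ and $b\coloneqq v^\star(p)$. Since $u=v$ on $E$, the pointwise bound $|a-b|\chi_E\le|u-a|+|v-b|$ integrated on $B(p,r)\cap E$ and divided by $\mathscr L^n(B(p,r))$ gives
\[
|a-b|\,\frac{\mathscr L^n(B(p,r)\cap E)}{\mathscr L^n(B(p,r))}\le\fint_{B(p,r)}|u-a|\,d\mathscr L^n+\fint_{B(p,r)}|v-b|\,d\mathscr L^n.
\]
As $r\to 0$ the right-hand side vanishes by definition of approximate limit while the left-hand factor tends to $|a-b|$, forcing $a=b$.

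For (ii) and (iii) I would argue via the blow-up characterizations of Propositions \ref{jumpequiv} and \ref{diffequiv}. Pass to adapted exponential coordinates $F_p$ around $p$, write $\widetilde E\coloneqq F_p^{-1}(E)$ and, for a fixed $R>0$, set
\[
A_r\coloneqq\widehat B(0,R)\setminus\delta_{1/r}(\widetilde E).
\]
The crucial preliminary fact is that $\mathscr L^n(A_r)\to 0$ as $r\to 0$. This follows by transferring the density $1$ assumption from the CC balls $B(p,\cdot)$ to the nilpotent-approximation balls $\widehat B(0,\cdot)$, using the identity $F_p(\widetilde B(0,r))=B(p,r)$, the Jacobian asymptotics \eqref{measureequivalence}, the Ahlfors regularity in Theorem \ref{ccproperties}(iii), the Carnot-group volume formula \eqref{eq:pallacarnot} and the sandwich $\widehat B(0,(1-\varepsilon)s)\subset\widetilde B(0,s)\subset\widehat B(0,(1+\varepsilon)s)$ from Theorem \ref{th:localgroup}.

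For (ii), Proposition \ref{jumpequiv} yields the $L^1_{loc}$ convergences $\widetilde u_r\coloneqq u\circ F_p\circ\delta_r\to w_u$ and $\widetilde v_r\coloneqq v\circ F_p\circ\delta_r\to w_v$, where $w_u\coloneqq w_{u^+(p),u^-(p),\nu_u(p)}$ and $w_v\coloneqq w_{v^+(p),v^-(p),\nu_v(p)}$ are the (bounded) step profiles attached to the jump triples. Since $\widetilde u_r=\widetilde v_r$ on $\delta_{1/r}(\widetilde E)$, splitting $\widehat B(0,R)$ into $A_r$ and its complement and applying the triangle inequality gives
\[
\|w_u-w_v\|_{L^1(\widehat B(0,R))}\le 2\|\widetilde u_r-w_u\|_{L^1(\widehat B(0,R))}+2\|\widetilde v_r-w_v\|_{L^1(\widehat B(0,R))}+(\|w_u\|_\infty+\|w_v\|_\infty)\,\mathscr L^n(A_r),
\]
and letting $r\to 0$ yields $w_u=w_v$ a.e.\ on $\R^n$; the injectivity portion of Proposition \ref{jumpequiv} then gives the desired equivalence of triples. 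For (iii), (i) supplies $c\coloneqq u^\star(p)=v^\star(p)$, so Proposition \ref{diffequiv} provides vectors $z^u,z^v\in\R^{k\times m}$ with $g_r\coloneqq(\widetilde u_r-c)/r\to g^\ast=(\widetilde L_{z^u_1},\ldots,\widetilde L_{z^u_k})$ and $h_r\coloneqq(\widetilde v_r-c)/r\to h^\ast=(\widetilde L_{z^v_1},\ldots,\widetilde L_{z^v_k})$ in $L^1_{loc}$, with $g_r=h_r$ on $\delta_{1/r}(\widetilde E)$. Since $g^\ast,h^\ast$ are polynomials, they are bounded on every $\widehat B(0,R)$, and the identical triangle-inequality estimate forces $g^\ast=h^\ast$ on each $\widehat B(0,R)$, hence $z^u=z^v$, i.e.\ $D_X^{ap}u(p)=D_X^{ap}v(p)$.

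The only non-routine step is the preliminary estimate $\mathscr L^n(A_r)\to 0$, which requires navigating three different ball systems ($B$, $\widetilde B$, $\widehat B$) and the Jacobian factor in order to carry the density $1$ condition across the coordinate change; once this is in place, each of (i)--(iii) reduces to a direct triangle-inequality computation.
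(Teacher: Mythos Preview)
Your proof is correct and, for parts (ii) and (iii), follows essentially the same blow-up strategy as the paper: use Propositions \ref{jumpequiv} and \ref{diffequiv} to pass to $L^1_{loc}$ limits, observe that $\widetilde u_r=\widetilde v_r$ on $\delta_{1/r}(\widetilde E)$, and show that the measure of the complement $A_r$ tends to zero. The paper phrases the last step as convergence in measure (so that $(\widetilde u_r)$ and $(\widetilde v_r)$ share the same measure limit), while you instead exploit the boundedness of the limits $w_u,w_v$ (resp.\ $g^\ast,h^\ast$) on $\widehat B(0,R)$ to close a direct $L^1$ triangle inequality; both arguments are equivalent here.

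The one genuine difference is in part (i): the paper also routes (i) through the blow-up characterization (Remark \ref{rem:applimIFFconvL1}) and the vanishing of $\mathscr L^n(A_r)$, whereas your argument stays entirely at the level of CC balls and the bare definition of approximate limit, via the pointwise inequality $|a-b|\chi_E\le|u-a|+|v-b|$. This is more elementary and avoids the exponential coordinates altogether for (i), which is a small gain. Conversely, the paper's uniform use of the blow-up viewpoint makes (i)--(iii) formally parallel. Either way, the ``non-routine'' step you flag---carrying the density~1 hypothesis from $B(p,\cdot)$ to $\widehat B(0,\cdot)$---is exactly the computation the paper sketches in one displayed line in its own proof, and your indicated chain of references (\eqref{measureequivalence}, Theorem \ref{ccproperties}(iii), \eqref{eq:pallacarnot}, Theorem \ref{th:localgroup}) is the right justification.
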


\subsection{Functions with bounded \texorpdfstring{$X$}{X}-variation}\label{sec:BVX}
In this section we review the definition and basic properties of $BV_X$ functions. We keep on working in a fixed equiregular CC space $(\R^n,X)$, while $\Omega$ denotes a fixed  open subset of $\R^n$.

\begin{definition}[Functions with bounded $X$-variation]
	We say that $u \in L^1_{loc}(\Omega)$ is a function of {\em locally bounded $X$-va\-ria\-tion} in $\Omega$, and we write $u \in BV_{X,loc}(\Omega)$, if there exists a $\R^m$-valued Radon measure $D_Xu=(D_{X_1}u,\dots,D_{X_m}u)$ in $\Omega$ such that for every open set $A\Subset \Omega$ and for every $\varphi\in C_c^1(A)$ we have
	\begin{equation}
	\forall\ i=1,\dots, m\qquad \int_A \varphi\: d(D_{X_i}u)=-\int_A u X_i^*\varphi \: d\mathscr{L}^n,\label{1}
	\end{equation}
	where $X_i^*$ denotes the formal adjoint of $X_i$. If $u\in L^1(\Omega)$,  we say that $u$ has {\em bounded $X$-variation} in $\Omega$, and we write $u \in BV_X(\Omega)$, if, moreover, the total variation $|D_Xu|$ of $D_Xu$ is finite on $\Omega$.
\end{definition}

As customary, we write $BV_X(\Omega;\R^k)\coloneqq(BV_X(\Omega))^k$, and similarly for $BV_{X,loc}(\Omega;\R^k)$. It can be useful to observe that if $u\in BV_X(\Omega;\R^k)$, the following inequalities hold
\begin{equation}\label{vectorbv}
\max_{1\leq i\leq k} |D_Xu^i|(\Omega)\leq |D_Xu|(\Omega)\leq \sum_{i=1}^k|D_Xu^i|(\Omega).
\end{equation}
	
The following approximation result is proved in \cite{FSSC3,GaroNhiCPAM}.
	
\begin{theorem}\label{teo:approxBV}
  Let $u\in BV_X(\Omega;\R^k)$. Then there exists a sequence $(u_h)$ in $C^\infty(\Omega; \R^k)$ such that
  \[
  \lim_h\|u_h-u\|_{L^1(\Omega;\R^k)}=0 \qquad \text{and}\qquad \lim_h|D_Xu_h|(\Omega)=|D_Xu|(\Omega).
  \]
\end{theorem}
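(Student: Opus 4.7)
The plan is to adapt the classical Meyers--Serrin strategy to the CC setting, combining a partition of unity with Euclidean convolution and using a Friedrichs-type commutator estimate to handle the non-constant coefficients of the vector fields $X_1,\dots,X_m$. It suffices to treat the scalar case $k=1$, as the vector-valued case follows from \eqref{vectorbv} applied componentwise.

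First I would fix $\eta>0$ and construct an exhaustion of $\Omega$ by open sets with $\Omega_0=\emptyset$ and $\Omega_j\Subset\Omega_{j+1}\Subset\Omega$, $\bigcup_j\Omega_j=\Omega$, and set $A_j\coloneqq\Omega_{j+1}\setminus\overline{\Omega_{j-1}}$. Choose a smooth partition of unity $\{\zeta_j\}_{j\geq 1}$ subordinate to $\{A_j\}$, so that $\sum_j\zeta_j\equiv 1$ on $\Omega$ and hence $\sum_j X_i\zeta_j\equiv 0$ for each $i=1,\dots,m$. Pick a standard Euclidean mollifier $\rho\in C^\infty_c(\R^n)$ with $\int\rho\,d\mathscr L^n=1$, and for each $j$ select $\varepsilon_j>0$ so small that $\mathrm{supp}(\rho_{\varepsilon_j}\ast(\zeta_j u))\subset A_j'$ for a slightly larger relatively compact $A_j'\subset\Omega$, that
\[
\|\rho_{\varepsilon_j}\ast(\zeta_j u)-\zeta_j u\|_{L^1(\Omega)}\leq\eta 2^{-j},
\]
and that the two additional smallness conditions described below hold. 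Define
\[
u_\eta\coloneqq\sum_{j=1}^\infty\rho_{\varepsilon_j}\ast(\zeta_j u).
\]
On any $\Omega_k$ the sum is locally finite, so $u_\eta\in C^\infty(\Omega)$, and $\|u_\eta-u\|_{L^1(\Omega)}\leq\eta$.

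Next I would estimate $|D_Xu_\eta|(\Omega)$. Using the Leibniz rule $X_i(\zeta_j u)=\zeta_j X_iu+(X_i\zeta_j)u$ distributionally and then splitting, for any test vector field $\varphi\in C^1_c(\Omega;\R^m)$ with $|\varphi|\leq 1$,
\begin{align*}
\int_\Omega\langle Xu_\eta,\varphi\rangle\,d\mathscr L^n
&=\sum_{i,j}\int_\Omega X_i\bigl(\rho_{\varepsilon_j}\ast(\zeta_j u)\bigr)\varphi_i\,d\mathscr L^n\\
&=\sum_{i,j}\int_\Omega\bigl(\rho_{\varepsilon_j}\ast(\zeta_j D_{X_i}u)\bigr)\varphi_i\,d\mathscr L^n
+\sum_{i,j}\int_\Omega\bigl(\rho_{\varepsilon_j}\ast((X_i\zeta_j)u)\bigr)\varphi_i\,d\mathscr L^n\\
&\quad+\sum_{i,j}\int_\Omega\Bigl(X_i\bigl(\rho_{\varepsilon_j}\ast(\zeta_j u)\bigr)-\rho_{\varepsilon_j}\ast(X_i(\zeta_j u))\Bigr)\varphi_i\,d\mathscr L^n.
\end{align*}
The first term is bounded by $\sum_j\int_\Omega\zeta_j\,d|D_Xu|=|D_Xu|(\Omega)$, since the $A_j$ have bounded overlap (each point of $\Omega$ lies in at most, say, three of them) and $\sum_j\zeta_j=1$; a careful book-keeping gives the bound $|D_Xu|(\Omega)$ up to an error controlled by the bounded overlap.

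The main obstacle is the third (commutator) term. For $X_i=\sum_k a_{ik}(x)\partial_{x_k}$ with smooth coefficients, a Friedrichs-type lemma (proved by integration by parts and by exploiting the smoothness of the $a_{ik}$ together with $\int\partial_{x_k}\rho_\varepsilon=0$) yields
\[
\bigl\|X_i(\rho_\varepsilon\ast v)-\rho_\varepsilon\ast(X_iv)\bigr\|_{L^1(A_j')}\xrightarrow[\varepsilon\to 0]{}0
\qquad\text{for every }v\in L^1_{loc}\text{ with }X_iv\in\mathcal M_{loc}.
\]
This allows me to choose each $\varepsilon_j$ so that the commutator contribution to the $j$-th summand is at most $\eta 2^{-j}$. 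For the second term I use $\sum_j X_i\zeta_j=0$ to rewrite
\[
\sum_j\rho_{\varepsilon_j}\ast((X_i\zeta_j)u)=\sum_j\Bigl(\rho_{\varepsilon_j}\ast((X_i\zeta_j)u)-(X_i\zeta_j)u\Bigr),
\]
and choose $\varepsilon_j$ so small that the $L^1$-norm of the $j$-th term is at most $\eta 2^{-j}$. Combining these estimates and taking the supremum over $\varphi$ yields $|D_Xu_\eta|(\Omega)\leq|D_Xu|(\Omega)+C\eta$.

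Finally, applying this construction with $\eta=1/h$ produces a sequence $u_h\in C^\infty(\Omega)$ with $u_h\to u$ in $L^1(\Omega)$ and $\limsup_h|D_Xu_h|(\Omega)\leq|D_Xu|(\Omega)$. The reverse inequality $\liminf_h|D_Xu_h|(\Omega)\geq|D_Xu|(\Omega)$ follows from the standard lower semicontinuity of the $X$-variation under $L^1$-convergence, which is immediate from the duality definition \eqref{1}. This concludes the proof; the only genuinely delicate point is the Friedrichs commutator estimate, whose proof relies on the smoothness of the coefficients of the $X_i$ and can be carried out exactly as in the Euclidean BV theory with variable coefficients.
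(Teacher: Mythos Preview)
The paper does not actually prove this theorem: it is stated with the remark ``The following approximation result is proved in \cite{FSSC3,GaroNhiCPAM}'' and no argument is given. Your proposal reproduces precisely the strategy carried out in those references---the Anzellotti--Giaquinta/Meyers--Serrin scheme with a partition of unity, Euclidean mollification, and a Friedrichs-type commutator estimate to cope with the variable coefficients of the $X_i$---so there is nothing to compare against within the paper itself.

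Your outline is correct. Two small cosmetic points: in the bound of the first term, after moving the mollifier onto $\varphi$ you get $\sum_j\int_\Omega\zeta_j\langle\check\rho_{\varepsilon_j}\ast\varphi,\,dD_Xu\rangle$, and since $|\check\rho_{\varepsilon_j}\ast\varphi|\leq 1$ and $\sum_j\zeta_j=1$ this is bounded by $|D_Xu|(\Omega)$ exactly, with no ``bounded overlap'' error term; and the Friedrichs commutator lemma you invoke needs only $\zeta_j u\in L^1$ with compact support (no hypothesis on $X_i(\zeta_j u)$), which is how it is stated and proved in \cite{FSSC3}.
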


We now state and prove a simple but useful result.	
	
\begin{proposition}\label{BVdiffeo}
	Let $\Omega,\widetilde\Omega$ be two open sets in $\R^n$ and let $G:\Omega\rightarrow \widetilde\Omega$ be a diffeomorphism. Let also $X_1,\dots, X_m$ be vector fields on $\Omega$ and define for every $i=1,\dots,m$ the vector fields $Y_i\coloneqq dG(X_i)$ on $\widetilde\Omega$. Then 
	\begin{equation}\label{eq:tagliaerba}
	u\in BV_{X,loc}(\Omega)\quad\Longleftrightarrow\quad v\coloneqq u\circ G^{-1} \in BV_{Y,loc}(\widetilde\Omega).
	\end{equation}
	More precisely, for every open set $U\Subset\Omega$ and setting $V\coloneqq G(U)$, one has for every $u\in BV_{X,loc}(\Omega)$ that
	\begin{equation}\label{eq:tagliaerba2}
	m|D_Xu|(U)\leq |D_Yv|(V)\leq M|D_Xu|(U)
	\end{equation}
	for $m\coloneqq \inf_U |\det \nabla G|$ and $M\coloneqq \sup_U |\det\nabla G|$.
\end{proposition}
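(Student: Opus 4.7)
The plan is to identify $D_Y v$ explicitly as a pushforward of $D_X u$ under $G$, from which both the equivalence \eqref{eq:tagliaerba} and the estimate \eqref{eq:tagliaerba2} follow at once. Since $G$ is a diffeomorphism and the relation between $X$ and $Y$ is symmetric (we also have $X_i = dG^{-1}(Y_i)$), it suffices to prove the forward implication and the bound; applying the result to $G^{-1}$ gives the converse.

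The key algebraic ingredient is the pointwise adjoint identity
\[
X_i^*\bigl((\varphi\circ G)\,|\det\nabla G|\bigr)(p)=(Y_i^*\varphi)(G(p))\,|\det\nabla G(p)|,\qquad p\in U,
\]
valid for every $\varphi\in C_c^1(V)$ and every $i=1,\dots,m$. To prove it, I would start from the pointwise relation $X_i(f\circ G)=(Y_i f)\circ G$, which is just the definition $Y_i=dG(X_i)$, substitute a smooth test function $f$ into the adjoint identity $\int_V (Y_if)\,\varphi\,d\mathscr L^n=\int_V f\,Y_i^*\varphi\,d\mathscr L^n$, and perform the change of variables $q=G(p)$, $d\mathscr L^n(q)=|\det\nabla G(p)|\,d\mathscr L^n(p)$. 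The arbitrariness of $f$ then forces the claimed pointwise equality.

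Setting $\psi\coloneqq\varphi\circ G\in C_c^1(U)$, the function $\psi\,|\det\nabla G|$ is an admissible $C_c^1(U)$ test function for $u$; changing variables and using the definition of $D_{X_i}u$ yields
\[
-\int_V v\,Y_i^*\varphi\,d\mathscr L^n=-\int_U u\cdot X_i^*\bigl(\psi\,|\det\nabla G|\bigr)\,d\mathscr L^n=\int_U \psi\,|\det\nabla G|\,dD_{X_i}u.
\]
The right-hand side is precisely $\int_V \varphi\,dG_\#\bigl(|\det\nabla G|\,D_{X_i}u\bigr)$, so $D_{Y_i}v$ exists as a Radon measure and equals $G_\#\bigl(|\det\nabla G|\,D_{X_i}u\bigr)$. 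This proves \eqref{eq:tagliaerba} together with the stronger pushforward identity at the level of vector-valued measures.

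The estimates in \eqref{eq:tagliaerba2} then follow immediately. Indeed, since $G$ is a homeomorphism and $|\det\nabla G|$ is continuous and strictly positive on the closure of $U$, the pushforward identity transfers to total variations as
\[
|D_Yv|(V)=\int_U|\det\nabla G|\,d|D_Xu|,
\]
and the definitions of $m$ and $M$ give the stated bounds. The main delicate step is the bookkeeping in the adjoint identity of Step 2; once this is in hand, everything else reduces to a direct change of variables and elementary properties of pushforward measures.
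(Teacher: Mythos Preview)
Your proof is correct and in fact sharper than the paper's. The paper argues by first treating smooth $u$, where the change of variables $\int_V\langle Yv,\varphi\rangle\,d\mathscr L^n=\int_U\langle Xu,\varphi\circ G\rangle|\det\nabla G|\,d\mathscr L^n$ yields the upper bound $|D_Yv|(V)\le M|D_Xu|(U)$ directly, and then passes to general $u\in BV_{X,loc}$ via the smooth approximation theorem (Theorem~\ref{teo:approxBV}) and lower semicontinuity of the total variation; the lower bound and the converse implication are then obtained by swapping the roles of $G$ and $G^{-1}$.

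Your route is genuinely different: by isolating the pointwise adjoint identity $X_i^*\bigl((\varphi\circ G)|\det\nabla G|\bigr)=(Y_i^*\varphi)\circ G\cdot|\det\nabla G|$ you can test directly against a non-smooth $u$ and obtain the exact pushforward formula $D_{Y_i}v=G_\#\bigl(|\det\nabla G|\,D_{X_i}u\bigr)$, from which the two-sided estimate \eqref{eq:tagliaerba2} and even the precise identity $|D_Yv|(V)=\int_U|\det\nabla G|\,d|D_Xu|$ drop out. This is more elementary in that it avoids any appeal to smooth approximation of $BV_X$ functions, and it yields strictly more information (the measure-level identity rather than only inequalities). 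The paper's approach, on the other hand, is the more standard BV template and makes the role of lower semicontinuity explicit. One small point worth stating in your write-up: $|\det\nabla G|$ is smooth because $\det\nabla G$ is continuous and nowhere vanishing, hence of constant sign on each component; this is what ensures $\psi|\det\nabla G|\in C_c^1(U)$.
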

\begin{proof}
We claim that, for any open set $U\Subset\Omega$ and any $u\in BV_{X,loc}(\Omega)$, one has
\[
v\coloneqq u\circ G^{-1} \in BV_{Y}(V)\qquad\text{and}\qquad |D_Yv|(V)\leq M|D_Xu|(U).
\]
This would be enough to conclude: indeed, the claim would imply both the $\Rightarrow$ implication in \eqref{eq:tagliaerba} and the second inequality in \eqref{eq:tagliaerba2}, while the $\Leftarrow$ implication in \eqref{eq:tagliaerba} and the first inequality in \eqref{eq:tagliaerba2} simply follow by replacing $X,U,u,G$ with (respectively) $Y,V,v,G^{-1}$ and noticing that $m=(\sup_V |\det\nabla (G^{-1})|)^{-1}$.

Let us prove the claim. First we assume that $u\in C^\infty(U)$, so that also $v$ is smooth on $V$. For every $\varphi\in C_c^1(V;\R^m)$ with $|\varphi|\leq 1$, by a change of variable we have that
\[
\int_V \langle Yv,\varphi\rangle d\mathscr L^n = \int_U\langle Xu,(\varphi\circ G) \rangle |\det\nabla G|d\mathscr L^n,
\]	
which gives 
	\[
	 |D_Yv|(V)\leq M|D_Xu|(U).
	\] 	
	In  case $u\in BV_X(U)$ is not smooth, consider a sequence $(u_h)$ in $C^\infty(U)$ that converges to $u$ in $L^1(U)$ and such that
	\[
	\lim_h |D_Xu_h|(U)=|D_X u|(U).
	\]
	Defining $v_h\coloneqq u_h\circ G^{-1}$, we easily get that $v_h$ converges to $v$ in $L^1(V)$ as $h\to +\infty$. Therefore 
	\[
	|D_Yv|(V)\leq \liminf_h |D_Yv_h| (V)\leq M\liminf_h|D_Xu_h|(U)=M|D_Xu|(U)
	\]
	and the proof is accomplished.
\end{proof}	

\begin{definition}[Sets with finite $X$-perimeter]
A measurable set $E\subset\R^n$ has {\em locally finite $X$-perimeter} (resp., {\em finite $X$-perimeter}) in $\Omega$ if $\chi_E\in BV_{X,loc}(\Omega)$ (resp., $\chi_E\in BV_{X}(\Omega)$). In such a case we define the {\em $X$-perimeter measure} $P_X^E$ of $E$ by $P_X^E\coloneqq |D_X\chi_E|$. 
	\end{definition}
	
It will sometimes be useful to write $P_X(E,\cdot)$ instead of $P_X^E$.
	
\begin{definition}[Measure theoretic horizontal normal]
  If  $E$ is a set with locally finite $X$-perimeter, then by Riesz representation theorem there exists a $P_X^E$-measurable function $\nu_E:\R^n\to \mathbb S^{m-1}$  such that
  \[
  D_X\chi_E=\nu_EP_X^E.
  \]
 We  call $\nu_E$ the {\em measure theoretic horizontal normal} to $E$. 
\end{definition}
  
The following result is proved in \cite{Ambrosio01} and it will be of capital importance in the following.

\begin{theorem}\label{teo:ambrosio}
Let $(\R^n,X)$ be an equiregular CC space of homogeneous dimension $Q$; let $E\subset\R^n$ be a set with finite $X$-perimeter in an open set $\Omega\subset\R^n$. Then 
\begin{equation}
P_X^E\res\Omega=\theta \mathscr H^{Q-1}\res (\Omega\cap\partial^\ast  E)
\end{equation}
for a suitable  positive function $\theta$ that is locally bounded away from 0. Moreover 
\[
\limsup_{r\to0}\frac{P_X^E(B(p,2r))}{P_X^E(B(p,r))}<\infty\qquad\text{for $P_X^E$-a.e. }p\in \Omega\cap\partial^\ast  E.
\]
\end{theorem}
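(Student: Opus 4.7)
The proof hinges on establishing matching lower and upper density bounds for the $X$-perimeter measure at points of $\partial^\ast E$, and then invoking standard differentiation theory for Radon measures in doubling metric spaces. The main tools are: local Ahlfors $Q$-regularity of $\mathscr L^n$ (Theorem \ref{ccproperties}), the smooth approximation of Theorem \ref{teo:approxBV}, and the relative isoperimetric inequality on CC balls (which holds in equiregular CC spaces via the doubling property and the Poincaré inequality for horizontal vector fields). The plan is to treat three separate items: (a) that $P_X^E$ concentrates on $\partial^\ast E$; (b) a uniform lower bound of $P_X^E(B(p,r))/r^{Q-1}$ on $\partial^\ast E$; (c) a finite upper bound, $P_X^E$-a.e., for the same ratio. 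The combination (b)+(c) yields both the representation formula and the asymptotic doubling.

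\emph{Concentration on $\partial^\ast E$.} At $p\in E^0\cup E^1$, one of $\mathscr L^n(E\cap B(p,r))$ and $\mathscr L^n(B(p,r)\setminus E)$ tends to $0$ faster than $\mathscr L^n(B(p,r))\sim r^Q$. A direct application of the relative isoperimetric inequality then forces $P_X^E(B(p,r))=o(r^{Q-1})$ as $r\to 0$, and by a standard Vitali-covering argument this implies $P_X^E(\Omega\cap(E^0\cup E^1))=0$, so $P_X^E$ is concentrated on $\Omega\cap\partial^\ast E$.

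\emph{Lower density bound.} Fix $p\in\partial^\ast E$. By definition there exist $\delta>0$ and arbitrarily small radii $r$ with
\[
\min\bigl(\mathscr L^n(E\cap B(p,r)),\,\mathscr L^n(B(p,r)\setminus E)\bigr)\;\geq\;\delta\,\mathscr L^n(B(p,r)).
\]
Combining with local Ahlfors $Q$-regularity and the relative isoperimetric inequality applied on $B(p,r)$, one obtains
\[
P_X^E(B(p,\lambda r))\;\geq\;c_{\mathrm{iso}}\bigl(\delta\,\mathscr L^n(B(p,r))\bigr)^{(Q-1)/Q}\;\geq\;c\, r^{Q-1},
\]
for a constant $c>0$ uniform on compact subsets of $\Omega$ (since $\lambda$, $c_{\mathrm{iso}}$ and the Ahlfors constants are locally uniform). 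Hence $\liminf_{r\to 0}P_X^E(B(p,r))/r^{Q-1}\geq c>0$ locally uniformly on $\partial^\ast E$.

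\emph{Upper density bound and conclusion.} For each $M>0$ set $A_M\coloneqq\{p\in\Omega:\limsup_{r\to 0}P_X^E(B(p,r))/r^{Q-1}>M\}$. By a Vitali $5r$-covering (Theorem \ref{5rcovering}), for any $\eta>0$ one covers $A_M$ by balls $B(p_i,r_i)$ with $r_i<\eta$, $P_X^E(B(p_i,r_i))>M r_i^{Q-1}$ and $\{B(p_i,r_i/5)\}$ pairwise disjoint; this yields $\mathscr S^{Q-1}_{10\eta}(A_M)\leq C\,P_X^E(\Omega)/M$, so $\mathscr H^{Q-1}(A_M)\leq C P_X^E(\Omega)/M\to 0$ as $M\to\infty$. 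Thus the upper $(Q-1)$-density of $P_X^E$ is finite $\mathscr H^{Q-1}$-a.e.; combined with the lower bound above, the standard differentiation theorem for Radon measures in doubling metric spaces (applicable thanks to Theorem \ref{ccproperties}\,(iii)) gives $P_X^E\res\Omega=\theta\,\mathscr H^{Q-1}\res(\Omega\cap\partial^\ast E)$ with $\theta$ locally bounded below by $c>0$. The asymptotic doubling property $\limsup_{r\to 0}P_X^E(B(p,2r))/P_X^E(B(p,r))<\infty$ $P_X^E$-a.e.\ then follows from the two-sided density control: dividing upper by lower density at $p$ gives a finite ratio, and both densities comparing $P_X^E(B(p,\rho))$ to $\rho^{Q-1}$ provides the desired quotient estimate.

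The main obstacle is the upper bound step (c): proving a priori that the $(Q-1)$-density is $P_X^E$-a.e.\ finite, which in general metric settings is subtle; the argument above uses that $P_X^E$ is a finite Radon measure, together with a careful Vitali selection tailored to the (non-Euclidean) CC balls, so that their diameters control the $\mathscr H^{Q-1}$ pre-measure. Another delicate point is ensuring the relative isoperimetric inequality holds with constants locally uniform in $p$, which follows from local Ahlfors regularity and a standard chain of implications in the theory of Poincaré spaces.
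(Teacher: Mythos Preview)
The paper does not supply its own proof of this theorem: it simply records the result and attributes it to \cite{Ambrosio01}. So there is no ``paper's proof'' to compare against, only the original argument of Ambrosio in the cited reference.

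That said, your sketch contains a genuine gap that would make the argument fail as written. In step (a) you claim that at $p\in E^0\cup E^1$ the relative isoperimetric inequality forces $P_X^E(B(p,r))=o(r^{Q-1})$. This is backwards: the isoperimetric inequality (Theorem \ref{isoperimetric}) gives a \emph{lower} bound on the perimeter in terms of $\min\bigl(\mathscr L^n(E\cap B),\mathscr L^n(B\setminus E)\bigr)$, not an upper bound. Knowing that this minimum is $o(r^Q)$ tells you nothing about how small $P_X^E(B(p,r))$ must be. In Ambrosio's actual argument, concentration on $\partial^\ast E$ goes the other way around: one first proves, via an ODE/iteration argument on $r\mapsto\mathscr L^n(E\cap B(p,r))$ combined with the isoperimetric inequality, that at $P_X^E$-a.e.\ point both $E$ and its complement have lower density bounded below by a fixed positive constant; concentration on $\partial^\ast E$ is then a consequence, not a starting point.

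There is a second, related issue. Your step (c) shows that the upper $(Q-1)$-density of $P_X^E$ is finite $\mathscr H^{Q-1}$-a.e., but what is needed for the representation and for asymptotic doubling is finiteness $P_X^E$-a.e.; you cannot pass from one to the other without already knowing the absolute continuity $P_X^E\ll\mathscr H^{Q-1}$, which is precisely part of what you are trying to prove. Likewise, your derivation of asymptotic doubling ``from the two-sided density control'' presupposes that both the upper and lower densities are finite and positive at the same point, which again has not been established $P_X^E$-a.e. In Ambrosio's proof the asymptotic doubling is obtained directly from the density lower bound for $E$ and $E^c$ together with the doubling of $\mathscr L^n$, and the representation then follows from differentiation theory for asymptotically doubling measures.
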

  
The proofs of the following well-known result can be found, for instance, in \cite{FSSC3}.

\begin{theorem}[Coarea Formula for $BV_X$ functions]\label{coarea}
	Let $(\R^n,X)$ be a $CC$ space, let $\Omega$ be an open set in $\R^n$ and let $u\in BV_X(\Omega)$. Then, if we define $E_s\coloneqq \{p\in\Omega:u(p)>s\}$, we have
	\[
	|D_Xu|(\Omega)=\int_{-\infty}^{+\infty}P_X(E_s;\Omega)ds.
	\]
\end{theorem}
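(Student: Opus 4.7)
The plan is to establish the identity via the two opposing inequalities, following the classical strategy of Fleming--Rishel adapted to the CC setting.

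For the bound $|D_Xu|(\Omega)\leq\int_{-\infty}^{+\infty}P_X(E_s;\Omega)\,ds$ I would argue by duality. Fix a test field $\varphi\in C^1_c(\Omega;\R^m)$ with $|\varphi|\leq 1$, write $u$ via the signed layer-cake identity
\[
u(p)=\int_0^{+\infty}\chi_{E_s}(p)\,ds-\int_{-\infty}^0\chi_{\Omega\setminus E_s}(p)\,ds,
\]
insert it into $\int_\Omega u\sum_i X_i^*\varphi_i\,d\mathscr{L}^n$, and apply Fubini. Since $\int_\Omega\sum_i X_i^*\varphi_i\,d\mathscr{L}^n=0$ (adjoint of the constant $1$ tested against compactly supported $\varphi$), the two pieces combine and one obtains
\[
\int_\Omega\varphi\cdot d(D_Xu)=\int_{-\infty}^{+\infty}\!\int_\Omega \varphi\cdot d(D_X\chi_{E_s})\,ds.
\]
Since the inner integral is dominated by $P_X(E_s;\Omega)$, taking the supremum over admissible $\varphi$ (and a standard exhaustion by open sets $A\Subset\Omega$) yields the inequality.

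For the reverse inequality I would first prove equality in the smooth case. If $u\in C^\infty(\Omega)\cap BV_X(\Omega)$ then $|D_Xu|(\Omega)=\int_\Omega|Xu|\,d\mathscr{L}^n$. By Sard's theorem, for $\mathscr{L}^1$-a.e. $s\in\R$ the level set $\{u=s\}$ is a smooth Euclidean hypersurface; at such $s$ the divergence theorem yields
\[
P_X(E_s;\Omega)=\int_{\{u=s\}\cap\Omega}\frac{|Xu|}{|\nabla u|}\,d\mathscr{H}^{n-1},
\]
because the horizontal normal to $\partial E_s$ is the projection of $-\nabla u/|\nabla u|$ onto the horizontal distribution. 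The classical Euclidean coarea formula applied with the weight $|Xu|/|\nabla u|$ then gives
\[
|D_Xu|(\Omega)=\int_\Omega|Xu|\,d\mathscr{L}^n=\int_{-\infty}^{+\infty}P_X(E_s;\Omega)\,ds.
\]

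To pass to a general $u\in BV_X(\Omega)$, invoke Theorem \ref{teo:approxBV} to choose $(u_h)\subset C^\infty(\Omega)$ with $u_h\to u$ in $L^1(\Omega)$ and $|D_Xu_h|(\Omega)\to|D_Xu|(\Omega)$. After extracting a subsequence we may assume $u_h\to u$ pointwise a.e.; since $\mathscr{L}^n(\{u=s\})=0$ for all but countably many $s$, for such $s$ we get $\chi_{\{u_h>s\}}\to\chi_{E_s}$ in $L^1_{loc}(\Omega)$. The standard lower semicontinuity of $X$-perimeter (immediate from its dual characterisation) gives $P_X(E_s;\Omega)\leq\liminf_h P_X(\{u_h>s\};\Omega)$ a.e., and Fatou combined with the smooth-case equality yields
\[
\int_{-\infty}^{+\infty}P_X(E_s;\Omega)\,ds\leq\liminf_{h\to\infty}\int_{-\infty}^{+\infty}P_X(\{u_h>s\};\Omega)\,ds=\lim_{h\to\infty}|D_Xu_h|(\Omega)=|D_Xu|(\Omega).
\]
The principal technical point is the lower-bound direction: once the smooth case is reduced to Federer's Euclidean coarea formula, the remaining work is the lower semicontinuity plus Fatou argument above, while extending the formula $P_X(E_s;\Omega)=\int_{\{u=s\}}|Xu|/|\nabla u|\,d\mathscr{H}^{n-1}$ to $\mathscr{L}^1$-a.e. level is the only step demanding genuine care, since one must invoke Sard to secure regularity of the level sets.
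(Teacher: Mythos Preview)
The paper does not prove this theorem: it simply records it as well known and refers to \cite{FSSC3}. Your proposal therefore cannot be compared line by line with a proof in the paper, but it is the standard Fleming--Rishel argument (layer-cake plus Fubini for the upper bound, smooth coarea plus approximation and lower semicontinuity for the lower bound), which is precisely the approach followed in the cited reference. The argument is correct; the only place I would tighten the exposition is the smooth step: when you apply the Euclidean coarea formula with weight $|Xu|/|\nabla u|$, you should remark that $\{Xu\neq 0\}\subset\{\nabla u\neq 0\}$ (since each $X_ju$ is a combination of Euclidean partials), so the set $\{\nabla u=0\}$ contributes nothing to either side and the identity $\int_\Omega|Xu|\,d\mathscr L^n=\int_\R P_X(E_s;\Omega)\,ds$ follows without needing Sard at that point.
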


The next result is essentially  \cite[Theorem 1.2]{CapDanGar94}; note, however, that the dimension $Q$ appearing in  \cite[Theorem 1.2]{CapDanGar94} is slightly different from the homogeneous dimension we are considering. See also \cite{Jerison}.

\begin{theorem}\label{teo:poincare}
Let $\Omega$ be an open subset of an equiregular CC space $(\R^n,X)$ of homogeneous dimension $Q$ and let $K\subseteq\Omega$ be compact. Then there exist $C>0$ and $R>0$ such that, for every $p\in K$, $r\in(0,R)$ and $u\in BV_{X,loc}(\Omega;\R^k)$, the inequality
\[
\left(\fint_{B(p,r)} |u-u_{p,r}|^{\frac{Q}{Q-1}}\,d\mathscr L^n\right)^{\frac{Q-1}{Q}} \leq  \frac{C}{r^{Q-1}}|D_Xu|(B(p,r)),
\]
where $u_{p,r}\coloneqq \fint_{B(p,r)} u\,d\mathscr L^n$, holds.
\end{theorem}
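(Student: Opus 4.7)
The plan is a two-step reduction: derive the inequality first for smooth functions from the known Poincar\'e-Sobolev estimate, then extend to $BV_X$ by Ambrosio-style approximation. By \eqref{vectorbv} it suffices to handle the scalar case $k=1$. For smooth $u$, the $(1,1)$-Poincar\'e-Sobolev inequality for H\"ormander vector fields in \cite[Theorem 1.2]{CapDanGar94} (cf.\ also \cite{Jerison}) yields
\[
\left(\fint_{B(p,r)} |u-u_{p,r}|^{Q'/(Q'-1)}\,d\mathscr L^n\right)^{(Q'-1)/Q'} \leq C_0\, r\fint_{B(p,r)} |Xu|\,d\mathscr L^n,
\]
for $p\in K$, $r\in (0,R)$ and $u\in C^\infty(\Omega)$, where the exponent $Q'$ is dictated by the doubling behaviour of $\mathscr L^n(B(p,r))$. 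In the equiregular setting, the local Ahlfors $Q$-regularity of Theorem \ref{ccproperties}(iii) forces $Q'=Q$, the homogeneous dimension, and simultaneously gives $\mathscr L^n(B(p,r))\geq c r^Q$ uniformly for $p\in K$, $r\in(0,R)$. Rewriting $r\fint_{B(p,r)}=r/\mathscr L^n(B(p,r))\cdot \int_{B(p,r)}$ converts the right-hand side into $Cr^{-(Q-1)}\int_{B(p,r)}|Xu|\,d\mathscr L^n$, giving the stated inequality for smooth $u$.

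To extend to $u\in BV_{X,loc}(\Omega)$, shrink $R$ so that $\Omega'\coloneqq \{q\in\R^n:d(q,K)<2R\}\Subset\Omega$, and apply Theorem \ref{teo:approxBV} on $\Omega'$ to produce smooth $u_h$ with $u_h\to u$ in $L^1(\Omega')$ and $\int_{\Omega'}|Xu_h|\,d\mathscr L^n\to |D_Xu|(\Omega')$. The non-negative Radon measures $\mu_h\coloneqq |Xu_h|\mathscr L^n\res\Omega'$ then converge weakly$^*$ to $|D_Xu|\res\Omega'$: by lower semicontinuity any weak$^*$ limit $\mu$ satisfies $\mu\geq |D_Xu|$, and equality of total masses forces $\mu=|D_Xu|$.

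The main subtlety, and the principal obstacle, is to control $\int_{B(p,r)}|Xu_h|\,d\mathscr L^n$ in the limit $h\to\infty$: weak$^*$ convergence only guarantees $\mu_h(B(p,\rho))\to |D_Xu|(B(p,\rho))$ at those radii $\rho$ for which $|D_Xu|(\partial B(p,\rho))=0$, i.e.\ for all $\rho\in(0,R)$ except at most countably many. For such a ``good'' radius $\rho$, Fatou's lemma (after extracting a subsequence with $u_h\to u$ almost everywhere) combined with $(u_h)_{p,\rho}\to u_{p,\rho}$ upgrades the smooth inequality for $u_h$ on $B(p,\rho)$ to the corresponding Poincar\'e inequality for $u$ on $B(p,\rho)$. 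For an arbitrary $r\in(0,R)$ I then pick good radii $\rho_j\nearrow r$: the nested balls $B(p,\rho_j)\nearrow B(p,r)$ give $|D_Xu|(B(p,\rho_j))\to |D_Xu|(B(p,r))$ by measure continuity, and monotone convergence of the $L^{Q/(Q-1)}$-integrals on these balls, combined with a Minkowski-type correction $\|u-u_{p,r}\|_{L^{Q/(Q-1)}(B(p,\rho_j))}\leq \|u-u_{p,\rho_j}\|_{L^{Q/(Q-1)}(B(p,\rho_j))}+|u_{p,\rho_j}-u_{p,r}|\cdot\mathscr L^n(B(p,\rho_j))^{(Q-1)/Q}$ (with the second term vanishing as $\rho_j\to r$), promotes the inequality from each $B(p,\rho_j)$ to $B(p,r)$ with a constant $C$ independent of $p$, $r$ and $u$.
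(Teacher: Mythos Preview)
Your argument is correct. The paper's proof follows a more abstract path: instead of invoking the Poincar\'e--Sobolev inequality of \cite[Theorem~1.2]{CapDanGar94} directly (the paper itself flags that the ``dimension'' appearing there need not coincide with the homogeneous $Q$), it starts from the weak $(1,1)$-Poincar\'e inequality \cite[Theorem~1.1]{CapDanGar94} and feeds it into the metric-measure-space machinery of Haj\l asz--Koskela \cite[Theorem~5.1, Corollary~9.8, Theorem~10.3]{HajKos}, which supplies the self-improvement to the Sobolev exponent $Q/(Q-1)$ with the correct $Q$ read off from the doubling/Ahlfors structure of Theorem~\ref{ccproperties}; the passage to $BV_X$ then again uses Theorem~\ref{teo:approxBV}. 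Your route is more hands-on: you argue that in the equiregular case Ahlfors regularity forces the exponent in \cite{CapDanGar94} to be the homogeneous $Q$, and you carry out the approximation step explicitly via weak$^*$ convergence of $|Xu_h|\mathscr L^n$ together with a good-radii/continuity argument on $r$. The trade-off is that the paper's version is essentially a one-line appeal to a heavy external toolbox, while yours is self-contained once the smooth Poincar\'e--Sobolev is granted; both ultimately rely on Theorem~\ref{teo:approxBV} for the extension from smooth functions to $BV_X$.
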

\begin{proof}
  It is clearly enough to consider the case $k=1$. The proof then easily follows by \cite[Theorem 5.1]{HajKos} on taking into account Theorem \ref{ccproperties}, \cite[Theorem 1.1]{CapDanGar94}, \cite[Corollary 9.8 and Theorem 10.3] {HajKos} and Theorem \ref{teo:approxBV}.
\end{proof}

An easy consequence of Theorem \ref{teo:poincare} is the following isoperimetric inequality.

\begin{theorem}[Isoperimetric inequality in CC spaces]\label{isoperimetric}
	Let $(\R^n,X)$ be an equiregular CC space and let $K\subset\R^n$ be a compact set. Then there exist $C>0$ and $R>0$ such that, for every $p\in K$, $r\in(0,R)$ and every ${\mathscr L}^n$-measurable set $E\subseteq \R^n$, one has
	\[
	\min\left\{\mathscr{L}^n(E\cap B(p,r)),\mathscr{L}^n(B(p,r)\setminus E)\right\}^{\frac{Q-1}{Q}}\leq C P_X(E,B(p,r)).
	\]
\end{theorem}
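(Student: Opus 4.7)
The plan is to derive the isoperimetric inequality by applying the Poincar\'e inequality (Theorem \ref{teo:poincare}) to the characteristic function $u = \chi_E$, which lies in $BV_{X,loc}(\R^n)$ with $|D_Xu| = P_X^E$. This is the standard route from a $(1,Q/(Q-1))$-Poincar\'e inequality to an isoperimetric inequality, and all the constants that appear will be locally uniform on $K$ thanks to Ahlfors regularity.

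First, I fix $p\in K$ and $r\in(0,R)$ with $R$ as in Theorem \ref{teo:poincare}, set $\alpha\coloneqq u_{p,r} = \mathscr L^n(E\cap B(p,r))/\mathscr L^n(B(p,r))\in[0,1]$, and compute directly
\[
\int_{B(p,r)}|\chi_E-\alpha|^{\tfrac{Q}{Q-1}}\,d\mathscr L^n = \mathscr L^n(E\cap B(p,r))(1-\alpha)^{\tfrac{Q}{Q-1}} + \mathscr L^n(B(p,r)\setminus E)\,\alpha^{\tfrac{Q}{Q-1}}.
\]
Without loss of generality I may assume $\mathscr L^n(E\cap B(p,r))\leq \mathscr L^n(B(p,r)\setminus E)$, i.e.\ $\alpha\leq 1/2$, since the complementary case is handled by swapping $E$ with its complement (which has the same $X$-perimeter). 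Under this assumption $(1-\alpha)^{Q/(Q-1)}\geq (1/2)^{Q/(Q-1)}$, so dropping the second term on the right yields
\[
\int_{B(p,r)}|\chi_E-\alpha|^{\tfrac{Q}{Q-1}}\,d\mathscr L^n \geq \left(\tfrac{1}{2}\right)^{\tfrac{Q}{Q-1}}\mathscr L^n(E\cap B(p,r)).
\]

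Next, I raise both sides to the power $(Q-1)/Q$, divide by $\mathscr L^n(B(p,r))^{(Q-1)/Q}$ to convert $\int$ into $\fint$, and apply Theorem \ref{teo:poincare}:
\[
\tfrac{1}{2}\,\frac{\mathscr L^n(E\cap B(p,r))^{\tfrac{Q-1}{Q}}}{\mathscr L^n(B(p,r))^{\tfrac{Q-1}{Q}}} \leq \left(\fint_{B(p,r)}|\chi_E-\alpha|^{\tfrac{Q}{Q-1}}\,d\mathscr L^n\right)^{\tfrac{Q-1}{Q}} \leq \frac{C}{r^{Q-1}}\,P_X(E,B(p,r)).
\]
Finally, by local Ahlfors regularity (Theorem \ref{ccproperties} (iii)) there exists $C'>0$ (depending only on $K$) such that $\mathscr L^n(B(p,r))\leq C'r^Q$ for all $p\in K$ and sufficiently small $r$, hence $\mathscr L^n(B(p,r))^{(Q-1)/Q}\leq C'^{(Q-1)/Q}r^{Q-1}$. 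Substituting this into the previous inequality and absorbing all constants produces $\mathscr L^n(E\cap B(p,r))^{(Q-1)/Q}\leq C''P_X(E,B(p,r))$, which is the desired estimate (after possibly replacing $R$ with the minimum of the radii furnished by Theorems \ref{teo:poincare} and \ref{ccproperties}).

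There is really no serious obstacle here: once Theorem \ref{teo:poincare} is available, the only point requiring a little care is the reduction $\alpha\leq 1/2$, and confirming that the constants from Poincar\'e and from Ahlfors regularity are uniform over $p\in K$ for small $r$, which is exactly what those theorems assert. The whole argument is elementary bookkeeping around the key ingredient, the Poincar\'e inequality for $BV_X$ functions.
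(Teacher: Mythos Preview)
Your proof is correct and follows exactly the route the paper indicates: the paper does not give a proof of Theorem~\ref{isoperimetric} but simply declares it ``an easy consequence of Theorem~\ref{teo:poincare}'', and applying the Poincar\'e inequality to $u=\chi_E$ together with Ahlfors regularity is precisely that easy consequence. One tiny remark: your claim that $\chi_E\in BV_{X,loc}(\R^n)$ is not literally true for arbitrary measurable $E$, but since the inequality is vacuous when $P_X(E,B(p,r))=+\infty$ and Theorem~\ref{teo:poincare} only uses $u$ on $B(p,r)$, this is harmless.
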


We conclude this section with some auxiliary results. The first one is proved in \cite{DVimmcpt}.

\begin{theorem}\label{teo:applicazione}
Let $X=(X_1,\dots,X_m)$ and $X^j=(X_1^j,\dots,X_m^j)$, $j\in \mathbb N$, be $m$-tuples of linearly independent smooth vector fields on $\R^n$ such that $X$ satisfies the H\"ormander condition and its CC balls are bounded in $\R^n$; assume that, for every $i=1,\dots,m$, $X^j_i\to X_i$ in $C^\infty_{loc}(\R^n)$ as $j\to\infty$.  Let $u_j\in BV_{X^j,loc}(\R^n)$ be a sequence of functions that is locally uniformly  bounded in $BV_{X^j}$, i.e., such that for any compact set $K\subset\R^n$ there exists $M>0$ such that
\[
\forall j\in\mathbb N\qquad \|u_j\|_{L^1(K)} + |D_{X^j}u_j|(K)\leq M<\infty.
\]
Then, there exist $u\in BV_{X,loc}(\R^n)$ and a subsequence $(u_{j_h})$ of $(u_j)$ such that $u_{j_h}\to u$ in $L^1_{loc}(\R^n)$ as $h\to\infty$. Moreover, for any bounded open set $\Omega\subset\R^n$ one has
\[
|D_Xu|(\Omega) \leq \liminf_{j\to\infty} |D_{X^j} u_j|(\Omega).
\]
\end{theorem}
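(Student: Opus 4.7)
Although the statement is quoted from \cite{DVimmcpt}, the natural strategy is to reproduce the classical $BV$-compactness argument, uniformly in $j$. The plan is to pass through a uniform Poincar\'e inequality for the family $(X^j)$ on compact sets, deduce $L^1_{loc}$ precompactness of $(u_j)$, and finally identify the limit using the $C^\infty_{loc}$ convergence of the adjoint operators.

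\textbf{Step 1: Convergence of the CC distances.} First I would show that the CC distances $d^j$ associated with $X^j$ converge to $d$ locally uniformly in $\R^n\times\R^n$. This is a consequence of $X^j\to X$ in $C^\infty_{loc}$: any $X$-subunit curve between nearby points can be perturbed into an $X^j$-subunit curve (up to a correction $o(1)$ as $j\to\infty$) by solving the corresponding ODE, and conversely. From this one also gets a uniform local Ahlfors $Q$-regularity of $(\R^n,d^j,\mathscr L^n)$ on any compact set, with constants independent of $j$ large enough.

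\textbf{Step 2: Uniform Poincar\'e inequality.} Next, I would argue that Theorem \ref{teo:poincare} holds uniformly in $j$: for every compact $K\subset\R^n$ there exist $C>0$ and $R>0$ (independent of $j$ large) such that, for every $j$, every $p\in K$, every $r\in(0,R)$ and every $u\in BV_{X^j,loc}$,
\[
\left(\fint_{B^j(p,r)}|u-u_{p,r}^j|^{Q/(Q-1)}\,d\mathscr L^n\right)^{(Q-1)/Q}\leq \frac{C}{r^{Q-1}}|D_{X^j}u|(B^j(p,r)).
\]
This follows by revisiting \cite[Theorem~1.2]{CapDanGar94} and checking that all the constants therein depend on $X^j$ only through quantities (commutator bounds, subunit control, doubling constants) that pass to uniform bounds thanks to Step 1 and $C^\infty_{loc}$ convergence.

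\textbf{Step 3: $L^1_{loc}$ precompactness.} Fix a bounded open $\Omega\Subset\R^n$. From Step 2 plus the uniform doubling property, I would deduce (Rellich--Kondrachov type argument as in \cite{GaroNhiCPAM, HajKos}) that any sequence which is bounded in $L^1(\Omega)$ and has uniformly bounded $X^j$-variation on $\Omega$ admits a subsequence converging in $L^1(\Omega)$. A standard diagonal argument over an exhaustion $\Omega_h\nearrow\R^n$ produces $u\in L^1_{loc}(\R^n)$ and a subsequence $u_{j_h}\to u$ in $L^1_{loc}(\R^n)$.

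\textbf{Step 4: Identification of the limit and lower semicontinuity.} Let $\Omega\subset\R^n$ be bounded and open, and fix $\varphi=(\varphi_1,\dots,\varphi_m)\in C_c^1(\Omega;\R^m)$ with $|\varphi|\leq 1$. Since $X^j_i\to X_i$ in $C^\infty_{loc}$, the adjoints $(X^j_i)^\ast\varphi_i$ converge to $X_i^\ast\varphi_i$ uniformly on the common compact support. Combining this with $u_{j_h}\to u$ in $L^1_{loc}$,
\[
\sum_{i=1}^m\int_\Omega u\,X_i^\ast\varphi_i\,d\mathscr L^n=\lim_h\sum_{i=1}^m\int_\Omega u_{j_h}(X^{j_h}_i)^\ast\varphi_i\,d\mathscr L^n=-\lim_h\int_\Omega\varphi\cdot d D_{X^{j_h}}u_{j_h},
\]
whose absolute value is bounded above by $\liminf_h|D_{X^{j_h}}u_{j_h}|(\Omega)$. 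Taking the supremum over such $\varphi$ yields both $u\in BV_{X,loc}(\R^n)$ and
\[
|D_Xu|(\Omega)\leq\liminf_{h\to\infty}|D_{X^{j_h}}u_{j_h}|(\Omega).
\]

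The main obstacle is Step 2, i.e.\ securing the Poincar\'e inequality with constants independent of $j$; this requires revisiting \cite{CapDanGar94} and tracking the dependence of each constant on the vector fields, using the $C^\infty_{loc}$ convergence and Step 1 to bound everything uniformly on compact sets.
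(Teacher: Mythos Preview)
The paper does not prove this theorem; it explicitly cites the result from \cite{DVimmcpt} (see the sentence introducing Theorem~\ref{teo:applicazione}). There is therefore no proof in the paper to compare against for the compactness part. What the paper \emph{does} prove is Proposition~\ref{prop:semicontstrutturavariabile}, immediately following the theorem, and that argument is exactly your Step~4: pass the adjoints $(X_i^j)^\ast\varphi_i$ to the limit using $C^\infty_{loc}$ convergence together with $L^1_{loc}$ convergence of $u_j$, then take the supremum over test functions. So on the only portion where a comparison is possible, your approach coincides with the paper's.

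As for your Steps 1--3, the outline is the standard route and is indeed the strategy of \cite{DVimmcpt}. You correctly flag Step~2 as the crux: obtaining a Poincar\'e inequality with constants uniform in $j$ requires going back into the proofs of \cite{CapDanGar94,Jerison,NSW} and checking that every quantitative input (ball--box constants, doubling constants, subunit control) is stable under $C^\infty_{loc}$ convergence of the vector fields. This is genuine work, not a formality, and your sketch is honest about that.

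One small remark: your Step~4 gives $|D_Xu|(\Omega)\le \liminf_h |D_{X^{j_h}}u_{j_h}|(\Omega)$ along the extracted subsequence, while the theorem as stated in the paper writes $\liminf_{j\to\infty}$ over the full sequence. Since the liminf along a subsequence dominates the liminf along the full sequence, the latter does not follow from the former; this is most likely a harmless imprecision in the paper's statement rather than a defect in your argument.
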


The proof of Theorem \ref{teo:applicazione}  given in \cite{DVimmcpt}  implicitly contains also the following result's proof, that we however provide for the sake of completeness.

\begin{proposition}\label{prop:semicontstrutturavariabile}
Let $X=(X_1,\dots,X_m)$ and $X^j=(X_1^j,\dots,X_m^j)$, $j\in \mathbb{N}$, be $m$-tuples of linearly independent smooth vector fields on $\mathbb R^n$ such that, for every $i=1,\dots,m$, $X^j_i\to X_i$ in $C^\infty_{loc}(\R^n)$ as $j\to\infty$.  Let $(u_j)\subset L^1_{loc}(\R^n)$ be a sequence converging in $L^1_{loc}(\R^n)$ to some $u$; then, for any open bounded set $\Omega\subset\R^n$ one has
\[
|D_{X} u|(\Omega) \leq\liminf_{j\to\infty} |D_{X^j} u_j|(\Omega)
\]
\end{proposition}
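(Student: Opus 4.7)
The plan is to exploit the duality characterization of the total $X$-variation and pass to the limit using that $C^\infty_{loc}$ convergence of the vector fields implies uniform convergence of their formal adjoints on compact sets. Recall that, by the very definition of $D_X u$ via distributional derivatives and Riesz's representation theorem, for every $u\in L^1_{loc}(\Omega)$ one has
\[
|D_X u|(\Omega)=\sup\left\{-\int_\Omega u\sum_{i=1}^m X_i^*\varphi_i\,d\mathscr L^n : \varphi=(\varphi_1,\dots,\varphi_m)\in C_c^1(\Omega;\R^m),\ |\varphi|\leq 1\right\},
\]
with the convention that the supremum equals $+\infty$ if $u\notin BV_{X,loc}(\Omega)$; the analogous identity holds with $u,X$ replaced by $u_j,X^j$.

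Fix $\varepsilon>0$ and choose $\varphi\in C_c^1(\Omega;\R^m)$ with $|\varphi|\leq 1$ such that $-\sum_i\int_\Omega u\, X_i^*\varphi_i\,d\mathscr L^n\geq |D_X u|(\Omega)-\varepsilon$; set $K\coloneqq\mathrm{supp}\,\varphi$, which is a compact subset of $\Omega$. Writing $X_i=\sum_k a_{i,k}\partial_k$ and $X^j_i=\sum_k a^j_{i,k}\partial_k$, the hypothesis $X^j_i\to X_i$ in $C^\infty_{loc}$ means $a^j_{i,k}\to a_{i,k}$ in $C^\infty_{loc}$. Since
\[
(X^j_i)^*\varphi_i=-\sum_k a^j_{i,k}\partial_k\varphi_i-\Big(\sum_k\partial_k a^j_{i,k}\Big)\varphi_i,
\]
we infer $(X^j_i)^*\varphi_i\to X_i^*\varphi_i$ uniformly on $K$, and in particular the $L^\infty(K)$-norms are bounded uniformly in $j$.

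Since $u_j\to u$ in $L^1_{loc}(\R^n)$, the sequence $\|u_j\|_{L^1(K)}$ is bounded and a standard triangle-inequality split gives
\[
\left|\int_\Omega u_j(X_i^j)^*\varphi_i\,d\mathscr L^n-\int_\Omega u\, X_i^*\varphi_i\,d\mathscr L^n\right|\leq \|u_j\|_{L^1(K)}\|(X_i^j)^*\varphi_i-X_i^*\varphi_i\|_{L^\infty(K)}+\|u_j-u\|_{L^1(K)}\|X_i^*\varphi_i\|_{L^\infty(K)},
\]
and both terms vanish as $j\to\infty$. Using $\varphi$ as an admissible test map for $X^j$ in the duality formula for $u_j$ yields $-\sum_i\int_\Omega u_j(X_i^j)^*\varphi_i\,d\mathscr L^n\leq |D_{X^j}u_j|(\Omega)$ for every $j$; taking the $\liminf$ as $j\to\infty$ we deduce
\[
|D_X u|(\Omega)-\varepsilon\leq -\sum_{i=1}^m\int_\Omega u\, X_i^*\varphi_i\,d\mathscr L^n\leq \liminf_{j\to\infty}|D_{X^j}u_j|(\Omega).
\]
Letting $\varepsilon\to 0$ concludes the argument.

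The proof is essentially routine; the only delicate points are the correct application of the duality formula (which also justifies that $|D_{X^j}u_j|(\Omega)=+\infty$ is allowed and causes no harm) and the verification that $C^\infty_{loc}$ convergence of the coefficients transfers to uniform convergence of $(X^j_i)^*\varphi_i$ on the (fixed) compact set $K$. No deep or specifically sub-Riemannian ingredient is needed.
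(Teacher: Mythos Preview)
Your proof is correct and follows essentially the same route as the paper: both arguments use the duality characterization of the total $X$-variation, write the vector fields in coordinates to see that $(X_i^j)^*\varphi_i\to X_i^*\varphi_i$ uniformly on the support of a fixed test function, and then pass to the limit using $u_j\to u$ in $L^1_{loc}$. The only cosmetic difference is that the paper establishes the inequality for an arbitrary test function and then takes the supremum, while you pick a near-optimal $\varphi$ at the outset; the content is the same.
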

\begin{proof}
For any $i=1,\dots,m$ and any $j\in \mathbb N$ we  write
\[
X_i(x)=\sum_{k=1}^n a_{i,k}(x)\partial_k\qquad\text{and}\qquad X_i^j(x)=\sum_{k=1}^n a_{i,k}^j(x)\partial_k
\]
for suitable smooth functions $a_{i,k},a_{i,k}^j$. Then, for any test function $\varphi\in C^1_c(\Omega;\R^m)$ we have
\begin{equation}\label{eq:convdebole}
\begin{split}
 \int_\Omega u \sum_{i=1}^m X_i^\ast \varphi_i\:d\mathscr L^n =&  \int_\Omega u\sum_{i=1}^m\sum_{k=1}^n\partial_k(a_{i,k}\varphi_i)\:d\mathscr L^n
\ = \ \lim_{j\to\infty} \int_\Omega u_j\sum_{i=1}^m\sum_{k=1}^n\partial_k(a_{i,k}^j\varphi_i)\:d\mathscr L^n\\
 = & \lim_{j\to\infty} \int_\Omega u_j \sum_{i=1}^m {X_i^j}^\ast \varphi_i\:d\mathscr L^n
\ \leq \ \|\varphi\|_{L^\infty(\Omega)} \liminf_{j\to\infty} |D_{X^j} u_j|(\Omega).
\end{split}
\end{equation}
The proof is accomplished.
\end{proof}

\begin{rmk}\label{rem:convergenzadebole}
Let $X,X^j,u_j,u$ be as in Proposition \ref{prop:semicontstrutturavariabile} and assume that $|D_{X^j}u^j|$ are locally uniformly bounded in $\R^n$, i.e., for any compact set $K\subset\R^n$ there exists $C_K<\infty$ such that $|D_{X^j}u^j|(K)<C_K$ for all $j$. Then $D_{X^j}u^j$ weakly$^\ast$ converges to $D_X u$  in $\R^n$. 

Indeed, one can reason as in \eqref{eq:convdebole} to show that for any test function $\varphi\in C^1_c(\R^n)$ and any $i=1,\dots,m$
\[
\lim_{j\to\infty}\int\varphi\: dD_{X^j_i}u^j = \int\varphi\: dD_{X_i}u
\]
and  the density of $C^1_c$ in $C^0_c$ allows to conclude.
\end{rmk}

\section{Fine properties of BV functions}\label{sec:dimostrazioni}
This section is devoted to the proof of our main results.

\begin{lemma}\label{th:3.74}
	Let $(\R^n,X)$ be an equiregular CC space, let $\Omega\subset\R^n$ be  open and let $(E_h)$ be a sequence of measurable sets in $\Omega$ such that
	\[
	\text{$\displaystyle\lim_h\mathscr L^n(E_h)=0$\quad and\quad $\displaystyle\lim_h P_X(E_h;\Omega)=0$.}
	\]
	Then for every $\alpha\in(0,1)$ we have
	\[
	\mathscr H^{Q-1}\left(\bigcap_{h=1}^\infty\left\{p\in\Omega:\limsup_{r\to 0}\frac{\mathscr L^n(E_h\cap B(p,r))}{\mathscr L^n(B(p,r))}\geq\alpha\right\}\right)=0.
	\]
\end{lemma}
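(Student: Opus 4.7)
The plan is to apply the isoperimetric inequality (Theorem \ref{isoperimetric}) at a ``median'' radius at which the density of $E_h$ equals $\alpha/2$, and combine it with a $5r$-covering argument. The role of $\alpha/2$ is to guarantee that \emph{both} terms in the min of Theorem \ref{isoperimetric} are comparable to $\mathscr L^n(B(p,r))$, so that the isoperimetric inequality yields a nontrivial estimate of the form $r^{Q-1}\leq C_\alpha\, P_X(E_h;B(p,r))$.

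I would first localize to a compact set $K\subset\Omega$ and aim to show that $F\coloneqq K\cap\bigcap_h A_h$ is $\mathscr H^{Q-1}$-negligible, where $A_h$ denotes the $h$-th density set in the statement. For $p\in F$, the function $g_h(r)\coloneqq \mathscr L^n(E_h\cap B(p,r))/\mathscr L^n(B(p,r))$ is continuous in $r$ (Proposition \ref{stella} gives $\mathscr L^n(\partial B(p,r))=0$), so defining
\[
t_p^h\coloneqq \sup\{r>0:g_h(r)\geq \alpha/2\}
\]
one obtains $t_p^h>0$ (since $\limsup_{r\to 0}g_h(r)\geq\alpha>\alpha/2$), $g_h(t_p^h)=\alpha/2$ by continuity, and the bound $t_p^h\leq \rho_h\coloneqq (2\mathscr L^n(E_h)/(\alpha c_0))^{1/Q}$, where $c_0$ is the Ahlfors constant on $K$ (Theorem \ref{ccproperties}). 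In particular $\rho_h\to 0$, so for $h$ large $t_p^h$ lies in the range where the isoperimetric inequality and Ahlfors lower bound are available.

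Next, at $r=t_p^h$ both $\mathscr L^n(E_h\cap B(p,t_p^h))$ and $\mathscr L^n(B(p,t_p^h)\setminus E_h)$ are of order $(t_p^h)^Q$, so Theorem \ref{isoperimetric} combined with Ahlfors regularity yields a uniform constant $C_\alpha=C_\alpha(K,\alpha)$ with
\[
(t_p^h)^{Q-1}\leq C_\alpha\, P_X(E_h;B(p,t_p^h))\qquad \forall\, p\in F.
\]
Applying the $5r$-covering Theorem \ref{5rcovering} to the family $\{B(p,t_p^h):p\in F\}$ (whose radii are uniformly bounded by $\rho_h$) yields a pairwise disjoint subfamily $\{B(p_i,t_{p_i}^h)\}_i$ whose $5$-enlargements still cover $F$. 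Bounding the spherical Hausdorff premeasure through this covering and exploiting disjointness, I would obtain
\[
\mathscr S^{Q-1}_{10\rho_h}(F)\leq \sum_i (10\,t_{p_i}^h)^{Q-1}\leq 10^{Q-1}C_\alpha \sum_i P_X(E_h;B(p_i,t_{p_i}^h))\leq 10^{Q-1}C_\alpha\, P_X(E_h;\Omega),
\]
which tends to $0$ as $h\to\infty$ by hypothesis. Hence $\mathscr S^{Q-1}(F)=0$, and a fortiori $\mathscr H^{Q-1}(F)=0$; exhausting $\Omega$ by countably many compacts concludes the proof.

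The main obstacle I anticipate is identifying the radius at which to apply the isoperimetric inequality so that the estimate is genuinely geometric: this is precisely what the ``median'' value $\alpha/2$ achieves. Its existence is forced by the combination of the hypothesis $\limsup_{r\to 0}g_h(r)\geq\alpha$ (which makes $g_h$ cross $\alpha/2$ near $r=0$) and $\mathscr L^n(E_h)\to 0$ (which forces $g_h<\alpha/2$ past the Ahlfors threshold $\rho_h$), together with the continuity of $g_h$ inherited from the geodesic structure of the CC space.
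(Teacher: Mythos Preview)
Your proposal is correct and follows essentially the same route as the paper's proof: localize to a compact, use continuity of $r\mapsto \mathscr L^n(E_h\cap B(p,r))/\mathscr L^n(B(p,r))$ (via Proposition \ref{stella}) together with the smallness of $\mathscr L^n(E_h)$ and Ahlfors regularity to find a radius where the density equals $\alpha/2$, apply the isoperimetric inequality there, and sum over a disjoint $5r$-cover. The only cosmetic difference is that the paper picks the median radius via the intermediate value theorem between an arbitrarily small radius with density $\geq\alpha/2$ and the fixed radius $\delta_h$ where the density is $\leq\alpha/4$, whereas you take a supremum; just be careful that the bound $t_p^h\leq\rho_h$ uses the Ahlfors lower bound, which is only guaranteed for $r<R$, so you should first restrict to $h$ large enough that $\rho_h<R$ and note that $g_h(r)<\alpha/2$ also for $r\geq R$ (since $\mathscr L^n(B(p,r))\geq c_0 R^Q$ there).
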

\begin{proof}
Set
	\[
	E_h^\alpha\coloneqq \left\{q\in\Omega:\limsup_{r\to 0}\frac{\mathscr L^n(E_h\cap B(q,r))}{\mathscr L^n(B(q,r))}\geq\alpha\right\},
	\]
and suppose without loss of generality that $\mathscr L^n(E_h)>0$ for every $h\in \mathbb N$. Let $K\Subset\Omega$. By Theorem \ref{ccproperties} there exist $C>1$ and $R>0$ such that for every $q\in K$, for every $0<r<2R$ we have
\[
\frac{1}{C}r^Q\leq\mathscr L^n(B(q,r))\leq Cr^Q.
\]
For any sufficiently large $h\in \mathbb N$ we have 
	\[
	\left(\frac{2C\mathscr L^n(E_{h})}{\alpha}\right)<R^Q.
	\]
	Fix now $p\in E_h^\alpha \cap K$ and define $\delta_h = \left(\frac{4C\mathscr L^n(E_h)}{\alpha}\right)^{1/Q}$; then
	\[
	\frac{\mathscr L^n(E_h\cap B(p,\delta_h))}{\mathscr L^n(B(p,\delta_h))}\leq \frac{C\mathscr L^n(E_h)}{\delta_h^Q}=\frac{\alpha}{4}.
	\]
	On the other hand, by definition of $E_h^\alpha$ we can find arbitrarily small radii $r>0$ such that
	\[
	\frac{\mathscr L^n(E_h\cap B(p,r))}{\mathscr L^n(B(p,r))}\geq \frac{\alpha}{2}.
	\]
	Taking  into account Proposition \ref{stella}, a continuity argument allows us to find $0<\varrho\leq\delta_h$ such that
\[
\mathscr L^n(E_h\cap B(x,\varrho))=\frac{\alpha}{2}\mathscr L^n(B(x,\varrho)).
\]
By the $5r$-covering Lemma, we can find a family $\displaystyle\left\{B(p_j,\varrho_j):j\in \mathbb N\right\}$ of pairwise disjoint balls in $\Omega$  such that, for every $j\in\mathbb N$,  
	\begin{align}
	& p_j\in E_h^\alpha\cap K\nonumber\\
	& \mathscr L^n(E_h\cap B(p_j,\varrho_j))=\frac{\alpha}{2}\mathscr L^n(B(p_j,\varrho_j))\nonumber\\
	& E_h^\alpha\cap K\subseteq \bigcup_{j=0}^\infty B(p_j,5\varrho_j).\label{eq:boh}
\end{align}	
Since $\mathscr L^n(E_h)$ is finite, by Theorem \ref{isoperimetric} we get $M>0$ such that 
	\begin{align*}
	\frac{\alpha}{2C}\varrho^Q_j\leq \frac{\alpha}{2}\mathscr L^n(B(p_j,\varrho_j))=\mathscr L^n(E_h\cap B(p_j,\varrho_j))
	\leq \big(M\:P_X(E_h;B(p_j,\varrho_j))\big)^{\frac{Q}{Q-1}}.
	\end{align*}
	Therefore we have that for every $j\in \mathbb N$
	\[
	\varrho_j^{Q-1}\leq M\left(\frac{2C}{\alpha}\right)^{\frac{Q-1}{Q}}P_X(E_h;B(p_j,\varrho_j)).
	\]
	Finally
	\begin{align*}
	\mathscr H^{Q-1}_{10\delta_h}\Big(K\cap \bigcap_{i=0}^\infty E_i^\alpha\Big) &\leq\mathscr H^{Q-1}_{10\delta_h}(K\cap E_h^\alpha) 
	\stackrel{\eqref{eq:boh}}{\leq} \omega _{Q-1}5^{Q-1}\sum_{j=0}^\infty \varrho_j^{Q-1}\\
	& \leq \omega_{Q-1}5^{Q-1}M \left(\frac{2C}{\alpha}\right)^{\frac{Q-1}{Q}}\sum_{j=0}^\infty P_X(E_h;B(p_j,\varrho_j))\\
	& \leq \omega_{Q-1}5^{Q-1}M \left(\frac{2C}{\alpha}\right)^{\frac{Q-1}{Q}} P_X(E_h;\Omega).
	\end{align*}
	Taking the limit for $h\to \infty$ we get
	\[
	\mathscr H^{Q-1}\left(K\cap \bigcap_{i=0}^\infty E_i^\alpha\right)=0.
	\]
	By the arbitrariness of $K$, the proof is complete.
\end{proof}

Before passing to the next result, we introduce some notation that we are going to use frequently in what follows. Let $p\in\R^n$ be fixed and let $F_p$ denote adapted  exponential coordinates as in  \eqref{eq:defcoord}, for a fixed choice of a basis $Y_1,\dots,Y_n$ as in \eqref{eq:defcoord}. Given $r>0$ and $i\in\{1,\dots,m\}$, define 
\begin{equation}\label{eq:defcampiriscalati}
\widetilde X_i^r\coloneqq r (d\delta_{r^{-1}}) [\widetilde X_i\circ\delta_{r}].
\end{equation}
If $\widetilde d_r,\widetilde B_r(x,\varrho)$ denote, respectively,  distance and balls with respect to the metric induced by the vector fields $(\widetilde X_1^r,\dots,\widetilde X_m^r)$, it is easy to see that the dilations $\delta_{r}$ satisfy
	\[
	\widetilde d_r(\xi,\eta) =\frac 1r\widetilde d(\delta_r\xi,\delta_r\eta).
	\]
By Theorem \ref{th:localgroup}, the convergence
\begin{equation}\label{eq:convGH}
\lim_{r\to0} \widetilde B_r(0,\varrho) = \widehat B(0,\varrho)
\end{equation}
holds in the Gromov-Hausdorff sense, $\widehat B(0,\varrho)$ denoting a ball in the tangent Carnot group at $p$. Moreover, given $u\in BV_{X,loc}(\R^n;\R^k)$  we set
\begin{equation}\label{eq:deftildeu}
\widetilde u\coloneqq u\circ F_p\qquad\text{and}\qquad\widetilde u_r\coloneqq \widetilde u\circ \delta_r;
\end{equation}
notice that $|D_{\widetilde{X}^r} \widetilde u_r|(\widetilde B_r(0,\varrho))=r^{1-Q}|D_{\widetilde X} \widetilde u|(\widetilde B(0,r\varrho))$.

We can now prove the following lemma.

\begin{lemma}\label{negligibleset}
	Let $(\R^n,X)$ be an equiregular CC space, let $\Omega\subset\R^n$ be  open and consider $u\in BV_X(\Omega;\R^k)$. Then
	\[
	\mathscr H^{Q-1}\left(\left\{p\in\Omega:\limsup_{r\to 0}\fint_{B(p,r)}|u|^{\tfrac{Q}{Q-1}}\:d\mathscr L^n=+\infty\right\}\right)=0.
	\]
\end{lemma}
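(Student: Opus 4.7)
My plan is to combine the previous Lemma~\ref{th:3.74} with the Poincar\'e--Sobolev inequality from Theorem~\ref{teo:poincare} and the standard upper density bound for Radon measures. Since $x\mapsto |x|$ is $1$-Lipschitz, the scalar function $|u|$ lies in $BV_X(\Omega)$ with $|D_X|u||(\Omega)\leq |D_Xu|(\Omega)$, so the superlevel sets $E_h \coloneqq \{|u|>h\}$ satisfy, via the coarea formula (Theorem~\ref{coarea}) and Cavalieri,
\[
\int_0^\infty P_X(E_h;\Omega)\,dh \leq |D_X|u||(\Omega) < \infty \quad\text{and}\quad \int_0^\infty \mathscr L^n(E_h)\,dh = \int_\Omega |u|\,d\mathscr L^n < \infty.
\]
Hence I can extract $h_j\to +\infty$ with both $\mathscr L^n(E_{h_j})\to 0$ and $P_X(E_{h_j};\Omega)\to 0$, and then Lemma~\ref{th:3.74} applied with $\alpha=1/2$ yields that for $\mathscr H^{Q-1}$-a.e.\ $p\in\Omega$ there exist $j=j(p)$ and $r_0>0$ such that
\[
\mathscr L^n(E_{h_j}\cap B(p,r))<\tfrac12\mathscr L^n(B(p,r)) \qquad\text{for every }r\in(0,r_0).
\]

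Independently, a classical $5r$-covering argument applied to the finite Radon measure $|D_Xu|$ shows that $\mathscr H^{Q-1}$-a.e.\ $p\in\Omega$ also satisfies $\limsup_{r\to 0}|D_Xu|(B(p,r))/r^{Q-1}<+\infty$. Fixing a point $p$ with both properties and setting $v\coloneqq (|u|-h_j)_+$, the function $v$ lies in $BV_X(\Omega)$ with $|D_Xv|\leq |D_X|u||\leq |D_Xu|$ (again via coarea), satisfies $|u|\leq v+h_j$ everywhere, and vanishes identically on $B(p,r)\setminus E_{h_j}$, a set of $\mathscr L^n$-measure at least $\tfrac12\mathscr L^n(B(p,r))$.

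The decisive step is then a ``vanishing on a half-ball'' trick: because $v\equiv 0$ on at least half of $B(p,r)$, one has $|v(x)-v_{p,r}|=v_{p,r}$ for such $x$, whence
\[
\tfrac12\, v_{p,r}^{Q/(Q-1)} \leq \fint_{B(p,r)} |v-v_{p,r}|^{Q/(Q-1)}\,d\mathscr L^n.
\]
Combining this with Theorem~\ref{teo:poincare} applied to $v$, a triangle inequality, and the bound $|u|^{Q/(Q-1)}\leq 2^{Q/(Q-1)-1}(v^{Q/(Q-1)}+h_j^{Q/(Q-1)})$ gives
\[
\fint_{B(p,r)}|u|^{Q/(Q-1)}\,d\mathscr L^n \leq C\,h_j^{Q/(Q-1)} + C\left(\frac{|D_Xu|(B(p,r))}{r^{Q-1}}\right)^{Q/(Q-1)}.
\]
Passing to the $\limsup$ as $r\to 0$, the right-hand side is finite at our chosen $p$, so the exceptional set has $\mathscr H^{Q-1}$-measure zero. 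The conceptual heart (and main obstacle) is the following: applying Poincar\'e directly to $u$ would leave $u_{p,r}^{Q/(Q-1)}$ on the right-hand side, which need not be bounded outside $\mathcal S_u$; truncating at a well-chosen level $h_j$ replaces the unknown mean by zero on a set of positive density and bypasses this difficulty.
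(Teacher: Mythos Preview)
Your argument is correct and shares the same skeleton as the paper's proof: both reduce to the scalar function $|u|$, invoke Lemma~\ref{th:3.74} on well-chosen superlevel sets together with the density bound of Proposition~\ref{k-density}, and feed the outcome into the Poincar\'e--Sobolev inequality of Theorem~\ref{teo:poincare}. The difference lies in how the potentially unbounded averages $u_{p,r}$ are handled.

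The paper proceeds by contradiction: assuming $u_{p,r_j}\to+\infty$ along some sequence, it rescales $u$ around $p$ and invokes the compactness result Theorem~\ref{teo:applicazione} (proved in a companion paper) to extract an $L^1$-limit of the recentered blow-ups $\widetilde v_j=\widetilde u_{r_j}-u_{p,r_j}$; this forces $u\circ F_p\circ\delta_{r_j}\to+\infty$ a.e., which in turn places $p$ in \emph{every} $F_h$, contradicting Lemma~\ref{th:3.74}. Your route is more direct: once Lemma~\ref{th:3.74} hands you a single level $h_j$ for which $\{|u|>h_j\}$ has upper density below $\tfrac12$ at $p$, you truncate to $v=(|u|-h_j)_+$ and exploit that $v\equiv0$ on at least half of each small ball to bound $v_{p,r}$ by the Poincar\'e oscillation term. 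This replaces the blow-up compactness step by an elementary pointwise inequality and yields the explicit estimate
\[
\fint_{B(p,r)}|u|^{Q/(Q-1)}\,d\mathscr L^n \leq C\,h_j^{Q/(Q-1)} + C\Big(\tfrac{|D_Xu|(B(p,r))}{r^{Q-1}}\Big)^{Q/(Q-1)},
\]
which is a bit more than what the lemma asserts. The trade-off is that the paper's argument, while heavier, illustrates the blow-up machinery that is reused elsewhere in Section~\ref{sec:dimostrazioni}; yours is self-contained and avoids Theorem~\ref{teo:applicazione} entirely. One small point worth making precise in your write-up: the measure inequality $|D_Xv|\leq |D_X|u||$ follows cleanly from coarea on open sets, and then it suffices to apply the density bound of Proposition~\ref{k-density} directly to the finite measure $|D_X|u||$ rather than to $|D_Xu|$, so that the chain-rule inequality $|D_X|u||\leq|D_Xu|$ (true, but not stated in the paper for vector-valued $u$) is not actually needed.
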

\begin{proof}
	We can suppose without loss of generality that $k=1$. Possibly considering $|u|$ instead of $u$, we can suppose that $u\geq 0$; we also assume without loss of generality that $\Omega$ is bounded in $\R^n$. Define the set
	\[
	D=\left\{p\in\Omega :\limsup_{r\to 0}\frac{|D_Xu|(B(p,r))}{r^{Q-1}}=+\infty\right\}.
	\]
	By Proposition \ref{k-density} we have that $\displaystyle\mathscr H^{Q-1}(D)=0$. For every $h\in \mathbb N$ we can find $t_h\in (h,h+1)$ such that
	\[
	P_X(\{u>t_h\},\Omega)\leq \int_h^{h+1}P_X(\{u>t\},\Omega)dt.
	\]
	Define $E_h=\{u>t_h\}$. Since $u\in L^1(\Omega)$ we have that $\lim_h\mathscr L^n(E_h)=0$ and applying the Coarea Formula of Theorem \ref{coarea} we get
	\[
	\sum_{h=0}^\infty P_X(E_h,\Omega)\leq\int_0^{+\infty} P_X(\{u>t\},\Omega)dt=|D_X u|(\Omega)<+\infty,
	\]
and therefore $\lim_hP_X(E_h,\Omega)=0$. We are in a position to apply Lemma \ref{th:3.74}. Defining for every $h\in \mathbb N$
	\[
	F_{h}=\left\{p\in\Omega:\limsup_{r\to0}\frac{\mathscr L^n(E_h\cap B(p,r))}{\mathscr L^n(B(p,r))}\geq \alpha\right\},
	\]
where $\alpha>0$ will be chosen later depending on $\Omega$ only, we have that $\mathscr H^{Q-1}\left(\bigcap_{h=0}^\infty F_{h}\right)=0$.	It is then sufficient to prove the inclusion 
	\[
		L\coloneqq\left\{p\in\Omega: \limsup_{r\to 0} \fint_{B(p,r)}|u|^{\tfrac{Q}{Q-1}}d\mathscr L^n=+\infty\right\}\subseteq D\cup\bigcap_{h=0}^\infty F_{h}.
	\]
	To this aim, we fix $p\notin D\cup\bigcap_{h=0}^\infty F_{h}$ and we prove that $p\notin L$. Define $	u_{p,r}\coloneqq\fint_{B(p,r)}ud\mathscr L^n$. Applying  Theorem \ref{teo:poincare} we get $C>0$ and $R>0$ such that for every $q\in \Omega$ and all $0<r<R$
	\begin{equation}\label{eq:poincare}
	\fint_{B(q,r)}|u(y)-u_{q,r}|^{\tfrac{Q}{Q-1}}d\mathscr L^n(y)\leq C\left(\frac{|D_X u|(B(q,r))}{r^{Q-1}}\right)^{\tfrac{Q}{Q-1}}.
	\end{equation}
	It is enough to prove that $\limsup_{r\to 0}u_{p,r}<+\infty$: in this case, in fact,   inequality \eqref{eq:poincare} and  the definition of $D$ would imply that $p \notin L$. 
	
	By contradiction we find an infinitesimal sequence $(r_j)$ such that  $\lim_ju_{p,r_j}=+\infty$. Define 	$\widetilde u,\widetilde u_{r_j}$ as in \eqref{eq:deftildeu} (with $r=r_j$) and $\widetilde v_j\coloneqq \widetilde u_{r_j}-u_{p,r_j}$; set also
	\[
	\text{$\widetilde X_i^j\coloneqq \widetilde X_i^{r_j}$\qquad and\qquad $\widetilde X^j\coloneqq (\widetilde X_1^j, \dots,\widetilde X_m^j)$.}
	\]
Since $p\notin D$, for any $\varrho>0$ the sequence $r_j^{1-Q}|D_{X}u|(B(p,\varrho r_j))$ is uniformly bounded with respect to $j\in \mathbb N$;  by Proposition \ref{BVdiffeo}, the same is true for the sequence 
	\[
	|D_{\widetilde X^j}\widetilde v_j|(\widetilde B_j(0,\varrho)) = r_j^{1-Q}|D_{\widetilde X}\widetilde u|(\widetilde B(0,\varrho r_j)),
	\]
	where $\widetilde B_j(0,\varrho)\coloneqq \widetilde B_{r_j}(0,\varrho)$ according to the notation introduced after \eqref{eq:defcampiriscalati}.  Taking also \eqref{eq:convGH} into account, this proves that, for any compact set $K\subseteq\R^n$, the sequence $|D_{\widetilde X^j}\widetilde v_j|(K)$ is bounded; by \eqref{eq:poincare}, also $\|\widetilde v_j\|_{L^1(K)}$ is bounded.
	
		By Theorem \ref{teo:applicazione} (recalling also Theorem \ref{th:tangentspace}) there exists $w\in L^1(\widehat B(0,1))$ such that, possibly extracting a subsequence, $\widetilde v_j\to w$ in $L^1(\widehat B(0,1))$. 
	Consequently, for almost every $x\in \widehat B(0,1)$ we have
	\[
	\lim_j u(F_p(\delta_{r_j}x))=+\infty
	\]
	and then, for every $h\in \mathbb N$,
	\[
	\begin{aligned}
		\mathscr L^n(\widehat B(0,1))&=\lim_j\mathscr L^n(\{x\in \widetilde B_j(0,1): u(F_p(\delta_{r_j}x))>t_h\})\\
		&=\lim_j r_j^{-Q}\mathscr L^n(\{x\in \widetilde B(0,r_j): u(F_p(x))>t_h\})\\
		&=\lim_j\frac 1{r_j^Q} \int_{B(p,r_j)\cap E_h}|\det\nabla F_p^{-1}|\:d\mathscr L^n\\
		&\leq |\det\nabla F_p^{-1}(p)|\limsup_{r\to0} \frac{\mathscr L^n(E_h\cap B(p,r))}{\mathscr L^n(B(p,r))} \: \frac{\mathscr L^n(B(p,r))}{r^Q}\\
		&\leq \frac C{|\det\nabla F_p(0)|}\limsup_{r\to0} \frac{\mathscr L^n(E_h\cap B(p,r))}{\mathscr L^n(B(p,r))} 
	\end{aligned}
	\]
where $C>0$ is given by Theorem \ref{ccproperties} with $K=\overline\Omega$. Notice that $\mathscr L^n(\widehat B(0,1))$ depends on $p$. Using \eqref{eq:convGH} we obtain
\begin{align*}
&\limsup_{r\to0} \frac{\mathscr L^n(E_h\cap B(p,r))}{\mathscr L^n(B(p,r))} \\
\geq & \frac {|\det\nabla F_p(0)|}C\mathscr L^n(\widehat B(0,1))
= \frac {|\det\nabla F_p(0)|}C\lim_{r\to 0} \mathscr L^n(\widetilde B_r(0,1))\\
=& \frac {|\det\nabla F_p(0)|}C \lim_{r\to 0} \frac{1}{r^Q}\mathscr L^n( \widetilde B(0,r))= \frac {|\det\nabla F_p(0)|}C \lim_{r\to 0}  \frac{1}{r^Q} \int_{B(p,r)}|\det \nabla F_p^{-1}|\:d\mathscr L^n\\
\geq & \frac {1}C \liminf_{r\to 0}  \frac{\mathscr L^n(B(p,r))}{r^Q} \ \geq\ \frac 1{C^2}.
\end{align*}
This proves that  $p\in \bigcap_{h=0}^\infty F_{h}$ for $\alpha\coloneqq 1/C^2$, a contradiction.
\end{proof}

The following proposition contains some of the first ``fine'' properties of $BV_X$ functions we are interested in.

\begin{proposition}\label{prop3.76}
	Let $(\R^n,X)$ be an equiregular $CC$ space. Then there exists $\lambda:\R^n\to(0,+\infty)$ locally bounded away from 0 such that, for every open set $\Omega\subset\R^n$  and every $u\in BV_X(\Omega;\R^k)$
	\[
	|D_Xu|\geq \lambda|u^+-u^-| \mathscr S^{Q-1}\res \mathcal J_u
	\]
	 and for every Borel set $B\subseteq \Omega$ the following implications hold:
	\begin{align}
		&\mathscr H^{Q-1}(B)=0\quad \Rightarrow\quad |D_Xu|(B)=0;\label{1implication}\\
&	\label{2implication}
		\mathscr H^{Q-1}(B)<+\infty \text{ and } B\cap\mathcal{S}_u=\emptyset\quad \Rightarrow \quad |D_Xu|(B)=0.
	\end{align}
\end{proposition}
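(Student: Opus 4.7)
My strategy is to handle the three claims separately: the density inequality at jump points is the main novelty and rests on a blow-up argument, while \eqref{1implication} and \eqref{2implication} will reduce to the coarea formula (Theorem~\ref{coarea}) combined with Theorem~\ref{teo:ambrosio}. Throughout I can assume $k=1$ using \eqref{vectorbv} (for \eqref{2implication} together with the trivial inclusion $\mathcal{S}_{u^i}\subset\mathcal{S}_u$).

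For the density inequality, I fix $p\in\mathcal J_u$ and invoke Proposition~\ref{jumpequiv}: the rescaled functions $\widetilde u_r\coloneqq u\circ F_p\circ\delta_r$ converge in $L^1_{loc}(\R^n;\R^k)$ to the two-valued function $w\coloneqq w_{u^+(p),u^-(p),\nu_u(p)}$ defined in \eqref{eq:saltotangente}, while the rescaled vector fields $\widetilde X_i^r$ of \eqref{eq:defcampiriscalati} converge smoothly to the nilpotent approximation $\widehat X_i$ by Theorem~\ref{th:tangentspace}. Since $w$ is constant on each side of the vertical hyperplane $\{\widetilde L_{\nu_u(p)}=0\}$, its $\widehat X$-gradient satisfies $|D_{\widehat X_p}w|=|u^+(p)-u^-(p)|\,P_{\widehat X_p}(\{\widetilde L_{\nu_u(p)}>0\};\cdot)$. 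The lower semicontinuity statement of Proposition~\ref{prop:semicontstrutturavariabile}, applied on $U=\widehat B_p(0,1)$, then gives
\[
|u^+(p)-u^-(p)|\,P_{\widehat X_p}(\{\widetilde L_{\nu_u(p)}>0\};\widehat B_p(0,1)) \leq \liminf_{r\to 0}|D_{\widetilde X^r}\widetilde u_r|(\widehat B_p(0,1)).
\]
A direct scaling computation yields $|D_{\widetilde X^r}\widetilde u_r|(U)=r^{1-Q}|D_{\widetilde X}\widetilde u|(\delta_r U)$; combining this with Theorem~\ref{th:localgroup} (in the form $\widehat B_p(0,r)\subset\widetilde B(0,(1+\varepsilon)r)$), Proposition~\ref{BVdiffeo} and \eqref{measureequivalence} (yielding the asymptotic $|D_{\widetilde X}\widetilde u|(\widetilde B(0,s))\sim|\det\nabla F_p(0)|^{-1}|D_Xu|(B(p,s))$) and letting $\varepsilon\to 0$, I obtain
\[
\liminf_{r\to 0}\frac{|D_X u|(B(p,r))}{r^{Q-1}}\geq |u^+(p)-u^-(p)|\,\widetilde\lambda(p,\nu_u(p)),\qquad\widetilde\lambda(p,\nu)\coloneqq |\det\nabla F_p(0)|\,P_{\widehat X_p}(\{\widetilde L_\nu>0\};\widehat B_p(0,1)).
\]
Setting $\lambda(p)\coloneqq c_Q\inf_{\nu\in\mathbb S^{m-1}}\widetilde\lambda(p,\nu)$, with a dimensional normalizing constant $c_Q$ accounting for the passage from lower density to $\mathscr S^{Q-1}$, the standard comparison theorem between Radon measures and spherical Hausdorff measures (Proposition~\ref{k-density}) gives $|D_Xu|\geq\lambda|u^+-u^-|\mathscr S^{Q-1}\res\mathcal J_u$. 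Positivity and local lower boundedness of $\lambda$ then follow from the continuous dependence of $\widehat X_p$ on $p$, the lower semicontinuity of the perimeter functional under convergent vector fields, and compactness of $\mathbb S^{m-1}$.

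For \eqref{1implication} the coarea formula yields $|D_Xu|(B)=\int_\R P_X(\{u>t\};B)\,dt$, and since by Theorem~\ref{teo:ambrosio} each $P_X^{E_t}$ is absolutely continuous with respect to $\mathscr H^{Q-1}\res\partial^* E_t$, the condition $\mathscr H^{Q-1}(B)=0$ forces $P_X^{E_t}(B)=0$ for every $t$ and integration concludes. For \eqref{2implication}, I combine coarea with Proposition~\ref{levelsets}: since $B\cap\mathcal S_u=\emptyset$, every $q\in B\cap\partial^* E_t$ satisfies $u^\star(q)=t$, so $B\cap\partial^* E_t\subset\{q\in B:u^\star(q)=t\}$. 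The sets $\{u^\star=t\}\cap B$ are pairwise disjoint subsets of $B$, and since $\mathscr H^{Q-1}(B)<+\infty$, at most countably many of them can be non-$\mathscr H^{Q-1}$-negligible; for $\mathscr L^1$-a.e.\ $t$ the set $B\cap\partial^* E_t$ is therefore $\mathscr H^{Q-1}$-negligible, so Theorem~\ref{teo:ambrosio} yields $P_X^{E_t}(B)=0$ for $\mathscr L^1$-a.e.\ $t$ and the coarea formula gives $|D_Xu|(B)=0$.

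The main obstacle I foresee is in the density part: producing $\lambda$ explicitly and, above all, ensuring that it is strictly positive and locally bounded below \emph{uniformly} in the normal direction $\nu$. This hinges on the continuous dependence of the tangent Carnot group structure on the base point and on compactness of $\mathbb S^{m-1}$, which together guarantee that the horizontal perimeter of $\{\widetilde L_\nu>0\}$ in $\widehat B_p(0,1)$ stays uniformly away from zero as $(p,\nu)$ ranges over compact subsets of $\R^n\times\mathbb S^{m-1}$.
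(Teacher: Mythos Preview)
Your proof is correct and takes essentially the same route as the paper: blow-up at jump points combined with Proposition~\ref{prop:semicontstrutturavariabile} and Proposition~\ref{BVdiffeo} for the density inequality, and coarea plus Theorem~\ref{teo:ambrosio} for \eqref{1implication} and \eqref{2implication}. The only cosmetic differences are that the paper lower-bounds $|D_{\widehat X}w_p|(\widehat B_p(0,1))$ via the Euclidean $(n-1)$-measure $\mathscr H^{n-1}_e(\overline\nu^\perp\cap\widehat B_p(0,1))$ rather than the intrinsic perimeter, and in \eqref{2implication} it swaps the order of integration (Fubini) in place of your disjoint-level-sets counting argument.
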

\begin{proof}
Let us prove the first part of the statement; we assume without loss of generality that $k=1$. Consider $p\in \mathcal J_u$. By Proposition \ref{jumpequiv} the sequence $\widetilde u_r\coloneqq u\circ F_p\circ\delta_{r}$ converges in $L^1(\widehat B(0,1))$ as $r\to 0$ to the function
	\[
	w_p(y)\coloneqq \begin{cases}
	u^+(p) & \text{if } \widetilde L_{\nu(p)}(y) \geq 0\\
	u^-(p) & \text{if } \widetilde L_{\nu(p)}(y) <0.
	\end{cases}
	\]
	Defining $	\widetilde{X}^r_i$ as in \eqref{eq:defcampiriscalati} and using Propositions \ref{prop:semicontstrutturavariabile} and \ref{BVdiffeo} we obtain for any positive $\varepsilon$ that
\[
\begin{split}
 \liminf_{r\to0}\frac{|D_Xu|(B(p,r))}{r^{Q-1}}\geq & |\det \nabla F_p(0)|\ \liminf_r |D_{\widetilde X^r}\widetilde u_{r}|(\widetilde B_r(0,1))\\
	  \geq & |\det \nabla F_p(0)|\ \liminf_r |D_{\widetilde X^r}\widetilde u_{r}|(\widehat B(0,1-\varepsilon))\\
	   \geq &|\det \nabla F_p(0)|\ |D_{\widehat{X}} w_p|(\widehat B(0,1-\varepsilon)),
		\end{split}
\]
	whence
\begin{equation}\label{inequality3.76}
	\begin{split}
	\liminf_{r\to0}\frac{|D_Xu|(B(p,r))}{r^{Q-1}}\geq & |\det \nabla F_p(0)|\ |D_{\widehat{X}} w_p|(\widehat B(0,1))\\
	\geq & |\det \nabla F_p(0)| |u^+(p)-u^-(p)|\mathscr H^{n-1}_{e}(\overline \nu^\perp\cap\widehat B(0,1))
\end{split}
	\end{equation}
where $\overline \nu\coloneqq (\nu_1,\dots,\nu_m,0,\dots,0)\in\R^n$ and $\mathscr H^{n-1}_{e}$ denotes the Euclidean Hausdorff measure in $\R^n$. It is easily seen that, for any $p\in\R^n$, there exist $c>0$ and a neighborhood $U$ of $p$ such that the function $\lambda(q)\coloneqq |\det \nabla F_q(0)| \mathscr H^{n-1}_{e}(\overline \nu^\perp\cap\widehat B_q(0,1))$ is such that $\lambda\geq c$ on $U$. By Corollary \ref{k-densityf}, this proves the first part of the statement.
	
By Theorem \ref{teo:ambrosio}, the implication  \eqref{1implication} is  true in case $k=1$ and $u=\chi_E$ for some $E\subset \R^n$ with  finite $X$-perimeter.	If $k=1$ and $u\in BV_X(\Omega)$, we define $E_s\coloneqq \{u>s\}$ and we apply Theorem \ref{coarea} (and, again, Theorem \ref{teo:ambrosio}) to get
	\[
	|D_Xu|(B)=\int_{-\infty}^{+\infty}P_X(E_s;B)ds=\int_{-\infty}^{+\infty}\left(\int_{ B\cap \partial^* E_s}\theta_s d\mathscr H^{Q-1}\right)ds
	\]
for  suitable positive functions $\theta_s$. This allows to infer \eqref{1implication}. In the general case $k\geq 1$, it is sufficient to recall inequality \eqref{vectorbv}.

	In order to prove \eqref{2implication} we consider $u\in BV_X(\Omega;\R^k)$  and  a Borel subset $B$ of $\Omega$  such that $B\cap \mathcal S_u=\emptyset$. If $k=1$, by Theorem \ref{coarea} we obtain again
	\[
	\begin{aligned}
	|D_Xu|(B)&=\int_{-\infty}^{+\infty}\left(\int_{B\cap \partial^*E_s}\theta_s \:d \mathscr H^{Q-1}\right)ds
=\int_{B}\int_\R\theta_s(p) \chi_{\partial^* E_s}(p)\:ds \:d\mathscr H^{Q-1}(p)=0,
	\end{aligned}
	\]
	the last equality following from Proposition \ref{levelsets}. In the case $u\in BV_X(\Omega;\R^k)$ with $k\geq2$, it is sufficient to notice that $B\cap \mathcal S_u=\emptyset$ implies $B\cap \mathcal S_{u^\alpha}=\emptyset$ for every $\alpha=1,\dots, k$, and one concludes using inequality \eqref{vectorbv}.
\end{proof}

We now prove some of our main results.

\begin{proof}[Proof of Theorems \ref{teo:Suquasirettificabile} and  \ref{federervolpert}]
It is not restrictive to suppose $k=1$. We first prove Theorem \ref{teo:Suquasirettificabile}.

By the Coarea Formula we get a countable and dense set $D\subseteq \R$ such that for every $t\in D$ the level set $\{u>t\}$ has finite $X$-perimeter. We prove that
\begin{equation}\label{eq:Suquasirettif}
\mathcal S_u\setminus L\subseteq \bigcup_{t\in D} \partial^*\{u>t\}
\end{equation}
where, as in Lemma \ref{negligibleset}, $L$ denotes the $\mathscr H^{Q-1}$-negligible set 
\[
\left\{p\in\Omega: \limsup_{r\to 0} \fint_{B(p,r)}|u|^{\tfrac{Q}{Q-1}}d\mathscr L^n=+\infty\right\}.
\]
Theorem \ref{teo:Suquasirettificabile} is immediately implied by 
formula \eqref{eq:Suquasirettif}. In order to prove the latter, take $p\notin L$ and suppose that $p\notin \bigcup_{t\in D} \partial^*\{u>t\}$; we will prove that $p\notin \mathcal S_u$. By definition, $p$ is either a point of density $1$ or a point of density $0$ in $\{u>t\}$ for every $t\in D$. Notice that for every $t\in D\cap (0,+\infty)$ one has
	\[
	\frac{\mathscr L^n\left(\{u>t\}\cap B(p,r)\right)}{\mathscr L^n(B(p,r))}\leq \frac{1}{t}\fint_{B(p,r)}|u|d\mathscr L^n\leq \frac{1}{t}\left(\fint_{B(p,r)}|u|^{\frac{Q}{Q-1}}d\mathscr L^n\right)^{\frac{Q-1}Q}
	\] 
and therefore, if $t\in D\cap (0,+\infty)$ is large enough, $p$ is a point of density $0$ for $\{u>t\}$. Analogously, if $t\in D\cap (-\infty,0)$ and $-t$ is large enough, $p$ is a point of density $1$ for $\{u>t\}$. Hence we can find a real number
	\[
	z=z(p)\coloneqq \sup\left\{t\in D: \{u>t\} \text{ has density } 1 \text{ at } p \right\}.
	\] 
	By the density of $D$ in $\R$ we get that, for every $t>z$, $\{u>t\}$ has density $0$ at $p$ and, for every $t<z$, $\{u>t\}$ has density $1$ at $p$.\\
	We prove now that $z$ is the approximate limit of $u$ at $p$. To this end define $E_\varepsilon\coloneqq\{|u-z|>\varepsilon\}$ and estimate
	\[
	\begin{aligned}
	\frac{1}{r^Q}\int_{B(p,r)} |u-z|d\mathscr L^n&\leq \varepsilon C +\frac{1}{r^Q}\int_{E_\varepsilon\cap B(p,r)} |u-z|d\mathscr L^n\\
	& \leq \varepsilon C +\frac{1}{r^Q}\left(\mathscr L^n(E_\varepsilon\cap B(p,r))\right)^{1/Q}\left(\int_{B(p,r)}|u-z|^{\frac{Q}{Q-1}}d\mathscr L^n\right)^{\frac{Q-1}{Q}}\\
	& = \varepsilon C +\left(\frac{\mathscr L^n(E_\varepsilon\cap B(p,r))}{r^Q}\right)^{1/Q}\left(\frac{1}{r^Q}\int_{B(p,r)}|u-z|^{\frac{Q}{Q-1}}d\mathscr L^n\right)^{\frac{Q-1}{Q}}.
	\end{aligned}
	\] 
	Since both $\{u>z+\varepsilon\}$ and $\{u<z-\varepsilon\}$ have density $0$ at $p$, one has
	\[
	\lim_{r\to 0} \frac{\mathscr L^n(E_\varepsilon\cap B(p,r))}{r^Q}=0
	\]
and, since $p\notin L$, we get
	\[
	\limsup_{r\to 0}\frac 1{r^Q}\int_{B(p,r)}|u-z|d\mathscr L^n\leq C\varepsilon,
	\]
from which we deduce that $p\notin \mathcal S_u$, as desired.

We now prove Theorem \ref{federervolpert}. When property $\mathcal R$ holds, the countable $X$-rectifiability of $\mathcal S_u$ immediately follows from \eqref{eq:Suquasirettif}. We have  to prove that $\mathscr H^{Q-1}(\mathcal S_u\setminus \mathcal J_u)=0$.  Let $\nu=\nu_{\mathcal S_u}$ be the horizontal normal to $\mathcal S_u$ and recall the notation $B^\pm_\nu(p,r)$  introduced in \eqref{eq:Bpmnu}.  By   Proposition \ref{rectifiabletraces} below, for $\mathscr H^{Q-1}$-almost every $p\in \mathcal S_u$ there exist $u^+(p)$ and $u^-(p)$ in $\R^k$ such that
	\[
	\lim_{r\to 0}\frac{1}{r^Q}\int_{B^+_{\nu(p)}(p,r)}|u-u^+(p)|d\mathscr L^n=\lim_{r\to 0}\frac{1}{r^Q}\int_{B^-_{\nu(p)}(p,r)}|u-u^-(p)|d\mathscr L^n=0.
	\]
Notice that $u^+(p)\neq u^-(p)$, for otherwise $u$ would have an approximate limit at $p$. This implies that $p$ is an approximate $X$-jump point associated with the triple $(u^+(p),u^-(p),\nu(p))$, and this concludes the proof.
\end{proof}

A milder version of Theorem \ref{federervolpert} holds when $(\R^n,X)$ satisfies the weaker property $\mathcal{LR}$, that we now introduce.

\begin{definition}[Property $\mathcal{LR}$]\label{def:proprietaLR}
	Let $(\R^n,X)$ be an equiregular CC space with homogeneous dimension $Q\in\mathbb N$. We say that $(\R^n,X)$ satisfies the {\em property $\mathcal{LR}$} if, for every open set $\Omega\subset\R^n$ and every $E\subseteq \R^n$ with locally finite $X$-perimeter in $\Omega$, the essential boundary $\partial^\ast E\cap\Omega$ is countably $X$-Lipschitz rectifiable. 
\end{definition}

The proof of the following result is an immediate consequence of \eqref{eq:Suquasirettif}.

\begin{theorem}\label{teo:SuLR}
	Let $(\R^n,X)$ be an equiregular CC space satisfying property $\mathcal{LR}$ and let $u\in BV_X(\Omega; \R^k)$. Then $\mathcal S_u$ is countably $X$-Lipschitz rectifiable.
\end{theorem}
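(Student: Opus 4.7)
The plan is to deduce this directly from the inclusion \eqref{eq:Suquasirettif} established in the proof of Theorem \ref{teo:Suquasirettificabile}, whose argument nowhere uses property $\mathcal R$ (it relies only on the Coarea Formula of Theorem \ref{coarea} and on Lemma \ref{negligibleset}).

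First I would reduce to the scalar case $k=1$. Writing $u=(u^1,\dots,u^k)$, each component satisfies $u^i\in BV_X(\Omega)$ by \eqref{vectorbv}, and the inclusion $\mathcal S_u\subseteq\bigcup_{i=1}^k \mathcal S_{u^i}$ is immediate: if every component admits an approximate limit at a point $p$, then so does $u$. Since a countable union of countably $X$-Lipschitz rectifiable sets is plainly countably $X$-Lipschitz rectifiable (by taking the union of the defining covering families), it suffices to prove the statement for scalar $u$.

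In the scalar case, \eqref{eq:Suquasirettif} provides a countable dense set $D\subset\R$ for which each super-level set $\{u>t\}$ with $t\in D$ has finite $X$-perimeter in $\Omega$, together with an $\mathscr H^{Q-1}$-negligible set $L\subset\Omega$ (the one from Lemma \ref{negligibleset}) such that
\[
\mathcal S_u\setminus L \ \subseteq \ \bigcup_{t\in D}\partial^\ast\{u>t\}.
\]
Invoking property $\mathcal{LR}$, for each $t\in D$ the essential boundary $\partial^\ast\{u>t\}\cap\Omega$ is countably $X$-Lipschitz rectifiable, hence covered by a countable family of $X$-Lipschitz hypersurfaces up to an $\mathscr H^{Q-1}$-negligible remainder. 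I would then assemble the countable union over $t\in D$ of these covering families and observe that $L$ is itself $\mathscr H^{Q-1}$-negligible; the result is a countable family of $X$-Lipschitz hypersurfaces covering $\mathcal S_u$ up to an $\mathscr H^{Q-1}$-null set, which is precisely the definition of countable $X$-Lipschitz rectifiability.

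There is no real obstacle here: the only ingredient beyond the already-established inclusion \eqref{eq:Suquasirettif} is the hypothesis $\mathcal{LR}$, which by definition supplies the needed countable $X$-Lipschitz cover of $\partial^\ast\{u>t\}\cap\Omega$ for each relevant $t$. This justifies the authors' remark that the proof is immediate, and explains why the proof of Theorem \ref{teo:Suquasirettificabile} was organized to isolate \eqref{eq:Suquasirettif} as the key structural ingredient.
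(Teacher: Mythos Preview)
Your proposal is correct and follows exactly the route the paper intends: the authors state that Theorem \ref{teo:SuLR} is an immediate consequence of \eqref{eq:Suquasirettif}, and you have simply spelled out the (obvious) details of that implication, including the reduction to $k=1$ and the application of property $\mathcal{LR}$ to each $\partial^\ast\{u>t\}\cap\Omega$.
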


Before proving Proposition \ref{rectifiabletraces}, that we used in the proof of Theorem \ref{federervolpert}, we state the following theorem, which is a consequence of some results contained in \cite{Vittone2}. We use the notation
\[
B_{f}^{\pm}(p,r)\coloneqq \{q\in B(p,r): \pm f(q) >0\}.
\]

\begin{theorem}\label{charactraces}
	Let $(\R^n,X)$ be an equiregular CC space, let $\Omega\subset\R^n$ be an  open set and let $f\in C^1_X(\Omega)$ be such that $Xf\neq 0$ on $\Omega$; let  $S$ be the $C^1_X$ hypersurface $S\coloneqq\{p\in\Omega:f(p)=0\}$. Then  there exist  linear operators $T^+,T^-:BV_{X,loc}(\Omega;\R^k)\rightarrow L^1_{loc}(S,\mathscr H^{Q-1})$ such that, for any $u\in BV_{X,loc}(\Omega;\R^k)$, one has for $\mathscr H^{Q-1}$-a.e. $p\in S$
	\[
	\lim_{r\to 0}\frac{1}{r^Q}\int_{B^+_f(p,r)}|u-T^+u(p)|d\mathscr{L}^n=\lim_{r\to 0}\frac{1}{r^Q}\int_{B^-_f(p,r)}|u-T^-u(p)|d\mathscr{L}^n=0.
	\]
In particular, for $\mathscr H^{Q-1}$-a.e. $p\in S$
	\[
	T^\pm u(p)=\lim_{r\to 0}\frac{1}{r^Q}\int_{B^\pm_f(p,r)} u\:d \mathscr L^n.
	\]
\end{theorem}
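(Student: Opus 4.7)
The proof is essentially an application of trace theorems for $BV_X$ functions on $C^1_X$ (or, more generally, $X$-Lipschitz) hypersurfaces established in \cite{Vittone2}. By linearity and the componentwise estimates \eqref{vectorbv} it suffices to treat the scalar case $k=1$. The plan is to define
\[
T^\pm u(p)\coloneqq \lim_{r\to 0}\frac{1}{r^Q}\int_{B^\pm_f(p,r)}u\,d\mathscr L^n
\]
on the set of $p\in S$ at which the limit exists; linearity is then automatic, and the task reduces to showing that this limit exists for $\mathscr H^{Q-1}$-a.e. $p\in S$, defines an element of $L^1_{loc}(S,\mathscr H^{Q-1})$, and actually realizes the stronger mean-value convergence with $|u-T^\pm u(p)|$.

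The existence of such traces (in a possibly differently normalized form) is the main content of \cite{Vittone2}, where linear operators $T^\pm$ from $BV_{X,loc}$ into $L^1_{loc}(S,\mathscr H^{Q-1})$ are constructed and characterized by mean-value convergence of $u$ over one-sided neighborhoods of $p$, typically tubes foliated by integral curves of a vector field transverse to $S$. The technical heart of the proof is therefore to (i) identify the one-sided neighborhoods used in \cite{Vittone2} with the metric half-balls $B^\pm_f(p,r)$, up to an error that is negligible at the relevant scale, and (ii) replace the perimeter-type normalization with the volume-type $r^{-Q}$.

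For point (ii) one uses Theorem \ref{teo:ambrosio}, which yields $P_X^{\{f>0\}}(B(p,r))\asymp r^{Q-1}$ for $\mathscr H^{Q-1}$-a.e. $p\in S$, together with Theorem \ref{ccproperties}(iii) and Proposition \ref{goodsplit}, giving $\mathscr L^n(B^\pm_f(p,r))\sim \tfrac{1}{2}\mathscr L^n(B(p,r))\asymp r^Q$. Thus the normalizations $P_X^{\{f>0\}}(B(p,r))$, $\mathscr L^n(B^\pm_f(p,r))$ and $r^Q$ are comparable, and vanishing of one average is equivalent to vanishing of the other. Once the strong trace relation
\[
\lim_{r\to 0}\frac{1}{r^Q}\int_{B^\pm_f(p,r)}|u-T^\pm u(p)|\,d\mathscr L^n=0
\]
is established, the representation formula for $T^\pm u(p)$ as $\lim_{r\to 0}r^{-Q}\int_{B^\pm_f(p,r)}u\,d\mathscr L^n$ follows by the triangle inequality.

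The main obstacle is point (i): making sure that the one-sided neighborhoods appearing in \cite{Vittone2} and our metric half-balls $B^\pm_f(p,r)$ agree up to a set of $\mathscr L^n$-measure $o(r^Q)$ at $\mathscr H^{Q-1}$-a.e. $p\in S$. This is handled by a blow-up argument in adapted exponential coordinates $F_p$ around $p$: upon rescaling by $\delta_{1/r}$, Corollary \ref{th:hypersurface} implies that $F_p^{-1}(B^\pm_f(p,r))$ converges, modulo sets of vanishing density, to the tangent halfspace $\{\pm \widetilde L_{Xf(p)}>0\}\cap \widehat B(0,1)$ in the nilpotent approximation; the same limit is obtained for the foliated tubes used in \cite{Vittone2}, thanks to the smoothness of the foliation and Theorem \ref{th:localgroup}. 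Therefore the symmetric difference between the two families of one-sided regions has Lebesgue measure $o(r^Q)$, which together with the $L^1_{loc}$ assumption on $u$ yields that the two resulting trace definitions coincide $\mathscr H^{Q-1}$-a.e. on $S$, completing the proof.
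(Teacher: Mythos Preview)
The paper does not give its own proof of this theorem: it is simply stated as ``a consequence of some results contained in \cite{Vittone2}'' and then used as a black box in Proposition \ref{rectifiabletraces}. Your proposal is in the right spirit---explain how the trace operators of \cite{Vittone2} yield the metric half-ball characterization---and the reduction to $k=1$, the normalization considerations in (ii), and the blow-up picture in (i) are all reasonable.

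There is, however, a genuine gap in the final step. You assert that, once the symmetric difference of the two one-sided regions has measure $o(r^Q)$, the $L^1_{loc}$ assumption on $u$ is enough to conclude that the two trace definitions agree. This is false in general: knowing only $u\in L^1_{loc}$ gives no control of $r^{-Q}\int_{E_r\Delta F_r}|u|$ when $\mathscr L^n(E_r\Delta F_r)=o(r^Q)$, since $|u|$ could concentrate on the thin strip. What is actually needed is higher integrability of the blow-ups, and this is exactly what Lemma \ref{negligibleset} provides for $BV_X$ functions: for $\mathscr H^{Q-1}$-a.e.\ $p$ the $L^{Q/(Q-1)}$-averages over $B(p,r)$ are bounded, and then H\"older's inequality turns $o(r^Q)$ measure into $o(1)$ contribution. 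This is precisely the mechanism the paper uses in the proof of Proposition \ref{rectifiabletraces}. Alternatively, you could avoid the symmetric-difference route altogether by arguing containment (the half-ball $B^\pm_f(p,r)$ sits inside a tube of comparable scale on which the \cite{Vittone2} convergence already gives $r^{-Q}\int|u-T^\pm u(p)|\to 0$), but either way the bare $L^1_{loc}$ claim as written does not close the argument.
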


We can now prove the following proposition, where we implicitly use Remark \ref{jumpwellposed}.

\begin{proposition}\label{rectifiabletraces}
	Let $(\R^n,X)$ be an equiregular CC space and let $\Omega\subseteq \R^n$ be an open set. Let $R\subseteq \Omega$ be a countably $X$-rectifiable set with horizontal normal $\nu_R$. Then, for every $u\in BV_X(\Omega;\R^k)$ and for $\mathscr H^{Q-1}$-almost every $p\in R$ there exists a couple $(u^+_R(p),u^-_R(p))\in\R^k\times \R^k$ such that 
	\begin{equation}\label{eq:traccerett}
	\lim_{r\to 0}\frac{1}{r^Q}\int_{\Omega \cap B^+_{\nu_R(p)}(p,r)} |u-u^+_R(p)|d\mathscr L^n=\lim_{r\to 0}\frac{1}{r^Q}\int_{\Omega \cap B^-_{\nu_R(p)}(p,r)} |u-u^-_R(p)|d\mathscr L^n=0.
	\end{equation}
Moreover, if $(\R^n,X)$ satisfies property $\mathcal R$ and $R=\mathcal J_u$\footnote{The jump set $\mathcal J_u$ is countably $X$-rectifiable by Theorem \ref{federervolpert} and Remark \ref{rem:JusubsetSu}.}, then  $(u^+_{\mathcal J_u}(p),u^-_{\mathcal J_u}(p),\nu_{\mathcal J_u}(p))$ is an approximate $X$-jump triple for $u$ at $p$ in the sense of Definition \ref{approximatejump}.
\end{proposition}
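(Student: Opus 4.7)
The plan is to reduce to traces on a single $C^1_X$ hypersurface and then invoke Theorem~\ref{charactraces}. Since $R$ is countably $X$-rectifiable, I can write $R=\bigcup_{h\in\mathbb N}R_h$ up to an $\mathscr H^{Q-1}$-negligible set, where $R_h\coloneqq R\cap S_h\setminus\bigcup_{k<h}S_k$ for $C^1_X$ hypersurfaces $S_h$; by Definition~\ref{def:normalerettif}, we have $\nu_R=\nu_{S_h}$ on $R_h$ (a sign change only swaps the roles of $u_R^+$ and $u_R^-$, which is compatible with the statement). Working in a small ball around a point $p\in R_h$ where $S_h=\{f=0\}$ for some $f\in C_X^1$ with $Xf\neq 0$ and $Xf(p)/|Xf(p)|=\nu_R(p)$, I would apply Theorem~\ref{charactraces} to obtain trace values $T^{\pm}u$ on $S_h$ and set $u_R^{\pm}(p)\coloneqq T^{\pm}u(p)$ for $\mathscr H^{Q-1}$-a.e.\ $p\in R_h$.

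Next, I need to pass from the half-balls $B_f^{\pm}(p,r)$ appearing in Theorem~\ref{charactraces} to the half-balls $B^{\pm}_{\nu_R(p)}(p,r)$ appearing in~\eqref{eq:traccerett}. These are defined through two different $C^1_X$ functions that agree in horizontal gradient at $p$, so Remark~\ref{rmk:germ} yields that the symmetric difference $\Delta_r$ between them satisfies $\mathscr L^n(\Delta_r)=o(r^Q)$. For $\mathscr H^{Q-1}$-a.e.\ $p\in R$, Lemma~\ref{negligibleset} ensures that $\fint_{B(p,r)}|u|^{Q/(Q-1)}\,d\mathscr L^n$ stays bounded as $r\to 0$, and by H\"older's inequality together with Theorem~\ref{ccproperties} one gets
\[
\frac{1}{r^Q}\int_{\Delta_r}|u-u_R^\pm(p)|\,d\mathscr L^n \leq \frac{C\mathscr L^n(\Delta_r)^{1/Q}}{r^Q}\left(\int_{B(p,r)}|u-u_R^\pm(p)|^{Q/(Q-1)}d\mathscr L^n\right)^{(Q-1)/Q}\to 0.
\]
Combining this with Theorem~\ref{charactraces} (and absorbing the intersection with $\Omega$, since $B(p,r)\subset\Omega$ for small $r$) yields~\eqref{eq:traccerett}.

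For the second assertion, assume property $\mathcal R$ holds and take $R=\mathcal J_u$, which is countably $X$-rectifiable by Theorem~\ref{federervolpert}. For $\mathscr H^{Q-1}$-a.e.\ $p\in\mathcal J_u$ the triple $(u^+(p),u^-(p),\nu_u(p))$ from Definition~\ref{approximatejump} exists with $u^+(p)\neq u^-(p)$, and the first part of the proposition supplies $(u_R^+(p),u_R^-(p))$. The crux is to show that $\nu_u(p)=\pm\nu_R(p)$ for $\mathscr H^{Q-1}$-a.e.\ $p\in\mathcal J_u$: if not, then $\{\widetilde L_{\nu_u(p)}>0\}$ and $\{\widetilde L_{\nu_R(p)}>0\}$ would be distinct half-spaces through the origin, so in adapted exponential coordinates each of their four pairwise intersections would have positive density at $0$ inside $\widehat B(0,1)$; comparing the blow-up limit of $u\circ F_p\circ\delta_r$ coming from Proposition~\ref{jumpequiv} with the one coming from~\eqref{eq:traccerett} on these four regions would force $u^+(p)=u^-(p)=u_R^+(p)=u_R^-(p)$, contradicting $u^+(p)\neq u^-(p)$. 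Once $\nu_u(p)=\pm\nu_R(p)$ is established, a final comparison (again via Remark~\ref{rmk:germ}) identifies $u_R^{\pm}(p)$ with $u^{\pm}(p)$ up to the corresponding swap, so in particular $u_R^+(p)\neq u_R^-(p)$, and the triple is an approximate $X$-jump triple as claimed.

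The main obstacle I expect is the second step, namely making the transfer between the trace half-balls $B_f^{\pm}$ and the generic half-balls $B^{\pm}_{\nu_R}$ quantitatively tight: it is here that one must lean on Remark~\ref{rmk:germ} together with the higher integrability provided by Lemma~\ref{negligibleset}, rather than on cruder bounds. Once this technical step is in place, both the first part and the jump-identification at the end follow by a clean combination of Theorem~\ref{charactraces}, Proposition~\ref{jumpequiv} and the uniqueness of blow-up limits.
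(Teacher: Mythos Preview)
Your argument for the first part is essentially the paper's: both reduce to Theorem~\ref{charactraces} on each $C^1_X$ hypersurface and then use the H\"older estimate combining Remark~\ref{rmk:germ} with Lemma~\ref{negligibleset}. The paper organizes this as a consistency check on overlaps $S_i\cap S_j$, while you use a disjoint decomposition and instead transfer between two choices of defining function; the technical content is the same.

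For the second part you work much harder than necessary. Once \eqref{eq:traccerett} holds at $p\in\mathcal J_u$, it \emph{is} the condition \eqref{eq:defsalti} for the triple $(u_R^+(p),u_R^-(p),\nu_R(p))$, so this triple is an approximate $X$-jump triple as soon as $u_R^+(p)\neq u_R^-(p)$; and equality would force $p\notin\mathcal S_u$, contradicting $p\in\mathcal J_u\subset\mathcal S_u$. The paper records exactly this one-line observation. Your blow-up comparison showing $\nu_u(p)=\pm\nu_R(p)$ is correct but superfluous: it follows \emph{a posteriori} from the uniqueness of jump triples (Proposition~\ref{jumpequiv}) once you know $(u_R^+,u_R^-,\nu_R)$ is one.
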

\begin{proof}
	We can assume without loss of generality that $k=1$. Let $u\in BV_X(\Omega)$ be fixed. By definition of countable $X$-rectifiability we can find  a family $\{S_i:i\in \mathbb N\}$ of $C^1_X$ hypersurfaces in $\R^n$ such that
	\[
	\mathscr H^{Q-1}\left(R\setminus \bigcup_{i=0}^\infty S_i\right)=0.
	\]
	For every $i\in \mathbb N$ we can write, at least locally, $S_i=\{f_i=0\}$ and we can suppose that $Xf_i\neq 0$ on $S_i$. Formula \eqref{eq:traccerett} easily follows (with $u^\pm_R(p)=T^\pm u(p)$) from Theorem \ref{charactraces} for $\mathscr H^{Q-1}$-a.e. $p\in R$ such that $\#\{i\in\mathbb N:p\in S_i\}=1$. It is then enough to show that, for any fixed couple $i,j\in\mathbb N$ with $i\neq j$ and for $\mathscr H^{Q-1}$-almost every   $p\in S_i\cap S_j$, the equivalence
	\begin{equation}\label{eq:equivalenza}
	(T_i^+u(p),T^-_iu(p),\nu_{S_i}(p))\equiv (T_j^+u(p),T^-_ju(p),\nu_{S_j}(p))
	\end{equation}
holds. Here, $T_i^\pm,T^\pm_j$ are the trace operators provided by Theorem \ref{charactraces} with $f=f_i,f_j$.

Fix a point $p\in S_i\cap S_j$ where $\nu_{S_i}(p)=\pm\nu_{S_j}(p)$; recall that this fact occurs at $\mathscr H^{Q-1}$-a.e. $p\in S_i\cap S_j$. Assume that $\nu_{S_i}(p)=\nu_{S_j}(p)$, i.e., $\frac{Xf_i(p)}{|Xf_i(p)|}=\frac{Xf_j(p)}{|Xf_j(p)|}$; by Theorem \ref{charactraces} we have for $\mathscr H^{Q-1}$-a.e. such $p$ that
	\[
	\begin{aligned}
		|T_i^\pm(p)-T_j^\pm(p)|&=\lim_{r\to 0}\frac{1}{r^Q}\left|\int_{\{\pm f_i>0\}\cap B(p,r)}u\:d\mathscr L^n-\int_{\{\pm f_j>0\}\cap B(p,r)} ud\mathscr L^n\right|\\
		&\leq\lim_{r\to 0}\frac{1}{r^Q}\int_{\{f_if_j\leq 0\}\cap B(p,r)}|u|d\mathscr L^n\\
		&\leq \lim_{r\to 0} \frac{1}{r^Q} \mathscr L^n(\{f_if_j\leq 0\}\cap B(p,r))^{1/Q}  \left(\int_{B(p,r)}|u|^{\frac{Q}{Q-1}}d\mathscr L^n\right)^{\frac{Q-1}{Q}}.
	\end{aligned}
	\] 
By Remark \ref{rmk:germ} we have
	\[
	\lim_{r\to 0} \frac{1}{r^Q}\mathscr L^n(\{f_if_j\leq 0\}\cap B(p,r))=0,
	\]
while by Lemma \ref{negligibleset} we also have that for $\mathscr H^{Q-1}$-almost every $p\in\Omega$ 
	\[
	\limsup_{r\to 0} \frac{1}{r^Q}\int_{B(p,r)}|u|^{\frac{Q}{Q-1}}d\mathscr L^n<+\infty.
	\]
	This proves that $T_i^\pm(p)=T_j^\pm(p)$ for $\mathscr H^{Q-1}$-a.e. $p\in S_i\cap S_j$ such that $\nu_{S_i}(p)=\nu_{S_j}(p)$. A similar argument shows that $T_i^\pm(p)=T_j^\mp(p)$ holds for $\mathscr H^{Q-1}$-a.e. $p\in S_i\cap S_j$ with $\nu_{S_i}(p)=-\nu_{S_j}(p)$. This proves \eqref{eq:equivalenza}, while the last statement of the proposition follows from Theorem \ref{charactraces}.
\end{proof}

The problem of studying  ``intrinsic'' measures of submanifolds of a CC space goes back to M. Gromov \cite[0.6.b]{Gromov}: the interested reader might consult \cite{MagnaniJEMS,MTV,MagV,MSC} and the references therein. Since we do not intend to dwell on such questions, we follow a different (``axiomatic'') path; this is based on the following definition, where we chose to work with the spherical Hausdorff measure $\mathscr S^{Q-1}$, rather than the standard one, because the results mentioned above (as well as \cite{FSSC1,FSSC2}) suggest  $\mathscr S^{Q-1}$ to be more natural than the standard measure $\mathscr H^{Q-1}$.

\begin{definition}[Property \propHD]\label{def:propHD}
Let $(\R^n,X)$ be an equiregular CC space with homogeneous dimension $Q\in\mathbb N$. We say that $(\R^n,X)$ satisfies the {\em property \propHD\ } if  there exists a function $\zeta:\R\times\mathbb S^{m-1}\to(0,+\infty)$ such that, for every $C^1_X$ hypersurface $S\subset\R^n$ and every $p\in S$, one has
\[
\lim_{r\to0}\frac{\mathscr S^{Q-1}(S\cap B(p,r))}{r^{Q-1}}=\zeta(p,\nu_S(p)).
\]
\end{definition}

\begin{rmk}\label{rem:daC1arettif}
If $(\R^n,X)$ is an equiregular CC space satisfying  {\em property \propHD} and $R\subset\R^n$ is $X$-rectifiable, then we have
\[
\lim_{r\to0}\frac{\mathscr S^{Q-1}(R\cap B(p,r))}{r^{Q-1}}=\zeta(p,\nu_R(p))\qquad\text{for $\mathscr S^{Q-1}$-a.e. $p\in R$},
\]
where $\zeta$ is as in Definition \ref{def:propHD}. 

Let us prove this fact. Let $S_i,i\in\mathbb N$, be a family of $C^1_X$ hypersurfaces such that $\mathscr S^{Q-1}(R\setminus\cup_{i\in\mathbb N}S_i)=0$; it is enough to show that, for any fixed $i\in\mathbb N$, we have
\[
\lim_{r\to0}\frac{\mathscr S^{Q-1}(R\cap B(p,r))}{r^{Q-1}}=\zeta(p,\nu_R(p))\qquad\text{for $\mathscr S^{Q-1}$-a.e. $p\in R\cap S_i$}.
\]
Setting $R\Delta S_i\coloneqq(R\setminus S_i)\cup(S_i\setminus R)$, by Remark \ref{densitycorollary} (applied with $\mu\coloneqq\mathscr S^{Q-1}\res(R\Delta S_i)$) we obtain
\[
\lim_{r\to0}\frac{\mathscr S^{Q-1}((R\Delta S_i)\cap B(p,r))}{r^{Q-1}}=0\qquad\text{for $\mathscr S^{Q-1}$-a.e. $p\in R\cap S_i$},
\]
which gives for $\mathscr S^{Q-1}$-a.e. $p\in R\cap S_i$
\[
\lim_{r\to0}\frac{\mathscr S^{Q-1}(R\cap B(p,r))}{r^{Q-1}}=
\lim_{r\to0}\frac{\mathscr S^{Q-1}(S_i\cap B(p,r))}{r^{Q-1}}=
\zeta(p,\nu_{S_i}(p))=
\zeta(p,\nu_R(p))
\]
as desired.
\end{rmk}

Assuming properties $\mathcal R$ and \propHD\ we are able to prove the following result, where we use the notation $u^+_R,u^-_R$ of Proposition \ref{rectifiabletraces}.

\begin{theorem}\label{teo:jumppartHDcompleto}
	Let $(\R^n,X)$ be an equiregular CC space satisfying properties $\mathcal R$ and \propHD; then, there exists a function $\sigma:\R^n\times\mathbb S^{m-1}\to(0,+\infty)$ such that the following holds. For every open set $\Omega\subset\R^n$,  $u\in BV_X(\Omega; \R^k)$ and every countably $X$-rectifiable set $R\subset\R^n$ one has
\[
D_Xu\res R=\sigma(\cdot,\nu_R)(u^+_R-u^-_R)\otimes \nu_R\: \mathscr S^{Q-1}\res R.
\]
In particular, $D^j_Xu=\sigma(\cdot,\nu_u)(u^+-u^-)\otimes \nu_u\: \mathscr S^{Q-1}\res \mathcal J_u$.
\end{theorem}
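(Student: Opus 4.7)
My plan is to recognise $D_Xu\res R$ as absolutely continuous with respect to $\mathscr{S}^{Q-1}\res R$, compute its Radon-Nikodym density via a blow-up of $u$ at $\mathscr{S}^{Q-1}$-a.e.\ $p\in R$, and identify this density with an intrinsic function $\sigma(p,\nu_R(p))$ determined by the tangent Carnot group at $p$. By the definition of countable $X$-rectifiability and the null-set property \eqref{1implication} of Proposition \ref{prop3.76}, it suffices to treat the case where $R$ is a Borel subset of a single $C^1_X$ hypersurface $S$ (so $\nu_R = \nu_S|_R$); the general case follows by countable additivity, and the jump statement is obtained by taking $R=\mathcal J_u$, which is countably $X$-rectifiable by Theorem \ref{federervolpert}.

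\textbf{Differentiation step.} Write $D_Xu=\mu+\mu_\perp$ with $\mu\coloneqq D_Xu\res R$ and $\mu_\perp\coloneqq D_Xu\res(\R^n\setminus R)$. By Proposition \ref{prop3.76} one has $|\mu|\ll \mathscr{H}^{Q-1}\res R$, and since $\mathscr{H}^{Q-1}$ and $\mathscr{S}^{Q-1}$ share null sets, $\mu\ll \mathscr{S}^{Q-1}\res R$; since $|\mu_\perp|(R)=0$, $\mu_\perp$ is mutually singular with $\mathscr{S}^{Q-1}\res R$. The differentiation theorem in the locally doubling space $(\R^n,d)$ (asymptotically doubling for $\mathscr{S}^{Q-1}\res R$ by property \propHD\ and Remark \ref{rem:daC1arettif}) yields for $\mathscr{S}^{Q-1}$-a.e.\ $p\in R$ that
\[
\lim_{r\to0}\frac{|\mu_\perp|(B(p,r))}{\mathscr{S}^{Q-1}(R\cap B(p,r))}=0,\qquad \lim_{r\to0}\frac{\mu(B(p,r))}{\mathscr{S}^{Q-1}(R\cap B(p,r))}=\frac{d\mu}{d(\mathscr{S}^{Q-1}\res R)}(p).
\]
Multiplying by $\mathscr{S}^{Q-1}(R\cap B(p,r))/r^{Q-1}\to \zeta(p,\nu_R(p))>0$, we see that $\limsup_{r\to0}|D_Xu|(B(p,r))/r^{Q-1}<\infty$ and
\[
L(p)\coloneqq \lim_{r\to0}\frac{D_Xu(B(p,r))}{r^{Q-1}}=\zeta(p,\nu_R(p))\cdot \frac{d\mu}{d(\mathscr{S}^{Q-1}\res R)}(p)\in \R^{k\times m}
\]
at every such $p$.

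\textbf{Blow-up identification.} Fix a $p$ satisfying all of the above. By Proposition \ref{rectifiabletraces} combined with Proposition \ref{jumpequiv} (applied with $(a,b,\nu)=(u^+_R(p),u^-_R(p),\nu_R(p))$), in adapted exponential coordinates $F_p$ the sequence $\widetilde u_r\coloneqq u\circ F_p\circ\delta_r$ converges in $L^1_{loc}(\R^n;\R^k)$ to $w\coloneqq w_{u^+_R(p),u^-_R(p),\nu_R(p)}$. The upper bound from the previous step together with Proposition \ref{BVdiffeo} and \eqref{measureequivalence} gives a uniform bound on $|D_{\widetilde X^r}\widetilde u_r|$ on each compact subset of $\R^n$; combined with $\widetilde X_i^r\to \widehat X_i$ in $C^\infty_{loc}$ (Theorem \ref{th:tangentspace}) and Remark \ref{rem:convergenzadebole}, this delivers the weak$^\ast$ convergence
\[
D_{\widetilde X^r}\widetilde u_r \rightharpoonup D_{\widehat X}w
\]
as $\R^{k\times m}$-valued Radon measures on $\R^n$. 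Since $w$ takes two constant values separated by $\{\widetilde L_{\nu_R(p)}=0\}$, one has $D_{\widehat X}w=(u^+_R(p)-u^-_R(p))\otimes \nu_R(p)\,P^{E_p}_{\widehat X}$, with $E_p\coloneqq \{\widetilde L_{\nu_R(p)}>0\}$; the $\delta$-homogeneity of $\widehat X$ forces $P^{E_p}_{\widehat X}(\widehat B_p(0,\rho))=c(p,\nu_R(p))\rho^{Q-1}$, where $c(p,\nu)\coloneqq P^{\{\widetilde L_\nu>0\}}_{\widehat X_p}(\widehat B_p(0,1))>0$. For cocountably many $\rho>0$ we have $P^{E_p}_{\widehat X}(\partial \widehat B_p(0,\rho))=0$; pairing the weak$^\ast$ limit with smooth cut-offs of such $\widehat B_p(0,\rho)$, using Theorem \ref{th:localgroup} to interchange $\widehat B_p(0,\rho)$ with $\widetilde B_r(0,\rho)$ asymptotically, and applying Proposition \ref{BVdiffeo} with \eqref{measureequivalence} to return to $D_Xu(B(p,r\rho))$, we obtain
\[
L(p)=|\det\nabla F_p(0)|\,c(p,\nu_R(p))\,(u^+_R(p)-u^-_R(p))\otimes \nu_R(p).
\]
Defining $\sigma(p,\nu)\coloneqq |\det\nabla F_p(0)|\,c(p,\nu)/\zeta(p,\nu)>0$ and comparing with the differentiation step yields $D_Xu\res R=\sigma(\cdot,\nu_R)(u^+_R-u^-_R)\otimes \nu_R\,\mathscr{S}^{Q-1}\res R$, as required.

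\textbf{Main obstacle.} The delicate step is the blow-up, specifically securing the uniform BV bound on the rescaled $\widetilde u_r$. This is not automatic because $|D_Xu|$ may have additional contributions (absolutely continuous, Cantor, or from other jump hypersurfaces) concentrated near $p$; the argument works only because the mutual singularity of $\mu_\perp$ with $\mathscr{S}^{Q-1}\res R$ forces $|\mu_\perp|(B(p,r))=o(r^{Q-1})$ at typical $p$, so that the potentially troublesome piece of $D_Xu$ contributes nothing in the limit. A secondary technical point is the boundary-negligibility of $\partial \widehat B_p(0,\rho)$ for $P^{E_p}_{\widehat X}$, which is handled by selecting $\rho$ from the cocountable set of good radii and then passing to $\rho\to1$ using continuity in $\rho$ of $L(\cdot)$ inherited from Step 2.
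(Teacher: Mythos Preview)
Your argument is correct and follows essentially the same route as the paper: absolute continuity of $D_Xu\res R$ with respect to $\mathscr S^{Q-1}\res R$ via Proposition~\ref{prop3.76}, differentiation using property~\propHD\ (through Remark~\ref{rem:daC1arettif} and Federer's theorem), and identification of the density by blowing up to the tangent Carnot group with the aid of Remark~\ref{rem:convergenzadebole}. The paper reduces to $R\subset\mathcal J_u$ rather than to $R\subset S$, and writes $\sigma$ in terms of $\mathscr H^{n-1}_e(\bar\nu^\perp\cap\widehat B_p(0,1))$ rather than your $c(p,\nu)$, but these are cosmetic differences.

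One technical imprecision worth flagging: the condition $P^{E_p}_{\widehat X}(\partial\widehat B_p(0,\rho))=0$ that you impose concerns the total variation of the \emph{limit} measure $D_{\widehat X}w$, whereas to upgrade weak$^\ast$ convergence $D_{\widetilde X^r}\widetilde u_r\rightharpoonup D_{\widehat X}w$ to convergence of the values on $\widehat B_p(0,\rho)$ one needs the boundary to be negligible for any weak$^\ast$ limit $\lambda$ of (a subsequence of) the \emph{total variations} $|D_{\widetilde X^r}\widetilde u_r|$, and $\lambda$ may strictly dominate $|D_{\widehat X}w|$. The paper fixes this by extracting such a subsequence and then choosing $\rho$ with $\lambda(\partial\widehat B_p(0,\rho))=0$; this is harmless in your setup precisely because your differentiation step already guarantees that the full limit $L(p)$ exists, so computing it along any convenient subsequence suffices.
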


\begin{proof}
We can assume without loss of generality that $k=1$ and $\mathscr S^{Q-1}(R)<\infty$. By Theorem \ref{federervolpert} and Proposition \ref{prop3.76} we can also assume that $R\subset \mathcal J_u$. Given $p\in\R^n$ we work in adapted exponential coordinates $F_p$ around $p$ and we define
\[
\sigma(p,\nu)\coloneqq\frac{ |\det \nabla F_p(0)|\mathscr H^{n-1}_{e}(\overline \nu^\perp\cap\widehat B_p(0,1))}{\zeta(p,\nu)}
\]
where $\zeta$ is as in Definition \ref{def:propHD} and, as in the proof of Proposition \ref{prop3.76}, $\mathscr H^{n-1}_{e}$ denotes the Euclidean Hausdorff measure in $\R^n$.

Let $\mu_R\coloneqq D_Xu\res R$; by Proposition \ref{prop3.76} we have $\mu_R\ll\mathscr S^{Q-1}\res R$. By Remark \ref{rem:daC1arettif} we can use \cite[Theorem 2.9.8]{Federer} (joint with \cite[Theorem 2.8.17]{Federer}) and it is enough to prove that for $\mathscr S^{Q-1}$-a.e. $p\in R$
\[
\lim_{r\to0} \frac{\mu_R(B(p,r))}{\mathscr S^{Q-1}(R\cap B(p,r))}= \sigma(p,\nu_R(p))(u^+_R(p)-u^-_R(p)) \nu_R(p) ;
\]
notice that the limit above exists $\mathscr S^{Q-1}$-almost everywhere. Taking into account Remark \ref{rem:daC1arettif} and the fact that (by Remark \ref{densitycorollary})  
\[
\lim_{r\to 0}\frac{|D_Xu-\mu_R|(B(p,r))}{r^{Q-1}}=0\qquad\text{for $\mathscr S^{Q-1}$-a.e. $p\in R$},
\]
it suffices to prove that, for $\mathscr S^{Q-1}$-a.e. $p\in R$,  there exists an infinitesimal sequence $(r_i)$ such that
\[
\lim_{i\to+\infty} \frac{D_Xu(B(p,r_i))}{r_i^{Q-1}}= |\det \nabla F_p(0)|\mathscr H^{n-1}_{e}(\overline \nu^\perp\cap\widehat B_p(0,1))(u^+_R(p)-u^-_R(p)) \nu_R(p).
\]
We prove that such a sequence exists at all points where $\limsup_{r\to 0}\frac{|D_Xu|(B(p,r))}{r^{Q-1}}<\infty$, which holds for $\mathscr S^{Q-1}$-a.e. $p\in R$ due to Remark \ref{densitycorollary}.

Let then such a $p\in R$ be fixed; since $R\subset\mathcal J_u$, the functions $\widetilde u_r\coloneqq u\circ F_p\circ\delta_r$ converge in $L^1_{loc}(\R^n)$ to
	\[
	w_p(y)\coloneqq \begin{cases}
	u^+(p) & \text{if }\widetilde L_{\nu_R(p)}(y) \geq 0\\
	u^-(p) & \text{if }\widetilde L_{\nu_R(p)}(y) <0,
	\end{cases}
	\]
	where we used the fact that $\nu_R=\nu_{\mathcal J_u}=\nu_u$  $\mathscr S^{Q-1}$-a.e. on $R$. Let $\widetilde u\coloneqq u\circ F_p$; since (recall notation \eqref{eq:defcampiriscalati}) $|D_{\widetilde X^r}\widetilde u_r|(\widetilde B_r(0,\varrho))=|D_{\widetilde X}\widetilde u|(\widetilde B(0,r\varrho))/r^{Q-1}$ is bounded as $r\to0$ for any positive $\varrho$, by Remark \ref{rem:convergenzadebole}  the sequence $D_{\widetilde X^r}\widetilde u_r$ weakly$^\ast$ converges  in $\R^n$ to $D_{\widehat X}w_p$ as $r\to0$. Let $s_i$ be an infinitesimal sequence such that $|D_{\widetilde X^{s_i}}\widetilde u_{s_i}|$  weakly$^\ast$ to some measure $\lambda$ in $\R^n$; let $\varrho\in (0,1)$ be such that $\lambda(\partial\widehat B_p(0,\varrho))=0$ (which holds for all except at most countably many  $\varrho$) and define $r_i\coloneqq\varrho s_i$.  Proposition \ref{BVdiffeo} gives
\begin{align*}
\lim_{i\to\infty}\frac{D_Xu(B(p,r_i))}{r_i^{Q-1}} = & |\det \nabla F_p(0)|\ \lim_{i\to\infty}\frac{D_{\widetilde X}\widetilde u(\widetilde B(0,r_i))}{r_i^{Q-1}}\\
 = & |\det \nabla F_p(0)|\ \lim_{i\to\infty}\frac{D_{\widetilde X^{s_i}}\widetilde u^{s_i}(\widetilde B_{s_i}(0,\varrho))}{\varrho^{Q-1}}.
\end{align*}
We prove in a moment that
\begin{equation}\label{eq:busillis}
\lim_{i\to\infty}\frac{D_{\widetilde X^{s_i}}\widetilde u^{s_i}(\widetilde B_{s_i}(0,\varrho))}{\varrho^{Q-1}} = \frac{D_{\widehat X} w_p(\widehat B_p(0,\varrho))}{\varrho^{Q-1}};
\end{equation}
assuming this to be true, we have
\begin{align*}
\lim_{i\to\infty}\frac{D_Xu(B(p,r_i))}{r_i^{Q-1}} =& |\det \nabla F_p(0)|\  \frac{D_{\widehat X} w_p(\widehat B_p(0,\varrho))}{\varrho^{Q-1}}\\
=&|\det \nabla F_p(0)|\mathscr H^{n-1}_{e}(\overline \nu^\perp\cap\widehat B_p(0,1))(u^+_R(p)-u^-_R(p)) \nu_R(p).
\end{align*}
and the proof would be concluded.

Let us prove \eqref{eq:busillis}. Defining
\[
\mu_i\coloneqq D_{\widetilde X^{s_i}}\widetilde u^{s_i}\res\widetilde B_{s_i}(0,\varrho),\qquad \mu\coloneqq D_{\widehat X} w_p\res\widehat B_p(0,\varrho)
\]
and taking into account \cite[Proposition 1.62 (b)]{AFP}, it will suffice to show that
\begin{equation}\label{eq:busillis2}
\mu_i\stackrel{\ast}{\rightharpoonup}\mu\qquad\text{and}\qquad |\mu_i|\stackrel{\ast}{\rightharpoonup}\lambda\res \widehat B_p(0,\varrho).
\end{equation}
Concerning the first statement in \eqref{eq:busillis2}, fix a test function $\varphi\in C^0_c(\R^n)$; then
\begin{align*}
&\lim_{i\to\infty} \int\varphi\:d\mu_i
= \lim_{i\to\infty}\int_{\widetilde B_{s_i}(0,\varrho)}\varphi\:dD_{\widetilde X^{s_i}}\widetilde u^{s_i} \\
= &\lim_{i\to\infty}\int_{\widehat B_p(0,\varrho)}\varphi\:dD_{\widetilde X^{s_i}}\widetilde u^{s_i}
+ \int_{\widetilde B_{s_i}(0,\varrho)\setminus \widehat B_p(0,\varrho)}\varphi\:dD_{\widetilde X^{s_i}}\widetilde u^{s_i}
- \int_{\widehat B_p(0,\varrho)\setminus\widetilde B_{s_i}(0,\varrho)}\varphi\:dD_{\widetilde X^{s_i}}\widetilde u^{s_i}\\
=& \lim_{i\to\infty}\int_{\widehat B_p(0,\varrho)}\varphi\:dD_{\widehat X} w_p,
\end{align*}
where the last equality follows from the  weak$^\ast$ convergence of $D_{\widetilde X^{s_i}}\widetilde u^{s_i}$ to $D_{\widehat X} w_p$ and the fact that (denoting by $\Delta$ the symmetric difference of sets)
\[
\lim_{i\to\infty}|D_{\widetilde X^{s_i}}\widetilde u^{s_i}|(\widetilde B_{s_i}(0,\varrho)\Delta \widehat B_p(0,\varrho))=0
\]
that, in turn, can be proved as follows. For any $\varepsilon>0$ there exists $\delta\in(0,\varrho)$ such that
\[
\lambda\big(\overline{\widehat B_p(0,\varrho+\delta)}\setminus \widehat B_p(0,\varrho-\delta)\big)<\varepsilon;
\]
by Theorem \ref{th:localgroup} we obtain
\begin{align*}
\limsup_{i\to\infty}|D_{\widetilde X^{s_i}}\widetilde u^{s_i}|(\widetilde B_{s_i}(0,\varrho)\Delta \widehat B_p(0,\varrho)) 
\leq& \limsup_{i\to\infty}|D_{\widetilde X^{s_i}}\widetilde u^{s_i}|\big(\overline{\widehat B_p(0,\varrho+\delta)}\setminus \widehat B_p(0,\varrho-\delta)\big)\\
\leq& \lambda\big(\overline{\widehat B_p(0,\varrho+\delta)}\setminus \widehat B_p(0,\varrho-\delta)\big)<\varepsilon,
\end{align*}
where we used \cite[Proposition 1.62 (a)]{AFP}. 

The first statement in \eqref{eq:busillis2} is proved; we are left with the second one,  which can be easily proved by  the very same argument taking into account that $|\mu_i|=|D_{\widetilde X^{s_i}}\widetilde u^{s_i}|\res\widetilde B_{s_i}(0,\varrho)$.
\end{proof}

Let us recall once more the notation $u_{p,r}\coloneqq \fint_{B(p,r)}u\:d\mathscr L^n$.

\begin{lemma}\label{circularcrown}
	Let $(\R^n,X)$ be an equiregular  $CC$ space of homogeneous dimension $Q$ and let $\Omega\subseteq\R^n$ be an open bounded set. Then there exist $C=C(\Omega)>0$ and $R=R(\Omega)>0$ such that, for every $p\in \Omega$, every $u\in BV_X(\Omega;\R^k)$ and  every $0<r<\min\{R,\tfrac12d(p,\partial\Omega)\}$, one has
\[
\left|u_{p,2r}-u_{p,r}\right|\leq Cr^{1-Q}|D_Xu|(B(p,2r)).
\]
\end{lemma}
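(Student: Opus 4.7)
The plan is to view the difference of averages as a single average and then estimate by a Poincar\'e inequality applied on the larger ball. First I would write
\[
u_{p,2r}-u_{p,r}=\fint_{B(p,r)}\bigl(u_{p,2r}-u(q)\bigr)\,d\mathscr L^n(q),
\]
so that
\[
|u_{p,2r}-u_{p,r}|\leq \fint_{B(p,r)}|u-u_{p,2r}|\,d\mathscr L^n\leq\frac{\mathscr L^n(B(p,2r))}{\mathscr L^n(B(p,r))}\fint_{B(p,2r)}|u-u_{p,2r}|\,d\mathscr L^n.
\]
Since $\Omega$ is bounded I would fix an open bounded set $\Omega'\supset\overline\Omega$ and set $K\coloneqq\overline\Omega$, which is a compact subset of $\Omega'$; by Theorem \ref{ccproperties}~(iii) there exist $C_0>1$ and $R_0>0$ such that
\[
\frac 1{C_0}r^Q\leq \mathscr L^n(B(p,r))\leq C_0r^Q\qquad\forall\,p\in K,\ r\in(0,R_0).
\]
In particular, for every $p\in \Omega$ and $0<r<R_0/2$ the ratio $\mathscr L^n(B(p,2r))/\mathscr L^n(B(p,r))$ is bounded by $C_0^2\,2^Q$.

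Next I would invoke the Poincar\'e inequality of Theorem \ref{teo:poincare} (applied to the pair $(\Omega',K)$), which furnishes constants $C_1>0$ and $R_1>0$ depending only on $\Omega$ such that for every $p\in K$ and every radius $\varrho\in(0,R_1)$ with $B(p,\varrho)\subseteq\Omega$ one has
\[
\left(\fint_{B(p,\varrho)}|u-u_{p,\varrho}|^{\tfrac{Q}{Q-1}}\,d\mathscr L^n\right)^{\tfrac{Q-1}{Q}}\leq\frac{C_1}{\varrho^{Q-1}}|D_Xu|(B(p,\varrho)).
\]
Applying this inequality with $\varrho=2r$, and controlling the $L^1$ average by the $L^{Q/(Q-1)}$ average through Jensen's inequality, gives
\[
\fint_{B(p,2r)}|u-u_{p,2r}|\,d\mathscr L^n\leq\frac{C_1}{(2r)^{Q-1}}|D_Xu|(B(p,2r)).
\]
Setting $R\coloneqq\tfrac 12\min\{R_0,R_1\}$ and combining the last two displayed inequalities yields
\[
|u_{p,2r}-u_{p,r}|\leq\frac{C_0^2\,2^Q\,C_1}{(2r)^{Q-1}}|D_Xu|(B(p,2r))=\frac{C}{r^{Q-1}}|D_Xu|(B(p,2r)),
\]
for a constant $C=C(\Omega)>0$, as required.

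There is no genuine obstacle here: the only mild technical point is the need to work inside a compact subset of an open set in order to apply both Theorem \ref{ccproperties}~(iii) and Theorem \ref{teo:poincare} with uniform constants, which is handled by the auxiliary enlargement $\Omega'\supset\overline\Omega$. The assumption $r<\tfrac12 d(p,\partial\Omega)$ is used exactly to ensure $B(p,2r)\subseteq\Omega$, so that the Poincar\'e inequality is applicable and the right-hand side $|D_Xu|(B(p,2r))$ is finite.
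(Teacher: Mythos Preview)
Your proof is correct and follows essentially the same approach as the paper's: rewrite the difference of averages as an average over the smaller ball, use the doubling property (Theorem \ref{ccproperties}~(iii)) to pass to the larger ball, and then apply the Poincar\'e inequality of Theorem \ref{teo:poincare} together with Jensen's inequality. Your version is slightly more explicit about the need to enlarge $\Omega$ to $\Omega'$ so that $K=\overline\Omega$ becomes a compact subset on which the constants in Theorems \ref{ccproperties} and \ref{teo:poincare} are uniform, a point the paper leaves implicit.
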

\begin{proof}
We use Theorems \ref{ccproperties} and \ref{teo:poincare} to estimate
\begin{align*}
\left|u_{p,2r}-u_{p,r}\right| = &
\left|\fint_{B(p,r)} (u-u_{p,2r})\:d\mathscr L^n\right|\ \leq \ C \fint_{B(p,2r)}|u-u_{p,2r}|\:d\mathscr L^n\\
\leq & C \left(\fint_{B(p,2r)}|u-u_{p,2r}|^{\frac{Q}{Q-1}}\:d\mathscr L^n\right)^{\frac{Q-1}{Q}}\ 
\leq \ C r^{1-Q} |D_X u|(B(p,2r)).
\end{align*}
\end{proof}

As  mentioned in the Introduction, the next lemma possesses its own interest and it is the key tool in the proof of Theorem \ref{teo:CalderonZ}.

\begin{lemma}\label{lemma3.81}
Let $(\R^n,X)$ be an equiregular $CC$ space of homogeneous dimension $Q$ and let $\Omega\subseteq\R^n$ be an open bounded set. Then there exist $C=C(\Omega)>0$ and $R=R(\Omega)>0$ such that the following holds: for every $u\in BV_X(\Omega;\R^k)$, $p\in \Omega\setminus \mathcal S_u$  and   $0<r<\min\{R,\tfrac12d(p,\partial\Omega)\}$ one has
\[
	\int_{B(p,r)}\frac{|u(q)-u^\star(p)|}{d(p,q)}d\mathscr L^n(q)\leq C\left(|D_Xu|(B(p,r))+\int_0^1\frac{|D_Xu|(B(p,tr))}{t^Q}dt\right).
	\]
	In particular 
	\[
	\int_{B(p,r)}\frac{|u(q)- u^\star(p)|}{d(p,q)}d\mathscr L^n(q)\leq C\int_0^2\frac{|D_Xu|(B(p,tr))}{t^Q}dt.
	\]
\end{lemma}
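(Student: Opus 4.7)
The plan is to decompose $B(p,r)$ dyadically into annuli and estimate each contribution via a Poincar\'e inequality together with a telescoping argument on the averages $u_{p,\varrho}\coloneqq\fint_{B(p,\varrho)}u\,d\mathscr L^n$. Setting $r_j\coloneqq 2^{-j}r$ and $A_j\coloneqq B(p,r_j)\setminus B(p,r_{j+1})$, the lower bound $d(p,q)\geq r_j/2$ on $A_j$ gives
\[
\int_{B(p,r)}\frac{|u(q)-u^\star(p)|}{d(p,q)}\,d\mathscr L^n(q)\leq \sum_{j=0}^\infty\frac{2}{r_j}\int_{B(p,r_j)}|u-u^\star(p)|\,d\mathscr L^n.
\]

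For each $j$ I split $|u-u^\star(p)|\leq|u-u_{p,r_j}|+|u_{p,r_j}-u^\star(p)|$. The Poincar\'e inequality of Theorem~\ref{teo:poincare}, combined with H\"older and the Ahlfors regularity in Theorem~\ref{ccproperties}(iii), yields $\int_{B(p,r_j)}|u-u_{p,r_j}|\,d\mathscr L^n\leq C r_j|D_Xu|(B(p,r_j))$. For the second piece, since $p\notin\mathcal S_u$ one has $u_{p,\varrho}\to u^\star(p)$ as $\varrho\to 0$, so a telescoping argument based on Lemma~\ref{circularcrown} (applied at radius $r_{k+1}$ for $k\geq j$) gives
\[
|u_{p,r_j}-u^\star(p)|\leq\sum_{k=j}^\infty|u_{p,r_k}-u_{p,r_{k+1}}|\leq C\sum_{k=j}^\infty r_k^{1-Q}|D_Xu|(B(p,r_k)),
\]
which, multiplied by $\mathscr L^n(B(p,r_j))\leq Cr_j^Q$ and divided by $r_j$, contributes $C r_j^{Q-1}\sum_{k\geq j}r_k^{1-Q}|D_Xu|(B(p,r_k))$.

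After summing over $j\geq 0$ and swapping the order of summation in the double sum (the inner geometric series $\sum_{l=0}^k 2^{l(Q-1)}\leq C\, 2^{k(Q-1)}$ absorbs the cross terms), the problem reduces to the combinatorial estimate
\[
\sum_{j=0}^\infty 2^{j(Q-1)}|D_Xu|(B(p,r_j))\leq C\left(|D_Xu|(B(p,r))+\int_0^1\frac{|D_Xu|(B(p,tr))}{t^Q}\,dt\right).
\]
This is obtained by isolating the $j=0$ term---which produces exactly $|D_Xu|(B(p,r))$---and, for $j\geq 1$, by using $\int_{2^{-k-1}}^{2^{-k}}t^{-Q}\,dt\geq c_Q 2^{k(Q-1)}$ on each dyadic interval together with the monotonicity of $\varrho\mapsto|D_Xu|(B(p,\varrho))$. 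The ``in particular'' statement is then immediate: the bound $|D_Xu|(B(p,tr))\geq|D_Xu|(B(p,r))$ for $t\in[1,2]$ gives $|D_Xu|(B(p,r))\leq C\int_1^2 t^{-Q}|D_Xu|(B(p,tr))\,dt$, so the two terms on the right-hand side merge into a single integral on $[0,2]$.

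The main delicate point is ensuring that the ingredients fit together uniformly along the full dyadic chain. One must verify that Lemma~\ref{circularcrown} applies at every radius $r_{k+1}$, which is guaranteed by the assumption $r<\tfrac12 d(p,\partial\Omega)$ (yielding $B(p,2r_{k+1})=B(p,r_k)\Subset\Omega$ and $r_{k+1}<R$ for every $k\geq 0$), and that the telescoping sum converges, which relies on the approximate-limit property $p\notin\mathcal S_u$. Once these verifications are in place, the combinatorial comparison of dyadic sums with the $t^{-Q}$ integral is the only non-routine step; everything else (Poincar\'e, Ahlfors regularity, geometric series) is standard.
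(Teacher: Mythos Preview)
Your proof is correct and follows essentially the same route as the paper's: dyadic decomposition of $B(p,r)$, the lower bound $d(p,q)\geq r_{j+1}$ on each annulus, the split $|u-u^\star(p)|\leq |u-u_{p,r_j}|+|u_{p,r_j}-u^\star(p)|$ handled by Poincar\'e and a telescoping sum via Lemma~\ref{circularcrown}, Fubini on the double sum with the geometric series $\sum_{l=0}^k 2^{l(Q-1)}\leq C\,2^{k(Q-1)}$, and finally the comparison of $\sum_k 2^{k(Q-1)}|D_Xu|(B(p,r_k))$ with $\int_0^1 t^{-Q}|D_Xu|(B(p,tr))\,dt$. The only cosmetic point: in the last step, to get the monotonicity in the right direction you should pair the term $2^{j(Q-1)}|D_Xu|(B(p,r_j))$ with the interval $[2^{-j},2^{1-j}]$ (so that $tr\geq r_j$ on it), which is what your index shift $k=j-1$ amounts to.
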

\begin{proof}
Let $u,p,r$ be as in the statement; we introduce the compact notation $u_i\coloneqq u_{p,2^{-i}r}$, $i\in\mathbb N$. Since $u_i\to u^\star (p)$ as $i\to\infty$ we estimate
\begin{align*}
	& \int_{B(p,r)}\frac{|u(q)-u^\star(p)|}{d(p,q)}d\mathscr L^n(q)\\
	\leq & \sum_{i=1}^\infty \int_{B(p,2^{-i+1}r)\setminus B(p,2^{-i}r)}\frac{|u(q)-{u}^\star(p)|}{2^{-i}r}d\mathscr L^n(q)\\
	\leq&\sum_{i=1}^\infty\frac{2^i}{r}\int_{B(p,2^{-i+1}r)\setminus B(p,2^{-i}r)}\bigg(|u(q)-u_{i-1}| + \sum_{j=i-1}^\infty|u_j-u_{j+1}| \bigg)d\mathscr L^n(q)
\intertext{and use Lemma \ref{circularcrown} and Theorem \ref{teo:poincare} to get}
	\leq & C\sum_{i=1}^\infty\frac{2^i}{r}\left(2^{-i}r|D_Xu|(B(p,2^{1-i}r))+
	\sum_{j=i-1}^\infty \left(2^{1-i}r\right)^Q\left(2^{-(j+1)}r\right)^{1-Q}|D_Xu|(B(p,2^{-j}r))\right)\\
	\leq& C\sum_{i=1}^\infty\left(|D_Xu|(B(p,2^{1-i}r))+\sum_{j=i-1}^\infty2^{(j-i+1)(Q-1)}|D_Xu|(B(p,2^{-j}r))\right)\\
	=&C\sum_{k=0}^\infty \Big(1+1+2^{Q-1}+(2^{Q-1})^2+\dots+ (2^{Q-1})^k\Big)|D_Xu|(B(p,2^{-k}r))	\\
	\leq&C\sum_{k=0}^\infty\frac{2^{(k+1)(Q-1)}-1}{2^{Q-1}-1}|D_Xu|(B(p,2^{-k}r)). 
	\end{align*}
	Since $Q\geq 2$ we have $2^{Q-1}-1\geq \tfrac{2^{Q-1}}{2}$, hence
	\[
	\begin{aligned}
	\int_{B(p,r)}\frac{|u(q)- u^\star(p)|}{d(p,q)}d\mathscr L^n(q)&\leq C\sum_{k=0}^\infty 2^{k(Q-1)}|D_Xu|(B(p,2^{-k}r))\\
	&= C\left(|D_Xu|(B(p,r))+\sum_{k=1}^\infty2^{k(Q-1)}|D_Xu|(B(p,2^{-k}r))\right)\\
	&=C\left(|D_Xu|(B(p,r))+\sum_{k=1}^\infty\int_{2^{-k}}^{2^{1-k}}2^{kQ}|D_Xu|(B(p,2^{-k}r))dt\right)\\
	&\leq C\left(|D_Xu|(B(p,r))+\sum_{k=1}^\infty\int_{2^{-k}}^{2^{1-k}}\frac{|D_Xu|(B(p,tr))}{t^Q}dt\right)\\
	&=C\left(|D_Xu|(B(p,r))+\int_0^1\frac{|D_Xu|(B(p,tr))}{t^Q}dt\right),
	\end{aligned}
	\]
as desired.
\end{proof}

We can now prove one of our main results; recall that we denote by $D_X^{ap}u(p)$ the approximate $X$-gradient of $u$ at $p$.

\begin{proof}[Proof of Theorem \ref{teo:CalderonZ}]
We can assume without loss of generality that $k=1$. Suppose that $D_Xu=v \mathscr L^n+ D_X^su$ is the Radon-Nykod\'ym decomposition of the measure $D_Xu$ with respect to $\mathscr L^n$. By the Radon-Nykod\'ym  Theorem in doubling metric spaces (see e.g. \cite[Theorem 4.7 and Remark 4.5]{Simon}), at $\mathscr L^n$-almost every $p\in \Omega$ we have
	\begin{equation}\label{eq:singular}
	\lim_{r\to 0} \frac{D^s_Xu(B(p,r))}{r^Q}=0.
	\end{equation}
	It is sufficient to prove that, for every $p\in \Omega\setminus(\mathcal S_u\cup\mathcal S_v)$ for which \eqref{eq:singular}  holds, $u$ is approximately $X$-differentiable at $p$ with $D^{ap}_Xu(p)=v^\star(p)$. 
	
	Let $R>0$ and $f\in C^1(B(p,R))$ be such that $f(p)=0$ and $Xf(p)=v^\star(p)$ and define 
	\[
	w(q)\coloneqq u(q)- u^\star(p)-f(q)
	\]
	Then $w \in BV_X(B(p,R))$, $p\notin \mathcal S_w$ and $ w^\star(p)=0$. We are in a position to apply Lemma \ref{lemma3.81} to the function $w$ and get $C>0$ so that, for small enough $r$,
	\[
	\begin{aligned}
	\frac{1}{r^Q}\int_{ B(p,r)}\frac{\left| u(q)- u^\star(p)- f(q) \right|}{d(p,q)}d\mathscr L^n(q)&\leq \frac{C}{r^Q}\int_0^2\frac{|D_Xw|(B(p,tr))}{t^Q}dt\\
	&\leq C\sup_{t\in (0,2)}\frac{|D_Xw|(B(p,tr))}{(tr)^Q}.
	\end{aligned}
	\]
	It is then enough to show that $\lim_{r\to 0}r^{-Q}|D_Xw|(B(p,r))=0$.     Taking into account that $D_Xw=(v-Xf)\mathscr L^n +D_X^su$ and \eqref{eq:singular}, it suffices to check that
    \[
    \lim_{r\to 0}\frac 1{r^Q}\int_{B(p,r)}|v-Xf|d\mathscr L^n=0,
    \]
    which follows by the generalized Lebesgue's differentiation theorem (see e.g. \cite[Section 2.7]{Heinonen}) and the inequality $|v-Xf|\leq |v-v^\star(p)|+|v^\star(p)-Xf|$.
\end{proof}

As for classical $BV$ functions (see e.g. \cite[pag. 177]{AFP}, the (approximate) convergence of $u\in BV_X$ to $u^\star(p)$ at points $p\notin\mathcal S_u$ can be improved in a $L^{1^\ast}$-sense, as we now state.

\begin{proposition}\label{prop:medie1star}
Let $(\R^n,X)$ be an equiregular CC space, $\Omega\subset\R^n$ an open set and let $u\in BV_X(\Omega)$. Then
  \[
  \lim_{r\to 0}\fint_{B(p,r)}|u-u^\star(p)|^{\frac{Q}{Q-1}}d\mathscr L^n=0\qquad\text{for $\mathscr H^{Q-1}$-a.e. }p\in\Omega\setminus\mathcal S_u.
  \]
\end{proposition}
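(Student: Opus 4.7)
The plan is to reduce the $L^{Q/(Q-1)}$-convergence to an $\mathscr H^{Q-1}$-a.e. vanishing of the $(Q-1)$-dimensional upper density of $|D_Xu|$ on $\Omega\setminus\mathcal S_u$, and then to deduce such vanishing from implication \eqref{2implication} of Proposition \ref{prop3.76} via a standard Vitali/$5r$-covering argument.

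First I would use the triangle inequality in $L^{Q/(Q-1)}(B(p,r))$ to estimate
\[
\left(\fint_{B(p,r)}|u-u^\star(p)|^{\frac{Q}{Q-1}}d\mathscr L^n\right)^{\frac{Q-1}{Q}} \leq \left(\fint_{B(p,r)}|u-u_{p,r}|^{\frac{Q}{Q-1}}d\mathscr L^n\right)^{\frac{Q-1}{Q}} + |u_{p,r}-u^\star(p)|.
\]
The first summand is bounded by $Cr^{1-Q}|D_Xu|(B(p,r))$ for $p$ in a fixed compact $K\Subset\Omega$ and $r$ sufficiently small, thanks to Theorem \ref{teo:poincare}. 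For every $p\in\Omega\setminus\mathcal S_u$, Jensen's inequality gives
\[
|u_{p,r}-u^\star(p)|\leq \fint_{B(p,r)}|u-u^\star(p)|\,d\mathscr L^n\longrightarrow 0\qquad\text{as }r\to 0
\]
by definition of approximate limit. The proposition is therefore reduced to showing
\[
\limsup_{r\to 0}\frac{|D_Xu|(B(p,r))}{r^{Q-1}}=0\qquad\text{for $\mathscr H^{Q-1}$-a.e. }p\in\Omega\setminus\mathcal S_u.
\]

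To prove this density vanishing, for each $\alpha>0$ and each compact $K\Subset\Omega$ I would set
\[
L_\alpha\coloneqq\left\{p\in\Omega\setminus\mathcal S_u:\limsup_{r\to 0}\frac{|D_Xu|(B(p,r))}{r^{Q-1}}\geq\alpha\right\}
\]
and prove $\mathscr H^{Q-1}(L_\alpha\cap K)=0$. For any open $V\supset L_\alpha\cap K$ and any sufficiently small $\eta>0$, every $p\in L_\alpha\cap K$ admits a radius $r_p<\eta$ with $B(p,r_p)\subset V$ and $|D_Xu|(B(p,r_p))\geq\tfrac\alpha2 r_p^{Q-1}$; the $5r$-covering lemma (Theorem \ref{5rcovering}, in the spirit of the proof of Lemma \ref{th:3.74}) then produces a disjoint subfamily $\{B(p_i,r_i)\}$ with $L_\alpha\cap K\subseteq\bigcup_i B(p_i,5r_i)$. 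Hence
\[
\mathscr H^{Q-1}_{10\eta}(L_\alpha\cap K)\leq C\sum_i r_i^{Q-1}\leq\frac{2C}{\alpha}\sum_i|D_Xu|(B(p_i,r_i))\leq\frac{2C}{\alpha}|D_Xu|(V).
\]
Letting $\eta\to 0$ and $V\downarrow L_\alpha\cap K$ (using that $|D_Xu|$ is Radon) yields $\mathscr H^{Q-1}(L_\alpha\cap K)\leq\frac{2C}{\alpha}|D_Xu|(L_\alpha\cap K)$, which in particular is finite because $u\in BV_X(\Omega)$. Since $L_\alpha\cap K\subset\Omega\setminus\mathcal S_u$, implication \eqref{2implication} of Proposition \ref{prop3.76} forces $|D_Xu|(L_\alpha\cap K)=0$, and reinserting this on the right-hand side gives $\mathscr H^{Q-1}(L_\alpha\cap K)=0$. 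Exhausting $\Omega$ by a sequence of compacts and letting $\alpha$ range over a positive sequence tending to $0$ finishes the proof.

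The only delicate point is the $5r$-covering estimate in the last paragraph, but it is essentially identical to the one already carried out in the proof of Lemma \ref{th:3.74}; the concluding chain "$\mathscr H^{Q-1}(L_\alpha\cap K)<\infty\Rightarrow|D_Xu|(L_\alpha\cap K)=0\Rightarrow\mathscr H^{Q-1}(L_\alpha\cap K)=0$" is then automatic.
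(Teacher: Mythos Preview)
Your proof is correct and follows essentially the same approach as the paper: reduce to showing that the $(Q-1)$-dimensional upper density of $|D_Xu|$ vanishes $\mathscr H^{Q-1}$-a.e. on $\Omega\setminus\mathcal S_u$, then use the chain $\mathscr H^{Q-1}(L_\alpha)<\infty\Rightarrow |D_Xu|(L_\alpha)=0\Rightarrow\mathscr H^{Q-1}(L_\alpha)=0$ via \eqref{2implication}. The only cosmetic difference is that the paper invokes Proposition~\ref{k-density} directly for both density estimates, whereas you unfold the $5r$-covering argument by hand; also, the paper uses the elementary inequality $(a+b)^{Q/(Q-1)}\leq 2^{1/(Q-1)}(a^{Q/(Q-1)}+b^{Q/(Q-1)})$ in place of your triangle inequality in $L^{Q/(Q-1)}$, which amounts to the same thing.
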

\begin{proof}
We first prove that 
  \begin{equation}\label{eq:orQ-1}
  \lim_{r\to 0}\frac{|D_Xu|(B(p,r))}{r^{Q-1}}=0\qquad\text{for $\mathscr H^{Q-1}$-a.e. }p\in\Omega\setminus\mathcal S_u.
  \end{equation}
Let $t>0$ be fixed and consider the set
\[
E_t\coloneqq \left\{ p\in\Omega\setminus\mathcal S_u:\limsup_{r\to0}\frac{|D_Xu|(B(p,r))}{r^{Q-1}}>t\right\}.
\]
By Proposition \ref{k-density} one has $\mathscr H^{Q-1}(E_t)<\infty$; Proposition \ref{prop3.76} then implies that $|D_Xu|(E_t)=0$ and again Proposition \ref{k-density} gives $\mathscr H^{Q-1}(E_t)=0$. Since this is true for all positive $t$, formula \eqref{eq:orQ-1} immediately follows.

  Combining Theorem \ref{teo:poincare} and \eqref{eq:orQ-1} we immediately get that for $\mathscr H^{Q-1}$-a.e. $p\in\Omega$
  \[
 \lim_{r\to 0}\fint_{B(p,r)}|u-u_{p,r}|^{\frac{Q}{Q-1}}d\mathscr L^n=0.
 \]
The conclusion follows by 
\[
|u-u^\star(p)|^{\frac{Q}{Q-1}}\leq 2^{\frac {1}{Q-1}}\left(|u_{p,r}-u^\star(p)|^{\frac{Q}{Q-1}}+|u-u_{p,r}|^{\frac{Q}{Q-1}}\right).
\]
together with $u^\star(p)=\lim_{r\to0}u_{p,r}$.
\end{proof}

When $(\R^n,X)$ satisfies property $\mathcal R$, $\Omega\subset\R^n$ is open and  $u\in BV_{X}(\Omega,\R^k)$, by Theorem \ref{federervolpert} the {\em precise representative} $u^\preciso$ 
\begin{equation}\label{eq:defrapprespreciso}
u^\preciso(p)\coloneqq
\begin{cases}
u^\star(p)& \text{if }p\in\Omega\setminus\mathcal S_u\vspace{.2cm}\\
\dfrac{u^+(p)+u^-(p)}{2}\quad& \text{if }p\in \mathcal J_u
\end{cases}
\end{equation}
is defined $\mathscr H^{Q-1}$-a.e. on $\Omega$. We have the following result.

\begin{theorem}\label{teo:convergenzaalrapprpreciso}
	Let $(\R^n,X)$ be an equiregular CC space satisfying property $\mathcal R$, $\Omega\subset\R^n$ an open set and let $u\in BV_X(\Omega; \R^k)$. Then
	\[
\lim_{r\to 0}\ \fint_{B(p,r)}u\:d\mathscr L^n=u^\preciso(p)\qquad\text{for $\mathscr H^{Q-1}$-a.e. }p\in\Omega.	
	\]
\end{theorem}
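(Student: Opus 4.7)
The plan is to verify the limit pointwise at every $p$ in the set $(\Omega\setminus\mathcal S_u)\cup\mathcal J_u$; by Theorem \ref{federervolpert}, the complement of this set in $\Omega$ is contained in $\mathcal S_u\setminus\mathcal J_u$ and is therefore $\mathscr H^{Q-1}$-negligible, which is exactly the required conclusion.

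At a point $p\in\Omega\setminus\mathcal S_u$ one has $u^\preciso(p)=u^\star(p)$, and the claim is immediate by Jensen's inequality:
\[
\left|\fint_{B(p,r)}u\,d\mathscr L^n - u^\star(p)\right|\leq \fint_{B(p,r)}|u-u^\star(p)|\,d\mathscr L^n\xrightarrow[r\to0]{}0,
\]
which is just Definition \ref{def:limiteapprossimato}. This step requires no additional work and uses neither property $\mathcal R$ nor any rectifiability result.

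The substantive case is $p\in\mathcal J_u$, where $u^\preciso(p)=\tfrac{1}{2}(u^+(p)+u^-(p))$. Set $\nu\coloneqq \nu_u(p)$ and let $f\in C^1_X$ be the defining function used to construct the half-balls $B_\nu^\pm(p,r)$ as in \eqref{eq:Bpmnu}. The hypersurface $\{f=0\}$ has locally finite $\mathscr H^{Q-1}$-measure, hence vanishing $\mathscr L^n$-measure; alternatively, by Corollary \ref{th:hypersurface} the quotient $\mathscr L^n(\{f=0\}\cap B(p,r))/r^Q$ tends to zero. In either reading we may decompose, up to a measure-zero set,
\[
\fint_{B(p,r)}u\,d\mathscr L^n=\frac{\mathscr L^n(B_\nu^+(p,r))}{\mathscr L^n(B(p,r))}\fint_{B_\nu^+(p,r)}u\,d\mathscr L^n+\frac{\mathscr L^n(B_\nu^-(p,r))}{\mathscr L^n(B(p,r))}\fint_{B_\nu^-(p,r)}u\,d\mathscr L^n.
\]
Proposition \ref{goodsplit} gives that each volume ratio tends to $\tfrac{1}{2}$ as $r\to 0$, while Definition \ref{approximatejump} together with Jensen's inequality yields
\[
\left|\fint_{B_\nu^\pm(p,r)}u\,d\mathscr L^n-u^\pm(p)\right|\leq \fint_{B_\nu^\pm(p,r)}|u-u^\pm(p)|\,d\mathscr L^n\xrightarrow[r\to0]{}0.
\]
Combining these two facts gives $\fint_{B(p,r)}u\,d\mathscr L^n\to \tfrac{1}{2}u^+(p)+\tfrac{1}{2}u^-(p)=u^\preciso(p)$.

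There is no real obstacle here: everything has been set up in Section \ref{sec:nozioniapprossimate} and the $\mathscr H^{Q-1}$-negligibility of $\mathcal S_u\setminus\mathcal J_u$ (which is where property $\mathcal R$ genuinely enters) is the content of Theorem \ref{federervolpert}. The only mild technical point is justifying that the ``equatorial'' level set $\{f=0\}$ contributes nothing to the integral, which is handled either by the classical fact that sets of $\sigma$-finite $\mathscr H^{Q-1}$-measure are $\mathscr L^n$-null in a $Q$-Ahlfors regular space, or directly from the inclusion \eqref{eq:striscia}.
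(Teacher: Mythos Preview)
Your proof is correct and follows the same route as the paper: split $\Omega$ into $\Omega\setminus\mathcal S_u$, $\mathcal J_u$, and the $\mathscr H^{Q-1}$-negligible remainder $\mathcal S_u\setminus\mathcal J_u$ (via Theorem~\ref{federervolpert}), then handle jump points through Proposition~\ref{goodsplit} and Definition~\ref{approximatejump}. The only difference is that for $p\in\Omega\setminus\mathcal S_u$ the paper invokes Proposition~\ref{prop:medie1star}, whereas your direct appeal to Definition~\ref{def:limiteapprossimato} is simpler and in fact yields the convergence at \emph{every} such point rather than only $\mathscr H^{Q-1}$-almost every one.
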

\begin{proof}
The statement easily follows for $\mathscr H^{Q-1}$-a.e. $p\in\Omega\setminus\mathcal S_u$ by Proposition \ref{prop:medie1star}. By Theorem \ref{federervolpert} it suffices to prove the statement for all $p\in \mathcal J_u$, which directly follows from   Proposition \ref{goodsplit} and Definition \ref{approximatejump}.
\end{proof}

\begin{rmk}
When $(\R^n,X)$ satisfies property $\mathcal R$, then $D^c_Xu=D^s_Xu\res(\Omega\setminus \mathcal S_u)$: to see this, it is enough to combine Proposition  \ref{prop3.76} and Theorem \ref{federervolpert}.
\end{rmk}

We now want to study the properties of the decomposition $D_Xu=D_X^au+D_X^cu+D^j_Xu$; recall that $\mathscr H^{1}_{e}$ denotes the Euclidean Hausdorff measure in $\R^n$.

\begin{theorem}[Properties of Cantor part and jump part]\label{teo:proprieta,a,c,j}
	Let $u\in BV_X(\Omega;\R^k)$. Then the following facts hold:
	\begin{itemize}
		\item[(a)] $D^a_Xu=D_Xu\res(\Omega\setminus S)$ and $D^s_Xu=D_Xu\res S$, where
			\[
			S\coloneqq \left\{p\in \Omega: \lim_{r\to 0}\frac{|D_Xu|(B(p,r))}{r^Q}=+\infty\right\}.
			\]
Moreover, if $E\subset\R^k$ is such that $\mathscr H^1_e(E)=0$, then $D^{ap}_Xu=0$ $\mathscr L^n$-a.e. in $(u^\star)^{-1}(E)$.
		\item[(b)] Let $\Theta_u\subset S$ be defined by
			\[
			\Theta_u\coloneqq\left\{p\in \Omega: L(p)\coloneqq\liminf_{r\to 0}\frac{|D_Xu|(B(p,r))}{r^{Q-1}}>0\right\}.
			\]
			Then $\mathcal J_u\subseteq \Theta_u$. 		
	\end{itemize}
Moreover, if $(\R^n,X)$ satisfies property $\mathcal R$, then
\begin{itemize}
\item[(c)]  $\mathscr H^{Q-1}(\Theta_u\setminus \mathcal J_u)=0$ and $D^j_Xu=D_Xu\res\Theta_u$. More generally, for every Borel set $\Sigma$ containing $\mathcal J_u$ and $\sigma$-finite with respect to $\mathscr H ^{Q-1}$ we have $D_X^ju=D_Xu\res \Sigma$.
\item[(d)] $D_X^cu=D_Xu\res(S\setminus \Theta_u)$. 
\item[(e)] if  $B\subset\Omega$ is such that either $\mathscr H^{Q-1}\res B$ is $\sigma$-finite or $B=(u^\star)^{\,-1}(E)$ for some $\mathscr H^1_e$-negligible set $E\subseteq \R^k$, then $D_X^cu(B)=0$.
\end{itemize}	
\end{theorem}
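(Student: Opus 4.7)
The decomposition $D^a_Xu=D_Xu\res(\Omega\setminus S)$, $D^s_Xu=D_Xu\res S$ is a direct consequence of the Radon--Nikodym differentiation theorem for Radon measures on the locally doubling metric space $(\R^n,d,\mathscr L^n)$: by Ahlfors regularity (Theorem \ref{ccproperties}(iii)), $\mathscr L^n(B(p,r))\asymp r^Q$, so $\lim_{r\to 0}r^{-Q}|D_Xu|(B(p,r))$ is finite $\mathscr L^n$-a.e.\ (it coincides with the Radon--Nikodym density of $D^a_Xu$ up to an Ahlfors factor) and equals $+\infty$ at $|D^s_Xu|$-a.e.\ point; since $\mathscr L^n(S)=0$ while $|D^s_Xu|$ is concentrated on $S$, this gives both identities. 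The second claim of (a) ($D^{ap}_Xu=0$ $\mathscr L^n$-a.e.\ on $(u^\star)^{-1}(E)$ when $\mathscr H^1_e(E)=0$) is classical (cf.\ \cite[Proposition 3.92(c)]{AFP}): at a Lebesgue point $p$ of $(u^\star)^{-1}(E)$ where $D^{ap}_Xu(p)\neq 0$, the blow-up of Proposition \ref{diffequiv} would force the image of $u$ to contain an arc through $u^\star(p)$ of positive $\mathscr H^1_e$-measure inside $E$, a contradiction. Part (b) is an immediate consequence of the estimate \eqref{inequality3.76} established inside the proof of Proposition \ref{prop3.76}: for $p\in\mathcal J_u$ one has $L(p)\geq\lambda(p)\,|u^+(p)-u^-(p)|>0$, hence $p\in\Theta_u$.

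\textbf{Part (c), under property $\mathcal R$.} Split $\Theta_u\setminus\mathcal J_u=(\Theta_u\setminus\mathcal S_u)\cup(\Theta_u\cap\mathcal S_u\setminus\mathcal J_u)$. The second piece is $\mathscr H^{Q-1}$-negligible by Theorem \ref{federervolpert}. For the first, the sets $A_k\coloneqq(\Theta_u\setminus\mathcal S_u)\cap\{L>1/k\}$ have finite $\mathscr H^{Q-1}$-measure by the upper density estimate of Proposition \ref{k-density} applied to $|D_Xu|$, and are disjoint from $\mathcal S_u$; implication \eqref{2implication} of Proposition \ref{prop3.76} then yields $|D_Xu|(A_k)=0$, and a second use of Proposition \ref{k-density} exploiting the lower density bound defining $\Theta_u$ forces $\mathscr H^{Q-1}(A_k)=0$. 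For the identity $D^j_Xu=D_Xu\res\Sigma$ with $\Sigma\supseteq\mathcal J_u$ Borel and $\sigma$-finite with respect to $\mathscr H^{Q-1}$, decompose $D_Xu\res\Sigma=D^a_Xu\res\Sigma+D^s_Xu\res\Sigma$: the first summand vanishes since any $\sigma$-finite $\mathscr H^{Q-1}$-set has Hausdorff dimension strictly less than $Q$ and hence $\mathscr L^n$-measure zero by Ahlfors regularity, while the second equals $D^s_Xu\res\mathcal J_u=D^j_Xu$ because, writing $\Sigma\setminus\mathcal J_u=(\mathcal S_u\setminus\mathcal J_u)\cup((\Sigma\setminus\mathcal J_u)\setminus\mathcal S_u)$, both pieces are $|D_Xu|$-null (the former by Theorem \ref{federervolpert} combined with \eqref{1implication}, the latter by a $\sigma$-finite decomposition and \eqref{2implication}). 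Specializing to $\Sigma=\Theta_u$ gives $D^j_Xu=D_Xu\res\Theta_u$.

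\textbf{Parts (d), (e), and main obstacle.} Part (d) is purely algebraic: since $D^a_Xu$ is supported on $\Omega\setminus S$ by (a), one has $D_Xu\res(S\setminus\Theta_u)=D^s_Xu\res(S\setminus\Theta_u)=D^s_Xu-D^s_Xu\res\Theta_u=D^s_Xu-D^j_Xu=D^c_Xu$, the third equality using (c). The $\sigma$-finite case of (e) follows from the same computation as in (c), giving $D^c_Xu(B)=D^s_Xu(B\setminus\mathcal J_u)=0$. The genuinely non-routine piece, which I expect to be the main obstacle, is the second case of (e): if $B=(u^\star)^{-1}(E)$ with $\mathscr H^1_e(E)=0$, then $B\subset\Omega\setminus\mathcal S_u$ so $D^j_Xu(B)=0$ trivially, while $D^a_Xu(B)=0$ follows from Theorem \ref{teo:CalderonZ} together with the second part of (a); but $|D^c_Xu|(B)$ is not a priori controlled by $\mathscr H^{Q-1}$. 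The plan is to adapt the Vol'pert-type BV chain rule to the CC setting: by smooth approximation (Theorem \ref{teo:approxBV}) and the locality Proposition \ref{prop:localita(balneare?)}, one should obtain, for every Lipschitz $\varphi:\R^k\to\R$, that $\varphi\circ u\in BV_X(\Omega)$ with $D^c_X(\varphi\circ u)=\nabla\varphi(\tilde u)\,D^c_Xu$; then, choosing a countable family of Lipschitz functions $\varphi_j$ vanishing on $E$ and spanning $\R^k$-directions at $\mathscr H^1_e$-a.e.\ point of $\R^k\setminus E$, the vanishing of every $\varphi_j\circ u^\star$ on $B$ will force $D^c_Xu\res B=0$.
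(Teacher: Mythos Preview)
Your treatment of (a) first part, (b), (c), (d), and the $\sigma$-finite case of (e) matches the paper's argument essentially line by line.

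The genuine discrepancy is in what you call ``the main obstacle'': the second case of (e) (and, relatedly, the second part of (a)). You propose to develop a Vol'pert-type chain rule $D^c_X(\varphi\circ u)=\nabla\varphi(\widetilde u)\,D^c_Xu$ in the CC setting and then exploit Lipschitz functions $\varphi_j$ vanishing on $E$. This is a significant detour: the chain rule for the diffuse part is not available in the paper, and your plan is in fact circular, since to conclude $D^c_Xu\res B=0$ from $D^c_X(\varphi_j\circ u)\res B=0$ you would need $\nabla\varphi_j(u^\star)$ to span on $B$, whereas the vanishing of $\varphi_j$ on $E$ gives you no control on $\nabla\varphi_j$ there; and to get $D^c_X(\varphi_j\circ u)\res B=0$ you are back to the scalar case of the very statement you are proving.

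The paper's route is far simpler and avoids all of this. For $k=1$ and $B=(u^\star)^{-1}(E)$ with $\mathscr L^1(E)=0$, Proposition~\ref{levelsets} gives $B\cap\partial^*\{u>t\}=\emptyset$ for every $t\notin E$; then the coarea formula (Theorem~\ref{coarea}) together with Theorem~\ref{teo:ambrosio} yields
\[
|D_Xu|(B)=\int_E\int_{\partial^*\{u>t\}\cap B}\theta_t\,d\mathscr H^{Q-1}\,dt=0,
\]
which is stronger than what is needed: it kills \emph{all} of $D_Xu$ on $B$, hence in particular $D^c_Xu(B)=0$. The vector-valued case follows from \eqref{vectorbv} by projecting $E$ onto coordinate axes. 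The same computation also gives the second part of (a) directly (combine $|D_Xu|(B)=0$ with Theorem~\ref{teo:CalderonZ}), so your blow-up sketch there---which is not quite complete as written---is also unnecessary.
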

\begin{proof}
	In order to prove the first part of statement (a) it is sufficient to apply Radon-Nykod\'ym  Theorem in doubling metric spaces (see e.g. \cite[Theorem 4.7 and Remark 4.5]{Simon}). Concerning the second part, assume first that $k=1$ and let $B\coloneqq(u^\star)^{-1}(E)$. By Proposition \ref{levelsets}, for any $t\notin E$ we have $	B\cap\partial^\ast\{u>t\}=\emptyset$. By Theorems \ref{coarea} and \ref{teo:ambrosio} we obtain
\[
|D_Xu|(B)=\int_\R P_X(\{u>t\}\cap B)\:dt=0= \int_{\R\setminus E} \int_{\partial^\ast\{u>t\}\cap B}\theta_td\mathscr H^{Q-1}\:dt=0,
\]
where $\theta_t$ denote suitable positive functions. When $k\geq 1$ and $j=1,\dots,k$ we set $E_j\coloneqq\{t\in\R:t=z_j\text{ for some }z\in E\}$; the set $E_j$ is such that $\mathscr L^1(E_j)=0$ and by \eqref{vectorbv} 
\[
|D_Xu|(B)\leq \sum_{j=1}^k |D_X u^j|(B) \leq \sum_{j=1}^k |D_X u^j|(((u^j)^\star)^{-1}(E_j))=0.
\]
We then conclude by Theorem \ref{teo:CalderonZ}.
	
By \eqref{inequality3.76} in the proof of Proposition \ref{prop3.76} we have $\mathcal J_u\subseteq \Theta_u$, and statement (b) follows. 
	
	We now prove (c). Applying Proposition \ref{k-density} we get that for every $h\in \mathbb N\setminus\{0\}$
	\begin{equation}\label{eq:sonno}
	|D_Xu|\res\{L\geq \tfrac{1}{h}\}\geq \frac 1h \omega_{Q-1}\mathscr H^{Q-1}\res \{L\geq \tfrac{1}{h}\},
	\end{equation}
where $L$ is defined in statement (b). In particular $\mathscr H^{Q-1}\left(\{L\geq \tfrac{1}{h}\}\right)<+\infty$. By \eqref{2implication}  
	\[
	|D_Xu|\left(\{L\geq\tfrac{1}{h}\}\setminus \mathcal S_u\right)=0
	\]
	and consequently (by \eqref{eq:sonno}) also $\mathscr H^{Q-1}(\{L\geq\tfrac{1}{h}\}\setminus \mathcal S_u)=0$. Since $\{L\geq\tfrac{1}{h}\}\nearrow\Theta_u$, on passing to the limit for $h\to +\infty$ we get $\mathscr H^{Q-1}(\Theta_u\setminus \mathcal S_u)=0$. Taking Theorem \ref{federervolpert} into account, we conclude that $\mathscr H^{Q-1}(\Theta_u\setminus \mathcal J_u)=0$.\\
	Let now $\Sigma$ be as in  statement (c). Then, taking into account Proposition \ref{prop3.76} and the fact that $\mathscr H ^{Q-1}(\mathcal S_u\setminus \mathcal J_u)=0$, we have
	\[
	\begin{aligned}
	D_Xu \res \Sigma&=D_Xu\res\mathcal J_u+D_Xu\res (\Sigma\setminus \mathcal J_u)\\
	&=D_X^ju+D_Xu\res (\Sigma\setminus \mathcal S_u)+D_Xu\res (\Sigma\cap \mathcal S_u \setminus \mathcal J_u)\\
	&=D_X^ju+D_Xu\res (\Sigma\setminus \mathcal S_u).
	\end{aligned}
	\]
	Since $\Sigma$ is $\sigma$-finite with respect to $\mathscr H^{Q-1}$, using \eqref{2implication} we get that $D_Xu\res (\Sigma\setminus \mathcal S_u)=0$, and so $D_Xu\res \Sigma=D_X^ju$.
	
	Statement $(d)$ follows from (a), (b), (c) and  the decomposition $D_Xu=D^a_Xu+D^c_Xu+D_X^ju$, which immediately give that $D_X^cu=D_Xu\res (S\setminus \Theta_u)$. 
	
	We prove (e) in case  $\mathscr H^{Q-1}\res B$ is $\sigma$-finite; we can assume (see e.g. \cite[Theorem 1.43]{AFP}) that $B$ is a Borel set. Using Proposition \ref{prop3.76} and Theorem \ref{federervolpert} we get that $|D_Xu|(B\setminus \mathcal J_u)=0$, which  gives $(D _X^au+D^c_Xu) \res B=0$.\\
	Concerning the second part of statement $(e)$, suppose first that $k=1$ and let $B=(u^\star)^{\,-1}(E)$ with $\mathscr L^1(E)=0$. By Proposition \ref{levelsets} we know that $\partial^*\{u>t\}\cap B=\emptyset$ for every $t\notin E$. Applying the Coarea Formula of Theorem \ref{coarea} we get
	\[
	|D_Xu|(B)=\int _E \int_{\partial ^*\{u>t\}\cap B}\theta_td\mathscr H^{Q-1}dt=0
	\]
	for suitable functions $\theta_t$. 	In the general case $k\geq 2$ define for every $\alpha=1,\dots, k$ the sets $
	E_\alpha\coloneqq\pi_\alpha(E)$, where $\pi_\alpha$ denotes the canonical projection $\pi_\alpha(x_1,\dots,x_k)=x_\alpha$. Noticing that $\mathscr L^1(E_\alpha)\leq \mathscr H^1_e(E)=0$, we can use \eqref{vectorbv} to estimate
	\[
	|D_Xu|( (u^\star)^{\,-1}(E))\leq \sum_{\alpha=1}^k|D_Xu^\alpha|((u^\star)^{\,-1}(E))\leq \sum_{\alpha=1}^k |D_Xu^\alpha|(((u^\alpha)^\star)^{\,-1}(E_\alpha))=0.
	\]
\end{proof}

\section{Applications to some classes of Carnot groups}\label{sec:proprietaLR}
Some of the main results of this paper rely on properties $\mathcal R,\mathcal{LR}$ or \propHD; in this section we show how they can be in some meaningful  CC spaces and, in particular, in some large classes of Carnot groups.

We start by introducing the reduced boundary $\mathcal F_XE$ of a set $E$ with finite $X$-perimeter. Recall that the reduced boundary was the object originally considered by E. De Giorgi in the seminal paper \cite{Deg55} about the rectifiability of sets with finite (Euclidean) perimeter in $\R^n$.

\begin{definition}[Reduced boundary]
Let $E\subset \R^n$ be a set with locally finite $X$-perimeter. The {\em $X$-reduced boundary} $\mathcal F_XE$ of $E$ is the set  of points  $p\in\R^n$ such that $P_X(E,B(p,r))>0$ for any $r>0$ and the limit
  \[
\widetilde\nu_E(p):=  \lim_{r\to 0}\frac {D_X\chi_E(B(p,r))}{|D_X\chi_E|(B(p,r))}
  \]
exists with $|\widetilde\nu_E(p)|=1$.
\end{definition}

For sets with finite (Euclidean) perimeter in $\R^n$  the symmetric difference between the essential boundary and the reduced one is $\mathscr H^{n-1}_e$-negligible, see e.g. \cite[Theorem 3.61]{AFP}. In our setting we have the following result, which is a known consequence of Theorem \ref{teo:ambrosio}, see e.g. \cite[Theorem 7.3]{FSSC1} for the {\em Heisenberg group} case and \cite[Lemma 2.26]{FSSC2} for step 2 Carnot groups.

\begin{theorem}\label{th:essenzialeridotta}
  Let $(\R^n, X)$ be an equiregular CC space of homogeneous dimension $Q$ and let $E\subseteq\R^n$ be a set of locally finite $X$-perimeter. Then $\mathscr H^{Q-1}(\partial^\ast E\setminus\mathcal F_XE)=0$.
\end{theorem}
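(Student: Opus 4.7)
The plan is to apply a Lebesgue--Besicovitch type differentiation theorem to the vector-valued measure $D_X\chi_E$, using the asymptotic doubling property of its total variation $P_X^E$ provided by Theorem \ref{teo:ambrosio}. The result then follows by switching back from the $P_X^E$-null statement to an $\mathscr H^{Q-1}$-null statement via the representation $P_X^E=\theta\,\mathscr H^{Q-1}\res\partial^\ast E$.

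More precisely, I would first fix a bounded open set $\Omega\subset\R^n$ and work locally; it suffices to prove $\mathscr H^{Q-1}((\partial^\ast E\setminus\mathcal F_XE)\cap\Omega)=0$. By Theorem \ref{teo:ambrosio} the total variation $P_X^E$ is concentrated on $\partial^\ast E$ and is asymptotically doubling at $P_X^E$-almost every point, namely
\[
\limsup_{r\to 0}\frac{P_X^E(B(p,2r))}{P_X^E(B(p,r))}<\infty\qquad\text{for }P_X^E\text{-a.e. }p\in\Omega\cap\partial^\ast E.
\]
Since the metric measure space $(\R^n,d,\mathscr L^n)$ is locally doubling (Theorem \ref{ccproperties}), a standard Vitali-type covering argument applies to $P_X^E$ restricted to the set of its asymptotic doubling points, yielding the differentiation theorem for $\R^m$-valued Radon measures with respect to $P_X^E$ on that set. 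Applied to the components of $D_X\chi_E$, this gives the existence of the vector limit
\[
\widetilde\nu_E(p)=\lim_{r\to 0}\frac{D_X\chi_E(B(p,r))}{|D_X\chi_E|(B(p,r))}
\]
for $P_X^E$-a.e. $p\in\Omega\cap\partial^\ast E$. A general (and essentially algebraic) consequence of this Radon--Nikodym differentiation is that the resulting density has unit norm $P_X^E$-almost everywhere: indeed, $|\widetilde\nu_E|\le 1$ since each difference quotient has norm at most one, while $\int_A |\widetilde\nu_E|\,dP_X^E=|D_X\chi_E|(A)=P_X^E(A)$ for every Borel set $A$ on which $\widetilde\nu_E$ is defined. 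Consequently $|\widetilde\nu_E(p)|=1$ for $P_X^E$-a.e.\ $p$, so every such point belongs to $\mathcal F_XE$; thus
\[
P_X^E\big((\partial^\ast E\setminus\mathcal F_XE)\cap\Omega\big)=0.
\]

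To conclude it remains to transfer this null statement to the measure $\mathscr H^{Q-1}$. By Theorem \ref{teo:ambrosio} there exists a Borel function $\theta$, locally bounded away from $0$, such that $P_X^E\res\Omega=\theta\,\mathscr H^{Q-1}\res(\Omega\cap\partial^\ast E)$; in particular, for every compact $K\subset\Omega$ there exists $c_K>0$ with $\theta\ge c_K$ on $K\cap\partial^\ast E$. Hence
\[
c_K\,\mathscr H^{Q-1}\big((\partial^\ast E\setminus\mathcal F_XE)\cap K\big)\le P_X^E\big((\partial^\ast E\setminus\mathcal F_XE)\cap K\big)=0,
\]
so the set in question is $\mathscr H^{Q-1}$-negligible on each compact subset of $\Omega$. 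Exhausting $\R^n$ by countably many such sets completes the proof.

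The main obstacle I would anticipate is the verification of the differentiation theorem in the non-doubling setting: one cannot invoke the global Lebesgue--Besicovitch theorem directly, and must instead justify Vitali-type covering arguments restricted to the $P_X^E$-a.e.\ set of asymptotic doubling points. This is nowadays a standard tool (see e.g. Federer 2.9.8 or the metric versions discussed in the appendix), but it is the place where the quantitative content of Theorem \ref{teo:ambrosio}, namely the $\limsup$ bound on perimeter ratios, is really needed. Everything else is a routine combination of the representation formula for $P_X^E$ and linear algebra.
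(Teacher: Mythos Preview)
Your proposal is correct and follows essentially the same approach as the paper: both arguments invoke the asymptotic doubling of $P_X^E$ from Theorem \ref{teo:ambrosio} to apply Federer's differentiation theorem (2.9.8 together with 2.8.17), identify the limit of the quotients with the polar $\nu_E$ of $D_X\chi_E$ (hence of unit length), and then pass from a $P_X^E$-null statement to an $\mathscr H^{Q-1}$-null one via the representation $P_X^E=\theta\,\mathscr H^{Q-1}\res\partial^\ast E$ with $\theta$ locally bounded away from zero. The only cosmetic difference is that the paper states the conclusion directly for $\mathscr H^{Q-1}$-a.e.\ $p\in\partial^\ast E$, while you first prove it $P_X^E$-a.e.\ and then transfer; this is the same argument unpacked.
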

\begin{proof}
  By Theorem \ref{teo:ambrosio} we have $ D_X\chi_E=\theta\nu_E \mathscr H^{Q-1}\res \partial^\ast E$ for a suitable positive function $\theta$. Therefore it is enough to prove that, for $\mathscr H^{Q-1}$-almost every $p\in \partial^\ast E$, one has
\[
\lim_{r\to 0}\frac{D_X \chi_E(B(p,r))}{|D\chi_E|(B(p,r))}=\nu_E(p).
\]
This fact directly follows from \cite[Theorem 2.9.8]{Federer} taking into account Theorem \ref{teo:ambrosio} and \cite[Theorem 2.8.17]{Federer}.
\end{proof}

The proof of Theorem \ref{th:essenzialeridotta} also shows that $\widetilde\nu_E=\nu_E$ $\mathscr H^{Q-1}$-a.e. on $\mathcal F_XE$. 

The papers \cite{FSSC1,FSSC2,Marchi} prove  the countable $X$-rectifiability of the reduced boundary of sets with locally finite $X$-perimeter in, respectively, Heisenberg groups,  Carnot groups of step 2, and Carnot groups {\em of type $\star$}. These results, in conjunction with Theorem \ref{th:essenzialeridotta}, show that property $\mathcal{R}$ is satisfied in these settings.

Actually,  Theorem \ref{th:essenzialeridotta} and the results about blow-up and representation of the $X$-perimeter available in Heisenberg groups (\cite[Theorems 4.1 and 7.1]{FSSC1}), step 2 Carnot groups (\cite[Theorems 3.1 and 3.9]{FSSC2}) and Carnot groups of type $\star$ \cite[Theorems 4.12 and 4.13]{Marchi} imply that also   property \propHD\ is satisfied in these settings. 

Using also the left-invariance of the structure we can  conclude what follows.

\begin{theorem}
Heisenberg groups, Carnot groups of step 2 and Carnot groups of type $\star$ satisfy properties $\mathcal{R}$ and \propHD. In particular,  Theorems \ref{federervolpert}, \ref{teo:decomp-intro-2propR}, \ref{teo:jumppartHDintro} and \ref{teo:convergenzaalrapprpreciso} hold in these settings.

Moreover, the function $\sigma(p,\nu)$ appearing in \ref{teo:jumppartHDintro} and \ref{teo:convergenzaalrapprpreciso} does not depend on the point $p\in\R^n$.
\end{theorem}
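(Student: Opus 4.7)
The plan is to verify that the three families of Carnot groups listed satisfy both properties $\mathcal{R}$ and \propHD, and then to observe that in the Carnot group setting left-invariance forces the density $\sigma$ to be translation-invariant. The main conceptual input will be Theorem~\ref{th:essenzialeridotta}, which allows us to replace the essential boundary $\partial^\ast E$ by the reduced boundary $\mathcal F_XE$ up to $\mathscr H^{Q-1}$-null sets, together with known blow-up and representation theorems already proved in \cite{FSSC1,FSSC2,Marchi}.

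First I would tackle property $\mathcal R$. Let $E\subset\R^n$ have locally finite $X$-perimeter in an open set $\Omega$. By Theorem~\ref{th:essenzialeridotta} it is enough to prove that $\mathcal F_XE\cap\Omega$ is countably $X$-rectifiable. In Heisenberg groups this is \cite[Theorem 4.1]{FSSC1}; in Carnot groups of step 2 this is \cite[Theorem 3.9]{FSSC2}; in Carnot groups of type $\star$ this is \cite[Theorem 4.13]{Marchi}. In each case the argument produces, for $\mathscr H^{Q-1}$-a.e.\ $p\in\mathcal F_XE$, a blow-up of $E$ at $p$ that coincides with a vertical half-space orthogonal to $\widetilde\nu_E(p)$, from which one extracts the $C^1_X$ hypersurfaces covering $\mathcal F_XE$ up to $\mathscr H^{Q-1}$-negligible sets.

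Next I would deduce property \propHD. The same references (\cite[Theorem 7.1]{FSSC1}, \cite[Theorem 3.1]{FSSC2}, \cite[Theorem 4.12]{Marchi}) give an explicit representation of $P_X^E$ as $\theta \mathscr S^{Q-1}\res\mathcal F_XE$ in which the density $\theta(p)=\theta(p,\widetilde\nu_E(p))$ depends only on $p$ and the horizontal normal. Combining this with Theorem~\ref{th:essenzialeridotta} and applying it to the super-level sets of a defining function $f\in C^1_X$ of a hypersurface $S$ with $\nu_S(p)=\nu$, one obtains
\[
\lim_{r\to0}\frac{\mathscr S^{Q-1}(S\cap B(p,r))}{r^{Q-1}}=\zeta(p,\nu)
\]
for a positive function $\zeta$; this is exactly property \propHD. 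Once $\mathcal R$ and \propHD\ are established, Theorems~\ref{federervolpert}, \ref{teo:decomp-intro-2propR}, \ref{teo:jumppartHDintro} and \ref{teo:convergenzaalrapprpreciso} apply automatically.

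Finally, for the point-independence of $\sigma$ in the Carnot group setting, I would invoke left-invariance. In a Carnot group $\mathbb G$ the left translation $\tau_p(q)=p\cdot q$ is a smooth diffeomorphism, an isometry of $(\R^n,d)$, and preserves the horizontal vector fields $X_j$ (hence the class of $C^1_X$ hypersurfaces and their horizontal normals). Taking $F_p=\tau_p\circ F_0$ as adapted exponential coordinates one gets $|\det\nabla F_p(0)|=|\det\nabla F_0(0)|$ (which can be taken to equal $1$), $\widehat B_p=\widehat B_0$, and for any $C^1_X$ hypersurface $S$ with $p\in S$ and $\nu_S(p)=\nu$ the set $\tau_{p^{-1}}(S)$ is a $C^1_X$ hypersurface through $0$ with horizontal normal $\nu$. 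Since left translation is a $d$-isometry, this forces $\zeta(p,\nu)=\zeta(0,\nu)$. Plugging into the formula $\sigma(p,\nu)=|\det\nabla F_p(0)|\mathscr H^{n-1}_e(\overline\nu^\perp\cap\widehat B_p(0,1))/\zeta(p,\nu)$ obtained in the proof of Theorem~\ref{teo:jumppartHDcompleto} shows that $\sigma(p,\nu)=\sigma(0,\nu)$ depends only on $\nu$. The only potential obstacle is book-keeping: one must ensure that the references cited really provide the blow-up information in the strong pointwise form required by the definitions of $\mathcal R$ and \propHD, but this is essentially explicit in each of \cite{FSSC1,FSSC2,Marchi}.
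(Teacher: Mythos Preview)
Your proposal is correct and follows essentially the same route as the paper: the paper's argument (given in the paragraphs immediately preceding the theorem) also reduces to the reduced boundary via Theorem~\ref{th:essenzialeridotta}, cites the rectifiability and blow-up/representation results of \cite{FSSC1,FSSC2,Marchi} to obtain properties $\mathcal R$ and \propHD, and then appeals to left-invariance for the point-independence of $\sigma$. You supply more detail than the paper does---in particular your explicit unpacking of how left-translation acts on the ingredients of the formula for $\sigma(p,\nu)$ from the proof of Theorem~\ref{teo:jumppartHDcompleto}---but the strategy is the same.
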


In the paper \cite{DLDMV} the class of Carnot groups $\mathbb G$ satisfying the following assumption (see e.g. \cite{Montgomery} for the notion of {\em abnormal curve})
\begin{equation}\label{eq:defhyva}
\parbox{14cm}{there exists at least one direction $V$ in the first layer of the stratified Lie algebra of $\mathbb G$ such that  $t\mapsto\exp(tV)$ is not an  abnormal curve}
\end{equation}
is considered.  This class includes, for instance, the {\em Engel group}, which is the simplest example where the rectifiability problem for sets with finite $X$-perimeter is open. One of the main results of \cite{DLDMV} is the following one: for any set $E$ with finite $X$-perimeter in a Carnot group $\mathbb G$ satisfying \eqref{eq:defhyva}, the reduced boundary $\mathcal F_XE$ is countably $X$-Lipschitz rectifiable. Together with Theorem \ref{th:essenzialeridotta}, this  gives the following result.

\begin{theorem}
The property $\mathcal{LR}$ is satisfied in all Carnot groups $\mathbb G$ such that \eqref{eq:defhyva} holds; in particular, Theorem \ref{teo:SuLR} holds in such groups.
\end{theorem}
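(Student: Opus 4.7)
The proof is essentially a direct combination of two ingredients already stated or cited in the excerpt. My plan is as follows.

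Let $\mathbb G$ be a Carnot group satisfying \eqref{eq:defhyva}. To verify property $\mathcal{LR}$, I fix an open set $\Omega\subset\mathbb G$ and a set $E\subseteq\mathbb G$ with locally finite $X$-perimeter in $\Omega$; I must show that $\partial^\ast E\cap\Omega$ is countably $X$-Lipschitz rectifiable. Using Theorem~\ref{th:essenzialeridotta} (valid in the present  equiregular CC setting) one has $\mathscr H^{Q-1}((\partial^\ast E\setminus \mathcal F_XE)\cap\Omega)=0$, so up to an $\mathscr H^{Q-1}$-negligible set (which can freely be absorbed into any countable family of $C^1_X$ or $X$-Lipschitz hypersurfaces) it is enough to prove that $\mathcal F_XE\cap\Omega$ is countably $X$-Lipschitz rectifiable. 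The main result of \cite{DLDMV} recalled just before the theorem ensures this conclusion whenever $E$ has \emph{finite} $X$-perimeter in the entire group $\mathbb G$, so the task reduces to a localization from ``locally finite in $\Omega$'' to ``finite in $\mathbb G$''.

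For the localization I would exhaust $\Omega$ by a countable increasing family of bounded open sets $U_k$ with $\overline{U_k}\subset U_{k+1}\Subset\Omega$, and for each $k$ construct an auxiliary set $E_k\subseteq\mathbb G$ such that (a) $E_k$ has finite $X$-perimeter in $\mathbb G$ and (b) $\mathcal F_XE_k\cap U_k=\mathcal F_XE\cap U_k$ (modulo $\mathscr H^{Q-1}$-negligible sets). A standard way to produce $E_k$ is to intersect $E$ with a set of finite $X$-perimeter in $\mathbb G$ whose boundary stays outside $U_k$, for instance a sublevel set $\{\varphi<t\}$ for a smooth cutoff $\varphi$ equal to $0$ on $U_k$ and equal to $1$ outside a slightly larger neighborhood, choosing $t\in(0,1)$ via the coarea formula (Theorem~\ref{coarea}) so that $P_X(\{\varphi<t\};\mathbb G)<\infty$; the Leibniz-type estimates for $BV_X$ functions together with $|D_X\chi_E|(U_{k+1})<\infty$ then yield finite $X$-perimeter of $E_k$ in $\mathbb G$. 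Applying the \cite{DLDMV} result to each $E_k$ gives $X$-Lipschitz hypersurfaces covering $\mathcal F_XE_k$ up to an $\mathscr H^{Q-1}$-null set, and the countable union of these families covers $\mathcal F_XE\cap\Omega$ up to an $\mathscr H^{Q-1}$-null set, as required.

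The main (and only) genuine obstacle is this localization step: making sure that the cutoff produces a set of \emph{finite} $X$-perimeter in the whole group without creating new pieces of reduced boundary inside $U_k$. Once this is carried out, combining with Theorem~\ref{th:essenzialeridotta} and the definition of countable $X$-Lipschitz rectifiability concludes the verification of property~$\mathcal{LR}$.

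The ``in particular'' clause is then immediate: Theorem~\ref{teo:SuLR} has already been established earlier in the section as a direct consequence of the inclusion \eqref{eq:Suquasirettif}, which, under property~$\mathcal{LR}$, expresses $\mathcal S_u$ (up to an $\mathscr H^{Q-1}$-null set) as a countable union of essential boundaries of sets of finite $X$-perimeter, each of which is in turn countably $X$-Lipschitz rectifiable by the property just proved. No further argument is needed.
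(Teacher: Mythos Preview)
Your proposal is correct and follows the same route as the paper: combine Theorem~\ref{th:essenzialeridotta} with the main result of \cite{DLDMV} to pass from the essential boundary to the reduced boundary and then invoke the $X$-Lipschitz rectifiability of $\mathcal F_XE$. The only difference is that you spell out a localization step (from ``locally finite $X$-perimeter in $\Omega$'' to ``finite $X$-perimeter in $\mathbb G$'') which the paper treats as routine and leaves implicit.
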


\appendix
\section{Some tools from Geometric Measure Theory in metric spaces.}\label{app:A}
\begin{proposition}\label{levelsetsbis}
  Let $u\in L^1_{loc}(\Omega; \R^k)$. If $p\in \Omega\setminus \mathcal S_u$, then, for any $\varepsilon>0$, the set
  \[
  E_\varepsilon\coloneqq\{q\in \Omega: |u(q)-u^\star(p)|>\varepsilon\}
  \]
  has density $0$ at $p$. Conversely, if $u\in L^\infty_{loc}(\Omega; \R^k)$ and  $z\in \R^k$ are such that, for any $\varepsilon>0$, the set
  \[
  E_\varepsilon\coloneqq\{q\in \Omega: |u(q)-z|>\varepsilon\}
  \]
  has density $0$ at $p$, then $p\in \Omega\setminus \mathcal S_u$ and $z=u^\star(p)$.

  In particular, if $k=1$ and $p\in \Omega\setminus \mathcal S_u$ and $t\neq u^\star(p)$, then $p\notin \partial^*\{q\in\Omega:u(q)>t\}$.  
\end{proposition}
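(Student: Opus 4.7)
The plan is to handle the three claims in sequence, relying only on elementary measure-theoretic manipulations once the appropriate sets have been identified. The forward direction should be handled first by a Chebyshev-type inequality: for any $\varepsilon>0$, the definition of $E_\varepsilon$ gives
\[
\varepsilon\, \mathscr L^n(E_\varepsilon\cap B(p,r))\leq \int_{B(p,r)}|u-u^\star(p)|\,d\mathscr L^n,
\]
so dividing by $\mathscr L^n(B(p,r))$ and using the definition of approximate limit yields
\[
\frac{\mathscr L^n(E_\varepsilon\cap B(p,r))}{\mathscr L^n(B(p,r))}\leq \frac1\varepsilon \fint_{B(p,r)}|u-u^\star(p)|\,d\mathscr L^n\xrightarrow[r\to0]{}0,
\]
which is exactly the density zero statement.

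For the converse I would use the $L^\infty_{loc}$ assumption to control $|u-z|$ on $E_\varepsilon$. Fix some $r_0>0$ so that $B(p,r_0)\Subset\Omega$ and set $M\coloneqq\|u\|_{L^\infty(B(p,r_0))}+|z|<+\infty$. For any $\varepsilon>0$ and $r\in(0,r_0)$ I would split
\[
\fint_{B(p,r)}|u-z|\,d\mathscr L^n \leq \varepsilon + M\, \frac{\mathscr L^n(E_\varepsilon\cap B(p,r))}{\mathscr L^n(B(p,r))}.
\]
By hypothesis the second term tends to $0$ as $r\to0$, so $\limsup_{r\to 0}\fint_{B(p,r)}|u-z|\,d\mathscr L^n\leq \varepsilon$, and the arbitrariness of $\varepsilon$ gives that $z$ is the approximate limit of $u$ at $p$, so $p\notin\mathcal S_u$ and $z=u^\star(p)$ by uniqueness of approximate limits.

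For the final statement (the original Proposition \ref{levelsets}) I would reduce to the first part. Given $k=1$, $p\in\Omega\setminus\mathcal S_u$ and $t\neq u^\star(p)$, set $\varepsilon\coloneqq|t-u^\star(p)|/2>0$. If $t>u^\star(p)$, every $q\in\{u>t\}$ satisfies $u(q)-u^\star(p)>2\varepsilon$, hence $\{u>t\}\subseteq E_\varepsilon$; by the first part $\{u>t\}$ has density $0$ at $p$, so $p\in\{u>t\}^0$. If instead $t<u^\star(p)$, every $q\in\{u\leq t\}$ satisfies $u^\star(p)-u(q)\geq 2\varepsilon$, so $\{u\leq t\}\subseteq E_\varepsilon$ has density $0$ at $p$, and thus $\{u>t\}$ has density $1$ at $p$. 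In either case $p\notin\partial^\ast\{u>t\}$.

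The argument is essentially routine; the only real subtlety is that the converse direction genuinely needs the $L^\infty_{loc}$ assumption, since without a pointwise bound one cannot control the contribution of $E_\varepsilon$ from its measure alone — this is why the statement is split into two halves with different regularity hypotheses.
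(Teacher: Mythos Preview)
Your proof is correct and follows essentially the same approach as the paper: Chebyshev for the forward direction, the $L^\infty_{loc}$ bound plus splitting for the converse, and the same inclusion argument for the final level-set statement. The only cosmetic difference is your uniform choice $\varepsilon=|t-u^\star(p)|/2$ in the last part, which is slightly cleaner than the paper's separate choices of $\varepsilon$ in the two cases.
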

\begin{proof}
  Suppose $p\in \Omega\setminus \mathcal S_u$. By Chebychev inequality we have
  \[
  \varepsilon \frac{\mathscr L^n(E_\varepsilon\cap B(p,r))}{\mathscr L^n(B(p,r))}\leq \fint_{B(p,r)}|u-u^\star(p)|d\mathscr L^n\to0\qquad\text{as }r\to 0.
  \]

  Conversely, suppose that $u$ and $z$ are as in the statement. Then we have for any $r\in (0,1)$
  \[
  \fint_{B(p,r)}|u-z|d\mathscr L^n\leq (\|u\|_{L^\infty(B(p,1);\R^k)}+|z|)\frac{\mathscr L^n(B(p,r)\cap E_\varepsilon))}{\mathscr L^n(B(p,r))}+\varepsilon \frac{\mathscr L^n(B(p,r)\setminus E_\varepsilon))}{\mathscr L^n(B(p,r))},
  \]
which is infinitesimal as $r\to0$.  
  
  Finally, consider $p\in \Omega\setminus \mathcal S_u$ and let $t\neq u^\star(p)$. We already know that both $\{u>u^\star(p)+\varepsilon\}$ and $\{u<u^\star(p)-\varepsilon\}$ have density $0$ at $p$ for every $\varepsilon>0$. If $t>u^\star(p)$, then choosing $\varepsilon=t-u^\star(p)$ we have that $\{u>t\}$ has density $0$ at $p$. If $t<u^\star(p)$ then choose $\eta>0$ such that  $\varepsilon=u^\star(p)-t-\eta>0$ to infer that $\{u<t+\eta\}$ has density $0$ at $p$, and consequently $\{u\geq t+\eta\}$ has density $1$ at $p$. This implies that also $\{u>t\}$ has density $1$ at $p$.
\end{proof}

The following result is classical, see e.g. \cite{Simon} or  \cite{Heinonen}.

\begin{theorem}[5r-Covering Lemma] \label{5rcovering}
	Let $(M,d)$ be a separable metric space and let $\mathcal B$ a family of closed balls in $M$ such that 
	\[
	\sup\left\{\diam B: B\in \mathcal B\right\}<+\infty.
	\]
	Denote by $5B$ the closed metric ball with  same center as $B$ and radius $5$ times larger than that of $B$. Then there exists a countable and pairwise disjoint subfamily $\mathcal F\subseteq \mathcal B$ such that
	\[
	\bigcup \mathcal B\subseteq \bigcup_{B\in \mathcal F} 5B.
	\]
\end{theorem}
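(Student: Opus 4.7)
The plan is to execute the classical greedy construction that sorts the balls of $\mathcal B$ into dyadic size classes and, within each class, extracts a maximal pairwise disjoint subfamily. Set $R\coloneqq\sup\{\diam B:B\in\mathcal B\}<+\infty$ and partition $\mathcal B=\bigsqcup_{n\geq 1}\mathcal B_n$ where $\mathcal B_n\coloneqq\{B\in\mathcal B:R/2^n<\diam B\leq R/2^{n-1}\}$. Using Zorn's lemma (or, after reducing to a countable $\mathcal B$ via separability, a direct greedy enumeration), I would construct subfamilies $\mathcal F_n\subseteq\mathcal B_n$ inductively: let $\mathcal F_1$ be maximal among pairwise disjoint subfamilies of $\mathcal B_1$, and having defined $\mathcal F_1,\dots,\mathcal F_n$, let $\mathcal F_{n+1}$ be maximal among pairwise disjoint subfamilies of the collection $\{B\in\mathcal B_{n+1}:B\cap B'=\emptyset\text{ for every }B'\in\mathcal F_1\cup\dots\cup\mathcal F_n\}$. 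Then $\mathcal F\coloneqq\bigcup_n\mathcal F_n$ is pairwise disjoint by construction.

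For the covering property I would fix $B\in\mathcal B$ with radius $r_B$ and choose $n$ so that $B\in\mathcal B_n$. By the maximality defining $\mathcal F_n$, either $B\in\mathcal F_n$ or $B$ intersects some $B'\in\mathcal F_1\cup\dots\cup\mathcal F_n$ (otherwise $B$ could be appended to $\mathcal F_n$, a contradiction). Such a $B'$ lies in some $\mathcal B_k$ with $k\leq n$, so $\diam B'>R/2^k\geq R/2^n\geq\tfrac{1}{2}\diam B$, giving $r_{B'}\geq r_B/2$. If $x,y$ denote the centers of $B,B'$ and $z\in B\cap B'$, then for every $w\in B$ the triangle inequality yields
\[
d(w,y)\leq d(w,x)+d(x,z)+d(z,y)\leq 2r_B+r_{B'}\leq 4r_{B'}+r_{B'}=5r_{B'},
\]
i.e. $B\subseteq 5B'$. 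Hence $\bigcup\mathcal B\subseteq\bigcup_{B'\in\mathcal F}5B'$.

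The countability of $\mathcal F$ would follow from separability of $(M,d)$: within each class $\mathcal F_n$ the balls are pairwise disjoint and each has diameter bounded below by $R/2^n>0$, hence each contains a nondegenerate open subset intersecting any prescribed countable dense set in at least one point, forcing $\mathcal F_n$, and hence $\mathcal F$, to be countable. The only genuinely delicate point is this countability step, since a priori in a general metric space one could worry about closed balls with empty interior; in the applications of interest (Ahlfors-regular CC spaces, cf.\ Theorem \ref{ccproperties}) all balls have interior with positive measure, so the issue disappears. A secondary, more bookkeeping-level, obstacle is the silent use of Zorn's lemma to produce each maximal $\mathcal F_n$, which can be bypassed by first replacing $\mathcal B$ with a countable subfamily having the same union (again by separability) and then building $\mathcal F_n$ by an explicit enumeration.
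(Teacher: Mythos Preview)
The paper does not prove this theorem; it simply states it and refers to \cite{Simon} and \cite{Heinonen}. Your argument is exactly the classical greedy construction found in those references, so there is nothing to compare against.

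Two small remarks on your write-up. First, your worry about closed balls with empty interior is unfounded: in any metric space the closed ball $\overline B(x,r)$ contains the open ball $B(x,r)$, which is a nonempty open set; hence a pairwise disjoint family of closed balls yields a pairwise disjoint family of nonempty open sets, automatically countable in a separable space. Second, the step ``$\diam B'>\tfrac12\diam B$, giving $r_{B'}\geq r_B/2$'' silently uses $\diam \overline B(x,r)=2r$, which can fail in a general metric space. The clean fix is to partition $\mathcal B$ by \emph{radii} rather than diameters (the hypothesis $\sup\diam B<+\infty$ is customarily read as $\sup r_B<+\infty$, and in the geodesic CC spaces where the lemma is applied the two coincide). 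With that cosmetic change your proof is complete.
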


\begin{definition}[Hausdorff measures]\label{def:Hausmeas}
  Let $(M,d)$ be a metric space and $k\geq 0$. For any $\delta>0$ and  any  $E\subset M$ we define
  \begin{align*}
  & \mathscr H^k_\delta(E)\coloneqq\frac{\omega_k}{2^k}\inf\left\{\sum_{h=0}^\infty(\diam E_h)^k: E\subseteq \bigcup_{h=0}^\infty E_h, \diam E_h<\delta\right\}\\
  & \mathscr S^k_\delta(E)\coloneqq\frac{\omega_k}{2^k}\inf\left\{\sum_{h=0}^\infty(\diam B_h)^k: E\subseteq \bigcup_{h=0}^\infty B_h, B_h \text{ balls with } \diam B_h<\delta\right\},
\end{align*}
where $\omega_\alpha\coloneqq\pi^{\alpha/2}\Gamma(1+\alpha/2)^{-1}$ and $\Gamma(t)\coloneqq\int_0^{+\infty}s^{t-1}e^{-s}ds$ is the Euler $\Gamma$ function.  The {\em Hausdorff measure} $\mathscr H^k(E)$ and the {\em spherical Hausdorff measure} $\mathscr S^k(E)$ of  $E$ are   
  \begin{align*}
&\mathscr H^k(E)\coloneqq\sup_{\delta>0}\mathscr H_\delta ^k(E)=\lim_{\delta \to 0}\mathscr H_\delta^k(E)\\
& \mathscr S^k(E)\coloneqq\sup_{\delta>0} \mathscr S_\delta^k(E)=\lim_{\delta \to 0} \mathscr S_\delta^k(E).
\end{align*}
\end{definition}

The useful inequalities $\mathscr H^k \leq \mathscr S^k \leq 2^k\mathscr H^k$ are classical.

If $(M,d,\mu)$ is a  metric measure space, $k\geq 0$ and $x\in M$, we define the \emph{upper $k$-density} $\Theta_k^*(\mu,x)$ and the \emph{lower $k$-density} $\Theta_{*k}(\mu,x)$ of $\mu$ at $x$ as
	\[
	\begin{split}
	&\Theta_k^*(\mu,x)\coloneqq\limsup_{r\to 0}\frac{\mu(B(x,r))}{\omega_k r^k},
	\qquad
	\Theta_{*k}(\mu,x)\coloneqq\liminf_{r\to 0}\frac{\mu(B(x,r))}{\omega_k r^k}.
	\end{split}
	\]
	For every Borel set $E\subseteq \R^n$ we will also write $\Theta_k^*(E,x)\coloneqq\Theta_k^*(\mathscr H^k\res E, x)$ and $\Theta_{*k}(E,x)\coloneqq\Theta_{*k}(\mathscr H^k\res E,x)$. 	If   $\Theta_k^*(\mu,x)=\Theta_{*k}(\mu,x)$, then the common value is denoted by $\Theta_k(\mu,x)$ and it will be called \emph{$k$-density} of $\mu$ at $x$.  Hausdorff measures  and densities are linked by Propositions \ref{k-density} and \ref{k-densityf} below. A proof of Proposition \ref{k-density} can be found for instance in \cite[Theorem 3.2]{Simon};  in the latter reference, statement (i) below is stated with $\mathscr H^k$ in place of $\mathscr S^k$, but the careful reader will notice that the proof is indeed provided for this stronger version.
	
\begin{proposition}\label{k-density}
	Let $(M,d)$ be a separable metric space, let $\mu$ be a Borel regular Radon measure on $M$, let $E\subseteq M$ be a Borel set and let $t>0$. Then the following facts hold.
	\begin{itemize}
		\item[(i)] If $\Theta^*_k(\mu,x)\geq t$ for every $x\in E$, then $\mu\geq t\mathscr S^k\res E$.
		\item[(ii)] If $\Theta^*_k(\mu,x)\leq t$ for every $x\in E$, then $\mu\leq 2^k t\mathscr H^k\res E$.
	\end{itemize} 
	In particular, for $\mathscr H^k$-almost every $x\in \R^n$ we have $\Theta^*_k(\mu,x)<+\infty$.
\end{proposition}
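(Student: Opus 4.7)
The plan is to prove (i) and (ii) separately via covering-type arguments and then derive the ``in particular'' assertion as a corollary of (i).

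For part (ii), which is the more straightforward one, I would fix $\varepsilon > 0$ and $\delta > 0$, localize to a subset of $E$ where $\mu(B(x,r)) \leq (t+\varepsilon) \omega_k r^k$ holds for all $x\in E$ and $r \leq \delta$ (such a subset exhausts $E$ up to an increasing union as $\delta \to 0$, by Egorov-type reasoning), and start from an arbitrary cover $\{E_h\}$ of $E$ by sets of diameter less than $\delta$. For each $E_h$ meeting $E$, I would pick $x_h \in E_h \cap E$ and observe $E_h \subseteq B(x_h, \diam E_h)$, so that $\mu(E_h) \leq (t+\varepsilon) \omega_k (\diam E_h)^k$. Summing and taking the infimum over such covers yields $\mu(E) \leq 2^k (t+\varepsilon) \mathscr H^k_\delta(E)$, where the factor $2^k$ comes from the normalization $\omega_k / 2^k$ in Definition \ref{def:Hausmeas} and the fact that a ball of diameter $d$ has radius $d/2$. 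Sending $\delta, \varepsilon \to 0$ gives (ii).

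For part (i), I would combine the 5r-covering lemma (Theorem \ref{5rcovering}) with the outer regularity of the Radon measure $\mu$. Fixing $\alpha \in (0,t)$, $\delta > 0$ and $\eta > 0$, I would enclose $E$ in an open set $U$ with $\mu(U) \leq \mu(E) + \eta$, then at each $x \in E$ use $\Theta^*_k(\mu,x) \geq t$ to select arbitrarily small radii $r_x < \delta$ with $B(x,r_x) \subseteq U$ and $\mu(B(x,r_x)) \geq \alpha \omega_k r_x^k$. Theorem \ref{5rcovering} then produces a countable pairwise disjoint subfamily $\{B(x_j,r_j)\}$ whose $5$-enlargements cover $E$, and disjointness plus the density lower bound give $\sum_j \omega_k r_j^k \leq \mu(U)/\alpha$.

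The main obstacle is obtaining the sharp constant $1$ in (i): the naive use of the $5$-enlargements to cover $E$ only yields $\mathscr S^k_{10\delta}(E) \leq 5^k \mu(U)/\alpha$, hence the weaker bound $\mu \geq 5^{-k} t \mathscr S^k \res E$. To remove the factor $5^k$ I would iterate the construction in the spirit of a Vitali--Radon differentiation argument. After the first step, the residual set $E_1 \coloneqq E \setminus \bigcup_j B(x_j,r_j)$ is Borel and $\mathscr S^k_\delta(E \cap \bigcup_j B(x_j,r_j)) \leq \sum_j \omega_k r_j^k \leq \mu(U)/\alpha$ via the $B(x_j,r_j)$ themselves; the task is then to show $\mathscr S^k(E_1)$ is comparably small. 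Applying the same machinery to $E_1 \subseteq U_1$ with $\mu(U_1)$ close to $\mu(E_1)$, and summing a geometric series over iterates as $\alpha \uparrow t$, one recovers $\mathscr S^k(E) \leq \mu(E)/t$.

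The final assertion is a corollary of (i) together with the local finiteness of $\mu$. I would exhaust $M$ by compacta $K_m$ with $\mu(K_m) < +\infty$; applying (i) to the Borel set $\{x \in K_m : \Theta^*_k(\mu,x) \geq j\}$ gives $\mathscr S^k(\{x \in K_m : \Theta^*_k(\mu,x) \geq j\}) \leq \mu(K_m)/j$, so the intersection over $j$ is $\mathscr S^k$-null. Since $\mathscr H^k \leq \mathscr S^k$, the set $\{\Theta^*_k(\mu,\cdot) = +\infty\} \cap K_m$ is also $\mathscr H^k$-null, and countable subadditivity concludes.
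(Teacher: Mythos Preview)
The paper does not prove this proposition; it simply refers to \cite[Theorem~3.2]{Simon}. So your proposal must be judged on its own merits.

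Your argument for (ii) is correct and standard. The ``Egorov-type'' remark is a slight misnomer---you only use the definition of $\limsup$ and monotone exhaustion---but this is cosmetic.

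Your argument for (i) has a real gap in the removal of the factor $5^k$. The iteration you sketch does not close: after one step you have $\mu(E_1)\le\mu(U)-\alpha\sum_j\omega_k r_j^k$, but you have no \emph{lower} bound on $\sum_j\omega_k r_j^k$, so nothing forces $\mu(E_n)$ to decay geometrically, and the ``geometric series'' you invoke need not converge. The standard route (the one in Simon) avoids iteration altogether via a tail argument. You have already arranged a \emph{fine} cover of $E$ by closed balls (arbitrarily small radii at each $x\in E$); the proof of Theorem~\ref{5rcovering} then yields the stronger conclusion that for every $N$,
\[
E\setminus\bigcup_{j\le N}\overline{B(x_j,r_j)}\ \subseteq\ \bigcup_{j>N}B(x_j,5r_j).
\]
Since $\sum_j\omega_k r_j^k\le\mu(U)/\alpha<\infty$, the tail $\sum_{j>N}\omega_k r_j^k\to 0$. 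Covering $E$ by $\{\overline{B(x_j,r_j)}:j\le N\}\cup\{B(x_j,5r_j):j>N\}$ gives
\[
\mathscr S^k_{10\delta}(E)\ \le\ \sum_{j\le N}\omega_k r_j^k+5^k\sum_{j>N}\omega_k r_j^k,
\]
and letting $N\to\infty$, then $\delta\to 0$, then $U\downarrow E$ by outer regularity, then $\alpha\uparrow t$, yields $t\,\mathscr S^k(E)\le\mu(E)$ with the sharp constant and no iteration. Applying this to $E\cap A$ for arbitrary Borel $A$ gives the measure inequality.

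Your derivation of the final assertion is correct in spirit, but exhaustion by compacta is not guaranteed in a merely separable metric space. It suffices to intersect with the Borel sets $\{x:\mu(B(x,1))\le m\}$, or simply to note that the paper only ever applies this statement in the locally compact space $(\R^n,d)$.
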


\begin{corollary}\label{k-densityf}
  Let $(M,d)$ be a separable metric space, let $\mu$ be a Borel regular Radon measure on $M$, let $E\subseteq M$ be a Borel set and let $f:E \rightarrow \R$ be a strictly positive function. Then the following facts hold.
  \begin{itemize}
		\item[(i)] If $\Theta^*_k(\mu,x)\geq f(x)$ for every $x\in E$ , then $\mu\geq f\mathscr S^k\res E$.
		\item[(ii)] If $\Theta^*_k(\mu,x)\leq f(x)$ for every $x\in E$ , then $\mu\leq 2^k f\mathscr H^k\res E$.
	\end{itemize}
\end{corollary}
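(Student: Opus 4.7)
The plan is to reduce to the constant-density case of Proposition \ref{k-density} via a dyadic slicing along the values of $f$. First I would fix a parameter $\lambda>1$ and, assuming $f$ is Borel measurable (tacitly needed for the integrals in the conclusion to be meaningful), decompose $E$ into the countable Borel partition
\[
E_j\coloneqq\{x\in E:\lambda^j\leq f(x)<\lambda^{j+1}\},\qquad j\in\mathbb Z;
\]
strict positivity of $f$ guarantees that the $E_j$ exhaust $E$.

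For part (i), the observation is that on each $E_j$ one has $\Theta^*_k(\mu,\cdot)\geq f\geq\lambda^j$, so Proposition \ref{k-density}(i) applied with $E=E_j$ and the constant $t=\lambda^j$ gives $\mu\geq\lambda^j\mathscr S^k\res E_j$. For an arbitrary Borel set $B\subseteq M$, summing over $j$ and using both the disjointness of the $E_j$'s and the bound $f<\lambda\cdot\lambda^j$ on $E_j$, I would obtain
\[
\mu(B)\geq\sum_{j\in\mathbb Z}\lambda^j\mathscr S^k(B\cap E_j)\geq\frac{1}{\lambda}\sum_{j\in\mathbb Z}\int_{B\cap E_j}f\,d\mathscr S^k=\frac{1}{\lambda}\int_B f\chi_E\,d\mathscr S^k,
\]
and letting $\lambda\to1^+$ would close the argument. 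Part (ii) is entirely parallel: Proposition \ref{k-density}(ii) on each $E_j$ yields $\mu\res E_j\leq 2^k\lambda^{j+1}\mathscr H^k\res E_j$, and summing (using $f\geq\lambda^j$ on $E_j$) produces
\[
\mu(B\cap E)=\sum_{j\in\mathbb Z}\mu(B\cap E_j)\leq 2^k\lambda\sum_{j\in\mathbb Z}\int_{B\cap E_j}f\,d\mathscr H^k=2^k\lambda\int_B f\chi_E\,d\mathscr H^k,
\]
after which $\lambda\to1^+$ finishes the proof.

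There is no genuine obstacle beyond the slicing bookkeeping; the only mild subtlety is the implicit measurability hypothesis on $f$, which is needed both for the level sets $E_j$ to be Borel and for the integrals $\int f\,d\mathscr S^k$, $\int f\,d\mathscr H^k$ to make sense. The conclusions should be read consistently with Proposition \ref{k-density}, i.e., the inequality $\mu\leq 2^k f\mathscr H^k\res E$ means $\mu(B)\leq 2^k\int_B f\chi_E\,d\mathscr H^k$ for every Borel $B$ (with the convention $f\chi_E=0$ outside $E$).
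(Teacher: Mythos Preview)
Your proof is correct and follows essentially the same route as the paper: both arguments partition $E$ into geometric level sets of $f$ (you use $\{\lambda^j\leq f<\lambda^{j+1}\}$, the paper uses $\{(1+\varepsilon)^j<f\leq(1+\varepsilon)^{j+1}\}$), apply Proposition~\ref{k-density} on each piece with the appropriate constant, sum, and let the parameter tend to $1$. Your remark that Borel measurability of $f$ is tacitly required is a fair point that the paper does not make explicit.
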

\begin{proof}
(i)  Let $\varepsilon>0$ and define for every $j\in \mathbb Z$ the set
  \[
  E_j\coloneqq \{x\in E: (1+\varepsilon)^j<f(x)\leq (1+\varepsilon)^{j+1}\}.
  \]
  Suppose that $\Theta^*_k(\mu,x)\geq f(x)$ for every $x\in E$. Then, using (i) of Proposition \ref{k-density} we get
  \[
    \mu = \sum_{j\in \mathbb Z} \mu\res E_j\geq \sum_{j\in \mathbb Z} (1+\varepsilon)^j \mathscr S^k \res E_j\geq \sum_{j\in \mathbb Z} \frac f{1+\varepsilon} \mathscr S^k\res E_j= \frac f{1+\varepsilon} \mathscr S^k\res E.
  \]
The statement follows by the arbitrariness of $\varepsilon$.
  
(ii)  Using (ii) of Proposition \ref{k-density} we have
  \[
  \begin{aligned}
    \mu &= \sum_{j\in \mathbb Z} \mu\res E_j\leq \sum_{j\in \mathbb Z} 2^k(1+\varepsilon)^{j+1} \mathscr H^k \res E_j\\
    &\leq \sum_{j\in \mathbb Z} 2^k(1+\varepsilon)f \mathscr S^k\res E_j= 2^k(1+\varepsilon)f \mathscr S^k\res E.
  \end{aligned}
  \]
The statement follows by the arbitrariness of $\varepsilon$.
\end{proof}

As a consequence of the Corollary \ref{k-densityf} we have the following remark.

\begin{rmk}\label{densitycorollary}
  Under the same assumptions of Corollary \ref{k-densityf}, for $\mathscr H^k$-almost every $x\in \R^n$ we have $\Theta_k^*(\mu,x)<+\infty$ and for any Borel set $B\subseteq \R^n$ the implication
\[
  \mu(B)=0\quad\Longrightarrow\quad \Theta_k(\mu,x)=0 \text{ for $\mathscr H^k$-a.e. $x\in B$}
  \]
  holds.  In particular, if $\mu= g\mathscr H^k\res E$ we have $  \Theta_k(\mu,x)=0$  for $\mathscr H^k$-almost every $x\in \R^n\setminus E$.
\end{rmk}

\begin{definition}[Porous sets]\label{def:porous}
Let $(M,d)$ be a metric space and let $E\subseteq M$ be a Borel set. Then $E$ is said to be \emph{porous} if there esist $\alpha\in (0,1)$ and $R>0$ such that for every $x\in M$ and  every $r\in (0,R)$  there exists $y\in M$ such that 
\[
B(y,\alpha r)\subseteq B(x,r)\setminus E.
\] 
\end{definition}
\begin{proposition}\label{th:porousnegligible}
Let $(M,d)$ be a locally compact and separable metric space, $\mu$ a  doubling Radon measure on $M$ and let $E\subseteq M$ be a porous set. Then $E$ has no points of density 1 and, in particular, $\mu(E)=0$.
\end{proposition}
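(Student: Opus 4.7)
The plan is to show the stronger statement first, namely that no point of $M$ has density $1$ for $E$, and then deduce $\mu(E)=0$ via the Lebesgue differentiation theorem for doubling measures. Fix an arbitrary $x_0\in M$; my goal will be to produce a constant $\theta\in(0,1)$, independent of $r$, such that
\[
\frac{\mu(B(x_0,r)\setminus E)}{\mu(B(x_0,r))}\geq \theta\qquad\text{for every }r\in(0,R).
\]

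To this end, I will apply the definition of porosity to the ball $B(x_0,r)$ to obtain $y\in M$ with $B(y,\alpha r)\subseteq B(x_0,r)\setminus E$. In particular $y\in B(x_0,r)$, so by the triangle inequality $B(x_0,r)\subseteq B(y,2r)$. The doubling assumption then yields a constant $C\geq 1$ such that $\mu(B(z,2s))\leq C\mu(B(z,s))$ for all $z,s$; iterating this estimate $k\coloneqq\lceil\log_2(2/\alpha)\rceil$ times at the point $y$ gives
\[
\mu(B(x_0,r))\leq\mu(B(y,2r))\leq C^{k}\mu(B(y,2^{1-k}r))\leq C^{k}\mu(B(y,\alpha r)),
\]
and combining with the inclusion $B(y,\alpha r)\subseteq B(x_0,r)\setminus E$ produces the desired lower bound with $\theta=C^{-k}$. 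Consequently $\limsup_{r\to 0}\mu(E\cap B(x_0,r))/\mu(B(x_0,r))\leq 1-\theta<1$, so $x_0$ is not a point of density $1$ for $E$. Since $x_0$ was arbitrary, $E$ contains no density-$1$ point.

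For the ``in particular'' statement, I will invoke the Lebesgue differentiation theorem for doubling Radon measures on a separable metric space (see e.g.\ \cite[Section 2.7]{Heinonen}), which guarantees that $\mu$-almost every $x\in E$ is a point of density $1$ of $E$. Since we just proved that no such point exists, we must have $\mu(E)=0$.

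I do not foresee a substantive obstacle: the argument is a clean comparison between porosity and doubling plus an application of the standard density theorem. The only minor care needed is in the chain of inclusions $B(y,\alpha r)\subseteq B(x_0,r)\subseteq B(y,2r)$ and in choosing $k$ so that $2^{1-k}\leq\alpha$, which makes the constant $\theta$ depend only on $\alpha$ and on the doubling constant $C$ but not on $x_0$ nor on $r$.
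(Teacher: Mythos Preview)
Your proof is correct and follows essentially the same approach as the paper: use porosity to find a ball $B(y,\alpha r)\subseteq B(x_0,r)\setminus E$, compare $\mu(B(y,\alpha r))$ with $\mu(B(x_0,r))$ via the doubling property, conclude that no point can have density $1$ for $E$, and finish with the Lebesgue differentiation theorem. The only difference is cosmetic: you spell out the doubling iteration and the constant $\theta=C^{-k}$ explicitly, whereas the paper simply asserts that the ratio is bounded below by a constant depending on $\alpha$ and the doubling constant.
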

\begin{proof}
Let $\alpha$ and $R$ be as in Definition \ref{def:porous}. Suppose by contradiction there exists $x\in E^1$. For every  $r\in (0,R)$ there exists $y\in M$ such that $B(y,\alpha r)\subseteq B(x,r)\setminus E$. This implies that
\[
\frac{\mu(B(x,r)\setminus E)}{\mu(B(x,r))}
\geq
\frac{\mu(B(y,\alpha r))}{\mu(B(x,r))}
\geq C,
\]
where $C>0$ depends on $\alpha$ and the doubling constant of $\mu$. Letting $r\to 0$ and taking into account that $x\in (M\setminus E)^0$, we get a contradiction. The last part of the statement follows from the generalized Lebesgue Theorem, see e.g. \cite[Theorem 1.8]{Heinonen}.
\end{proof}

\begin{proposition}\label{stella}
  Let $(\R^n, X)$ be a geodesic equiregular CC space; then, for every $p\in \R^n$ and for every $r>0$ one has $\mathscr L^n (\partial B(p,r))=0$.
\end{proposition}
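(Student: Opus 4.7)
The plan is to adapt the porosity argument of Proposition \ref{th:porousnegligible} to the present (only locally doubling) setting. First I observe that, since $B(p,r)$ is open, its topological boundary is contained in the metric sphere $S_r:=\{q\in\R^n:d(p,q)=r\}$, so it suffices to show $\mathscr L^n(S_r)=0$. The overall strategy is to prove that $S_r$ is porous at each of its points and then to invoke the generalized Lebesgue differentiation theorem (see e.g.\ \cite[Theorem 1.8]{Heinonen}), in combination with the local Ahlfors $Q$-regularity of $\mathscr L^n$ provided by Theorem \ref{ccproperties}(iii).

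I first exploit the geodesic hypothesis. Fix $q\in S_r$; since $r>0$ we have $q\neq p$, and by assumption there exists a subunit geodesic $\gamma\colon[0,r]\to\R^n$ with $\gamma(0)=p$ and $\gamma(r)=q$. Standard minimality arguments give $d(\gamma(s),\gamma(t))=|s-t|$ for all $s,t\in[0,r]$. For any $\rho\in(0,r)$, set $q_\rho:=\gamma(r-\rho/2)$, so that $d(p,q_\rho)=r-\rho/2$ and $d(q,q_\rho)=\rho/2$. For every $x\in B(q_\rho,\rho/4)$ the triangle inequality yields
\[
d(p,x)\leq (r-\rho/2)+\rho/4<r\qquad\text{and}\qquad d(q,x)\leq \rho/2+\rho/4<\rho,
\]
hence $B(q_\rho,\rho/4)\subseteq B(q,\rho)\setminus S_r$.

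To conclude, I will show that no $q\in S_r$ can be a point of $\mathscr L^n$-density $1$ for $S_r$. Fixing $q\in S_r$ and a compact neighborhood $K$ of $q$, and applying Theorem \ref{ccproperties}(iii), we obtain constants $C>1$ and $R>0$ such that $C^{-1}\varrho^Q\leq\mathscr L^n(B(x,\varrho))\leq C\varrho^Q$ for every $x\in K$ and every $\varrho\in(0,R)$. For $\rho$ small enough that both $q$ and $q_\rho$ lie in $K$, we combine the inclusion above with this regularity to get
\[
\mathscr L^n(B(q,\rho)\setminus S_r)\geq \mathscr L^n(B(q_\rho,\rho/4))\geq C^{-2}4^{-Q}\,\mathscr L^n(B(q,\rho)),
\]
so that the $\mathscr L^n$-density of $S_r$ at $q$ is at most $1-C^{-2}4^{-Q}<1$, a contradiction. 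The generalized Lebesgue differentiation theorem then forces $\mathscr L^n(S_r)=0$. The key (and essentially only) subtlety is the need to localize the doubling property, since Proposition \ref{th:porousnegligible} is phrased for globally doubling measures; here one simply runs the argument at a fixed $q$, on scales comparable to the Ahlfors-regularity scale of a compact neighborhood of $q$.
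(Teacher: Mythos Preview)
Your proof is correct and follows essentially the same porosity argument as the paper: both use a geodesic from $p$ to $q\in\partial B(p,r)$, step back along it to find a sub-ball disjoint from the sphere, and conclude via Lebesgue differentiation. The only difference is cosmetic (you use $\alpha=1/4$ rather than $1/2$) together with your explicit localization of the doubling estimate; the paper instead invokes Proposition~\ref{th:porousnegligible} directly, tacitly using that $\mathscr L^n$ is doubling on a compact neighborhood of $\partial B(p,r)$, so your added care there is a mild improvement in presentation rather than a different idea.
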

\begin{proof}
By Proposition \ref{th:porousnegligible} it is sufficient to prove that $\partial B(p,r)$ is a porous set. Take $q\in \partial B(p,r)$ and consider a length minimizing absolutely continuous path  $\gamma:[0,r]\rightarrow \R^n$ such that $\gamma(0)=p$, $\gamma(r)=q$ and for every $t\in [0,r]$ one has $d(p,\gamma(t))=t$. Consider $\varepsilon \in (0,2r]$ and set $y\coloneqq\gamma(r-\frac{\varepsilon}{2})\in B(p,r)$. Then $B(y,\frac{\varepsilon}{2})\subseteq B(q,\varepsilon)$, hence $B(y,\frac{\varepsilon}{2})\cap \partial B(p,r)=\emptyset$, i.e., $\partial B(p,r)$ is porous.
\end{proof}

\section{Proofs of some results about jumps and approximate differentiability points}\label{app:risultatitecnici}

\begin{proof}[Proof of Proposition \ref{jumpprop}] 
(i) We can without loss of generality assume that $k=1$. Consider a countable dense subset $\{(a_h,b_h,\nu_h): h\in\mathbb N\}$ of $\R\times\R\times\mathbb S^{m-1}$ and, for every $h\in \mathbb N$, define  $w_h:\R^n\to \R$ by
	\[
	w_h(y)\coloneqq\begin{cases}
	a_h& \text{ if } \widetilde L_{\nu_h}(y)\geq 0,\\
	b_h& \text{ if } \widetilde L_{\nu_h}(y)< 0.
	\end{cases}
	\]
We first prove that (recalling the notation \eqref{eq:defAr})
	\begin{equation}\label{eq:jumpborel}
	\left(\Omega\setminus\mathcal S_u\right)\cup \mathcal J_u=\bigcap_{\ell=1}^\infty\bigcup_{h=0}^\infty\left\{p \in \Omega: \limsup_{r\to 0}\fint_{A(r)}|u\circ F_p-w_h|d\mathscr L^n<\frac 1\ell\right\}.
	\end{equation}
The inclusion $\subseteq$ in \eqref{eq:jumpborel} is straightforward by Remark \ref{rem:applimIFFconvL1} and Proposition \ref{jumpequiv}. In order to prove the opposite inclusion, consider $p\in \Omega$ such that for every $\ell\in \mathbb N\setminus\{0\}$ there exists $w_{h_\ell}$ such that 
	\[
	\limsup_{r\to 0}\fint_{A(r)}|u\circ F_p-w_{h_\ell}|d\mathscr L^n<\frac 1\ell.
	\]
	We prove that, possibly passing to a subsequence, there exist $a, b$ and $\nu$ such that $(w_{h_\ell})$ is convergent in $L^1(A(1))$ to
        \[
        w(y)\coloneqq \begin{cases}
          a & \text{ if } \widetilde L_{\nu}(y)\geq 0,\\
          b & \text{ if } \widetilde L_{\nu}(y)<0.
     \end{cases}
        \]
Up to  subsequences we can suppose that  $(\nu_{h_\ell})$ converges to some $\nu$. Define $C\coloneqq\mathscr L^n\left(A(1)\right)$ and let $\overline \ell \in \mathbb N$ be such that for every $\ell,k\geq \overline \ell$ the set 
	\[
	A^+(1)\coloneqq \left\{y\in A(1):\widetilde L_{\nu_{h_\ell}}(y)>0\text{ and }\widetilde L_{\nu_{h_k}}(y)>0\right\}
	\]
        is such that $\mathscr L^n(A^+(1))\geq \tfrac 14 C$. By a change of variables, for such $h$ and $k$ one has
	\[
	\begin{aligned}
	|a_{h_\ell}-a_{h_k}|&=\fint_{A^+(1)}|a_{h_\ell}-a_{h_k}|d\mathscr L^n\leq\frac{4}{C} \int_{A^+(1)} |w_{h_\ell}-w_{h_k}|d\mathscr L^n\\
	&\leq\frac{4}{C}\int_{A(1)}|w_{h_\ell}-w_{h_k}|d\mathscr L^n=\frac{4}{Cr^Q}\int_{A(r)}|w_{h_\ell}-w_{h_k}|d\mathscr L^n\\
	&\leq 4\fint_{A(r)}|u\circ F_p-w_{h_\ell}|d\mathscr L^n+4\fint_{A(r)}|u\circ F_p-w_{h_k}|d\mathscr L^n.
	\end{aligned}
	\]
	Passing to the $\limsup$ as $r\to 0$ we get that $(a_{h_\ell})$ is Cauchy and therefore convergent to some $a\in \R$. Using the same technique we also get that $(b_{h_\ell})$ is convergent to some $b\in \R$, and $w_{h_\ell}$ converges in $L^1(A(1))$ to $w$. Now, for sufficiently large $\ell\in\mathbb N$ and for sufficiently small $r>0$, from
	\[
	\int_{A(r)}|u\circ F_p\circ \delta_r-w|d\mathscr L^n\leq \int_{A(r)}|u\circ F_p\circ \delta_r- w_{h_\ell}|d\mathscr L^n + \int_{A(1)}|w_{h_\ell}-w|d\mathscr L^n
	\]
we deduce the remaining inclusion $\supseteq$ in \eqref{eq:jumpborel}. 

Notice that the right-hand side of \eqref{eq:jumpborel} is a Borel set if, for any $h\in \mathbb N$, and any small enough $r$, the function
\begin{equation}\label{eq:fieldsmedal}
	p\longmapsto \fint_{A(r)}|u\circ F_p-w_h|d\mathscr L^n
\end{equation}
is continuous. This is clearly true if $u$ is of class $C^\infty$. For general $u$, fix $p\in \Omega$, $r>0$ and  $\varepsilon>0$ and consider $v\in C^\infty(\Omega)$ such that $ \|u-v\|_{L^1(B(p,C_1r))}<\varepsilon$, where $C_1$ is such that $F_p(A(r))\Subset B(p,C_1r)$. Applying the triangular inequality,  we find
	\[
	\begin{aligned}
	  &\fint_{A(r)}|u\circ F_p-u\circ F_q|d\mathscr L^n\\
	  \leq\: & \fint_{A(r)}\big(|u\circ F_p-v\circ F_p|
	+|v\circ F_p-v\circ F_q|
	+|v\circ F_q-u\circ F_q|\big)d\mathscr L^n\ <\ C \varepsilon,
	\end{aligned}
	\]
for some $C>0$, for every sufficiently small $r$ and for every $q$ sufficiently close to $p$. This proves that the function in \eqref{eq:fieldsmedal} is continuous. It follows that $\left(\Omega\setminus\mathcal S_u\right)\cup \mathcal J_u$ is a Borel set: then, also $\mathcal J_u$ is a Borel set, for $\Omega\setminus \mathcal S_u$ is  Borel  and it is disjoint from $\mathcal J_u$.

	Select now for any $p\in \mathcal J_u$ an $X$-jump triple $(u^+(p),u^-(p),\nu(p))$ according to Definition \ref{approximatejump}. Define  $\phi:\mathcal J_u\rightarrow\R^m$ by $\phi(p)\coloneqq(u^+(p)-u^-(p))\nu(p)$.  We prove that $\phi$ is Borel, so that also $\nu$ is Borel up to re-defining it as $\nu(p)=\phi(p)/|\phi(p)|$. Set
	\[
	w_p(y)\coloneqq\begin{cases}
	u^+(p)&\text{ if }\widetilde L_{\nu(p)}(y)>0;\\
	u^-(p)&\text{ if }\widetilde L_{\nu(p)}(y)<0,
	\end{cases}
	\]
and 
	\[
	\widetilde{A}(r)\coloneqq\left\{y\in \R^n:|(y_1,\dots,y_m)|+\sum_{j=m+1}^n|y_j|^{\frac{1}{w_j}}\leq r\right\}.
	\]
	Notice that the sets $\widetilde A(r)$ are  invariant under rotations of the first $m$ coordinates. By Proposition \ref{jumpequiv} we have that for every $\psi \in C_c^\infty(\widetilde A(1))$ and  every $i=1,\dots,n$
	\[
	\begin{aligned}
	\int_{\widetilde{A}(1)}w_p\partial_i\psi d\mathscr L^n&=\lim_{\varepsilon\to 0}\int_{\widetilde{A}(1)}(u\circ F_p\circ\delta_\varepsilon)\partial_i \psi d\mathscr L^n\\
	&=\lim_{\varepsilon\to 0}\frac{1}{\varepsilon^Q}\int_{\widetilde{A}(\varepsilon)}u(F_p(y))\partial_i\psi(\delta_{\varepsilon^{-1}}(y))d\mathscr L^n(y).
	\end{aligned}
	\]
	Hence, for every $\psi\in C_c^\infty(\widetilde A(1))$ and for every $i=1,\dots,n$ the function
	\[
	p\longmapsto\int_{\widetilde{A}(1)}w_p \partial_i\psi d\mathscr L^n
	\]
is Borel. Fix $p\in\mathcal J_u$ and consider a sequence $(\psi_h)$ in  $C_c^\infty(\widetilde A(1))$ converging to $\chi_{\widetilde A(1)}$. Computing the (Euclidean) measure derivative of $w_p$ we obtain that for every $i=1,\dots,n$ 
	\[
	\begin{aligned}
	&\phi^i(p)\mathscr H_e^{n-1}\big(\widetilde A(1)\cap\{\widetilde L_{\nu(p)}=0\}\big)\\
	=\: &D^iw_p(\widetilde A(1))=\lim_h\int_{\widetilde A(1)}\psi_hd D^iw_p=-\lim_h\int_{\widetilde A(1)}w_p\partial_i\psi_h d\mathscr L^n.
	\end{aligned}
	\]
	Since $\mathscr H_e^{n-1}(\widetilde A(1)\cap\{\widetilde L_{\nu(p)}=0\})$ does not depend on $p$ we deduce by the previous step that $\phi$ is a Borel function, and therefore $\nu$ is Borel. 
	
	Finally, by Proposition \ref{jumpequiv} we have
	\[
	u^+(p)=\lim_{\varepsilon\to 0}\frac{1}{\varepsilon^Q}\int_{A(\varepsilon)}\chi_{\{\widetilde L_{\nu(p)} >0\}}u\circ F_pd\mathscr L^n
	\]
and this concludes the proof.

The proof of (ii) is completely analogous to the Euclidean case, see \cite{AFP}.
\end{proof}

\begin{proof}[Proof of Proposition \ref{prop:DuBorel}]
We can assume without loss of generality that $k=1$. Consider  a dense subset $\{z_i:i\in\mathbb N\}$ of $\R^{m}$. Reasoning as in the proof of Proposition \ref{jumpprop} one can prove that 
	\[
	\mathcal D_u=\bigcap_{h=1}^\infty\bigcup_{i=0}^\infty\left\{p\in \Omega\setminus\mathcal S_u: \limsup_{\varrho\to 0}\frac{1}{r^{Q+1}}\int_{A(r)}\left|u\circ F_p-u^\star(p)-\widetilde L_{z_i}\right|d\mathscr L^n<\frac 1h\right\}
	\] 
which implies that $\mathcal D_u$ is a Borel set. \newline
We now prove that $D_X^{ap}u$ is Borel. Using Theorem \ref{normestimate}, for any $p\in \mathcal D_u$ one has
	\[
	\lim_{\varepsilon\to 0}\frac{1}{\varepsilon^{Q+1}}\int_{\delta_\varepsilon P}\left|u\circ F_p-u^\star(p)-\widetilde L_{D_X^{ap}u(p)}\right|d\mathscr L^n=0,
	\]
where for every $n$-tuple of positive real numbers $(\ell_1,\dots,\ell_n)$
	\[
	P=P(\ell_1,\dots,\ell_n)\coloneqq\{\xi\in\R^n: 0\leq\xi_j^{1/{d_j}}\leq \ell_j\  \text{for any }j=1,\dots,n\}
	\]
is the anisotropic box with axis that are parallel to the coordinate ones $(e_1,\dots,e_n)$. By a change of variables we get
	\[
        \frac{1}{\mathscr L^n(P)}\int_P \widetilde L_{D_X^{ap}u(p)} d\mathscr L^n=\frac{1}{\mathscr L^n(P)}\lim_{\varepsilon\to 0}\frac{1}{\varepsilon^{Q+1}}\int_{\delta_\varepsilon P} \left(u\circ F_p-u^\star(p)\right)d\mathscr L^n.
	\]
	From this we deduce that, for any $n$-tuple $(\ell_1,\dots,\ell_n)$ the function
	\begin{equation}\label{rectangle}
	p\longmapsto \frac{1}{\mathscr L^n(P)}\int_P\widetilde L_{D_X^{ap}u(p)} d\mathscr L^n
	\end{equation}
is Borel. Now, for every $i=1,\dots,m$ and  every $h\in\mathbb N\setminus\{0\}$ define the rectangles $P_h^i\coloneqq P(1/h,\dots,1/h,1,1/h,\dots,1/h)$. A simple computation shows that
	\[
	\lim_h \frac{1}{\mathscr L^n(P_h^i)}\int_{P_h^i}\widetilde L_{D_X^{ap}u(p)}d\mathscr L^n=\frac{1}{2}\left(D_X^{ap}u(p)\right)_i,
	\]
	which completes the proof.
\end{proof}

\begin{proof}[Proof of Proposition \ref{prop:localita(balneare?)}]
We can assume without loss of generality that $k=1$. 

(i) By Remark \ref{rem:applimIFFconvL1}, the functions $\widetilde u_r\coloneqq u\circ F_p\circ \delta_r$ and $\widetilde v_r\coloneqq u\circ F_p\circ \delta_r$ converge, respectively, to $u^\star(p)$ and $v^\star(p)$ in $L^1_{loc}(\R^n)$ as $r\to 0$. In particular, as $r\to 0$ the families $(\widetilde u_r)$ and $(\widetilde v_r)$ converge (locally) in measure  to $u^\star(p)$ and $v^\star(p)$ respectively. By a change of variables we have for any $R>0$
\[
\lim_{r\to 0}\mathscr L^n(\widehat B(0,R)\cap\{\widetilde v_r\neq \widetilde u_r\})
= \lim_{r\to 0}{r^{-Q}}\mathscr L^n(\widehat B(0,r R)\cap\{u\circ F_p\neq v\circ F_p \})=0.
\]
It follows that $(\widetilde u_r)$ and $(\widetilde v_r)$  have the same measure limit, hence $u^\star(p)= v^\star(p)$.

(ii) Using Proposition \ref{jumpequiv} and the same argument used in (i) we  obtain that the functions
	\[
	U(y)\coloneqq\begin{cases}
	u^+(p)&\text{ if }\widetilde L_{\nu_u(p)}(y)>0\\
	u^-(p)&\text{ if }\widetilde L_{\nu_u(p)}(y)<0
	\end{cases}
	\qquad\text{and}\qquad
	V(y)\coloneqq\begin{cases}
	v^+(p)&\text{ if }\widetilde L_{\nu_v(p)}(y)>0\\
	v^-(p)&\text{ if }\widetilde L_{\nu_v(p)}(y)<0
	\end{cases}
	\]
coincide for $\mathscr L^n$-almost every $y$, hence $(u^+(p),u^-(p),\nu_u(p))\equiv(v^+(p),v^-(p),\nu_v(p))$.

(iii) By point (i) we already know that $u^\star(p)=v^\star(p)$. Since
	\[
	\frac{u(F_p(\delta_r(y)))-u^\star(p)}{r}\neq\frac{v(F_p(\delta_r(y)))-v^\star(p)}{r}\quad\Longleftrightarrow\quad u(F_p(\delta_r(y)))\neq v(F_p(\delta_r(y))),
	\]
the statement follows using Proposition \ref{diffequiv} and an argument similar to part (i) above.
\end{proof}

\bibliographystyle{acm}
\bibliography{BVfunctionsBib}

\begin{thebibliography}{10}

\bibitem{Ambrosio01}
{\sc Ambrosio, L.}
\newblock Some fine properties of sets of finite perimeter in {A}hlfors regular
  metric measure spaces.
\newblock {\em Adv. Math. 159}, 1 (2001), 51--67.

\bibitem{AFP}
{\sc Ambrosio, L., Fusco, N., and Pallara, D.}
\newblock {\em Functions of bounded variation and free discontinuity problems}.
\newblock Oxford Mathematical Monographs. The Clarendon Press, Oxford
  University Press, New York, 2000.

\bibitem{AGM}
{\sc Ambrosio, L., Ghezzi, R., and Magnani, V.}
\newblock B{V} functions and sets of finite perimeter in sub-{R}iemannian
  manifolds.
\newblock {\em Ann. Inst. H. Poincar\'e Anal. Non Lin\'eaire 32}, 3 (2015),
  489--517.

\bibitem{AKLD}
{\sc Ambrosio, L., Kleiner, B., and Le~Donne, E.}
\newblock Rectifiability of sets of finite perimeter in {C}arnot groups:
  existence of a tangent hyperplane.
\newblock {\em J. Geom. Anal. 19}, 3 (2009), 509--540.

\bibitem{AmbMag}
{\sc Ambrosio, L., and Magnani, V.}
\newblock Weak differentiability of {BV} functions on stratified groups.
\newblock {\em Math. Z. 245}, 1 (2003), 123--153.

\bibitem{AmbScienza}
{\sc Ambrosio, L., and Scienza, M.}
\newblock Locality of the perimeter in {C}arnot groups and chain rule.
\newblock {\em Ann. Mat. Pura Appl. (4) 189}, 4 (2010), 661--678.

\bibitem{Bellaiche1}
{\sc Bella\"iche, A.}
\newblock The tangent space in sub-{R}iemannian geometry.
\newblock In {\em Sub-{R}iemannian geometry}, vol.~144 of {\em Progr. Math.}
  Birkh\"auser, Basel, 1996, pp.~1--78.

\bibitem{BiroliMosco}
{\sc Biroli, M., and Mosco, U.}
\newblock Sobolev and isoperimetric inequalities for {D}irichlet forms on
  homogeneous spaces.
\newblock {\em Atti Accad. Naz. Lincei Cl. Sci. Fis. Mat. Natur. Rend. Lincei
  (9) Mat. Appl. 6}, 1 (1995), 37--44.

\bibitem{BraMirPal}
{\sc Bramanti, M., Miranda, Jr., M., and Pallara, D.}
\newblock Two characterization of {BV} functions on {C}arnot groups via the
  heat semigroup.
\newblock {\em Int. Math. Res. Not. IMRN}, 17 (2012), 3845--3876.

\bibitem{CapDanGar94}
{\sc Capogna, L., Danielli, D., and Garofalo, N.}
\newblock The geometric {S}obolev embedding for vector fields and the
  isoperimetric inequality.
\newblock {\em Comm. Anal. Geom. 2}, 2 (1994), 203--215.

\bibitem{ComiMag}
{\sc Comi, G.~E., and Magnani, V.}
\newblock The {G}auss-{G}reen theorem in stratified groups.
\newblock Preprint 2018,
  \href{https://arxiv.org/abs/1806.04011}{arXiv:1806.04011}.

\bibitem{DanGarNhiAnnSNS98}
{\sc Danielli, D., Garofalo, N., and Nhieu, D.-M.}
\newblock Trace inequalities for {C}arnot-{C}arath\'eodory spaces and
  applications.
\newblock {\em Ann. Scuola Norm. Sup. Pisa Cl. Sci. (4) 27}, 2 (1998), 195--252
  (1999).

\bibitem{Deg55}
{\sc De~Giorgi, E.}
\newblock Nuovi teoremi relativi alle misure {$(r-1)$}-dimensionali in uno
  spazio ad {$r$} dimensioni.
\newblock {\em Ricerche Mat. 4\/} (1955), 95--113.

\bibitem{DLDMV}
{\sc Don, S., Le~Donne, E., Moisala, T., and Vittone, D.}
\newblock In preparation.

\bibitem{DMV}
{\sc Don, S., Massaccesi, A., and Vittone, D.}
\newblock Rank-one theorem and subgraphs of {BV} functions in {C}arnot groups.
\newblock Preprint 2017,
  \href{https://arxiv.org/abs/1712.02242}{arXiv:1712.02242}.

\bibitem{DVimmcpt}
{\sc Don, S., and Vittone, D.}
\newblock A compactness result for {BV} functions in metric spaces.
\newblock {\em Ann. Acad. Sci. Fenn. Math., {\upshape to appear}\/}.
\newblock \href{https://arxiv.org/abs/1803.07545}{arXiv:1803.07545}.

\bibitem{Federer}
{\sc Federer, H.}
\newblock {\em Geometric measure theory}.
\newblock Die Grundlehren der mathematischen Wissenschaften, Band 153.
  Springer-Verlag New York Inc., New York, 1969.

\bibitem{FollandStein}
{\sc Folland, G.~B., and Stein, E.~M.}
\newblock {\em Hardy spaces on homogeneous groups}, vol.~28 of {\em
  Mathematical Notes}.
\newblock Princeton University Press, Princeton, N.J.; University of Tokyo
  Press, Tokyo, 1982.

\bibitem{FraGalWhe94}
{\sc Franchi, B., Gallot, S., and Wheeden, R.~L.}
\newblock Sobolev and isoperimetric inequalities for degenerate metrics.
\newblock {\em Math. Ann. 300}, 4 (1994), 557--571.

\bibitem{FSSC3}
{\sc Franchi, B., Serapioni, R., and Serra~Cassano, F.}
\newblock Meyers-{S}errin type theorems and relaxation of variational integrals
  depending on vector fields.
\newblock {\em Houston J. Math. 22}, 4 (1996), 859--890.

\bibitem{FSSCBUMI}
{\sc Franchi, B., Serapioni, R., and Serra~Cassano, F.}
\newblock Approximation and imbedding theorems for weighted {S}obolev spaces
  associated with {L}ipschitz continuous vector fields.
\newblock {\em Boll. Un. Mat. Ital. B (7) 11}, 1 (1997), 83--117.

\bibitem{FSSC1}
{\sc Franchi, B., Serapioni, R., and Serra~Cassano, F.}
\newblock Rectifiability and perimeter in the {H}eisenberg group.
\newblock {\em Math. Ann. 321}, 3 (2001), 479--531.

\bibitem{FSSC2}
{\sc Franchi, B., Serapioni, R., and Serra~Cassano, F.}
\newblock On the structure of finite perimeter sets in step 2 {C}arnot groups.
\newblock {\em J. Geom. Anal. 13}, 3 (2003), 421--466.

\bibitem{GaroNhiCPAM}
{\sc Garofalo, N., and Nhieu, D.-M.}
\newblock Isoperimetric and {S}obolev inequalities for
  {C}arnot-{C}arath\'eodory spaces and the existence of minimal surfaces.
\newblock {\em Comm. Pure Appl. Math. 49}, 10 (1996), 1081--1144.

\bibitem{GNJAM}
{\sc Garofalo, N., and Nhieu, D.-M.}
\newblock Lipschitz continuity, global smooth approximations and extension
  theorems for {S}obolev functions in {C}arnot-{C}arath\'eodory spaces.
\newblock {\em J. Anal. Math. 74\/} (1998), 67--97.

\bibitem{Gromov}
{\sc Gromov, M.}
\newblock Carnot-{C}arath\'eodory spaces seen from within.
\newblock In {\em Sub-{R}iemannian geometry}, vol.~144 of {\em Progr. Math.}
  Birkh\"auser, Basel, 1996, pp.~79--323.

\bibitem{HajKos}
{\sc Haj{\l}asz, P., and Koskela, P.}
\newblock Sobolev met {P}oincar\'e.
\newblock {\em Mem. Amer. Math. Soc. 145}, 688 (2000), x+101.

\bibitem{Heinonen}
{\sc Heinonen, J.}
\newblock {\em Lectures on analysis on metric spaces}.
\newblock Universitext. Springer-Verlag, New York, 2001.

\bibitem{Jerison}
{\sc Jerison, D.}
\newblock The {P}oincar\'e inequality for vector fields satisfying
  {H}\"ormander's condition.
\newblock {\em Duke Math. J. 53}, 2 (1986), 503--523.

\bibitem{NoteLeDonne}
{\sc Le~Donne, E.}
\newblock Lecture notes on sub-{R}iemannian geometry.
\newblock
  \href{https://sites.google.com/site/enricoledonne/}{sites.google.com/site/enricoledonne/},
  2017.

\bibitem{Magnani}
{\sc Magnani, V.}
\newblock {\em Elements of geometric measure theory on sub-{R}iemannian
  groups}.
\newblock Scuola Normale Superiore, Pisa, 2002.

\bibitem{MagnaniJEMS}
{\sc Magnani, V.}
\newblock Characteristic points, rectifiability and perimeter measure on
  stratified groups.
\newblock {\em J. Eur. Math. Soc. (JEMS) 8}, 4 (2006), 585--609.

\bibitem{MTV}
{\sc Magnani, V., Tyson, J.~T., and Vittone, D.}
\newblock On transversal submanifolds and their measure.
\newblock {\em J. Anal. Math. 125\/} (2015), 319--351.

\bibitem{MagV}
{\sc Magnani, V., and Vittone, D.}
\newblock An intrinsic measure for submanifolds in stratified groups.
\newblock {\em J. Reine Angew. Math. 619\/} (2008), 203--232.

\bibitem{Marchi}
{\sc Marchi, M.}
\newblock Regularity of sets with constant intrinsic normal in a class of
  {C}arnot groups.
\newblock {\em Ann. Inst. Fourier (Grenoble) 64}, 2 (2014), 429--455.

\bibitem{Mitchell}
{\sc Mitchell, J.}
\newblock On {C}arnot-{C}arath\'eodory metrics.
\newblock {\em J. Differential Geom. 21}, 1 (1985), 35--45.

\bibitem{Montgomery}
{\sc Montgomery, R.}
\newblock {\em A tour of subriemannian geometries, their geodesics and
  applications}, vol.~91 of {\em Mathematical Surveys and Monographs}.
\newblock American Mathematical Society, Providence, RI, 2002.

\bibitem{Monti}
{\sc Monti, R.}
\newblock Distances, boundaries and surface measures in
  {C}ar\-not-{C}a\-ra\-th\'eo\-do\-ry spaces, 2001.
\newblock PhD thesis,
  \href{http://cvgmt.sns.it/paper/3706/}{cvgmt.sns.it/paper/3706/}.

\bibitem{MPV}
{\sc Monti, R., Pigati, A., and Vittone, D.}
\newblock On tangent cones to length minimizers in {C}arnot-{C}arath\'eodory
  spaces.
\newblock {\em SIAM J. Control Optim., {\upshape to appear}\/}.
\newblock \href{http://cvgmt.sns.it/paper/3529/}{cvgmt.sns.it/paper/3529/}.

\bibitem{MSC}
{\sc Monti, R., and Serra~Cassano, F.}
\newblock Surface measures in {C}arnot-{C}arath\'eodory spaces.
\newblock {\em Calc. Var. Partial Differential Equations 13}, 3 (2001),
  339--376.

\bibitem{NSW}
{\sc Nagel, A., Stein, E.~M., and Wainger, S.}
\newblock Balls and metrics defined by vector fields. {I}. {B}asic properties.
\newblock {\em Acta Math. 155}, 1-2 (1985), 103--147.

\bibitem{RotStein}
{\sc Rothschild, L.~P., and Stein, E.~M.}
\newblock Hypoelliptic differential operators and nilpotent groups.
\newblock {\em Acta Math. 137}, 3-4 (1976), 247--320.

\bibitem{Simon}
{\sc Simon, L.}
\newblock {\em Lectures on geometric measure theory}, vol.~3 of {\em
  Proceedings of the Centre for Mathematical Analysis, Australian National
  University}.
\newblock Australian National University, Centre for Mathematical Analysis,
  Canberra, 1983.

\bibitem{SongYang03}
{\sc Song, Y.~Q., and Yang, X.~P.}
\newblock {$BV$} functions in the {H}eisenberg group {$H^n$}.
\newblock {\em Chinese Ann. Math. Ser. A 24}, 5 (2003), 541--554.

\bibitem{LaureaVittone}
{\sc Vittone, D.}
\newblock Regolarit\`a delle geodetiche nei gruppi di {C}arnot.
\newblock Master thesis, 2003. Available at
  \href{http://cvgmt.sns.it/paper/972/}{cvgmt.sns.it/paper/972/}.

\bibitem{Vittone2}
{\sc Vittone, D.}
\newblock Lipschitz surfaces, perimeter and trace theorems for {BV} functions
  in {C}arnot-{C}arath\'eodory spaces.
\newblock {\em Ann. Sc. Norm. Super. Pisa Cl. Sci. (5) 11}, 4 (2012), 939--998.

\end{thebibliography}
\end{document}